\documentclass[a4paper]{article}

\usepackage[utf8]{inputenc}

\usepackage[backend=biber, maxbibnames=10, bibencoding=utf8, giveninits=true]{biblatex}
\usepackage{graphicx, mathtools, amsmath, amsfonts, amsthm, amssymb, latexsym,
verbatim, pifont, xspace, mathrsfs, tabularx, xcolor}
\usepackage{calc}
\usepackage{booktabs}
\usepackage{multirow}
\usepackage[shortlabels]{enumitem}
\usepackage{subfigure}
\usepackage{yfonts}
\usepackage{adjustbox}
\usepackage{anyfontsize}
\usepackage{xspace}

\usepackage[colorlinks, hypertexnames=false]{hyperref}
\hypersetup{linkcolor=blue, urlcolor=blue, citecolor=red}
\usepackage[capitalise]{cleveref}

\usepackage{todonotes}
\setuptodonotes{inline}

\makeatletter
\define@key{todonotes}{FH}[]{%
    \setkeys{todonotes}{author=FH,color=green}}%
\define@key{todonotes}{JDM}[]{%
    \setkeys{todonotes}{author=JDM,color=cyan}}%
\define@key{todonotes}{AA}[]{%
    \setkeys{todonotes}{author=AA,color=blue!50,noinline,size=\scriptsize}}%
\define@key{todonotes}{JCN}[]{%
    \setkeys{todonotes}{author=JCN,color=yellow}}%
\makeatother

\usepackage{tikz}
\usepackage{tikz-cd}
\usetikzlibrary{decorations, arrows}
\usetikzlibrary{arrows,decorations.markings}
\usetikzlibrary{arrows.meta}
\usetikzlibrary{petri,positioning,decorations.pathmorphing}
\usetikzlibrary{graphs} 
\usepackage{genyoungtabtikz}
\newcommand\Ygr{\Yfillcolour{gray!20}}
\newcommand\Yw{\Yfillcolour{white}}

\usepackage{mahjong}
\usepackage{ytableau}
\usepackage[edges]{forest}
\usepackage{pdflscape}

\pgfdeclaredecoration{sl}{initial}{
  \state{initial}[width=\pgfdecoratedpathlength-1sp]{
    \pgfmoveto{\pgfpointorigin}
  }
  \state{final}{
    \pgflineto{\pgfpointorigin}
  }
}

\pgfarrowsdeclarecombine{aa}{aa}{angle 90}{angle 90}{angle 90}{angle 90}

\newcommand{\NN}{\mathbb{N}}    
\newcommand{\Pp}{\mathcal{P}}   
\newcommand{\R}{\mathcal{R}}    

\newcommand{\A}{\mathcal{A}}
\newcommand{\B}{\mathcal{B}}
\newcommand{\C}{\mathcal{C}}

\newcommand{\M}{\mathbf{M}}
\newcommand{\Nn}{\mathbf{N}}
\newcommand{\Com}{\mathbf{Com}}
\renewcommand{\L}{\mathscr{L}}  
\newcommand{\J}{\mathscr{J}}    
\newcommand{\G}{\mathcal{G}}    

\newcommand{\id}{\mbox{\rm id}}     
\newcommand{\set}[2]{\ensuremath{\{\: #1 \: :\: #2 \:\}}}   
\renewcommand{\to}{\longrightarrow} 
\newcommand{\im}{\operatorname{Im}}     
\newcommand{\dom}{\operatorname{Dom}}   
\newcommand{\cont}{\operatorname{Cont}} 
\newcommand{\ev}{\operatorname{ev}}     
\newcommand{\RW}{\operatorname{R}}      

\newcommand{\stal}{\mathbf{Stal}}

\newcommand{\plax}{\mathbf{Plax}}
\newcommand{\styl}{\mathbf{Styl}}
\renewcommand{\P}{\operatorname{P}}
\newcommand{\N}{\operatorname{N}}

\newcommand{\chinese}{\mathbf{Ch}}
\newcommand{\dyck}{\mathbf{Dyck}}       
\newcommand{\cdf}{\mathbf{CDF}}         

\newcommand\dyckpath[8]{
  \begin{scope}[local bounding box=#4]
    \fill[white]  (#1) rectangle +(#2,#2);
    \fill[white] (#1) foreach \dir in {#3}{-- ++(\dir*90:1)} |- (#1);
    \path[fill] (#1) foreach \num [count=\i from 0] in {#5}{ +(\i.5,-.6) node[anchor=south]{\num} \ifnum\i>#2 circle (1pt) \fi};
    \path[fill] (#1) foreach \num [count=\i from 0] in {#5}{ +(#2.1,\i.5) node[anchor=west]{\num} \ifnum\i>#2 circle (1pt) \fi};
    \draw[help lines] (#1) grid +(#2,#2);
    \draw[line width=2pt] (#1) foreach \dir in {#3}{ -- ++(\dir*90:1)};
  \end{scope}
}

\newcommand{\f}{f}

\newcommand{\fp}{fp}
\newcommand{\emptyf}{\tilde f}
\newcommand{\rd}{g}
\newcommand{\rdb}{g'}
\newcommand{\gd}{h}

\renewcommand{\gathered}{regrouped\xspace}

\newcommand{\HPl}{\mathbf{HP}}
\newcommand{\Q}{\operatorname{Q}}

\newcommand{\Hlleft}{\xleftarrow{\HPl_2}}

\newcommand{\Sylv}{\mathbf{Sylv}}
\newcommand{\T}{\operatorname{T}}

\newcommand{\HS}{\mathbf{HS}}
\renewcommand{\H}{\operatorname{H}}

\newcommand{\MS}{\mathbf{MS}}
\newcommand{\Mm}{\operatorname{M}}

\newcommand{\Met}{\mathcal{M}}

\NewEnviron{mosaic}[2][]{\begin{tikzpicture}[baseline]\def\tilesize{#2}\matrix[mosai,ampersand replacement=\&, #1]{\BODY};\end{tikzpicture}}

\newcommand{\tileedge}{.05pt} 
\newcommand{\knotthickness}{1.5pt} 

\newcommand{\tilevi}[1][{}]{\tikz {\draw[tilestyle] (0,0) rectangle (\tilesize,\tilesize); \draw[knotstyle={\tilesize*\knotthickness}] (.5*\tilesize,0) to (.5*\tilesize,\tilesize);}}

\tikzset{tilestyle/.style={use as bounding box, line width=\tileedge}} 
\tikzset{knotstyle/.style={line width=#1}} 
\tikzset{mosai/.style={execute at begin cell=\node\bgroup, execute at end cell=\egroup;, column sep={\tilesize cm,between origins}, row sep={\tilesize cm,between origins}, inner sep=0}}

\newtheorem{thm}{Theorem}[section]
\newtheorem{prop}[thm]{Proposition}
\newtheorem{cor}[thm]{Corollary}
\newtheorem{lemma}[thm]{Lemma}
\newtheorem{algo}[thm]{Algorithm}

\newtheorem{defi}[thm]{Definition}
\newenvironment{de}{\begin{defi} \rm}{\end{defi}}
\newtheorem{exam}[thm]{Example}

\newtheorem{remark}[thm]{Remark}

\newtheorem{note}[thm]{Note}
\numberwithin{equation}{section}

\title{Power quotients of plactic-like monoids}
\author{Antoine Abram
\thanks{ULiège, Liège, Belgium, \href{mailto:antoine.abram@uliege.be}{antoine.abram@uliege.be}}
\and
Florent Hivert
\thanks{LISN, Université Paris-Saclay, Orsay, France, 
\href{mailto:florent.hivert@lri.fr}{florent.hivert@lri.fr}}
\and
James D. Mitchell
\thanks{School of Mathematics and Statistics, University of St Andrews, St Andrews, Scotland, 
\href{mailto:jdm3@st-andrews.ac.uk}{jdm3@st-andrews.ac.uk}}
\and
Jean-Christophe Novelli
\thanks{LIGM, Université Gustave-Eiffel, Marne-la-Vallée, France, 
\href{mailto:novelli@univ-mlv.fr}{novelli@univ-mlv.fr}}
\and
Maria Tsalakou,}

\date{\today}

\begin{document}
\maketitle

\begin{abstract}
In this paper we describe the quotients of several plactic-like monoids by
the least congruences containing the relations $a^{\sigma(a)} = a$ for
$\sigma(a) \geq 2$ for every generator $a$.
The starting point of this paper is the recent paper of Reutenauer and
the first author about the stylic monoid. This monoid is the quotient of the
plactic monoid by the relations $a^2 = a$ for every generator $a$. The
plactic-like monoids considered here are the plactic monoid itself, the
Chinese monoid, the hypoplactic monoid, the sylvester monoid, the
metasylvester and hyposylvester monoids.
In each case we describe a set of normal forms and the idempotents. We obtain
formulae for their cardinalities and we describe their isomorphism types.
\end{abstract}

\tableofcontents


\section{Introduction}\label{section-introduction}

Given an alphabet $\A$, the \textit{plactic monoid} $\plax(\A)$ is the
quotient of the free monoid $\A ^*$ by the equivalence relation that
relates words having the same $\P$-tableau when the Robinson-Schensted
Algorithm~\cite{R, S} is applied. Knuth~\cite{K} found a presentation for
$\plax(\A)$ where $\A$ is a totally ordered finite alphabet $(\A, <)$:
\begin{equation*}
  acb  =  cab \quad \text{if } a \leq b < c \quad\text{and}\quad
  bac =  bca \quad \text{if } a < b \leq c, \quad \text{for all }a, b, c\in \A. 
\end{equation*}
The plactic monoid was studied by Lascoux and Sch\"utzenberger in
\cite{Lascoux1981aa}. Due to its connection with symmetric functions and
representation theory, the plactic monoid is a central object in algebraic
combinatorics that has been widely studied in the literature. It has a rich
combinatorial structure, and has revealed many connections between seemingly
unrelated concepts. A survey on the plactic monoid and its applications is
given by Lascoux, Leclerc and Thibon (Chapter 5 of Lothaire’s book
\cite{Lothaire2002aa}).

More recently, Reutenauer and the first author published a paper~\cite{AR}
where they define and study the \textit{stylic monoid}, the quotient of the
plactic monoid by the relations $a^2=a$ for every letter $a$.
They show that it is a finite monoid and that it has an interesting structure
and many intriguing properties; for example, if $|\A|=n$, its cardinality is
equal to the ($n+1$)-st Bell number.

From this, one can ask if it is combinatorially sound to quotient the plactic
monoid by relations $a^{\sigma(a)}\equiv a$ with $\sigma$
any function from the alphabet to $\NN_{\geq2}$ and if the original idea and
its above generalization are also sound as quotients of other monoids ?

Since the point was to study these questions from the combinatorial point of
view, we restricted ourselves to monoids sharing deep combinatorial or
algebraic connections with the plactic monoid. These monoids, refered as
\emph{plactic-like monoids}, have rich combinatorial structures and relations
to representation theory and geometry.

\subsection{The club of plactic-like monoids}

The \textit{Chinese monoid} $\chinese(\A)$ was first defined in \cite{DK}
where the authors study monoids having the same growth function as the
plactic monoid. It appears in the study of Hecke atoms and the Bruhat
order~\cite{HMP}.
Its insertion algorithm, defined in~\cite{CASSAIGNE2001}, involves tableau-like structure called Chinese
staircases.
A quotient of the plactic monoid called the \textit{hypoplactic monoid}
$\HPl(\A)$ was defined by Krob and Thibon~\cite{KT} in their study of the
representations of the degeneracy at~$q=0$ of the Iwahori-Hecke algebra and
the quantum linear group.
This monoid plays the same role for non-commutative and quasi-symmetric
functions as the plactic monoid does for symmetric functions. For a detailed
study of this monoid see~\cite{N}.
The \textit{sylvester monoid} $\Sylv(\A)$ is obtained using an insertion
algorithm into binary search trees~\cite{HNT05}.
The sylvester monoid is related, among other things, to the associahedra and
the Loday-Ronco algebra of trees~\cite{LR02}.
In the study of Hopf algebras of $(m+1)$-ary trees and $m$-permutations,
Thibon and the fourth author defined two extra monoids: the
\textit{metasylvester} and \textit{hyposylvester} monoids, denoted by
$\MS(\A)$ and $\HS(\A)$ respectively.
These monoids are used to generalize non-commutative symmetric and
quasi-symmetric functions and are respectively a coarsening and a refinement
of the sylvester monoid.

\subsection{Quotienting the plactic-like monoids}

Let $\M(\A)$ be any of the plactic-like monoids above.
We shall investigate the following general quotients: the quotients
$\M(\A, \sigma)$ of $\M(\A)$ by the relations $a^{\sigma(a)} =a$ for every
$a\in \A$ where $\sigma:\A\to \NN_{\geq 2}$ is any function.
We call such quotients \emph{$\sigma$-quotients of $\M(\A)$} or in a more
general way, \textit{power-quotients}.
Surprisingly, all of the $\sigma$-quotients of the plactic-like monoids given
above are finite and the function $\sigma$ uniquely determines the
isomorphism-type of the monoid.
Also, they can be defined in terms of insertion algorithms into combinatorial
objects that are, in each case, related to the corresponding objects of
$\M(\A)$.
Using these algorithms, we study the structure of each $\M(\A,\sigma)$ by
in particular computing their cardinalities and describing their idempotents;
see \cref{table-summary} for details of where to find the relevant results. 

In every case, the quotients $\M(\A, 2)$, where $\sigma(a)=2$ for all
$a\in\A$, play a key role in the structure of the general quotients
$\M(\A, \sigma)$: describing the structure of $\M(\A, 2)$ is indeed very
helpful to describe the structure of all the $\M(\A, \sigma)$.

During the first stages of this research, we found that our monoids satisfy
two kinds of very specific properties that notably simplify their studies and
decided to name these properties as defining monoids of the first or second
type.

The first type, that contains the plactic, the Chinese, and the
hypoplactic monoids, $\M(\A, \sigma)$ can be naturally embedded in the
cartesian product of $\M(\A, 2)$ with the commutative monoid
$\Com(\A, \sigma)$, the latter being the $\sigma$-quotient of the free
commutative monoid over $\A$.

The second type, that includes the hypoplactic, the sylvester, the
hyposylvester and the metasylvester monoids, the original monoid $\M(\A)$ has
a particular set of normal forms that makes it easy to apply the relations
$a^{\sigma(a)}=a$.

For both types of monoids, we provide a general framework for analysing these
in terms of $\M(\A, 2)$, and for monoids of the first type, we derive a
formula for their size.

Type 2 monoids are somewhat less tractable.
Using the particular normal form of $\M(\A)$, one can find normal forms for 
$\M(\A, \sigma)$ but this is not enough to fully understand these quotients,
even when $\sigma$ is a constant function.
For example, we do not know a formula for the cardinalities of
$\Sylv(\A,\sigma)$ and $\MS(\A,\sigma)$ when $\sigma$ is not constant and,
even though we can describe the idempotents in $\Sylv(\A,\sigma)$ and
$\MS(\A,\sigma)$ when $\sigma(a)\neq 2$, we do not know a formula for their
numbers. On the other hand, we expect that the techniques developed in
\cref{section-types} can be applied to other classes of plactic-like monoids
of type 1.

\begin{table}
\centering
  \begin{tabular}{l|l|l|l|l|l}
  Monoids & Comb. Obj. & Types & Cardinality & Idempotents & $\cong$ \\ \hline
  $\plax(\A)$
  & \ref{thm:N-tableau}
  & \ref{cor-plax-type1}
  & \ref{cor-plax-size}
  & \ref{prop-plax-idempotents}
  & \ref{plax-iso}\\
  $\chinese(\A)$
  & \ref{cor-main-chinese}, \ref{2chifunct}
  & \ref{cor:M-Type}
  & \ref{cor-sigma-chinese-size} 
  & \ref{prop-sigma-chinese-idempotents} 
  & \ref{chinese-iso}\\
  $\HPl(\A)$
  & \ref{sqrandhsm}
  & \ref{cor-hypo-gathered}, \ref{hypoplax-type1}
  & \ref{hypo-enum} 
  & \ref{HPsigmaIdem}
  & \ref{Hpl-iso} \\
  $\Sylv(\A)$
  & \ref{redtrees}
  & \ref{cor-sylv-gathered}
  & \ref{sylv-enum}
  & \ref{2sylv-idem} 
  & \ref{sylv-iso}\\
  $\HS(\A)$
  & \ref{hypotree-w}
  & \ref{cor-hyposylv-gathered}
  & \ref{hyposylv-enum} 
  & \ref{hyposylv-idem} 
  & \ref{hyposylv-iso} \\
  $\MS(\A)$
  & \ref{metasylv-s}
  & \ref{cor-metasylv-gathered}
  & \ref{meta-enum} 
  & \ref{prop:MS2idem} 
  & \ref{meta-iso}
\end{tabular}
\caption{Summary of the locations of the main results in the paper.}
\label{table-summary}
\end{table}

\subsection{Outline of the paper}

The paper is organised as follows. After recalling some fundamental
combinatorial objects and providing some notations in \cref{section-notation},
we define the general quotients $\M(\A,\sigma)$ and both types of monoids in
\cref{section-types}. The general strategy of the paper is detailed on a toy
example, the stalactic monoid defined in~\cite{HNT05-comm}, that is of both
types of monoids.
We then consider each of the plactic-like monoids in their own sections:
plactic in \cref{section-plax}; Chinese in \cref{section-chinese}; hypoplactic
in \cref{section-hypoplax}; sylvester in \cref{section-sylv}; hyposylvester in
\cref{section-hyposylv}; and metasylvester in \cref{section-metasylv}.

After proving that $\plax(\A)$ is of type 1, the structure of the
quotients $\plax(\A, \sigma)$ can be derived from the results in
\cref{ss:Type1Monoids} and from the description of the structure of the stylic
monoid $\styl(\A)=\plax(\A,2)$ from~\cite{AR}.

Concerning the Chinese monoid, the first essential prerequisite, a description
of $\chinese(\A,2)$, does not appear in the literature. Thus,
\cref{section-chinese} begins with a deep study of $\chinese(\A, 2)$ using two
different combinatorial objects: the $2$-Chinese staircases
(\cref{subsection-2-chinese-staircase}) and the $2$-Chinese functions
(\cref{subsection-2-chinese-functions}).
Using these objects, we proceed to a deeper study of $\chinese(\A, 2)$ by computing its cardinality,
which is directly related to the Catalan numbers, describing its idempotents and, showing that $\chinese(\A, 2)$ is
$\J$-trivial (\cref{ss-2-chinese-more}).
Using the description of $\chinese(\A,2)$, we prove that $\chinese(\A)$ is of
type 1 (\cref{ss-chinese-type}), and use this to exhibit the structure of
$\chinese(\A,\sigma)$ (\cref{ss-chinese-enum-idem-iso}).

The hypoplactic monoid $\HPl(\A)$, studied in \cref{section-hypoplax}, is both
of type 1 and type 2. Being a quotient of the plactic monoid, $\HPl(\A,2)$
inherits the $\J$-triviality of $\plax(\A,2)$ and we describe its $\J$-order
in \cref{2-Hypo-Jtriv}.
The structure of the other $\sigma$-quotients $\HPl(\A,\sigma)$ follows
naturally.

The first monoid we consider that is not of type 1 is the sylvester monoid,
$\Sylv(\A)$. In \cref{SylvWord}, we show that the right-to-left postfix
readings of binary search trees are a set of normal forms for $\Sylv(\A)$ and
this allows us to show that $\Sylv(\A)$ is of type 2
(\cref{cor-sylv-gathered}). 
In \cref{ss:2-sylv}, we exhibit some connections between enumerative properies
of $\Sylv(\A, 2)$ and the sequences of Borel numbers and of the large
Schr\"oder numbers.

In \cref{section-hyposylv}, we study the hyposylvester monoid, $\HS(\A)$. Like
its sibling the sylvester monoid, it is only of type 2. The cardinality of its
$\sigma$-quotients is related to the number of $k$-faces in an $n$-cube; see
\cref{hyposylv-reductree}.

Finally, we study the metasylvester monoid $\MS(\A)$ which is also only of
type 2 in \cref{section-metasylv}.

\section{Notations and basic definitions}
\label{section-notation}

In this paper, we denote finite totally ordered alphabets using calligraphic
letters such as $\A$, $\B$ and $\leq$ for the alphabet order.
The free monoid generated by $\A$ is denoted by $\A^*$ and the free
commutative monoid generated by $\A$ is denoted by $\Com(\A)$.
We denote letters by lower case Latin letters, such as $a$, $b$, $x$, and
words by capital Latin letters, such as $W$, $U$, $V$.
We denote monoids by boldface letters, such as $\M$, $\Nn$, or specific ones
like $\plax$.
The sets of consecutive integers ${a,a+1,\dots,b}$ (intervals) are written
$[a,b]$ and if $a$ is the smallest letter of $A$, the interval $[a,b]$ is
simplified as $[b]$ and called an \emph{initial} interval.

\paragraph{Words}

Let $W\in \A ^*$ be a word over the alphabet $\A$.
The \emph{length of} $W$ is denoted by $|W|$ and the number of
occurrences of a letter $a\in\A$ by $|W|_a$.
The \emph{content} of $W$ is the set of distinct
letters occurring in $W$, and is denoted by $\cont(W)$.
The \emph{evaluation} of $W\in \A ^*$ is the image of $W$ in $\Com(\A)$ under
the natural homomorphism.
In other words, if $\A = \set{a_i}{i \in I}$ and $W\in\A ^*$,
then $\ev(W)=a_1^{\delta_1}\cdots a_n ^ {\delta_n}$ where $\delta_i=|W|_{a_i}$
and $\cont(W)=\set{ a_i\in\A}{\delta_i>0}$.

Since $\A$ is totally ordered by $<$, we say that a word
$W= w_1 \cdots w_n\in \A^ *$ is \emph{non-decreasing} if
$w_1 \leq w_2 \leq \cdots \leq w_n$.
The word $W$ is \emph{increasing} if all previous inequalities are strict.

If $W= w_1 \cdots w_n\in \A^ *$ is an arbitrary word, an \emph{inflation} of
$W$ is a word $W^\sharp$ of the form
$w_1 ^ {\delta_1} \cdots w_n ^ {\delta_n}\in \A ^ *$ for some $\delta_1,
\delta_2, \ldots, \delta_n\in \NN^*$.

Denote by $\Theta_\A$ the unique order-reversing permutation of $\A$.
It extends uniquely to an anti-automorphism of $\A^*$, that we still denote
$\Theta_\A$ (or only by $\Theta$ if there is no ambiguity).
For example, with $A = \{a < b < c < d\}$, $\Theta(acdaadc) = baddabd$.
The mapping $\Theta$ is an involution.
For $\B\subset\A$, we will also denote
$\Theta_\A(\B)=\set{\Theta(b)\in\A}{b\in\B}$.

\paragraph{Combinatorial Objects}

A \emph{composition of $n$} is a list of positive integers
$I=(i_1,i_2,\cdots,i_p)\models n$ such that $\sum_{j=1}^p i_j=n$. The
\emph{weight} of $I$ is $n$ and its \emph{length} is $p$. The
\emph{descent set of} $I$ is the sequence $D(I)=\{d_1,\cdots,d_{p-1}\}$,
where $d_k=\sum_{j=1}^k i_j$. We say that $J$ is \emph{finer} than
$I$ if and only if $D(I)\subseteq D(J)$.

A \emph{partition of $n$} is a decreasing composition
$\lambda=(i_1\geq i_2\geq\cdots\geq i_p)$, noted $\lambda\vdash n$.
Let $\lambda=(i_1,i_2,\cdots,i_p)$ and
$\gamma=(j_1,j_2,\cdots,j_q)$ be two partitions. Then $\lambda$ is
\emph{contained} in $\gamma$ if $p\leq q$ and $i_k\leq j_k$ for all
$1\leq k\leq p$. We then write $\lambda\leq\gamma$.

Given a partition $\lambda=(i_1,i_2,\cdots,i_p)$, the \emph{Ferrers diagram of
shape} $\lambda$ is the collection of boxes such that its rows are
left-justified and their lengths are, from bottom to top (French notation),
$i_1,i_2,\cdots,i_p$. The \emph{size} of a Ferrers diagram is the weight of its
partition, \emph{i.e.}, the number of boxes.
Given two partitions $\lambda$ and $\gamma$ such that $\lambda<\gamma$, a
\emph{skew Ferrers diagram of shape} $(\lambda,\gamma)$ is the collection of
boxes obtained by deleting the boxes of the Ferrers diagram of shape
$\lambda$ inside the diagram of $\gamma$.

Given a composition $I$, a \emph{ribbon diagram of shape} $I$ is a skew
diagram such that row $j$ has $i_j$ consecutive boxes that are placed starting
just below the last box of the previous row. It is a special case of a skew
Ferrers diagram. For example, the (filled) ribbon diagram on \cref{insere-d}
has shape $(1,3,1,2,2)$ (read the sizes of the rows from top to bottom).

We shall also consider other combinatorial objects, namely trees.
These are always rooted but not necessarily binary and are represented with
the root at the top.

\paragraph{Monoids}

Asume that $\M(\A)$ is a monoid defined by the presentation
$\langle\A|\R\rangle$ with the generating set $\A$ and the relations $\R$.
If $U, V\in \A^*$ have the same image under the natural surjective
homomorphism $\pi_{\M}: A ^ *  \to \M$, then we say that $U$ and $V$ are
\emph{equivalent} in $\M(\A)$. If so, we write $U \equiv_{\M(\A)} V$.
We may also write $U \equiv_{\M} V$ if there is no ambiguity.
In any finitely-presented monoid $\M(\A)$, we refer to a word $U\in A^*$ as
being a \emph{shortlex normal form} for $V\in \A^*$ if $U$ is the
lexicographically smallest element among the shortest words of its class.

A useful set of tools for understanding the structure of a monoid are the
\emph{Green relations}. Given a monoid $\M$ and two elements $u,v\in\M$, we
have
\begin{align*}
    u\J v  \Leftrightarrow & \M u\M =\M v\M,\\
    u\L v  \Leftrightarrow & \M u =\M v.\\
\end{align*}
One can define the \emph{$\J$-preorder} (reflexive and transitive) $\leq_\J$
by letting $u\leq_\J v$ if $u\in\M v\M$.
A monoid $\M$ is \emph{$\J$-trivial} if $\M u\M=\M v\M$ if and only if $u=v$.
This can be expressed as if $u\in \M v\M$ and $v\in \M u\M$ then $u=v$.
If $\M$ is $\J$-trivial, than $\leq_\J$ is a partial order on $\M$ called the
\emph{$\J$-order}.
For the $\L$-relation, one defines similar concepts by replacing $\M x \M$ by
$\M x$ in the previous definitions.
We refer to Chapter 3 of \cite{P} for more details on these concepts.

If $\R\subseteq \A ^* \times \A ^ *$ has the property that
$\cont(U) = \cont(V)$ for all $(U, V) \in \R$, we say that the
presentation $\langle \A | \R\rangle$ is \emph{content-preserving}. 
If $\langle \A | \R\rangle$ is content-preserving, then we may, without
ambiguity, refer to the \emph{content} of $W\in \M(\A)$ and write it
$\cont(W)$.
In particular, the map $\cont: \langle \A | \R\rangle \to \Pp(\A)$,
where $\Pp(\A)$ is the power monoid of $\A$ with product $\cup$ and identity
$\varnothing$, is a homomorphism.

Similarly, we define \emph{evaluation-preserving} presentations if
$\ev(U) = \ev(V)$ for all $(U, V) \in \R$, and we write $\ev(W)$ for the
evaluation of $W\in \M(\A)$ when $\M(\A)$ is defined by an
evaluation-preserving presentation. When this is the case, $\ev: \langle \A | \R \rangle \to \Com(\A)$ is a homomorphism.
Note that an evaluation-preserving presentation is content-preserving.

\section{The \texorpdfstring{$\sigma$}{s}-Quotients and Types of Monoids}
\label{section-types}

In this section, we describe the $\sigma$-type quotients of general monoids.

While studying the first examples coming from combinatorics, we noticed that
general properties of monoids, whether enumerative or structural, make it
possible to significantly simplify their study and mechanically lead to
enumerative results. Having encountered these properties several times, we
decided to classify our monoids according to these properties, which are of
two non-mutually exclusive types.

The monoids of the first type, that include the plactic, Chinese, and
hypoplactic monoids, have a general quotient structure that is directly
deduced from the structure of $M(A,2)$. Fine enumerative formulas are directly
obtained from this.

The monoids of the second type, that include the hypoplactic monoids and
variations of the sylvester monoid, have a much more complex structure, yet
rigid enough to allow the characterization of a section that behaves well with
respect to the studied quotients. Enumerative questions are more challenging
to carry out there.

To clearly illustrate these various types of monoids, which we hope may be
useful in other contexts, we study throughout this section a toy example: the
stalactic monoid defined in \cite{HNT05}. This monoid has a simple description
but nevertheless allows us to understand how certain of its properties, which
will be those defining type 1 and type 2 monoids, help facilitate their
combinatorial study.

\subsection{Power-Quotient Monoids}\label{ss:QuotientMonoids}

\begin{defi}\label{def:quotient-monoid}
Let $\M(\A)$ be a quotient $\A^*/\equiv_\M$ for a congruence relation
$\equiv_\M$, and $\sigma : \A \to \NN$ such that $\sigma(a) \geq 2$
for all $a\in \A$.
The \emph{$\sigma$-$\M$ congruence}, denoted by $\equiv_{\M(\sigma)}$,
is the least congruence on
$\A ^ *$ containing $\equiv_{\M}$ and such that
$a ^ {\sigma(a)} = a$ for every $a\in \A$.

The \emph{$\sigma$-$\M$ monoid on $\A$},
or \emph{$\sigma$-quotient of $\M(\A)$}, is the quotient
$\A^* / {\equiv_{\M(\sigma)}}$ and is denoted by $\M(\A, \sigma)$.
\end{defi}

In particular, we shall repeatedly use the monoid $\Com(\A, \sigma)$, which is
the quotient of the commutative monoid on $\A$ by the $a ^ {\sigma(a)} = a$ for every $a\in \A$.

Except stated otherwise, we assume that $\sigma: \A \to \NN_{\geq 2}$ is
arbitrary throughout the whole manuscript.
For the sake of simplicity, if $\sigma$ is the constant function with value
$p\geq 2$, we then write $\M(\A, p)$ instead of $\M(\A, \sigma)$.

Note that $\Pp(\A)$, the power quotient that encodes the content of a word
can be naturally identified with $\Com(\A, 2)$.

\subsection{A first example of a power-quotient monoid}
\label{ss:stalac}

\subsubsection{The stalactic monoid}

Let us consider the following relations on a totally order alphabet $\A$:
\begin{equation}\label{eq-stalac}
aWba = aWab, \quad \text{for } a,b \in \A,\ W\in\A^*.
\end{equation}

The least congruence on the free monoid $\A^*$ containing these relations is
the \emph{stalactic congruence} (see~\cite{HNT05}) and is denoted
by $\equiv_{\stal}$. The quotient $\A ^ * /{\equiv_{\stal}}$ is called the
\emph{stalactic monoid} and is denoted by $\stal(\A)$. It is
evaluation-preserving.

It is easy to check that thanks to the relations all equal letters in a word
can be regrouped with their first occurrence so that any word is congruent
to a representative of the form
\begin{equation}
\label{eq-repres-stalac}
a_{i_1}^{\alpha_1} \dots a_{i_p}^{\alpha_p}.
\end{equation}
Moreover, these elements form a section of the monoid since applying one
relation cannot change the order of appearance of the letters inside a
congruence class.

On an alphabet of two letters $\{a,b\}$, let us encode the classes as
follows: the class of the empty word is encoded by $(\epsilon,0,0)$ where
$\epsilon$ is the empty word. And the class whose representative is $\ell^n
\ell'^p$ (where $\ell$ is $a$ or $b$ and $\ell'$ is the other) is encoded by
$(\ell,n,p)$ with the condition $n\not=0$.
For the product we have the following rule,
\begin{equation}
(\ell,n,p).(\ell',n',p') =
\left\{
\begin{array}{rl}
(\ell',n',p'), & \text{if } \ell=\epsilon,  \\
(\ell,n+n',p+p'), & \text{if } \ell'=\ell,  \\
(\ell,n+p',n'+p),& \text{if } \ell'\not=\ell. \\
\end{array}
\right.
\end{equation}.

\subsubsection{The \texorpdfstring{$2$}{2}-quotient of the stalactic monoid}%

Let us now consider the $2$-quotient of $\stal(\A)$, again with $\A=\{a,b\}$.
Note that this quotient is not evaluation-preserving anymore but it is
content-preserving.
The class of the empty word is still present and all other classes reduce to
four distinct classes:
\begin{equation}
(a,1,0), \quad (b,1,0), \quad (a,1,1), \quad\text{and}\quad (b,1,1).
\end{equation}
Indeed, the class $(\ell,n,0)$ is now directly equivalent to $(\ell,1,0)$ and
the class $(\ell,n,p)$ with $p\not=0$ is equivalent to $(\ell,1,1)$ for any
$\ell$ in $\A$.

The reader may recognize that $\stal(\A, 2)$ is the free left regular band generated by $\A$ (see~\cite[Example 1.3]{Brown2000}).

\subsubsection{The general \texorpdfstring{$\sigma$}{s}-quotients of the
stalactic monoid}%
As in the previous case, with an alphabet of two letters $\{a,b\}$ and the
relations $a^\alpha=a$ and $b^\beta=b$, we still have the class of the empty
word, the classes $(a,n,p)$ with
$n\in[1,\alpha-1]$ and $p\in[0,\beta-1]$,
and the classes $(b,n,p)$ with
$n\in[1,\beta-1]$ and $p\in[0,\alpha-1]$.

Computing the $\sigma$-quotient of any monoid seems rather complicated in
general but it is easy in this case since we have a set of representatives of
classes that is stable under the relations $\ell^{\sigma_\ell}=\ell$. We shall
give later a precise definition with extra conditions but this is the general
idea of monoids of type $2$.

Moreover, if one wants to enumerate the classes of $\stal(\A,\sigma)$, one
should partition the classes according to their content.
In the case of the stalactic monoid with two letters, one gets that
\begin{equation}
|\M(\A,\sigma)|= 1 + (\alpha-1) + (\beta-1) + 2(\alpha-1)(\beta-1),
\end{equation}
where the partition consists in the class of the empty words, then words with
only $a$s, then words with only $b$s, then words with both $a$s and $b$s.

With a general alphabet $\A$ and a general congruence $\sigma$, one would get
the cardinality
\begin{equation}
|\M(\A,\sigma)|
=\sum\limits_{\B\subset\A} m_{|\B|}\prod\limits_{b\in\B}(\sigma(b)-1).
\end{equation}
where $m_{|\B|}=|\B|!$.
Indeed, the number of classes of content $\B$ is the product of $|\B|!$
(choose the order of appearance of the elements of $\B$) by
$\prod\limits_{b\in\B}(\sigma(b)-1)$ (choose independently a power for
every $b\in \B$ in the interval $[1,\sigma_b-1]$).

Note that the factorial amounts to consider the classes as classes of
$\M(\B,2)$ and that the powers amounts to consider the classes as classes
of $\Com(\B,\sigma)$. The compatibility between these two monoid morphisms is
informally the definition of monoids of type $1$.

\subsection{Type 1 Monoids}\label{ss:Type1Monoids}

Let $\M(\A)$ be an evaluation-preserving monoid.
Note that, for all $a\in \A$ and for any $p\geq 2$, $a^p \equiv_{\M(2)}a$.
So $\M(\A,2)$ is a homomorphic image of $\M(\A, \sigma)$ for every $\sigma$.

The monoid $\M(\A, \sigma)$ is content-preserving but not
evaluation-preserving, and so $\ev(M)$ for $M\in \M(\A, \sigma)$ is not
well-defined.
However, one can define
$\ev_{\sigma}: \M(\A, \sigma) \to \Com(\A, \sigma)$ as the unique
surjective homomorphism extending the identity map from $\A$ to $\A$. It will
play a similar role in this context as $\ev: \M(\A) \to \Com(\A)$.
Denote by $\theta$ the homomorphism from $\M(\A, \sigma)$ to $\M(\A,2)$
extending the identity function from $\A$ to $\A$.

Let $\phi_\sigma$ be the product map $\theta \times \ev_\sigma$.

\begin{defi}\label{def:type1}
Let $\M(\A)$ be an evaluation-preserving monoid.
Then $\M(\A)$ is of \emph{type 1} if $\phi_\sigma$ is injective for
all $\sigma$.
\end{defi}

Consider a monoid $\M(\A)$ of type $1$, a word $W$, and a letter
$\ell$ in the content of $W$. Then repeating $\sigma(\ell)$ times any occurrence of $\ell$ in $W$
yields the same element by $\phi_\sigma$ so all these words belong to the same
$\M(\A,\sigma)$ class. In other words, any word $\ell^nU\ell^p$ with $n$ and
$p$ nonzero is equivalent to $\ell U\ell^{n+p-1}$ in any $\M(\A,\sigma)$ if
$\M$ is of type $1$.

Let us now characterize the image of $\phi_\sigma$.

\begin{lemma}
\label{lem-type1}
Let $\M(\A)$ be a monoid of type $1$.
Then the image of $\phi_\sigma$ are pairs consisting in an element of
$\M(\A,2)$ and an element of $\Com(\A,\sigma)$ of the same content.
\end{lemma}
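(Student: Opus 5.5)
We prove the two inclusions separately. For the first, we show that every image pair has matching contents. Since $\M(\A)$ is evaluation-preserving, it is content-preserving. Moreover, the added relations $a^{\sigma(a)}=a$ and $a^2=a$ only relate words with the same content. Hence $\cont$ is a well-defined homomorphism on $\M(\A,\sigma)$, on $\M(\A,2)$ and on $\Com(\A,\sigma)$. Both $\theta$ and $\ev_\sigma$ are induced by the identity on $\A$, so for a word $W$ we get $\cont(\theta(W))=\cont(W)=\cont(\ev_\sigma(W))$. Thus $\phi_\sigma(W)$ is a pair of elements with the same content.

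For the converse, fix $(X,C)$ with $X\in\M(\A,2)$, $C\in\Com(\A,\sigma)$ and $\cont(X)=\cont(C)=\B$. Choose a word $U$ representing $X$; then $\cont(U)=\B$. Represent $C$ as $\prod_{b\in\B} b^{\gamma_b}$ with $\gamma_b\ge 1$. Such exponents exist: every letter of $\B$ occurs in $C$, and we may reduce each exponent into $[1,\sigma(b)-1]$ using $b^{\sigma(b)}=b$.

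Next we build an inflation $U^\sharp$ of $U$. For each $b\in\B$, pick one occurrence of $b$ in $U$. Replace it by $b^{k_b}$, where $k_b\ge 1$ is chosen with $k_b\equiv \gamma_b-|U|_b+1 \pmod{\sigma(b)-1}$. Then $|U^\sharp|_b=|U|_b-1+k_b\equiv\gamma_b \pmod{\sigma(b)-1}$. In $\Com(\A,\sigma)$, two positive exponents of $b$ give the same element exactly when they agree modulo $\sigma(b)-1$. Hence $\ev_\sigma(U^\sharp)=C$.

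On the other hand, $b^{k}\equiv_{\M(2)} b$ for all $k\ge1$, so $\theta(U^\sharp)=\theta(U)=X$. Therefore $\phi_\sigma(U^\sharp)=(X,C)$, which gives surjectivity onto the set of equal-content pairs.

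The only step needing care is this arithmetic of exponents in $\Com(\A,\sigma)$. One must note that $b^m=b^{m'}$ for $m,m'\ge1$ iff $m\equiv m'\pmod{\sigma(b)-1}$, and that exponent $0$ is excluded because $b\in\B$. Everything else is immediate from the definitions of $\theta$ and $\ev_\sigma$. Type 1 injectivity is not even needed for this lemma; it only ensures the image is in bijection with $\M(\A,\sigma)$.
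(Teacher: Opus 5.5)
Your proof is correct and follows the same route as the paper. The content condition on the image is immediate, and surjectivity comes from inflating one chosen occurrence of each letter of a representative of the $\M(\A,2)$-element so that $\ev_\sigma$ matches, using $\ell^k\equiv_{\M(\A,2)}\ell$; you just spell out the arithmetic modulo $\sigma(b)-1$ in $\Com(\A,\sigma)$, which the paper leaves implicit.
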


\begin{proof}
First, the image of any element of $\M(\A,\sigma)$ is in the prescribed set.
Now consider any element $W$ of $\M(\A,2)$ as an element of $\M(\A)$ and
consider an element $C$ of $\Com(\A,\sigma)$ with the same content $\B$.
For all letters $\ell$ in $\B$, pick any occurrence of $\ell$ in $W$ and
change it into $\ell^k$ with $k>0$ so that the resulting word $W'$ satisfies
$\ev_\sigma(W')=C$. This is always possible since the $\sigma$-evaluation is
essentially computed modulo $\sigma_\ell\!-\!1$. Then $\phi_\sigma(W')=(W,C)$
since $\ell^k\equiv_{\M(\A,2)}\ell$ for any nonzero $k$ and any letter $\ell$.
\end{proof}
  
We shall see later that the plactic monoid, the hypoplactic monoid, and the
Chinese monoid are of type $1$.
As in the toy example of the stalactic monoid, if a monoid is of type $1$, one
can derive enumerative results with an extra hypothesis that will be satisfied
by our monoids.

\begin{thm}\label{thm:Type1-Enum}
Let $\A$ be a finite alphabet and $\M(\A)$ a monoid of type 1.
Partition the classes of $\M(\A,2)$ according to their content.
Assume that the number of classes of content $\B$, where $\B$ is of
cardinality $k$ only depends on $k$ and denote that number by $m_k$.
Then

\begin{equation}
|\M(\A,2)|=\sum\limits_{k\geq 0}^{|\A|} \binom{|\A|}{k}m_k.
\end{equation}

Moreover, the cardinality of $\M(\A,\sigma)$ is
\begin{equation}
|\M(\A,\sigma)|
 = \sum\limits_{\B\subset\A} m_{|\B|}\prod\limits_{b\in\B}(\sigma(b)-1).
\end{equation}

In particular, if $\sigma(x)=p$ for all $x\in\A$, then
\begin{equation}
|\M(\A,p)|=\sum\limits_{k\geq 0}^{|\A|} \binom{|\A|}{k}m_k(p-1)^k.
\end{equation}
\end{thm}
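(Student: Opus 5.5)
The first formula is pure counting. The content map $\cont$ is well defined on $\M(\A,2)$, because $\M(\A)$ is evaluation-preserving and the relations $a^2=a$ do not change content. So $\M(\A,2)$ is the disjoint union, over all subsets $\B\subseteq\A$, of the classes of content $\B$. By hypothesis the number of classes of content $\B$ is $m_{|\B|}$. For each $k$ there are $\binom{|\A|}{k}$ subsets of size $k$, and summing gives $|\M(\A,2)|=\sum_k\binom{|\A|}{k}m_k$.

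For the general formula, I will use that $\M(\A)$ is of type 1, so $\phi_\sigma=\theta\times\ev_\sigma$ is injective. Hence $|\M(\A,\sigma)|=|\im\phi_\sigma|$. By \cref{lem-type1}, $\im\phi_\sigma$ is exactly the set of pairs $(W,C)\in\M(\A,2)\times\Com(\A,\sigma)$ with $\cont(W)=\cont(C)$. I again split according to the common content $\B$:
\[
|\M(\A,\sigma)|=\sum_{\B\subseteq\A} \#\{W\in\M(\A,2):\cont(W)=\B\}\cdot\#\{C\in\Com(\A,\sigma):\cont(C)=\B\}.
\]
The first factor is $m_{|\B|}$. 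For the second factor I need to count elements of $\Com(\A,\sigma)$ of content $\B$. Every such element is $\prod_{b\in\B}b^{e_b}$, and the relation $b^{\sigma(b)}=b$ lets us reduce each exponent into the range $1\le e_b\le\sigma(b)-1$. These representatives are pairwise distinct. This is the one point needing care: I will check it with the map $\Com(\A)\to\prod_{a\in\A}\{0,1,\dots,\sigma(a)-1\}$ that sends $a^e$ to $0$ if $e=0$ and to $1+((e-1)\bmod(\sigma(a)-1))$ otherwise, coordinatewise. Each coordinate is a monoid homomorphism onto the cyclic monoid $\langle a\mid a^{\sigma(a)}=a\rangle$, whose elements are $1,a,\dots,a^{\sigma(a)-1}$, all distinct. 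The map is therefore constant on classes of $\Com(\A,\sigma)$ and separates the reduced representatives. So the second factor is $\prod_{b\in\B}(\sigma(b)-1)$, which gives the stated formula.

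Finally, for constant $\sigma\equiv p$ the product becomes $(p-1)^{|\B|}$. Grouping the subsets $\B$ by their size $k$ gives $\sum_k\binom{|\A|}{k}m_k(p-1)^k$. Once \cref{lem-type1} and injectivity are available, the only nontrivial step is the description of $\Com(\A,\sigma)$ above.
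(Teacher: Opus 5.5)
Your proof is correct and follows the paper's route. You split by content, use injectivity of $\phi_\sigma$ together with \cref{lem-type1} to identify $\M(\A,\sigma)$ with same-content pairs, and count $m_{|\B|}\prod_{b\in\B}(\sigma(b)-1)$ pairs for each $\B$. The only differences are that you explicitly verify, via the exponent map onto a product of cyclic monoids, that $\Com(\A,\sigma)$ has exactly $\prod_{b\in\B}(\sigma(b)-1)$ elements of content $\B$, and that you count elements of $\M(\A,2)$ of content $\B$ directly rather than passing to $\M(\B,2)$; the paper leaves both points implicit.
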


\begin{proof}
The first equation is direct given the hypothesis since the number of
subalphabets $\B$ of $\A$ of cardinality $k$ is $\binom{|\A|}{k}$.

Now, consider a subalphabet $\B$ of $\A$.
Since $\M(\A)$ is evaluation-preserving, we can apply $\phi_\sigma$
to the subset of $\M(\A,\sigma)$ of words of content $\B$.
Since $\phi_\sigma$ is bijective with its image and since it is
given by pairs consisting in an element of $\M(\B,2)$ and an element
of $\Com(\B,\sigma)$ both of content $\B$, these elements contribute to
$m_{|\B|} \prod\limits_{b\in\B}(\sigma(b)-1)$ disjoint classes, hence the
result by summing over all possible subalphabets $\B$.

The last formula is a direct corollary of the previous one.
\end{proof}

The stalactic monoid satisfies the condition that the number of classes of
$\M(\A)$ of content $\B$ only depends on the cardinality of $\B$ and that
$m_k=k!$.

\subsection{Type 2 Monoids}
\label{ss:GatheredMonoids}

As mentionned before, the plactic monoid is a monoid of type $1$. But, even
with monoids close to it in the algebraic sense, some are not of type $1$. For
example, as we shall see later, the sylvester monoid is not of type $1$.
But it still has a nice trait that will allow us to easily characterize a
section of its $\sigma$ quotients. This trait is shared by other monoids and
we regroup these monoids under the name of monoids of type $2$.

Note that our types are not mutually exclusive since both the stalactic and
hypoplactic monoids are of both types (see \cref{ss-hypoplac-type}).

\begin{de}
Let $\M(\A)$ be an evaluation-preserving monoid. 
We say that $E\in \A ^ *$ is a \emph{\gathered element}
if for any $W\in \A ^*$ such that $W\equiv_{\M(\A)} E$ and any $a\in\cont(E)$
\begin{equation}
W = U'a^2V' \quad\Rightarrow\quad  E = Ua^ 2V
\end{equation}
for some words $U, V, U', V'\in \A ^*$ and such that $|U|_a=|U'|_a$.
\end{de}

In other words, if the $k$-th and the $k+1$-st occurrence of a given letter
are consecutive somewhere in the congruence class, then they have to be
consecutive in the elements called the \gathered elements.

If an $\equiv_{\M}$-class contains a \gathered element, we refer to the
lexicographically largest \gathered word $G(W)$ in the class as the
\emph{canonical \gathered element}. Note that in an evaluation-preserving
monoid, every $\equiv_{\M}$-class is finite, and so $G(W)$ exists if there are
any \gathered words in the class of $W$.
Denote $\G_{\M}$ the set of canonical \gathered elements.

Given a word $W$, let $a \in \cont(W)$ and pick an integer $i\in [1,|W|_a]$.
The \emph{$(a,i)$-expansion} of $W$ is the word obtained from
$W$ by duplicating the $i$-th occurrence of $a$ in $W$.

\begin{de}\label{def:type2}
An evaluation-preserving monoid $\M(\A)$ is \emph{\gathered} (or of type 2) if:
\begin{enumerate}[(1)]
\item each $\equiv_{\M}$-class has a \gathered element;
\item for any word $W$, for any pair $(a, i)$ with $a \in \cont(W)$ and
$i\in[1,|W|_a]$,
the $(a,i)$-expansion of $G(W)$ is equal to the canonical \gathered element of
the $(a,i)$-expansion of $W$.
\end{enumerate}
\end{de}

We will see that the hypoplactic, the sylvester, the hyposylvester and the
metasylvester monoids are \gathered monoids.

Let $\sigma:\A\to\N_{\geq 2}$.
For $W\in\A^*$, $G_\sigma(W)$ is the word obtained from $G(W)$
by repeatedly replacing any factor $a^{\sigma(a)}$ of $G(W)$ by $a$, until
there are no such factors anymore. The result is independent of the
order of the replacements so this uniquely defines the
\emph{$\sigma$-reduced word} $G_\sigma(W)$ of $G(W)$ (and, thus, of $W$).
We will denote by $\G_{M(\sigma)}$ the set of $\sigma$-reduced words of
$\M(\A,\sigma)$.

Note that thanks to the second condition of \cref{def:type2}, $G_\sigma(W)$ is
the canonical \gathered element of its $\M(\A)$ class: if it was not, its
canonical \gathered element would be another word and by applying the same
expansions to both, $G(W)$ would not be the canonical \gathered element of its
class which is impossible by definition.

\begin{lemma}
\label{qr-equiv-simp}
Let $\M$ be a \gathered monoid. Then for any word $W$, we have
$G_\sigma(W)\equiv_{\M(\sigma)}W$.
\end{lemma}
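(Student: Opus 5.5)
The claim follows from two facts: $\equiv_{\M(\sigma)}$ contains $\equiv_\M$, and each reduction step used to build $G_\sigma(W)$ from $G(W)$ is an instance of a defining relation of $\M(\A,\sigma)$. So the plan is to chain $W \equiv_{\M(\sigma)} G(W)$ with $G(W)\equiv_{\M(\sigma)} G_\sigma(W)$.

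\emph{First step.} By condition (1) of \cref{def:type2}, the $\equiv_\M$-class of $W$ contains a \gathered element. Hence the canonical \gathered element $G(W)$ exists, and it lies in the same class, so $W\equiv_\M G(W)$. By \cref{def:quotient-monoid}, $\equiv_{\M(\sigma)}$ is a congruence containing $\equiv_\M$. Therefore $W\equiv_{\M(\sigma)} G(W)$.

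\emph{Second step.} By definition, $G_\sigma(W)$ is obtained from $G(W)$ by a finite sequence of replacements. Each replacement turns a word $Ua^{\sigma(a)}V$ into $UaV$. Since $\equiv_{\M(\sigma)}$ is a congruence containing the pair $(a^{\sigma(a)},a)$, multiplying on the left by $U$ and on the right by $V$ gives $Ua^{\sigma(a)}V\equiv_{\M(\sigma)} UaV$. The sequence terminates because each replacement strictly decreases length. Induction on the number of replacements, using transitivity, then yields $G(W)\equiv_{\M(\sigma)}G_\sigma(W)$. Combining the two steps gives the lemma.

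There is no real obstacle. The only point needing care is that $G_\sigma(W)$ is well defined, i.e.\ independent of the order of replacements, and the paper asserts this just before the statement. Even without that assertion, the argument shows that every terminal word of any reduction sequence is $\equiv_{\M(\sigma)}$-equivalent to $W$. Condition (2) of \cref{def:type2} is not needed here; it only matters later, for showing that distinct $\sigma$-reduced words represent distinct classes.
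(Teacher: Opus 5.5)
Your proposal is correct and follows the paper's proof: chain $W\equiv_{\M} G(W)$, which passes to $\equiv_{\M(\sigma)}$ because $\equiv_{\M(\sigma)}$ contains $\equiv_{\M}$, with $G(W)\equiv_{\M(\sigma)}G_\sigma(W)$ from the defining relations $a^{\sigma(a)}=a$. Your version just spells out the congruence and induction details that the paper leaves implicit.
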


\begin{proof}
We know that $G(W)\equiv_{\M} W$ thus $G(W)\equiv_{\M(\sigma)} W$. By
definition, $G_\sigma(W)\equiv_{\M(\sigma)} G(W)$, which concludes the proof.
\end{proof}

\cref{qr-equiv-simp} shows that any word is $\M(\A, \sigma)$-equivalent to a
$\sigma$-reduced word. Let us now show that no two $\sigma$-reduced words
are $\M(\A, \sigma)$-equivalent by considering individual $(a,i)$-expansions.

\begin{thm}\label{gen-gathered}
Let $\M(\A)$ be a \gathered monoid.
Then the $\sigma$-reduced words form a set of normal forms
for $\M(\A, \sigma)$.
\end{thm}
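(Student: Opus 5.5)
Existence is already given by \cref{qr-equiv-simp}: every word $W$ is $\M(\A,\sigma)$-equivalent to its $\sigma$-reduced word $G_\sigma(W)$. What remains is uniqueness: two distinct $\sigma$-reduced words are never $\equiv_{\M(\sigma)}$-equivalent. The plan is to prove that the map $W\mapsto G_\sigma(W)$ is constant on $\equiv_{\M(\sigma)}$-classes. Granting this, if $R,R'$ are $\sigma$-reduced and equivalent, then $R=G_\sigma(R)=G_\sigma(R')=R'$. The equality $G_\sigma(R)=R$ holds because, by the remark following \cref{def:type2}, a $\sigma$-reduced word is the canonical \gathered element of its own $\M(\A)$-class and contains no factor $a^{\sigma(a)}$.

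The congruence $\equiv_{\M(\sigma)}$ is generated by $\equiv_\M$ together with the elementary moves $UaV\leftrightarrow Ua^{\sigma(a)}V$. So it suffices to check that $G_\sigma$ is invariant under each of the two kinds of steps.
\begin{enumerate}[(a)]
\item Invariance under $\equiv_\M$ is immediate: $G(W)$ depends only on the $\M$-class of $W$, hence so does $G_\sigma(W)$.
\item For the move $UaV\to Ua^{\sigma(a)}V$, write $W=UaV$ and let $i=|U|_a+1$. Then $Ua^{\sigma(a)}V$ is obtained from $W$ by applying $(a,i)$-expansions $\sigma(a)-1$ times, duplicating the same occurrence each time.
\end{enumerate}

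Applying condition (2) of \cref{def:type2} repeatedly, $G(Ua^{\sigma(a)}V)$ is obtained from $G(W)$ by the same expansions. In other words, the $i$-th occurrence of $a$ in $G(W)$ is replaced by a block of $\sigma(a)$ consecutive copies of $a$, sitting where that single occurrence sat. It then remains to show that $\sigma$-reduction of $G(W)$ and of this expanded word gives the same result. This is the combinatorial heart of the argument. The expanded word lies in the same maximal run of $a$'s as the original occurrence, and that run has grown by exactly $\sigma(a)-1$ letters.

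The key observation for this step is that $\sigma$-reduction acts independently on each maximal run $a^m$ in a word. Each run is replaced by $a^{r}$, where $r\in[1,\sigma(a)-1]$ and $r\equiv m \pmod{\sigma(a)-1}$, and the runs themselves never merge. Runs cannot merge because removing $\sigma(a)-1$ copies of $a$ from a run of $a$'s never makes that run vanish, so neighbouring letters never become adjacent to new letters. This same observation also justifies the claim that the reduction is independent of the order of replacements. Increasing $m$ by $\sigma(a)-1$ does not change $r$, so the two $\sigma$-reductions coincide and $G_\sigma(Ua^{\sigma(a)}V)=G_\sigma(UaV)$.

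The main obstacle I expect is bookkeeping in the iterated use of condition (2). After one expansion, the index of the occurrence being duplicated in subsequent expansions must still match. This works if we always expand the $i$-th occurrence: the new occurrence is adjacent to it, so repeated $(a,i)$-expansions produce the block $a^{\sigma(a)}$ at the same spot in both $W$ and $G(W)$. The inverse move, contracting $a^{\sigma(a)}$ to $a$, is handled by symmetry, since the relation is symmetric.
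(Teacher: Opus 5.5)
Your proof is correct and follows essentially the same route as the paper. Existence comes from \cref{qr-equiv-simp}. Then $G_\sigma$ is shown invariant under each elementary $\M(\A,\sigma)$-rewriting: relations of $\M$, expansions via condition (2) of \cref{def:type2}, and contractions. Finally, $G_\sigma$ fixes $\sigma$-reduced words. Your run-length argument (a maximal run $a^m$ reduces to $a^r$ with $r\equiv m \pmod{\sigma(a)-1}$, and runs never merge) makes explicit the confluence and invariance step that the paper states without detail.
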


\begin{proof}
We already know that the $\sigma$-reduced words intersect all
congruence classes of $\M(\A)$ by \cref{qr-equiv-simp}.

Let $W$ and $W'$ two equivalent words by $\M(\A, \sigma)$. We will then show
that $G_\sigma(W) = G_\sigma(W')$.

Consider a word $W$ and assume that it is rewritten into $W'$
by an application of an $\M(\A, \sigma)$ elementary rewriting.
There are three possibilities:
either apply a relation from the relations of $\M$, expand $W$ by applying
$\sigma(a)-1$ times the $(a,i)$-expansion or contract $W$ by applying
$\sigma(a)-1$ times the $(a,i)$-contraction.

In each case, the $\sigma$-reduced word of $W'$ will be equal to
$G_{\sigma}(W)$ since either $G(W')=G(W)$
or $G(W')$ is an expansion of $G(W)$ that reduces to $G(W)$ by one
contraction or a contraction of $G(W)$ obtained from $G(W)$ by one
contraction.

Therefore all $\M(\A, \sigma)$-equivalent words have the same $\sigma$-reduced
word and since the reducing algorithm applied to a reduced element is itself,
they all belong to different classes.
\end{proof}

In the stalactic monoid, there is only one word in each class that is
\gathered{}, the canonical element shown in Equation~\eqref{eq-repres-stalac},
where all equal letters are consecutive.
The set of these elements is stable by expansion and contraction hence the
stalactic monoid is of type $2$.
We can then conclude that the $\sigma$-reduced elements are a section of
$\stal(\A,\sigma)$ for any $\sigma$.

\subsection{Other general properties}
\label{ss:blabla}

Given the simple structure of the stalactic monoid, it is immediate that
$\stal(\A,\sigma)$ is isomophic to $\stal(\A,\sigma')$ if $\sigma'$ is a
permutation of $\sigma$. This property is not true with any of the other
monoids studied in this paper: given such a monoid $\M$, the quotients
$\M(\A,\sigma)$ are all different monoids. 
Since this property is already true when $\A=\{a,b\}$, one can concentrate on
the relations only involving two different letters. It appears that all the
monoids considered in the sequel have very specific two-letter relations.

We begin with a technical lemma that will be used in the general proof.
\newpage

\begin{lemma}\label{lemma-antoine}
Assume that $\M$ and $\M'$ are two monoids respectively defined by the
content-preserving presentations $\langle \A | \R\rangle$ and
$\langle \A | \R'\rangle$ and that $\psi :\M\to \M'$ is a homomorphism. Then
there exists a unique homomorphism $\psi'$ such that the following diagram
commutes:
\begin{equation}
    \begin{tikzcd}
     \mathbf{M} \arrow[r, "\psi"] \arrow[d, "\cont"] & \M' \arrow[d, "\cont"] \\
     \Pp(\A) \arrow[r, "\psi'"] & \Pp(\A)
   \end{tikzcd}
\end{equation}
\end{lemma}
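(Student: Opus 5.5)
We define $\psi'$ on generators and then check compatibility with $\cont$. Since $\Pp(\A)$ is generated as a monoid (under $\cup$, identity $\varnothing$) by the singletons $\{a\}$, $a\in\A$, any homomorphism $\psi'$ making the diagram commute is forced on these generators. For $a\in\A$ we must have
\[
\psi'(\{a\})=\psi'(\cont(\pi_{\M}(a)))=\cont(\psi(\pi_{\M}(a))).
\]
This gives uniqueness at once: two homomorphisms agreeing on generators are equal. Moreover, every subset $\B\subseteq\A$ equals $\cont$ of some word (for instance the product of its letters in any order), so $\cont:\M\to\Pp(\A)$ is surjective.

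For existence, set $\psi'(\B)=\bigcup_{b\in\B}\cont(\psi(\pi_\M(b)))$, with $\psi'(\varnothing)=\varnothing$. This is a homomorphism of $\Pp(\A)$, because the union over $\B\cup\B'$ splits as the union over $\B$ together with the union over $\B'$.

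It remains to check commutativity, i.e.\ $\psi'(\cont(m))=\cont(\psi(m))$ for every $m\in\M$. Write $m=\pi_\M(w_1\cdots w_n)$. Since $\cont$ on $\M'$ is a homomorphism (the presentation $\langle\A|\R'\rangle$ is content-preserving) and $\psi$ is a homomorphism,
\[
\cont(\psi(m))=\bigcup_i\cont(\psi(\pi_\M(w_i))).
\]
This is exactly $\psi'(\{w_1,\dots,w_n\})=\psi'(\cont(m))$. Note that this value depends only on $m$, not on the chosen word, because $\cont$ is well defined on $\M$ (content-preserving $\R$).

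There is no real obstacle here. The only subtle point is that $\psi$ need not send generators to generators, which is why $\psi'(\{a\})$ must be taken as the content of the image $\psi(a)$ rather than as a singleton. The argument then rests on two facts: $\cont$ is a well-defined homomorphism on both $\M$ and $\M'$, which is where content-preservation of both presentations is used, and it is surjective.
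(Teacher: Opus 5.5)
Your proof is correct and follows the paper's argument. You use the same definition $\psi'(\B)=\bigcup_{b\in\B}\cont(\psi(b))$, get commutativity from $\cont$ and $\psi$ both being homomorphisms, and get uniqueness because commutativity forces the values of $\psi'$. That last step uses that singletons generate $\Pp(\A)$, which holds since alphabets are finite throughout the paper.
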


\begin{proof}
   We define
    \[
    \psi'(\{a_1, \ldots, a_n\}) =  \bigcup_{i = 1}^ n \cont(\psi(a_i))
    \]
    where $a_1, \ldots, a_n\in \A$ is arbitrary.
    Since $\cont$ and $\psi$ are homomorphisms, 
    \begin{equation}\label{eq-phi-prime}
    \psi'(\{a_1, \ldots, a_n\}) = \cont(\psi(a_1\cdots a_n)).
    \end{equation}
and so the diagram commutes and $\psi'$ is a homomorphism. Any two
homomorphisms satisfying \eqref{eq-phi-prime} coincide, and so $\psi'$ is
unique. 
\end{proof}

We now concentrate on the very specific relations on words on two letters that
appear in all our monoids.

\begin{thm}
\label{thm:isom}
Let $\M(\A)=\langle \A|\R\rangle$ be a content-preserving presentation,
and let $\sigma,\tau:\A\to\NN_{\geq 2}$ and $\M(\A,\sigma)$, $\M(\A,\tau)$ as
in \cref{def:quotient-monoid}. Then 
\begin{enumerate}[(i)]
\item if for all $a<b\in\A$, $(bab,bba)\in\R$ but $(aba,aab)\notin\R$,
$\M(\A,\sigma)\simeq\M(\A,\tau)$ if and only if $\sigma=\tau$;
\item if for all $a<b\in\A$, $(aba,baa)\in\R$ but $(bab,abb)\notin\R$,
$\M(\A,\sigma)\simeq\M(\A,\tau)$ if and only if $\sigma=\tau$.
\end{enumerate}
\end{thm}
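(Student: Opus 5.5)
The plan is to show that any isomorphism $\psi:\M(\A,\sigma)\to\M(\A,\tau)$ comes from a permutation $\pi$ of $\A$ with $\tau\circ\pi=\sigma$, and then to use the two-letter relations to force $\pi=\id$. The reverse implication is trivial.

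\emph{Step 1 (content).} Both quotients are still content-preserving, since the relations $a^{\sigma(a)}=a$ preserve content. Apply \cref{lemma-antoine} to $\psi$ and to $\psi^{-1}$. This gives homomorphisms $\psi'$ and $(\psi^{-1})'$ of $\Pp(\A)$. By uniqueness in the lemma their composites are the identity, so $\psi'$ is an automorphism of the free semilattice $\Pp(\A)$. Such an automorphism must permute the join-irreducible elements, i.e.\ the singletons. Hence there is a permutation $\pi$ of $\A$ with $\cont(\psi(a))=\{\pi(a)\}$. Consequently $\psi$ maps the set of elements of content $\{a\}$ bijectively onto the set of elements of content $\{\pi(a)\}$.

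\emph{Step 2 ($\sigma=\tau\circ\pi$).} I would check that the elements of content $\{a\}$ in $\M(\A,\sigma)$ are exactly $a,a^2,\dots,a^{\sigma(a)-1}$, and that these are pairwise distinct. Distinctness follows from the surjection $\ev_\sigma$ onto $\Com(\A,\sigma)$, which is well defined because the presentation is evaluation-preserving, as throughout the paper. Comparing cardinalities gives $\sigma(a)-1=\tau(\pi(a))-1$. Moreover $\psi(a)$ generates the cyclic semigroup of content $\{\pi(a)\}$. So some power $x=a^m$ satisfies $\psi(x)=\pi(a)$, and similarly some $y=b^n$ satisfies $\psi(y)=\pi(b)$.

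\emph{Step 3 ($\pi$ preserves order).} Treat case (i); case (ii) is symmetric, with the roles of the first and last positions exchanged. Suppose $a<b$ but $d=\pi(a)>c=\pi(b)$. The relation $(dcd,ddc)\in\R$ holds in $\M(\A,\tau)$. Pulling it back by $\psi^{-1}$ gives $xyx=xxy$, i.e.
\[
a^m b^n a^m \equiv_{\M(\sigma)} a^{2m} b^n ,
\]
with $a<b$. I must show that this is impossible, and this is the main obstacle: I need an invariant separating the two words in $\M(\A,\sigma)$.

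\emph{Choice of invariant.} My first attempt is the two-letter statistic ``some occurrence of $a$ appears after an occurrence of $b$'', restricted to words over $\{a,b\}$. The power relations clearly preserve it. For $\R$, the hypothesis $(aba,aab)\notin\R$ should be the only two-letter relation of the relevant shape that could destroy it. If that is not enough in general, I would first erase all other letters, which is legitimate when the relations restrict well to sub-alphabets. I would then reduce the degrees via the surjection $\theta$ onto $\M(\{a,b\},2)$. There the question becomes whether $aba=ab$ holds, which I would settle by comparing it against the relation list of $\R$.

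\emph{Conclusion.} Granting Step 3, $\pi$ is an order-preserving permutation of a finite chain, hence $\pi=\id$. Step 2 then gives $\sigma=\tau$.
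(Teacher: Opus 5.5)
Your Steps 1 and 2 follow the paper's own route: the induced automorphism of $\Pp(\A)$ gives a permutation $\pi$ of $\A$, and then $\sigma=\tau\circ\pi$. Counting the elements $a,\dots,a^{\sigma(a)-1}$ of content $\{a\}$ is cleaner than the paper's divisibility argument. It does need evaluation-preservation, which you rightly flag as an extra assumption. Without it the statement fails outright: adding $(x^2,x)$ to $\R$ for every $x$ respects all stated hypotheses and makes every $\M(\A,\sigma)$ equal.

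The genuine gap is Step 3, which you name as the main obstacle but do not close. You need $a^mb^na^m\not\equiv_{\M(\sigma)}a^{2m}b^n$ for $a<b$, or, after your reduction by $\theta$, $aba\not\equiv ab$ in $\M(\A,2)$. Neither of your routes delivers this.
\begin{itemize}
\item The statistic ``some $a$ occurs after some $b$'' is only known to be preserved by the power relations and by $(bab,bba)$. $\R$ may contain arbitrary further content-preserving relations. Erasing the other letters is not a homomorphism in general: already in $\plax(\A)$ one has $abc\equiv bac$ for $a<c<b$.
\item ``Comparing against the relation list of $\R$'' cannot work either. Whether $aba\equiv ab$ holds in $\M(\A,2)$ is a word problem for the congruence generated by $\R$ and $x^2=x$. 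The hypothesis $(aba,aab)\notin\R$ only says that one particular pair is not a generator.
\end{itemize}

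In fact Step 3 cannot be derived from the stated hypotheses, even with evaluation-preservation. Take $\A=\{a<b\}$ and $\R=\{(bab,bba),(bab,abb),(aba,baa),(baa,aab)\}$. This set is evaluation-preserving and satisfies the hypothesis of (i). Yet it makes any two words of length $3$ with the same evaluation equivalent. So the letter swap $a\leftrightarrow b$ sends $\R$ into its own congruence, and it induces $\M(\A,\sigma)\cong\M(\A,\tau)$ with $\tau(a)=\sigma(b)$ and $\tau(b)=\sigma(a)$.

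The paper's proof has the same hole at exactly this point; it simply declares ``which is a contradiction''. The missing ingredient is a hypothesis on the congruence rather than on $\R$. For instance, assume that $\R$ is evaluation-preserving and that $aba\not\equiv ab$ (case (i)), resp.\ $bab\not\equiv ab$ (case (ii)), in $\M(\A,2)$ for all $a<b$. With that, your $\theta$-reduction finishes Step 3 immediately. For each monoid in the paper this is a one-line check on the normal forms of $\M(\A,2)$. For example, $aba\equiv_{\styl}ba$, whose $\N$-tableau has two rows, while that of $ab$ has one.
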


\begin{proof}
Assume that $\psi:\M(\A,\sigma)\to \M(\A,\tau)$ is an isomorphism. Applying
\cref{lemma-antoine} repeatedly, it follows that
\begin{equation}
\psi' \circ (\psi ^ {-1})'
  = (\psi\circ \psi^{-1})' = (\id_{\M(\A, \tau)})' = \id_{\M(\A)},
\end{equation}
where $\id$ denotes the identity function. Hence $\psi'$ is an isomorphism,
and so $\psi'$ induces a permutation of $\A$.
In particular, for all $a\in \A$, $\psi(a)=\psi'(a)^{\alpha_a}$ for some
$\alpha_a\geq 1$.
Moreover $a = \psi(a ^ {\sigma(a)}) = \psi'(a) ^ {\sigma(a)}$ and so
$\sigma(a)$ divides $\tau(\psi'(a))$. Similarly, using $\psi^{-1}$,
$\tau(\psi'(a))$ divides $\sigma(a)$, and so  $\tau(\psi'(a))= \sigma(a)$ for
all $a\in \A$.
Thus if $\A = \{a_1, \ldots, a_n\}$, then  $(\tau(a_1),\ldots,\tau(a_n))$ is a
permutation of $(\sigma(a_1),\ldots,\sigma(a_n))$.

Assume that there exists $x<y\in \A$ such that $\psi(x)>\psi(y)$.

For (i), assume that for all $a<b\in\A$, $(bab,bba)\in\R$ but
$(aba,aab)\notin\R$. Then one would have
$\psi(y)\psi(x)\psi(y)\equiv_{M(\tau)} \psi(y)\psi(y)\psi(x)$ which is a
contradiction.

Similarly for (ii), assume that for all $a<b\in\A$, $(aba,baa)\in\R$ but
$(bab,abb)\notin\R$. Then one would have
$\psi(x)\psi(y)\psi(x)\equiv_{M(\tau)} \psi(y)\psi(x)\psi(x)$ which is a
contradiction.
    
Thus, in both cases, $\psi$ is the identity on $\A$ and $\sigma=\tau$.
\end{proof}


\begin{lemma}
\label{lem:expgenfunct}
Let $\M(\A)=\langle \A|\R\rangle$ be a content-preserving presentation.
Let $S_p(n)=|\M(\A,p)|$ for $|\A|=n$.
If $S_p(n)=\sum_{k=0}^{n} \binom{n}{k})s_k(p-1)^k$ for some $s_k\in\NN$, then
\begin{align}
\sum_{n\geq 0} S_p(n)\frac{z^n}{n!}
  = \sum_{k\geq 0} \frac{s_k((p-1)z)^k}{k!}\exp(z).
\end{align}
\end{lemma}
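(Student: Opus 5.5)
The identity is a purely formal statement about exponential generating functions: it is the binomial convolution of the sequence $s_k(p-1)^k$ with the constant sequence $1$. The presentation $\langle \A|\R\rangle$ plays no role beyond supplying the numbers $S_p(n)$. The plan is to expand the right-hand side as a product of two exponential series and compare coefficients of $z^n/n!$ with the hypothesised expression for $S_p(n)$.

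\textbf{Step 1: write both factors as series.} Set $a_k = s_k(p-1)^k$, so that
\[
\sum_{k\geq 0}\frac{s_k((p-1)z)^k}{k!} = \sum_{k\geq 0} a_k\frac{z^k}{k!}.
\]
Also write $\exp(z)=\sum_{j\geq 0} z^j/j!$.

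\textbf{Step 2: form the Cauchy product.} In the ring of formal power series $\mathbb{Q}[[z]]$, collect the terms of total degree $n=k+j$. This gives
\[
\Bigl(\sum_{k\geq 0} a_k\frac{z^k}{k!}\Bigr)\exp(z)
 = \sum_{n\geq 0}\Bigl(\sum_{k=0}^{n}\frac{n!}{k!\,(n-k)!}\,a_k\Bigr)\frac{z^n}{n!}
 = \sum_{n\geq 0}\Bigl(\sum_{k=0}^{n}\binom{n}{k}s_k(p-1)^k\Bigr)\frac{z^n}{n!}.
\]

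\textbf{Step 3: identify the coefficients.} By hypothesis, the coefficient of $z^n/n!$ obtained in Step 2 is exactly $S_p(n)$. This gives the claimed equality of formal power series.

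\textbf{The only care point.} The argument is to be read at the level of formal power series; the sequence $s_k$ is arbitrary, so convergence is not assumed. The one thing to check is that $s_k$ does not depend on $n$. This is how the hypothesis is phrased, and it holds in the intended applications, since \cref{thm:Type1-Enum} gives exactly this form with $s_k=m_k$. Without that independence, the right-hand side would not even be well defined. Beyond this, there is no real obstacle: the result is the standard fact that multiplying an exponential generating function by $\exp(z)$ corresponds to binomial transformation of its coefficients.
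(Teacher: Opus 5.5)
Your proof is correct and is essentially the paper's argument: the paper rewrites $\binom{n}{k}\frac{z^n}{n!}=\frac{z^k}{k!}\cdot\frac{z^{n-k}}{(n-k)!}$, swaps the two sums and recognises $\exp(z)$, which is your Cauchy-product computation read in the opposite direction. Your remarks that the identity should be read in $\mathbb{Q}[[z]]$ and that $s_k$ must not depend on $n$ are sensible clarifications that the paper leaves implicit.
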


\begin{proof}
It is a very classical computation on generating functions:
\begin{align*}
\sum_{n=0}^\infty S_{p}(n)\frac{z^n}{n!}
    &= \sum_{n=0}^\infty\sum_{k=0}^{n}
         \binom{n}{k}s_k(p-1)^k\frac{z^n}{n!}\\
    &= \sum_{n=0}^\infty\sum_{k=0}^{n}
         \frac{s_k((p-1)z)^k}{k!}\frac{z^{n-k}}{(n-k)!}\\
    &=\sum_{k=0}^\infty
         \frac{s_k((p-1)z)^k}{k!}\sum_{j=0}^\infty\frac{z^{j}}{j!}\\
    &=\sum_{k=0}^\infty
         \frac{s_k((p-1)z)^k}{k!}\exp(z).\qedhere
\end{align*}
\end{proof}

\section{The \texorpdfstring{$\sigma$}{s}-Plactic Monoids}
\label{section-plax}

\subsection{The Plactic and Stylic Monoids}\label{ss-plax}

The \emph{Knuth relations} (see~\cite{K}) on a totally ordered alphabet $\A$
are
\begin{equation}\label{eq-plactic}
\begin{split}
acb = cab, & \quad \text{for } a \leq b < c\in \A, \\
bac = bca, & \quad \text{for } a < b \leq c\in \A.
\end{split}
\end{equation}
The least congruence on the free monoid $\A^*$ containing the Knuth relations
is called the \emph{plactic congruence} and is denoted by $\equiv_{\plax}$.
The quotient $\A ^ * /{\equiv_{\plax}}$ is called the \emph{plactic monoid};
see \emph{e.g.}, \cite{Lascoux1981aa} or \cite[Chapter 5]{Lothaire2002aa}.
It is evaluation-preserving. 

The \emph{stylic congruence} is the least congruence on $\A ^ *$ containing
$\equiv_{\plax}$ and $a^2 = a$ for every $a \in \A$; we denote it by
$\equiv_{\styl}$. Note that the \emph{stylic monoid} $\styl(\A)$ on the
alphabet $\A$ (first defined in~\cite{AR}) is the quotient of $\A ^ *$ by the
stylic congruence $\equiv_{\styl}$.
It is content-preserving but not evaluation-preserving.

As in \cref{def:quotient-monoid}, for $\sigma: \A \to \NN_{\geq 2}$,
the \emph{$\sigma$-plactic congruence} is the least congruence on
$\A ^ *$ containing the plactic congruence $\equiv_{\plax}$ and
$a ^ {\sigma(a)} \equiv a$ for every $a\in \A$; we denote this by
$\equiv_{\plax(\sigma)}$. The $\sigma$-plactic monoid on $\A$ is the quotient
$\A ^ * / {\equiv_{\plax(\sigma)}}$ and is denoted by $\plax(\A, \sigma)$. In
particular, the original stylic monoid from~\cite{AR} is the
2-plactic monoid, \emph{i.e.}, $\plax(\A, 2)=\styl(\A)$.

\paragraph{Tableaux and \texorpdfstring{$\N$}{N}-Tableaux\\}

\label{sss-tableaux}
Let $\lambda$ be a partition of $n\in \NN$.
A semi-standard \emph{Young tableau} (or Young tableau, for short) of shape
$\lambda$ is a filling of the shape of $\lambda$ with letters of $\A$ so that
rows weakly increase from left to right and columns strictly increase from
bottom to top (French notation).
A Young tableau is \emph{standard} if it has no repeated letters and its
content consists in an initial interval of $\A$.

The \emph{row reading} $\RW(T)$ of a tableau is obtained by reading each row
from left to right starting from the top.
The \emph{content} $\cont(T)$ of a tableau $T$ is the content of its row
reading.

In \cite{S}, Schensted defined left and right insertions of letters in
semi-standard Young tableaux.
For sake of completeness, we recall the \emph{right insertion}.
Consider a row $R$ and a letter $x\in\A$. There are two possible cases.
If $x\geq y$ for all $y\in R$, let $R'$ be the row obtained by adding a box
labelled $x$ at the end of $R$.
Otherwise, let $y=\min\{z\in R: z>x\}$ and $R'$ be the row obtained by
replacing the leftmost $y$ by $x$ in $R$.
We say that $y$ is \emph{bumped} by $x$.
The insertion of $x$ in a semi-standard Young tableau $T$ is done by inserting
$x$ in the bottom row, and recursively inserting the bumped letter in the row
above until there is no bumped letter anymore.

The left insertion is defined similarly by changing the rows by columns and,
replacing strict inequalities with large inequalities and vice-versa in order
to preserve the semi-standard properties.

Using these insertions, one can define the $\P$-tableau of a word
$W=w_1\cdots w_n$, denoted by $\P(W)$, by right-inserting the
letters of $W$ one after another from left to right into the empty tableau.
Thanks to a result of Knuth~\cite{K}, one can equivalently define the plactic
relation as $U\equiv_{\plax} V$ if and only if $\P(U)=\P(V)$.
Moreover, for $W\in\A^*$ and $a\in\A$, $\P(Wa)=\P(W)\leftarrow a$ and
$\P(aW)=a\rightarrow \P(W)$, where the arrows represent respectively the right
and left insertions.

In \cite{AR}, the authors defined an analogous object for the stylic monoid,
called \emph{$\N$-tableau}, that are semi-standard tableaux satisfying the
following extra properties:
\begin{enumerate}[(1)]
    \item the rows are strictly increasing;
    \item each row is contained in the row below.
\end{enumerate}
The \emph{right $\N$-insertion} of $x\in\A$ in row $Y$ of an $\N$-tableau
$T$ is as follows.
If $x\in Y$, do nothing.
If $x\notin Y$, insert $x$ in the correct position of $Y$ so that $Y$ remains
strictly increasing.
Let $y$ be the smallest letter in $Y$ such that $y>x$ (if any). 
If such a letter exists, we say that $y$ is bumped by $x$.
The right $\N$-insertion of $x$ in an $\N$-tableau $T$ is obtained
by $\N$-inserting $x$ in the bottom row. If $y>x$ is bumped, then $y$ is
inserted in the row above and this process stops when no letter is bumped.
Note that the difference with Schensted's insertion is that if $y$ is bumped,
it stays in the row and a copy of $y$ is inserted in the row above it.
If $T$ is an $\N$-tableau and $x\in \A$, then we denote the $N$-tableau
obtained from $T$ by right inserting $a$ by $T\leftarrow x$.
To right $\N$-insert a word $W$ in a $\N$-tableau, $\N$-insert each letter of
$W$ from left to right.

We also denote by $\N$ the function from $\A^*$ to the set of all
$\N$-tableaux that maps a word $W$ to the $\N$-tableau obtained by
$\N$-inserting $W$ in the empty tableau. We then have

\begin{thm}[Theorem 7.1 of~\cite{AR}]
\label{thm:N-tableau}
The mapping $w\to\N(w)$ induces a bijection from the monoid $\styl(\A)$
onto the set of $\N$-tableaux on $\A$.
\end{thm}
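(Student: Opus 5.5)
The plan is to show that the $\N$-insertion map is well defined on stylic classes and injective on them. Surjectivity onto $\N$-tableaux will come from a reading word. There are three steps.

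First, we check that $\N(W)$ is always an $\N$-tableau. This is an induction on $|W|$: right $\N$-inserting a letter $x$ keeps every row strictly increasing. It also preserves containment of each row in the row below, because a letter $y$ is inserted into row $i+1$ only when $y$ was bumped in row $i$, and $y$ remains in row $i$. We also check the semi-standard column condition, by the usual argument that the bumped letter exceeds the inserted one.

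Second, we show that $U\equiv_{\styl}V$ implies $\N(U)=\N(V)$. Since $\N(UW)$ is obtained by inserting $W$ into $\N(U)$, it suffices to prove two things for an arbitrary $\N$-tableau $T$. The first is $T\leftarrow xx=T\leftarrow x$: after inserting $x$ into the bottom row, $x$ lies in that row, so a second insertion does nothing. The second is that the Knuth relations hold, i.e. $T\leftarrow acb=T\leftarrow cab$ for $a\le b<c$, and similarly for $bac=bca$. I would prove this row by row: describe the effect of inserting three letters into a single strictly increasing row $Y$, namely the new row and the word of bumped letters sent upward. Then show that the two sides produce the same new row and stylically equivalent bumped words, and conclude by induction on the number of rows. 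Equal letters, where $a=b$ or $b=c$, need separate small cases. Handling the many configurations of $a,b,c$ relative to $Y$ is the main obstacle. To cut it down, I would first note that inserting a word into a row depends only on the set of letters inserted together with the bumped letters. Then I would compare the two sides through that description.

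Third, for each $\N$-tableau $T$ we exhibit a word $W_T$ with $\N(W_T)=T$, and show that every word $W$ satisfies $W\equiv_{\styl}W_T$ for $T=\N(W)$. The natural choice is the row reading $\RW(T)$, top row first. Inserting it rebuilds $T$: each row is contained in the row below, so inserting a lower row bumps exactly copies of letters already sitting above. For the congruence, I would argue by induction on $|W|$. Write $W=W'x$, so that $W'\equiv_{\styl}\RW(\N(W'))$ by induction. It then remains to show $\RW(T)x\equiv_{\styl}\RW(T\leftarrow x)$. This is a row-by-row computation using Knuth relations and idempotence, mirroring the classical proof that $\RW(T)x\equiv_{\plax}\RW(T\leftarrow x)$. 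The extra point is that a bumped letter $y$ stays in its row: using $yy\equiv y$ and the Knuth moves, we duplicate $y$ before moving one copy up. Together with the second step, this shows that $\N$ induces a bijection from $\styl(\A)$ onto the set of $\N$-tableaux.
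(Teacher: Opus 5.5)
The paper gives no proof of this statement; it quotes it from \cite{AR}. So I judged your outline on its own merits. Its architecture is sound: invariance under the relations, $\N(\RW(T))=T$, and $W\equiv_{\styl}\RW(\N(W))$. There is a genuine gap, however: it rests on a misreading of the insertion rule. Your argument for $T\leftarrow xx=T\leftarrow x$ (``a second insertion does nothing'') treats ``if $x\in Y$, do nothing'' as halting the whole insertion. Under that reading the statement is false. We have $aba\equiv_{\plax}baa$ (the case $a=b$ of $acb=cab$) and $baa\equiv_{\styl}ba$. Yet $\N(aba)$ would be the single row $ab$, while $\N(ba)$ has rows $ab$ and $b$. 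Step 3 breaks for the same reason. With $T$ the row $ab$ and $x=a$, you would need $aba\equiv_{\styl}ab$; this fails, since $aba\equiv_{\styl}ba$ and the words $a^ib^j$ ($i,j\geq 1$) form a class. Likewise, inserting $\RW(T)=23\,123$ over $1<2<3$ gives rows $123$ and $2$ instead of $123$ and $23$. The rule has to be read as in Schensted's insertion, as the paper's remark comparing the two suggests. When $x\in Y$ the row is unchanged, but $x$ still bumps the smallest $y\in Y$ with $y>x$, and $y$ is inserted in the row above. The ``equal letters'' cases you deferred are exactly where this matters.

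With the correct rule your plan works, after these repairs.

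For $T\leftarrow xx=T\leftarrow x$: the smallest letter of $Y\cup\{z\}$ exceeding $z$ equals the smallest letter of $Y$ exceeding $z$. So the second $x$ retraces the bumping path of the first and finds each carried letter already present.

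For a single row, the new row is $Y\cup\cont(u)$. Let $\alpha,\beta,\gamma$ be the smallest letters of $Y$ exceeding $a,b,c$ (an absent letter is omitted, and ignored inside a $\min$), and put $m=\min(\beta,c)$ and $\mu=\min(\alpha,b)$. The bumped words are then $\alpha\gamma m$ for $acb$ versus $\gamma\min(\alpha,c)m$ for $cab$, and $\beta\mu\gamma$ for $bac$ versus $\beta\gamma\mu$ for $bca$. A short case check shows each pair is stylically equivalent. Idempotence is genuinely needed here, e.g.\ $c$ versus $cc$ when $Y=\varnothing$. Since bumped words factor along products, stylically equivalent words send stylically equivalent words to every row. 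Set the induction on the row index. An induction on the number of rows of $T$ does not terminate, since the part of $\varnothing$ above its first row is again $\varnothing$.

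In Step 3 you need $Yx\equiv_{\styl}y\,(Y\cup\{x\})$ also when $x\in Y$. Write $Y=UxyV$. Then $Yx\equiv_{\styl}UxyyVx\equiv_{\styl}UxyyxV\equiv_{\styl}UxyxyV\equiv_{\styl}UyxyV$, using idempotence and the Knuth relations $yyx\equiv yxy$ and $xyx\equiv yxx$. Finally, $y$ moves to the front past $U$ by $acb=cab$.
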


\subsection{Quotient Type}\label{ss-type-plax}

If $|\A|\geq 2$, $\plax(\A)$ is not of type 2 because for $a<b$,
$abba\equiv_{\plax}abab\equiv_{\plax}baab$ but they are equivalent to
neither $aabb$ nor $bbaa$. Thus, this class does not contain a \gathered
element.

We shall see that $\plax(\A)$ is of type 1, thus that $\phi$ is injective.
In order to prove that property, we need two technical results on
$\plax(\A,\sigma)$.
Let us first prove that any word $W$ is $\plax(\A,\sigma)$-equivalent to an
inflation of the row reading of its $\N$-tableau.
We then prove that all the inflations of a row reading of an $\N$-tableau that
have the same $\sigma$-evaluation are equivalent in $\plax(\A,\sigma)$.

\begin{lemma}
\label{lem:placticrelgeneralized}
Let $i,j,k\in\NN$ and $a,b,c\in\A$. We have
\begin{enumerate}[(i)]
\item if $a<b\leq c$ and $j>i$, $b^j c^k a^i \equiv_{\plax} b^j a^i c^k$,
\item if $a\leq b<c$ and $j>k$, $a^i c^k b^j \equiv_{\plax} c^k a^i b^j$.
\end{enumerate}
\end{lemma}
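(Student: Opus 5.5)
The plan is to show that both sides have the same $\P$-tableau, using Knuth's theorem that $U\equiv_{\plax}V$ if and only if $\P(U)=\P(V)$. Right insertion is easy to track for words whose letters come in blocks of equal letters. (A purely relational proof, moving letters one at a time with the Knuth relations, also works, but the tableau computation is cleaner.)

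For (i), with $a<b\le c$ and $j>i$, compute $\P(b^jc^ka^i)$. The word $b^jc^k$ is non-decreasing, so its tableau is the single row $b^jc^k$. Now insert $a$ $i$ times. Each $a$ bumps the leftmost entry strictly greater than $a$, which is the leftmost remaining $b$. Since $j>i$, all $i$ bumped letters are $b$'s, and no $c$ is ever bumped. The bottom row becomes $a^ib^{j-i}c^k$ and the second row is $b^i$; each bumped $b$ is appended to the second row, which only contains $b$'s.

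The other side is $\P(b^ja^ic^k)$. First, $\P(b^ja^i)$ has bottom row $a^ib^{j-i}$ and second row $b^i$, by the same argument since $i<j$. Then inserting the $c$'s appends them to the bottom row, because $c\ge b$ is at least every entry there. This gives the same tableau, so the two words are $\plax$-equivalent.

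For (ii), with $a\le b<c$ and $j>k$, apply the same method.
\begin{itemize}
\item $\P(a^ic^kb^j)$: start from the row $a^ic^k$. Each $b$ bumps the leftmost $c$, since $b\ge a$ means the $a$'s are not bumped. After $k$ insertions all $c$'s have moved to the second row, and the remaining $j-k\ge 1$ copies of $b$ are appended. The result is the bottom row $a^ib^j$ with second row $c^k$.
\item $\P(c^ka^ib^j)$: inserting $a^i$ into the row $c^k$ bumps the $c$'s one at a time while $i\le k$. If $i>k$, after $k$ bumps the row is $a^k$ and the extra $a$'s are appended. Either way we get the bottom row $a^ic^{k-\min(i,k)}$ with second row $c^{\min(i,k)}$. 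Inserting $b^j$ then bumps the remaining $c$'s into the second row; there are at most $k<j$ of them. This again gives the bottom row $a^ib^j$ with second row $c^k$.
\end{itemize}

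Alternatively, (ii) follows from (i) by applying $\Theta$, since $\Theta$ maps the Knuth relations to Knuth relations and $\equiv_{\plax}$ is $\Theta$-invariant. The only real point is the bookkeeping in the bumping, where the hypotheses $j>i$ and $j>k$ guarantee that the bumped letters are exactly as claimed. Degenerate cases such as $i=0$ or $k=0$ are immediate.
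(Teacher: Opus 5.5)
Your proof is correct and takes the same approach as the paper: you compare the $\P$-tableaux of the two sides under Schensted right insertion, and the hypothesis $j>i$ (resp.\ $j>k$) guarantees that exactly the right letters are bumped into the second row. You spell out the bumping bookkeeping that the paper leaves implicit, and your alternative derivation of (ii) from (i) via $\Theta$ is also valid.
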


\begin{proof}
Property (i) holds since $j>i$ implies that inserting $a^i$ only bumps the
first $i$ occurrences of $b$ of both words.
	
Concerning (ii), since $i+j>j>k$, for both words, all the $c$ are bumped in
the second row and all the $a$ and $b$ stay in the first row.
\end{proof}

\begin{cor}
\label{cor:genrelsigmaplactic}
Let $i,j,k\in\NN$ and $a,b,c\in\A$. We have
\begin{enumerate}[(i)]
\item if $a<b\leq c$, $b^j c^k a^i \equiv_{\plax(\sigma)} b^j a^i c^k$,
\item if $a\leq b<c$, $a^i c^k b^j \equiv_{\plax(\sigma)} c^k a^i b^j$.
\end{enumerate}
\end{cor}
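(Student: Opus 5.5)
The plan is to reduce \cref{cor:genrelsigmaplactic} to \cref{lem:placticrelgeneralized} by raising exponents. The extra condition there ($j>i$ in (i), $j>k$ in (ii)) can always be met in $\plax(\A,\sigma)$ by replacing $b^j$ with an equivalent higher power. First, the degenerate cases are immediate. If $j=0$ in (i), the two sides are $c^ka^i$ and $a^ic^k$. This is not automatic, so I will handle it by the same raising trick applied to the other letters, as described below. If $i=0$ or $k=0$, both sides coincide. I therefore assume $i,j,k\geq 1$ for the main case, and note that when $j\geq 1$ we have $b^j\equiv_{\plax(\sigma)} b^{j+m(\sigma(b)-1)}$ for every $m\geq 0$. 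This follows since $b^{\sigma(b)}\equiv b$ lets us replace one occurrence of $b$ by $b^{\sigma(b)}$, adding $\sigma(b)-1\geq 1$ copies each time.

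For (i) with $j\geq1$, choose $m$ large enough that $j'=j+m(\sigma(b)-1)>i$. Then
\[
b^jc^ka^i\equiv_{\plax(\sigma)} b^{j'}c^ka^i\equiv_{\plax} b^{j'}a^ic^k\equiv_{\plax(\sigma)} b^ja^ic^k,
\]
using \cref{lem:placticrelgeneralized}(i) in the middle step. Since $\equiv_{\plax}$ is contained in $\equiv_{\plax(\sigma)}$ and the latter is a congruence, the chain is valid. For (ii) with $j\geq 1$, the argument is symmetric: pick $j'>k$ with $j'\equiv j \pmod{\sigma(b)-1}$ and $j'\geq j$, then apply \cref{lem:placticrelgeneralized}(ii).

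The main obstacle is the case $j=0$, where the statement reads $c^ka^i\equiv a^ic^k$ with $a<c$. This is false in general: for $i=k=1$, $ca\not\equiv ac$ even in the stylic monoid, since the $\N$-tableaux differ. So the corollary as written must intend $j\geq 1$, which is implicit because Lemma (i) requires $j>i\geq 0$. I would state this explicitly and treat $j\geq1$ as the hypothesis. If $i=0$ or $k=0$, nothing needs proof. No further difficulty remains beyond checking that the raised power $j'$ is reachable, which the congruence $b^{\sigma(b)}\equiv b$ guarantees.
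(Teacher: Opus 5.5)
Your argument is correct and is essentially the paper's: inflate $b^j$ to $b^{j+m(\sigma(b)-1)}$ so that the exponent exceeds $i$ (resp.\ $k$), apply \cref{lem:placticrelgeneralized}, and deflate back (the paper simply takes $m=i$, resp.\ $m=k$). Your observation that $j\geq 1$ must be assumed is also right: the paper's proof tacitly needs it, since its first step inflates $b^j$, and $ca\not\equiv_{\styl} ac$ shows the case $j=0$ genuinely fails, so you should drop your earlier promise to handle $j=0$ by raising the other letters.
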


\begin{proof}
We shall see several statements similar to this one throughout the paper. The
idea is always the same: since any letter can be inflated as much as necessary
in the $\sigma$-monoid, inflate it, use a property on the usual monoid and
deflate the value that was inflated.
The first formula is proven the following way:
\begin{equation}
b^ja^ic^k \equiv_{\plax(\sigma)} b^{j+i(\sigma(b)-1)}a^ic^k
          \equiv_{\plax} b^{j+i(\sigma(b)-1)} c^k a^i
          \equiv_{\plax(\sigma)} b^{j}c^ka^i.
\end{equation}
The first and last rewritings come from iterating $i$ times the inflation of
letter $b$ and the middle rewriting comes from
Lemma~\ref{lem:placticrelgeneralized} since the power of $b$ is
$j+i(\sigma(b)-1)$ so is greater than the power of $a$ that is $i$.

The second formula is proven analogously.
\end{proof}

\begin{prop}
\label{prop:2plax2sigmaplax}
Let $U$ and $W$ be two $\plax(\A,2)$-equivalent words.
If $U^\#$ is an inflation of $U$, then there exists an inflation $W^\#$ of
$W$ such that $W^\#\equiv_{\plax(\sigma)}U^\#$.
\end{prop}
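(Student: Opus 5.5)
The plan is to reduce to a single elementary step of the congruence $\equiv_{\plax(2)}$. That congruence is generated by the Knuth relations together with the relations $a^2=a$. So two $\plax(\A,2)$-equivalent words $U$ and $W$ are linked by a finite chain $U=U_0,U_1,\dots,U_m=W$ in which consecutive words differ by one elementary rewriting applied to a factor. Such a rewriting is either a Knuth relation $acb\leftrightarrow cab$ (with $a\leq b<c$) or $bac\leftrightarrow bca$ (with $a<b\leq c$), or a contraction/expansion $a^2\leftrightarrow a$.

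By induction on $m$, it suffices to treat $m=1$. Indeed, if from $U_t^\#$ we can produce an inflation $U_{t+1}^\#$ of $U_{t+1}$ that is $\plax(\sigma)$-equivalent to it, the conclusion composes along the chain.

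So fix an inflation $U^\#$ of $U$, and suppose $W$ is obtained from $U$ by one elementary step on a factor. Write $U=XFY$ and $W=XF'Y$. Then $U^\#=X^\#F^\#Y^\#$, where $F^\#$ is an inflation of $F$. We keep $X^\#$ and $Y^\#$ unchanged and only need an inflation $F'^\#$ of $F'$ with $F^\#\equiv_{\plax(\sigma)}F'^\#$. Since $\equiv_{\plax(\sigma)}$ is a congruence, this gives the claim. We then go through the cases.

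\begin{itemize}
\item \textbf{Contraction $F=a^2$, $F'=a$.} Here $F^\#=a^{i}a^{j}=a^{i+j}$, which is already an inflation of $a$, so take $F'^\#=F^\#$.
\item \textbf{Expansion $F=a$, $F'=a^2$.} Here $F^\#=a^i$. If $i\geq 2$, take $F'^\#=a^{1}a^{i-1}$, which is literally the same word. If $i=1$, use $a\equiv_{\plax(\sigma)}a^{\sigma(a)}=a\cdot a^{\sigma(a)-1}$, with $\sigma(a)\geq 2$.
\item \textbf{Knuth relation $acb=cab$ with $a\leq b<c$.} Here $F^\#=a^ic^kb^j$, and Corollary~\ref{cor:genrelsigmaplactic}(ii) gives $a^ic^kb^j\equiv_{\plax(\sigma)}c^ka^ib^j$, which is an inflation of $cab$. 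The converse direction is the same equivalence read backwards.
\item \textbf{Knuth relation $bac=bca$ with $a<b\leq c$.} Here $F^\#=b^ja^ic^k$, and Corollary~\ref{cor:genrelsigmaplactic}(i) gives the equivalence with $b^jc^ka^i$.
\end{itemize}

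One subtlety is that the Knuth relations allow equalities such as $b=c$ or $a=b$. In those cases inflated blocks of equal adjacent letters merge, for example $a^ic^kb^j$ with $a=b$. The corollary is stated for words of the form $a^ic^kb^j$ with arbitrary exponents, so it still applies verbatim. The only thing to check is that the right-hand side, read as a word, is an inflation of $F'$ as a word with its positions, which it clearly is.

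The main obstacle is essentially already absorbed in Corollary~\ref{cor:genrelsigmaplactic}: the inflated Knuth relations hold only in $\plax(\sigma)$, not in $\plax$, because they require the re-inflation trick. What remains is the careful bookkeeping that every elementary step, in both directions, yields an inflation of the new word, and in particular that the expansion case with $i=1$ is handled via $a\equiv a^{\sigma(a)}$.
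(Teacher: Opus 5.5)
Your proposal is correct and follows essentially the same route as the paper: reduce to a single elementary $\plax(\A,2)$ rewriting on a factor, handle the two Knuth relations (in both directions) with \cref{cor:genrelsigmaplactic}, and treat contraction and expansion directly. The differences are cosmetic: the paper handles expansion uniformly via $x^i\equiv_{\plax(\sigma)}x^ix^{\sigma(x)-1}$ without splitting on $i$, and you are more explicit than the paper about the degenerate Knuth cases $a=b$ and $b=c$.
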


\begin{proof}
Since $U\equiv_{\plax(2)}W$, there exists a sequence of 2-plactic relations
going from $U$ to $W$. We only need to check the result for the elementary
2-plactic relations.
	
Assume that $U=V_1bacV_2$ and $W=V_1bcaV_2$, with $a<b<c$.
This means that \mbox{$U^\#=V_1^\# b^j a^i c^k V_2^\#$} for some inflations
$V_1^\#,V_2^\#$ and $i,j,k\in\NN_{>0}$.
By \cref{cor:genrelsigmaplactic}.(i), $U^\#$ is $\plax(\sigma)$ equivalent to
$V_1^\# b^j c^k a^i V_2^\#$, an inflation of $W$.

Using \cref{cor:genrelsigmaplactic}.(ii), one proves the result for the other
elementary plactic relations in a similar way.

Assume that $U=V_1xxV_2$ and $W=V_1xV_2$ for some $x\in\A$. This means that
$U^\#=V_1^\# x^i x^j V_2^\#$ for some inflations $V_1^\#,V_2^\#$ and
$i,j\in\NN_{>0}$. We then have $$U^\#=V_1^\# x^i x^j V_2^\#=V_1^\# x^{i+j}
V_2^\#,$$ an inflation of $W$.
	
Finally, assume that $U=V_1xV_2$ and $W=V_1xxV_2$ for some $x\in\A$.
This means that \mbox{$U^\#=V_1^\# x^i V_2^\#$} for some inflations $V_1^\#,V_2^\#$
and $i\in\NN_{>0}$.
We then have $$U^\#=V_1^\# x^i V_2^\#\equiv_{\chinese(\sigma)}V_1^\# x^i
x^{\sigma(x)-1} V_2^\#,$$ an inflation of $W$.
\end{proof}

\begin{lemma}
Let $b\in \A$ and let $A = a_1\cdots a_n$ and $C = c_1\cdots c_{m}\in \A^*$
be non-decreasing words such that $a_i < b < c_j$ for all $i$ and $j$.
Then if $u\geq n$, and $v\geq m$, the result of the Robinson-Schensted
insertion of $b^uCAb^v$ is the tableau
\begin{equation}
\begin{array}{|c|c|c}
\cline{1-2}
b^{n}&c_1\cdots c_{m}\\
\cline{1-3}
a_1\cdots a_n&\multicolumn{2}{c|}{\hbox{$\quad b^{u+v-n}\qquad$}}\\
\cline{1-3}
\end{array}
\end{equation}
whose row reading is $b^{n}CAb^{u+v-n}$.
\end{lemma}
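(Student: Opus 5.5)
We will prove the statement by inserting the word $b^uCAb^v$ from left to right with Schensted's right insertion, tracking the tableau after each of the four blocks $b^u$, $C$, $A$, $b^v$. The plan is to describe the tableau explicitly after each block. Each step uses only the bumping rule: a letter $x$ bumps the leftmost letter strictly greater than $x$ in the row, or is appended if no such letter exists.

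First, inserting $b^u$ into the empty tableau gives the single row $b^u$. Next we insert $C=c_1\cdots c_m$. Since $C$ is non-decreasing and every $c_j>b$, each $c_j$ is at least every entry of the current first row. Hence each $c_j$ is simply appended, and after this block the tableau is the single row $b^uC$.

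Then we insert $A=a_1\cdots a_n$ letter by letter, and this is the main step. We claim by induction on $t\le n$ that after inserting $a_1\cdots a_t$ the tableau has first row $a_1\cdots a_t\,b^{u-t}C$ and second row $b^t$. For the inductive step, consider $a_{t+1}$. It is at least $a_1,\dots,a_t$ and strictly less than $b$. Since $u\ge n>t$, at least one $b$ remains in the first row, so the smallest entry exceeding $a_{t+1}$ is $b$. Thus $a_{t+1}$ bumps the leftmost $b$, which sits just after $a_t$. The bumped $b$ enters the second row $b^t$, where nothing is strictly greater than $b$, so it is appended. This is where the hypothesis $u\geq n$ is essential: it guarantees the bumped letter is always $b$ and never some $c_j$. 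After this block the tableau has first row $AB^{u-n}C$, writing $B^{u-n}=b^{u-n}$, and second row $b^n$.

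Finally we insert $b^v$. We claim that after inserting $s\le v$ further copies of $b$, the first row is $A\,b^{u-n+s}\,c_{s+1}\cdots c_m$ and the second row is $b^n c_1\cdots c_s$, for $s\le m$. Each new $b$ bumps the leftmost letter strictly greater than $b$ in the first row, namely $c_{s+1}$, and the new $b$ lands just after the existing $b$'s. The bumped $c_{s+1}$ goes to the second row $b^nc_1\cdots c_s$, where it is at least every entry, so it is appended and nothing moves to the third row. Once $s=m$, which happens since $v\ge m$, the remaining $b$'s are appended to the first row. The final tableau therefore has first row $Ab^{u+v-n}$ and second row $b^nC$, which is the displayed tableau. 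Its row reading, top row first, is $b^nCAb^{u+v-n}$.

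As a final check, the result is a valid semistandard tableau. Its columns are strict because the $i$-th entry $b$ of the top row sits above $a_i<b$. The entries $c_j$ of the top row sit above copies of $b$, since the bottom row has length $u+v\ge n+m$. The only potential obstacle is the bookkeeping in the $A$ and $b^v$ phases, and it is handled by the two inductions above.
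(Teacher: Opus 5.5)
Your proof is correct and follows the paper's argument exactly: $b^uC$ inserts as a single row, the $n\le u$ letters of $A$ each bump a $b$ into the second row, and the $v\ge m$ final copies of $b$ bump every $c_j$ into the second row. The paper only sketches these three steps; your two inductions make the bookkeeping explicit, and your closing semistandardness check is harmless but not needed, since Schensted insertion always produces a tableau.
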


\begin{proof}
The word $b^{u}C$ is non-decreassing and therefore inserts into a single
row. The insertion of $A$ of length $n\leq u$ in $b^uC$ bumps $n$ letters $b$
which results into
\begin{equation}
\begin{array}{|c|c|c|}
\cline{1-1}
b^{n}\\
\cline{1-3}
a_1\cdots a_n&b^{u-n}&c_1\cdots c_{m}\\
\cline{1-3}
\end{array}.
\end{equation}
Finally, inserting at least $|C|$ letters $b$ bump all the $c_i$, hence
returning the expected tableau.
\end{proof}

\begin{lemma}[Plactic Gathering Lemma]
\label{lemma-plactic-gathering}
Let $b\in \A$, and let $A = a_1\cdots a_n$ and $C = c_1\cdots c_{m}\in \A^*$
be non-decreasing words such that $a_i < b < c_j$ for all $i$ and $j$. Then
\begin{equation}
b^{\alpha+1} CA b \equiv_{\plax(\sigma)} b CA b^{\alpha+1},
\end{equation}
for all $\alpha \geq 1$.
\end{lemma}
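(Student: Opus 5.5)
We prove the statement by inflating the outer powers of $b$ until the unnamed lemma just above (the Robinson--Schensted insertion of $b^uCAb^v$) applies, then deflating. That lemma gives, for $u\geq n$ and $v\geq m$,
\[
\P(b^uCAb^v)=\P(b^nCAb^{u+v-n}),
\]
so $b^uCAb^v\equiv_{\plax} b^nCAb^{u+v-n}$. The key observation is that the right-hand side depends only on $u+v$. Hence, whenever $u,u'\geq n$, $v,v'\geq m$ and $u+v=u'+v'$, we get $b^uCAb^v\equiv_{\plax} b^{u'}CAb^{v'}$. This is a plactic identity, so it also holds in $\plax(\A,\sigma)$.

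The steps are as follows.
\begin{enumerate}[(1)]
\item Choose an integer $N$ with $N(\sigma(b)-1)\geq n+m+\alpha$.
\item In $\plax(\A,\sigma)$, replace each of the two boundary powers of $b$ by a sufficiently large one, using $b\equiv_{\plax(\sigma)} b^{1+N(\sigma(b)-1)}$, i.e.\ $N$ applications of $b^{\sigma(b)}=b$, exactly as in the proof of \cref{cor:genrelsigmaplactic}. This gives
\[
b^{\alpha+1}CAb \equiv_{\plax(\sigma)} b^{\alpha+1+N(\sigma(b)-1)}\,CA\,b^{1+N(\sigma(b)-1)} .
\]
Both exponents now exceed $\max(n,m)$.
\item Apply the observation: the total exponent of $b$ is $\alpha+2+2N(\sigma(b)-1)$, so the word is $\plax$-equivalent to
\[
b^{1+N(\sigma(b)-1)}\,CA\,b^{\alpha+1+N(\sigma(b)-1)},
\]
since both factorisations satisfy the hypotheses $u\geq n$, $v\geq m$.
\item Deflate each side by $N$ contractions $b^{\sigma(b)}\to b$ to obtain $bCAb^{\alpha+1}$.
\end{enumerate}

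The main point requiring care is the applicability of the insertion lemma after inflation. Its hypotheses require $u\geq n$ and $v\geq m$ for both configurations $(u,v)$ and $(u',v')$, which is exactly why the inflation must be large enough on both sides. With the choice of $N$ above this holds for both factorisations. No other case analysis is needed, since $A$ and $C$ are non-decreasing with all letters of $A$ below $b$ and all letters of $C$ above $b$, precisely as the lemma demands.
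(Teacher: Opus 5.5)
Your proof is correct and follows essentially the same route as the paper: inflate both boundary powers of $b$ using $b\equiv_{\plax(\sigma)} b^{1+N(\sigma(b)-1)}$, use the preceding insertion lemma (whose tableau depends only on $u+v$ once $u\geq n$ and $v\geq m$) to move the extra $\alpha$ copies of $b$ from left to right, then deflate. The only difference is that you use a single $N$ with an explicit bound $N(\sigma(b)-1)\geq n+m+\alpha$, where the paper uses two integers $k,\ell$ and just says ``sufficiently large''.
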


\begin{proof}
Thanks to the previous lemma, for sufficiently large integers $k$ and $\ell$,
the words $b^{1+k(\sigma(b)-1)} CAb^{\alpha + 1 + \ell(\sigma(b)-1)}$ and
$b^{\alpha+1+k(\sigma(b)-1)} CAb^{1 + \ell(\sigma(b)-1)}$
have the same tableau and hence are plactic-equivalent.
We then have
\begin{align}
b^{\alpha + 1}CAb
&\equiv_{\plax(\sigma)}
   b^{\alpha + 1+k(\sigma(b)-1)}CAb^{1 + \ell(\sigma(b)-1)}\\
&\equiv_{\plax} b ^ {1+k(\sigma(b)-1)} CAb^{\alpha + 1 + \ell(\sigma(b)-1)}
\equiv_{\plax(\sigma)}
b CA b^{\alpha + 1}\,,
\end{align}
the $\plax(\sigma)$ equivalences coming from the fact that
$b^{1+ (\sigma(b)-1)}\equiv_{\plax(\sigma)} b$.
\end{proof}

\begin{cor}
\label{cor-inflations}
If $T$ is an $\N$-tableau, and $U, V\in \A^*$ both are inflations of the row
reading of $T$ such that $\ev_{\sigma}(U) = \ev_{\sigma}(V)$,
then $U \equiv_{\plax(\sigma)} V$.

In particular, $U$ is equivalent to an inflation of the row reading of $T$
where only the last row has possibly been inflated.
\end{cor}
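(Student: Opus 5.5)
We prove the statement by moving the extra multiplicity of each letter from its earlier occurrences to its last occurrence in the row reading, using the Plactic Gathering Lemma (\cref{lemma-plactic-gathering}). Write the row reading of $T$ as $R_h R_{h-1}\cdots R_1$, where $R_1$ is the bottom row. Each $R_i$ is strictly increasing and contained in $R_{i-1}$. Hence every letter $b$ occurring in the rows $R_i$ with $i\geq 2$ also appears in the bottom row $R_1$.

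Fix an occurrence of $b$ in a row $R_i$ with $i\geq 2$, and consider the next occurrence of $b$ in the reading word, which lies in $R_{i-1}$. The factor strictly between these two occurrences consists of the letters of $R_i$ greater than $b$, followed by the letters of $R_{i-1}$ smaller than $b$. Both parts are increasing, hence non-decreasing, so this factor has the form $CA$ with $a<b<c$ for all letters $a$ of $A$ and $c$ of $C$, exactly as in \cref{lemma-plactic-gathering}. Suppose this occurrence of $b$ is inflated to $b^{\alpha+1}$ with $\alpha\geq1$ and the next one to $b^{\beta}$ with $\beta\geq 1$. The lemma, applied inside the context $X\,b^{\alpha+1}CAb\,b^{\beta-1}Y$, gives $\plax(\sigma)$-equivalence with $X\,bCAb^{\alpha+\beta}Y$. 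This moves all but one copy of $b$ forward to the occurrence in the row below.

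We apply this repeatedly, for each letter $b$ and working downward from the highest occurrence of $b$. Then any inflation $U$ of $\RW(T)$ is $\plax(\sigma)$-equivalent to an inflation $U'$ in which only the occurrences in $R_1$ carry exponents, and the exponent of each $b$ in $R_1$ equals $|U|_b-(\text{number of other occurrences of }b)$. This proves the ``in particular'' claim.

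For the main claim, let $U$ and $V$ have the same $\sigma$-evaluation, and reduce both as above to $U'$ and $V'$, inflated only in $R_1$. Since $\ev_\sigma$ is a homomorphism and the reduction preserves $\plax(\sigma)$-classes, we still have $\ev_\sigma(U')=\ev_\sigma(V')$. For each $b$, the exponents $e_b$ and $f_b$ of $b$ in $R_1$ differ from $|U|_b$ and $|V|_b$ by the same constant. Equality of $\sigma$-evaluations means that $|U|_b$ and $|V|_b$ are equal, or are both at least $1$ and congruent modulo $\sigma(b)-1$. Hence $e_b\equiv f_b \pmod{\sigma(b)-1}$ with $e_b,f_b\geq1$. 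Since $b^{\sigma(b)}\equiv b$ in $\plax(\sigma)$, any factor $b^e$ with $e\geq 1$ is equivalent to $b^{e'}$ whenever $e'\geq1$ and $e'\equiv e\pmod{\sigma(b)-1}$. Therefore $U'\equiv_{\plax(\sigma)}V'$.

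The main obstacle is the bookkeeping in the second paragraph. We must check that the factor between consecutive occurrences of $b$ really is of the form $CA$. This requires strict increase of the rows, which excludes $b$ from both $C$ and $A$, and uses that we read rows from top to bottom. We must also check that the lemma still applies after earlier moves have changed some exponents. Since the lemma only inspects the two occurrences of $b$ and the uninflated shape of the factor between them, I would justify this by noting that the lemma holds for arbitrary inflations of $C$ and $A$: an inflation of a non-decreasing word is still non-decreasing. If needed, one can first normalize all other letters to exponent $1$ modulo $\sigma$, which the final argument permits.
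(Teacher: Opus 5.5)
Your proposal is correct and follows the paper's route. You use the nesting of the rows of an $\N$-tableau to see that the factor between consecutive occurrences of $b$ in $\RW(T)$ has the form $CA$, iterate the Plactic Gathering Lemma to push every excess copy of each letter into the bottom row, and then compare the bottom-row exponents modulo $\sigma(b)-1$. Your worry about earlier moves is already settled by the lemma's hypothesis that $A$ and $C$ need only be non-decreasing, so your closing fallback is unnecessary; it is also not quite right as stated, since an exponent can only be reduced to a residue in $[1,\sigma(b)-1]$, not to $1$ in general.
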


\begin{proof}
By definition of the $\N$-tableaux, if $b\in \A$ belongs to a row of $T$, then
it belongs to every row below that one in $T$.
In particular, the word between two consecutive occurrences of the same letter
in the row reading of $T$ is composed of a word $C=c_1\dots c_m$ with $c_j>b$
for all $j$ followed by a word $A=a_1\dots a_n$ with $a_i<b$ for all $i$.

Hence, by iterating \cref{lemma-plactic-gathering}, if $U\in \A^*$ is an
inflation of the row reading of $T$, then $U$ is equivalent in $\plax(\A,
\sigma)$ to the inflation $U'$ of the row reading of $T$ where all rows but
the last are not inflated and the values of the last row are inflated in such
a way that $\ev(U)=\ev(U')$. This is always possible since all letters in $T$
appear in the last row.

Hence if $V\in A^*$ is an inflation of the row reading of $T$ such that
$\ev_{\sigma}(U) = \ev_{\sigma}(V)$, then
\begin{equation}
U \equiv_{\plax(\sigma)} U' \equiv_{\plax(\sigma)} V' \equiv_{\plax(\sigma)} V,
\end{equation}
the second equivalence coming from the fact that $U'$ and $V'$ coincide except
on the last row where, $U$ and $V$ having same $\sigma$-evaluation, they only
differ by multiples of $\sigma(x)-1$ for all letters $x$.
\end{proof}

This property has two very interesting corollaries.

\begin{cor}
\label{cor-plax-type1}
The plactic monoid is of type $1$.
So the image of $\phi_\sigma$ consists in the pairs of $(T,E)$
of an $\N$-tableau and a commutative word of the same contents.
\end{cor}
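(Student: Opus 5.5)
To prove that $\plax(\A)$ is of type 1, I will show that $\phi_\sigma=\theta\times\ev_\sigma$ is injective. The second assertion, the description of the image, then follows from \cref{lem-type1} combined with \cref{thm:N-tableau}. The latter identifies $\plax(\A,2)=\styl(\A)$ with the set of $\N$-tableaux, so the image consists of the pairs $(T,E)$ with $T$ an $\N$-tableau and $E\in\Com(\A,\sigma)$ such that $\cont(T)=\cont(E)$.

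For injectivity, take $U,V\in\A^*$ with $\phi_\sigma(U)=\phi_\sigma(V)$. This means $U\equiv_{\plax(2)}V$ and $\ev_\sigma(U)=\ev_\sigma(V)$. Let $T=\N(U)=\N(V)$ be their common $\N$-tableau and $R=\RW(T)$ its row reading. By \cref{thm:N-tableau}, $U\equiv_{\plax(2)}R$, since $\N(R)=T$; the equality $\N(\RW(T))=T$ is the standard fact underlying that theorem. Trivially, $U$ is an inflation of itself, so \cref{prop:2plax2sigmaplax}, applied to the pair $U,R$, gives an inflation $R^\#_U$ of $R$ with $R^\#_U\equiv_{\plax(\sigma)}U$. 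In the same way we obtain an inflation $R^\#_V$ of $R$ with $R^\#_V\equiv_{\plax(\sigma)}V$.

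Because $\ev_\sigma$ is a homomorphism that factors through $\plax(\A,\sigma)$, we have $\ev_\sigma(R^\#_U)=\ev_\sigma(U)=\ev_\sigma(V)=\ev_\sigma(R^\#_V)$. Both $R^\#_U$ and $R^\#_V$ are inflations of the row reading of the $\N$-tableau $T$ with equal $\sigma$-evaluation, so \cref{cor-inflations} yields $R^\#_U\equiv_{\plax(\sigma)}R^\#_V$. Hence $U\equiv_{\plax(\sigma)}V$, and $\phi_\sigma$ is injective. The plactic monoid is evaluation-preserving, so it is of type 1.

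The main point needing care is the direction in which \cref{prop:2plax2sigmaplax} is applied. We must start from the inflation of $U$, namely $U$ itself, and transport it to an inflation of $R$; the proposition's proof handles each elementary $2$-plactic step, including the deletion and insertion of repeated letters, so this is legitimate. One should also check that $\N(\RW(T))=T$ for an $\N$-tableau $T$, so that $R$ lies in the same $\styl$-class as $U$. If that were unavailable, I would instead argue by induction on the rows using the insertion definition.
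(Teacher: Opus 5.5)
Your proposal is correct and follows essentially the same route as the paper. You transport $U$ and $V$ (each an inflation of itself) via Proposition~\ref{prop:2plax2sigmaplax} to inflations of the row reading of their common $\N$-tableau, conclude with Corollary~\ref{cor-inflations}, and read off the image from Lemma~\ref{lem-type1}; your explicit check that $\N(\RW(T))=T$, so that the row reading lies in the stylic class of $U$, is a point the paper uses without stating it.
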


\begin{proof}
Recall that a monoid is of type $1$ if, for any $\sigma$, the map
$\phi_\sigma$ sending $\M(\A,\sigma)$ to the pair $\M(\A,2)\times
\Com(\A,\sigma)$ is injective.

In our case, it amounts to sending a $\sigma$-plactic class $U$ to the pair
$(T,E)$ consisting of its $N$-tableau $T$ and its commutative image $E$.
Consider now $U$ and $V$ having the same image $(T,E)$ by $\phi_\sigma$.
Thanks to \cref{prop:2plax2sigmaplax}, we know that $U$ being an inflation
of itself, $U$ is $\sigma$-plactic equivalent to an inflation $U'$ of $T$. The
same hold for $V$ with an inflation $V'$ of $T$. Now, since $U$ and $V$ have
the same $\sigma$-evaluation, $U'$ and $V'$ also have same $\sigma$-evaluation
and are both inflation of the same word. Thanks to \cref{cor-inflations}, we
know that they are $\sigma$-equivalent, hence equal.
So $\phi_\sigma$ is injective and the plactic monoid is of type $1$.

By general results on type $1$ monoids, the second property holds
(see~\cref{lem-type1}).
\end{proof}

\begin{cor}
\label{cor-sigplax-section}
A section of the $\sigma$-plactic monoid is given by the inflations of
the row readings of the $N$-tableaux where only the last occurrence of each
letter may have been inflated and such that the power of the last occurrence
of any letter $a$ is strictly smaller that $\sigma(a)$.
\end{cor}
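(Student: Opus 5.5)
We need to prove that these words (call them canonical inflations) meet every $\plax(\A,\sigma)$-class exactly once. The plan is to prove existence through \cref{cor-inflations} and uniqueness through the injectivity of $\phi_\sigma$ in \cref{cor-plax-type1}.

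\emph{Existence.} Let $W\in\A^*$ and let $T=\N(W)$. Since $\RW(T)\equiv_{\plax(2)}W$ by \cref{thm:N-tableau}, and $W$ is an inflation of itself, \cref{prop:2plax2sigmaplax} gives an inflation $U$ of $\RW(T)$ with $U\equiv_{\plax(\sigma)}W$. The second part of \cref{cor-inflations} then lets us move all exponents of $U$ to the last row of $T$, keeping the same $\sigma$-evaluation.

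Every letter of $T$ appears in the bottom row, which is read last. So the last occurrence of each letter $a$ in $\RW(T)$ is its occurrence in the bottom row. Concentrating the inflation there produces a word where only the last occurrence of $a$ carries an exponent $e_a\geq 1$.

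We then reduce each $e_a$ into $[1,\sigma(a)-1]$ using $a^{\sigma(a)}\equiv a$. This is possible because the relation lowers an exponent by exactly $\sigma(a)-1$ while keeping it positive.

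One bookkeeping point needs care. The letter $a$ also occurs unexponentiated in the higher rows, so the total count of $a$ is $e_a$ plus the number $r_a-1$ of its occurrences in the higher rows. The prescribed bound is on $e_a$ itself, not on this total. Hence we reduce $e_a$ directly, not the total count.

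\emph{Uniqueness.} Take two canonical words $U,V$ that are $\plax(\sigma)$-equivalent. Their $\N$-tableaux coincide, since $\theta$ is well defined, and an inflation of $\RW(T)$ has $\N$-tableau $T$. So both are inflations of the same $\RW(T)$. Their $\sigma$-evaluations coincide as well.

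Write $r_a$ for the number of rows of $T$ containing $a$. In $\Com(\A,\sigma)$, the exponent of $a$ is determined by its class in the set $\{1,\dots,\sigma(a)-1\}$, identified with $\mathbb{Z}/(\sigma(a)-1)$ and shifted by $1$. For $U$ this class is that of $r_a-1+e_a$, and for $V$ it is that of $r_a-1+e'_a$. These are equal, so $e_a\equiv e'_a \pmod{\sigma(a)-1}$.

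Since $1\le e_a,e'_a\le \sigma(a)-1$, this forces $e_a=e'_a$, and therefore $U=V$ as words. Alternatively, we could just invoke injectivity of $\phi_\sigma$ together with the fact that distinct canonical words have distinct images under $\phi_\sigma$.

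\emph{Main obstacle.} The delicate step is the modular bookkeeping in $\Com(\A,\sigma)$. There the element $a^n$ for $n\geq 1$ is represented by $1+((n-1)\bmod(\sigma(a)-1))$, and we must check that the residues of $e_a$ in $[1,\sigma(a)-1]$ are in bijection with these classes once the fixed offset $r_a-1$ is accounted for. This holds because translation by $r_a-1$ is a bijection of $\mathbb{Z}/(\sigma(a)-1)$.

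As a consistency check, the count of canonical words is $\sum_{T}\prod_{a\in\cont(T)}(\sigma(a)-1)$. This agrees with \cref{thm:Type1-Enum} via \cref{lem-type1}.
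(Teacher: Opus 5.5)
Your proof is correct and follows the paper's route. Existence comes from \cref{prop:2plax2sigmaplax} and \cref{cor-inflations}, after which each last-occurrence exponent is reduced modulo $\sigma(a)-1$; uniqueness comes from the $\N$-tableau and the $\sigma$-evaluation being class invariants, which you make explicit where the paper only says that powers are computed modulo $\sigma(a)-1$. One small remark: your uniqueness step only needs $\theta$ and $\ev_\sigma$ to be well defined on $\plax(\A,\sigma)$, not the injectivity of $\phi_\sigma$ that your opening plan and your ``alternatively'' sentence invoke.
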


\begin{proof}
As already seen before, any word is $\sigma$-equivalent to an inflation of an
$N$-tableau and since all inflations are $\sigma$-equivalent, one can choose
that only the last occurrence of each letter is possibly inflated.
Now, among those words, one can ask for the shortest one and since all powers
are essentially computed modulo $\sigma(a)-1$, the result follows.
\end{proof}

The canonical word $\RW_\sigma(T, E)$ of a $\sigma$-plactic class $U$ whose
image by $\phi_\sigma$ is $(T,E)$ is computed by the following procedure:
start with the row reading $\RW(T)$ of the $\N$-tableaux $T$.
For every $a\in \cont(\RW(T))$, if  $\alpha$ is the number of occurrences of
$a$ in $\RW(T)$ and $\beta$ is the exponent of $a$ in $E$, then we replace
the last occurrence of $a$ in $\RW(T)$ by $a^\gamma$ where $\gamma$ is the
least positive integer such that $\gamma + \alpha - 1 = \beta
\pmod{\sigma(a)-1}$.

For example, if $\A = \{a,b,c\}$ and $\sigma$ is constant with value $4$,
then
\begin{equation}
\RW_\sigma\left(\vcenter{\hbox{\gyoung(b,ab)}}, a^1b^1\right)
  = bab ^ 3
\qquad\text{and}\qquad
\RW_\sigma\left(\vcenter{\hbox{\gyoung(bc,abc)}}, a^3b^2c^1\right)
  = bca^3bc^3\,.
\end{equation}

\subsection{The \texorpdfstring{$\sigma$}{s}-Plactic Monoids}
\label{ss-plax-enum-idem-iso}

As a direct corollary of \cref{cor-plax-type1}, \cref{thm:Type1-Enum}, and
\cite[Corollary 8.5]{AR}, one has

\begin{cor}
\label{cor-plax-size}
If $\A$ is a finite alphabet and $\sigma: \A \to \NN$ is any function such
that $\sigma(a)\geq 2$ for all $a\in \A$, then
\begin{equation}
\label{eq-sigma-stylic-size}
|\plax(\A, \sigma)|
   = \sum_{\B\subseteq \A}
     \left(b_{|\B|} \prod_{x\in \B}\left(\sigma(x) - 1\right)\right),
\end{equation}
where $b_{k}$ is the $k$-th Bell number. If $\sigma(a) = p$
for every $a\in \A$, then \eqref{eq-sigma-stylic-size} simplifies into
\begin{equation}
\label{eq-sigma-stylic-size-uniform}
|\plax(\A, p)|
  = \sum_{k = 0}^{|\A|} \left(\binom{|\A|}{k}b_k (p-1)^k\right)\,.
\end{equation}
If we define $S_{p}(n) = |\plax(\A, p)|$ for any $\A$ of size $n$, then the
exponential generating function of the $S_{p}(n)$ is given by
\begin{equation}
\sum_{n=0}^{\infty} S_{p}(n) \frac{z^n}{n!} = \exp(z + \exp((p-1)z) - 1)\,.
\end{equation}
\end{cor}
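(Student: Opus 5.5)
The plan is to combine three earlier results. \cref{cor-plax-type1} shows that $\plax(\A)$ is of type 1, so \cref{thm:Type1-Enum} applies once we check its extra hypothesis: the number of classes of $\plax(\A,2)=\styl(\A)$ with content $\B$ must depend only on $|\B|$. By \cref{thm:N-tableau}, classes of $\styl(\A)$ of content $\B$ are in bijection with $\N$-tableaux of content $\B$. The conditions defining an $\N$-tableau involve only the relative order of the letters. So relabelling along the unique order-preserving bijection $\B\to[|\B|]$ gives a bijection between $\N$-tableaux on $\B$ with content $\B$ and those on $[|\B|]$ with full content. Hence the number of such tableaux depends only on $k=|\B|$; call it $m_k$.

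Next we identify $m_k$ with the Bell number $b_k$. By \cite[Corollary 8.5]{AR}, $|\styl(\A)|=b_{|\A|+1}$. The first formula of \cref{thm:Type1-Enum} gives $|\styl(\A)|=\sum_k\binom{n}{k}m_k$ with $n=|\A|$. The Bell recurrence reads $b_{n+1}=\sum_k\binom{n}{k}b_k$. Induction on $n$, starting from $m_0=b_0=1$ for the empty tableau, then forces $m_n=b_n$: the top term $m_n$ is the only unknown at step $n$. (Alternatively, one may quote directly from \cite{AR} that full-content $\N$-tableaux on $k$ letters are counted by $b_k$.) Substituting $m_k=b_k$ into the second and third formulas of \cref{thm:Type1-Enum} gives \eqref{eq-sigma-stylic-size} and \eqref{eq-sigma-stylic-size-uniform}.

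For the generating function, apply \cref{lem:expgenfunct} with $s_k=b_k$. This gives
\[
\sum_n S_p(n)\frac{z^n}{n!}=\exp(z)\sum_k b_k\frac{((p-1)z)^k}{k!}.
\]
The classical exponential generating function of Bell numbers, $\sum_k b_k x^k/k!=\exp(\exp(x)-1)$, evaluated at $x=(p-1)z$, then yields $\exp(z+\exp((p-1)z)-1)$.

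The only step that is not purely mechanical is identifying $m_k=b_k$, which uses the Bell recurrence inversion described above. Everything else is direct substitution into earlier results.
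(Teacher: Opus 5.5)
Your proposal is correct and follows essentially the same route as the paper: type 1 via \cref{cor-plax-type1}, then \cref{thm:Type1-Enum} with $m_k=b_k$ taken from \cite[Corollary 8.5]{AR}, then \cref{lem:expgenfunct} combined with the Bell exponential generating function. The paper leaves two steps inside the citation, and you spell them out: the check that the number of content-$\B$ classes depends only on $|\B|$ (via relabelling $\N$-tableaux), and the recovery of $m_k=b_k$ from $|\styl(\A)|=b_{|\A|+1}$ by inverting the Bell recurrence.
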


\begin{proof}
Recall that the exponential generating series for Bell numbers is
$\sum_n b_n\frac{z^n}{n!} = \exp(\exp(z) -1)\,.$ Using this with
\cref{lem:expgenfunct} and \cref{eq-sigma-stylic-size-uniform}, one gets
\begin{equation}
\sum_{i=0}^\infty S_{p}(n) = \exp(\exp((p-1)z) - 1)\exp(z)\,. 
\end{equation}
\qedhere
\end{proof}

One can find in Table~\ref{table-sigma-stylic-sizes} the first seven values of
$\plax(\A,p)$ for $p$ between $2$ and $6$.

\begin{table}[ht]
   \[
\begin{array}{@{}l*{9}{r}@{}}
\toprule
\multirow{2}{*}{$p$} & \multicolumn{7}{c@{}}{|\A|}\\
    \cmidrule(l){2-8}
    & 2 & 3 & 4 & 5 & 6 & 7 & 
    \text{OEIS}\\
\midrule
  2& 5& 15& 52& 203& 877& 4,140& 
  \href{https://oeis.org/A000110}{A000110} \\
  3& 13& 71& 457& 3,355& 27,509& 248,127& 
  \href{https://oeis.org/A126390}{A126390}\\
  4& 25& 199& 1,876& 20,257& 245,017& 3,266,914& 
  \href{https://oeis.org/A284859}{A284859}\\
  5& 41& 429& 5,329& 75,989& 1,215,481& 21,453,693& 
  \href{https://oeis.org/A285064}{A285064}\\
  6& 61& 791& 12,196& 216,151& 4,300,781& 94,482,336
  \\
\bottomrule
\end{array}
\]
\caption{The sizes of $\plax(\A,p)$ for $|\A|$ up to $7$ and $p$ up to $6$.}
\label{table-sigma-stylic-sizes}
\end{table}


Let us conclude this section dedicated to the $\sigma$-plactic monoids
by some monoidal properties.

Given $\B=\{b_1 < b_2 < \cdots < b_k\}\subseteq \A$, let
$e_{\B}$ be $b_k^{\sigma(b_k)-1}\cdots b_1^{\sigma(b_1)-1}$.

We have 
\begin{equation}
 \phi_\sigma(e_{\B})
 = \left(b_k \cdots b_1, b_1^{\sigma(b_1)-1} \cdots b_k^{\sigma(b_k)-1}
   \right),
\end{equation}
  with
\begin{equation}
  \N(b_k \cdots b_1) = 
    \newcommand\hT{1.2}
    \newcommand\lT{1.5}
      \raisebox{0.9cm}{\Yboxdim{16pt}
        \gyoung(<b_k>,%
        |\hT\vdts:\ddots,%
      <b_3>_\lT\hdts;<b_k>,%
      <b_2><b_3>_\lT\hdts;<b_k>,%
      <b_1><b_2><b_3>_\lT\hdts<b_k>,%
      )}.
\end{equation}

\begin{prop}
\label{prop-plax-idempotents}
The idempotents of $\plax(\A, \sigma)$ are precisely the $e_{\B}$ for every
$\B\subseteq \A$.
In particular, there are $2^{|\A|}$ idempotents in $\plax(\A, \sigma)$. 
\end{prop}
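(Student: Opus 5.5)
We use the injective homomorphism $\phi_\sigma=\theta\times\ev_\sigma$ from \cref{cor-plax-type1}. An element $u\in\plax(\A,\sigma)$ is idempotent if and only if both components of $\phi_\sigma(u)$ are idempotent. One direction is clear because $\phi_\sigma$ is a homomorphism. Conversely, if $\phi_\sigma(u^2)=\phi_\sigma(u)$, then $u^2=u$ by injectivity. So the idempotents of $\plax(\A,\sigma)$ correspond bijectively to pairs $(T,E)$ in the image of $\phi_\sigma$ with $T$ idempotent in $\styl(\A)$ and $E$ idempotent in $\Com(\A,\sigma)$. By \cref{lem-type1}, this image consists of the pairs whose two components have the same content.

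First we handle the commutative component. In $\Com(\A,\sigma)$ an element is $\prod_{x} x^{\gamma_x}$, where $\gamma_x=0$ or $\gamma_x\in[1,\sigma(x)-1]$, with exponents added and reduced by $x^{\sigma(x)}=x$. It is idempotent exactly when $2\gamma_x\equiv\gamma_x$ in the cyclic part for each $x$ with $\gamma_x\neq0$. For each content $\B$ this gives a unique idempotent, namely $\prod_{b\in\B} b^{\sigma(b)-1}$, since $x^{\sigma(x)-1}$ is the identity of the cyclic group $\{x,\dots,x^{\sigma(x)-1}\}$.

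Next we handle the stylic component. We need that for each content $\B$ there is exactly one idempotent in $\styl(\A)$, namely the class of $b_k\cdots b_1$, whose $\N$-tableau is the staircase displayed above. This is the main obstacle. Ideally we would cite the description of idempotents of the stylic monoid from \cite{AR}. Otherwise we argue directly as follows.

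\begin{enumerate}[(a)]
\item The staircase word is idempotent. Inserting $b_k\cdots b_1$ into its own $\N$-tableau leaves it unchanged: each $b_i$ is already in the bottom row, and it bumps copies of $b_{i+1}$, which are already present above. So no new box is created.
\item It is the only idempotent of content $\B$. The staircase is the $\N$-tableau of content $\B$ with the maximal number of rows. If $T$ is idempotent with content $\B$, then $T=T\cdot T$. Inserting the reading word of $T$ a second time forces each letter, in decreasing order, to propagate upward until $T$ contains the staircase. Since an $\N$-tableau of content $\B$ with the staircase inside it must be the staircase, $T$ equals the staircase.
\end{enumerate}

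For step (b), we would check it through the invariant that in $\N(W)$ row $i$ records the letters $x$ such that $W$ contains a strictly decreasing subsequence of length $i$ ending at $x$. Indeed, in $W W$ every letter $b_j$ is the end of a decreasing chain $b_k,\dots,b_j$, taking $b_k$ from the first copy of $W$ and later letters from the second. We will verify this invariant carefully, since it is the technical heart of the argument.

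Combining the two components, the unique idempotent pair of content $\B$ is $\bigl(b_k\cdots b_1,\ \prod b^{\sigma(b)-1}\bigr)$. By the displayed computation of $\phi_\sigma(e_\B)$, this pair is exactly $\phi_\sigma(e_\B)$, so by injectivity the idempotents are precisely the $e_\B$. They are pairwise distinct because their contents differ, so there is one for each $\B\subseteq\A$, giving $2^{|\A|}$ idempotents.
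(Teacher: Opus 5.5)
Your main line is the same as the paper's proof. Injectivity of $\phi_\sigma$ reduces idempotency to idempotency of both components, the idempotents of $\styl(\A)$ are taken from \cite[Theorem 12.1]{AR}, and those of $\Com(\A,\sigma)$ are computed exactly as you do, since each $\{x,\dots,x^{\sigma(x)-1}\}$ is a cyclic group with identity $x^{\sigma(x)-1}$. With that citation the proof is complete, and your check (a) that the staircase word is idempotent is also correct.

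Your fallback argument for (b), however, would not go through as written, for two reasons.
\begin{enumerate}[(1)]
\item The invariant you plan to verify is false. For $a<b$, $\N(ba)$ has rows $\{a,b\}$ and $\{b\}$, yet $ba$ is a strictly decreasing subsequence of length $2$ ending at $a$. Applied to $b_k\cdots b_1$, your rule would put $\{b_1,\dots,b_{k-1}\}$ in the second row, which contradicts the staircase. Replacing ``ending at'' by ``starting at'' does not help either. For $a<b<c$, $\N(bca)$ has rows $\{a,b,c\}$ and $\{b\}$, although $ca$ is a decreasing subsequence starting at $c$.
\item Two copies do not suffice to build your chain. Taking $b_k$ from the first copy and $b_{k-1},\dots,b_j$ from the second requires $W$ itself to contain $b_{k-1}\cdots b_j$ as a subsequence.
\end{enumerate}

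What does work is to track only the number of rows.
\begin{enumerate}[(1)]
\item Let $d(V)$ be the length of a longest strictly decreasing subsequence of $V$. By Schensted it is the number of rows of $\P(V)$, so it is invariant under the plactic relations. It is also clearly invariant under $a^2=a$.
\item Since $W\equiv_{\styl}\RW(\N(W))$ and $\P(\RW(\N(W)))=\N(W)$, the number of rows of $\N(W)$ equals $d(W)$.
\item Let $W$ be idempotent with content $\B=\{b_1<\dots<b_k\}$. Then $W\equiv_{\styl}W^k$, and $W^k$ contains $b_kb_{k-1}\cdots b_1$ by taking one letter from each copy. Hence $\N(W)$ has $k$ rows.
\item The first column of $\N(W)$ is then forced to be $b_1,\dots,b_k$. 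Row containment forces row $i$ to be $\{b_i,\dots,b_k\}$, so $\N(W)$ is the staircase.
\end{enumerate}
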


\begin{proof}
Since $\phi_\sigma$ is injective (see~\cref{cor-plax-type1}),
$X\in\plax(\A, \sigma)$ is idempotent if and only if $\phi_\sigma(X)$ is an
idempotent.

Hence $X$ is an idempotent if and only if $\theta(X)$ and $\ev_{\sigma}(X)$
are idempotents respectively in $\plax(\A, 2)$ and $\Com(\A, \sigma)$.
By \cite[Theorem 12.1]{AR}, the idempotents of $\plax(\A, 2)$ are precisely 
the $b_k\cdots b_1$ where $b_1 < b_2 < \cdots < b_k \in \A$. 
One easily checks that the idempotents of $\Com(\A, \sigma)$ are precisely
$c_1^{\sigma(c_1)-1} \cdots c_\ell^{\sigma(c_\ell)-1}$,
where $c_1, c_2, \ldots, c_\ell\in \A$.
Combining both results, one gets the statement.
\end{proof}

We finally have the following corollary of \cref{thm:isom}.(i).

\begin{cor}
\label{plax-iso}
Let $\sigma, \tau:\A \to \NN_{\geq2}$.
Then $\plax(\A,\sigma)\cong\plax(\A,\tau)$ if and only if $\sigma=\tau$.
\end{cor}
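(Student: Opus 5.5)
We will apply \cref{thm:isom}.(i) to the Knuth presentation of $\plax(\A)$ given in \eqref{eq-plactic}. That presentation is evaluation-preserving, hence content-preserving. It remains to check the two hypotheses on two-letter relations.

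The first hypothesis is that $(bab,bba)\in\R$ for all $a<b$. Take the second Knuth relation $bac=bca$ (valid for $a<b\leq c$) and specialise $c=b$. This gives $bab = bba$, which is exactly the required relation.

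The second hypothesis is that $(aba,aab)\notin\R$ for $a<b$. We check that no Knuth relation specialises to this pair. The first relation $acb=cab$ requires $b<c$, so its two sides have distinct first letters or distinct middle letters in a pattern of the form $x z y = z x y$ with $z$ strictly largest. For the pair $aba,aab$ the words share the first letter $a$, so they cannot be the two sides of this relation. The second relation $bac=bca$ with $b$ replaced by $a$ would require $a'<a$ for the letter in middle position, which is impossible since $a$ is the smallest letter of the pair.

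If one wants a more robust argument than inspecting the presentation, note that \cref{thm:isom} is really used through the consequence in $\plax(\A,\tau)$. Its proof needs only that $yxy\equiv yyx$ holds while $xyx\not\equiv xxy$ in the quotient for $x<y$. This non-equivalence can be checked directly with \cref{cor-plax-type1}. The map $\phi_\tau$ is injective, and the two words have equal $\tau$-evaluation. Their $\N$-tableaux, however, differ: $\N(aba)$ has rows $ab$ (bottom) and $b$ (top), while $\N(aab)$ is the single row $ab$. Hence $aba\not\equiv_{\plax(\tau)}aab$, and \cref{thm:isom}.(i) gives $\sigma=\tau$.

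The converse direction, that $\sigma=\tau$ implies isomorphism, is trivial. The only delicate point is the non-membership hypothesis, and the $\N$-tableau computation above is the safest way to settle it.
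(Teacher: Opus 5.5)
Your proposal is correct and follows the paper, which obtains this statement directly as a corollary of \cref{thm:isom}.(i). Your check that $bab=bba$ is the case $c=b$ of the second Knuth relation, while $aba=aab$ is an instance of neither relation, is exactly the verification needed. Your additional semantic check is a worthwhile strengthening, since the absence of a pair from $\R$ does not by itself give non-equivalence in the quotient. However, you should state it for $a^{i}b^{j}a^{i}$ versus $a^{2i}b^{j}$, because the proof of \cref{thm:isom} only gives $\psi(x)=\psi'(x)^{\alpha_x}$. This follows immediately: $\theta$ sends these words to $aba\equiv_{\plax}baa\equiv_{\styl}ba$ and to $aab\equiv_{\styl}ab$, which are distinct in $\styl(\A)$. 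Note also that this uses only that $\theta$ is well defined, not the injectivity of $\phi_\tau$.
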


\section{The \texorpdfstring{$\sigma$}{s}-Chinese monoids}
\label{section-chinese}
\subsection{The Chinese Monoid}\label{ss-chinese}

For $\A$ an ordered alphabet,
the \emph{Chinese relations} are the relations~\cite{DK}
\begin{equation}\label{eq-chinese}
\begin{split}
    cba = cab = bca, &\quad \text{for } a < b < c\in\A, \\
    aba = baa, \quad bba = bab, &\quad \text{for } a < b\in\A.
\end{split}
\end{equation}
The least congruence containing the relations \eqref{eq-chinese} is called
the \emph{Chinese congruence} and is denoted by $\equiv_{\chinese}$.
The quotient $\A^*/\equiv_{\chinese}$ is called the \emph{Chinese
monoid}~\cite{DK,CASSAIGNE2001} and, as one can see, it is
evaluation-preserving.

As in \cref{def:quotient-monoid}, for $\sigma: \A \to \NN_{\geq 2}$,
the \emph{$\sigma$-chinese congruence} is the least congruence on
$\A ^ *$ containing the Chinese congruence $\equiv_{\chinese}$ and
$a ^ {\sigma(a)} = a$ for every $a\in \A$; we denote this by
$\equiv_{\chinese(\sigma)}$. The $\sigma$-Chinese monoid on $\A$ is the
quotient $\A ^ * / {\equiv_{\chinese(\sigma)}}$ and is denoted by
$\chinese(\A, \sigma)$.

\paragraph{Chinese Staircases}\label{Chinese_Staircases}

In this section, we recall the details of the normal forms of $\chinese(\A)$
from \cite{CASSAIGNE2001} that are relevant here.

A \emph{Chinese staircase} $S$ over a finite ordered alphabet
$\A = \{a_1< \ldots< a_{n}\}$ is a triangular diagram of boxes with rows and
columns labelled by $\A$. The $i$-th row starting from the top is labelled by
$a_i$ and the $i$-th column starting from the right is labelled by $a_i$. The
row of $S$ labelled by $a_i$ has length $i$ and is drawn right-justified.
The entries of the boxes are non-negative integers, but for readability we
will omit the $0$ values,
see \cref{f:chistair} for an example.
\begin{figure}[ht]
$$\gyoung(:::;1:a,::;2;:b,:;;2;1:d,;;1;1;:f,:f:d:b:a)$$
\caption{\label{f:chistair}The Chinese staircase over the alphabet
$\{a,b,d,f\}$ with row word $ab^2(db)^2(da)(fd)(fb)$. }
\end{figure}

For $S$ a Chinese staircase, we denote by $S(r,c)$ the entry with row
label $r$ and column label $c$.
An entry $S(r,c)$ is on the diagonal of $S$ if $r=c$.
An entry $S(r',c')$ \emph{covers} an entry $S(r,c)$ if
$r'\geq r$ and $c' \leq c$.
We say that $S(r',c')$ \emph{strictly covers} $S(r,c)$ if $r'>r$ and $c'<c$.
Equivalently, we say that $(r',c')$ is (strictly) at the south-east (SE) of
$(r,c)$, or that $(r',c')$ and $(r,c)$ are in a SE-NW position with each
other..

The \emph{row word of a Chinese staircase $S$}~\cite{CASSAIGNE2001}, denoted
by $\RW(S)$, is the word obtained by reading from top to bottom and left to
right the boxes of the staircase as follows.
We write $r^{S(r,r)}$ for boxes on the diagonal and
$(rc)^{S(r, c)}$ for the box at the intersection of row $r$ and column $c$,
see \cref{f:chistair} for an example.
Because of the labelling and the staircase shape, $r>c$.
Given a Chinese staircase $S$, the \emph{content} and the
\emph{evaluation} of $S$ respectively are the content and the evaluation of
$\RW(S)$.
Note that a letter appears in the reading of a Chinese staircase $S$ if and
only if there is a non-zero entry in its associated \emph{hook} (\emph{i.e.},
the reunion of its row and its column).
If all hooks are nonempty, we say that $S$ in \emph{reduced}.

As it is the case for the plactic monoid, there exists an algorithm inserting
a letter into a Chinese staircase~\cite[Algorithm 2.6]{CASSAIGNE2001} whose
iteration associates with any word $W\in\A^*$ a Chinese staircase $S(W)$ such
that $\RW(S(W))\equiv_\chinese W$~\cite[Theorem 2.14]{CASSAIGNE2001}.

Here is how it works. Consider a Chinese staircase $S$ indexed by an alphabet
$A\{a_1,\dots,a_n\}$ and split it into two parts: its bottom row $R$ and the
rest of $S$, denoted by $S'$.
Consider a letter $a_k$ to be inserted in $S$.
\begin{itemize}
\item If $k=n$ then add $1$ to the cell $(a_n,a_n)$.
\item If not, let $\ell$ be the largest value so that $S(a_n,a_\ell)$ is
nonzero. If the row is empty or if $\ell\leq k$, then do not change
$R$ and insert $a_k$ into $S'$.
Otherwise, subtract $1$ to the cell $R(a_\ell)$ and add $1$ to the cell
$R(a_k)$. Moreover, if $\ell<n$ then insert $a_\ell$ into $S'$. Following
the case of the plactic monoid, in that case, we say that $a_k$ bumps
$a_\ell$.
\end{itemize}

Now, to compute the Chinese staircase of a word $w$ on $A$, start
with the empty Chinese staircase indexed by $A$ and insert successively the
letters of $w$ from left to right in the staircase.

\begin{thm}[Proposition 3.12 of~\cite{CASSAIGNE2001}]
\label{thm-chinese}
Given a Chinese staircase $S$, the row word $\RW(S)$ is the shortlex normal
form of its Chinese class.
\end{thm}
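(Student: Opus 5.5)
The plan is to show two things: the row word $\RW(S)$ is Chinese-equivalent to every word inserting to $S$, and it is the shortlex minimum of its class. Since the presentation \eqref{eq-chinese} is evaluation-preserving, all words in a class have the same length. So ``shortlex'' reduces to ``lexicographically smallest among words of the given evaluation in the class''.

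First, I would establish that $W\mapsto S(W)$ is constant on Chinese classes. By \cite[Theorem 2.14]{CASSAIGNE2001}, $\RW(S(W))\equiv_\chinese W$. I would also use the converse, that $S(\RW(S))=S$ for every staircase $S$, which is checked by inserting the row word box by box. Together these give a bijection between Chinese classes and Chinese staircases, with $\RW$ as a section. The substantive part is then minimality: every word $W$ with $S(W)=S$ satisfies $\RW(S)\leq_{\mathrm{lex}} W$.

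For minimality, I would argue by induction on the alphabet size, peeling off the smallest letter $a_1$. A word $W$ in the class factors according to where the occurrences of $a_1$ sit. In $\RW(S)$ the letter $a_1$ occurs first, as the diagonal power $a_1^{S(a_1,a_1)}$, and afterwards only inside the factors $(ra_1)^{S(r,a_1)}$ of the column of $a_1$. Using the relations $aba=baa$ and $cab=cba=bca$, one can move each occurrence of $a_1$ leftwards as far as the class allows. The claim to prove is that the maximal prefix power of $a_1$ over the class is $a_1^{S(a_1,a_1)}$. I would identify $S(a_1,a_1)$ as an invariant, namely the number of $a_1$'s not ``bumped'' during insertion. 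Then any word in the class beginning with more $a_1$'s would produce a different staircase, contradicting the bijection. After stripping this prefix, the remaining word lives over the same alphabet, but its $a_1$'s are each paired with a larger letter immediately preceding them. I would then compare lexicographically letter by letter: at each step the letter chosen by $\RW(S)$ is the smallest letter that can start a word of the residual class. This is certified again by an insertion invariant, namely the first nonzero entry in reading order.

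The main obstacle is making this greedy step rigorous. One must show that if some word of the class begins with a letter $x$, then the reading order of $S$ puts a box starting with a letter $\leq x$ first. A concrete route is to characterise, via the insertion algorithm, which letters can appear first in a word of a given class: $x$ can start $W$ iff $S(W)$ is obtained by \emph{left}-inserting $x$ into some staircase. One then checks that the smallest such $x$ is the first letter of $\RW(S)$, and applies induction on length to the residual staircase. Since this is quoted as Proposition 3.12 of \cite{CASSAIGNE2001}, I would ultimately defer the detailed case analysis of the left-insertion to that source.
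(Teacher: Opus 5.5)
The paper gives no proof of this statement; it imports it from \cite{CASSAIGNE2001}. Your sketch likewise defers the decisive step, and the parts you do argue contain two genuine gaps.

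First, the bijection between classes and staircases does not follow from the two facts you invoke. The facts $\RW(S(W))\equiv_\chinese W$ and $S(\RW(S))=S$ only show that every class contains a row word and that $\RW$ is injective. They do not exclude $\RW(S)\equiv_\chinese\RW(S')$ with $S\neq S'$, so they do not make $W\mapsto S(W)$ constant on classes. The statement itself implies that distinct row words are never equivalent, so you must prove a separating invariant. One way is to check that insertion respects each defining relation for every staircase $T$: $T\leftarrow cba=T\leftarrow cab=T\leftarrow bca$ for $a<b<c$, and $T\leftarrow aba=T\leftarrow baa$, $T\leftarrow bba=T\leftarrow bab$ for $a<b$. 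This suffices because $S(UV)=S(U)\leftarrow V$. Only with this in place does your bound on the leading power of $a_1$ work. That bound also needs two facts: insertion never decreases the entry $(a_1,a_1)$, and each leading $a_1$ increases it. Note that an $a_1$ is never bumped, so ``the number of unbumped $a_1$'s'' is $|W|_{a_1}$, not $S(a_1,a_1)$.

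Second, the lexicographic step carries all the content, and as formulated it cannot be run. After stripping a prefix $P$, the set $\{W' : PW'\equiv_\chinese\RW(S)\}$ is in general a union of several classes, because $\chinese(\A)$ is not left cancellative ($cba\equiv_\chinese cab$ but $ba\not\equiv_\chinese ab$). So there is no single residual staircase to induct on; you would have to compare the row words of all staircases $S'$ with $S(P\,\RW(S'))=S$.

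More seriously, whether the first nonzero box in reading order yields the smallest admissible next letter depends on how the boxes of a row are read. With the order displayed in \cref{f:chistair} (diagonal first, then decreasing column labels) it is false. The set $\{bab,bba\}$ is a full class (relation $bba=bab$). Both words insert to the staircase with $S(b,b)=S(b,a)=1$, whose row word in that order is $bba$, yet $bab$ is lexicographically smaller. Likewise, for $S(c,a)=S(c,b)=1$ the reading $cbca$ is equivalent to the smaller word $cacb$.

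The proposition holds for the normal form of \cite{CASSAIGNE2001}, in which row $a_i$ is read as $(a_ia_1)^{S(a_i,a_1)}\cdots(a_ia_{i-1})^{S(a_i,a_{i-1})}a_i^{S(a_i,a_i)}$. This is also the order tacitly used in the step $cacb\equiv_\chinese ccab$, $a\le b$, of the proof of \cref{thm-2-chinese}. A proof along your lines must fix that convention and then actually carry out the left-insertion analysis you defer. As it stands, your proposal amounts to the citation itself.
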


As a consequence of \cref{thm-chinese} we may identify elements of
$\chinese(\A)$ and the corresponding staircases.

\begin{note}
\label{note-insertion}
Inserting a letter $a_k$ changes a unique entry 
on column $a_k$ and possibly other entries to the non-strict NW on that entry.
Indeed, $a_k$ bumps a letter that is strictly greater than itself so that it
will be inserted to its left and above it.
\end{note}

\subsection{The 2-Chinese Monoid}
\label{ss-2-chinese}

The goal of the whole Chinese section is to study the $\sigma$-Chinese
monoids in general. We will prove in \cref{ss-chinese-type} that these are
monoids of type $1$ and since such monoids $\M(\A,\sigma)$ derive all their
properties from $\M(\A,2)$, we first concentrate on $\chinese(\A,2)$.
However, unlike the 2-plactic monoid or stylic monoid~\cite{AR}, the
2-Chinese monoid does not appear in the literature.
Therefore, in this section, we provide a description of the 2-Chinese monoid.
More precisely, we present in \cref{subsection-2-chinese-staircase} and
\cref{subsection-2-chinese-functions} two normal forms of the 2-Chinese
monoid.
The former is the shortlex normal form of the classes ;
the latter is obtained by an insertion algorithm.
In \cref{ss-2-chinese-more}, we use these normal forms to show properties
of the 2-Chinese monoid.

\subsubsection{2-Chinese Staircases}
\label{subsection-2-chinese-staircase}

\begin{de}
\label{de-2-chinese-staircase}
A \emph{2-Chinese staircase} on the alphabet $\A$ is a Chinese staircase
such that the only entries are $0$ and $1$ and
\begin{enumerate}[(1)]
\item there is at most one non-zero entry in every row and every column;
\item every non-zero entry strictly to the north-west of a non-zero entry is
on the diagonal.
\end{enumerate}
\end{de}

For a 2-Chinese staircase $S$, the \emph{row word of $S$} is the row word of
$S$ considered as a Chinese staircase. We denote this word by $\RW(S)$.
See \cref{fig-chinese-examples} for some examples of 2-Chinese and not
2-Chinese staircases.

\begin{figure}[ht]
\centering
 \gyoung(:::;:a,::;1;:b,:;;;1:d,;;1;;:f,:f:d:b:a)\qquad\qquad\qquad
 \gyoung(:::;:a,::;;:b,:;;;1:d,;;1;1;:f,:f:d:b:a)\qquad
 \gyoung(:::;:a,::;;:b,:;;1;:d,;;;;1:f,:f:d:b:a)
\caption{The leftmost diagram is the diagram of a 2-Chinese staircase on the
alphabet $\{a,b,d,f\}$ whose row word is $bdafd$; the second is not a
2-Chinese staircase, because there are two non-zero values in the row labelled
$f$; the third is also not a 2-Chinese staircase because the $1$ in box $(d,
b)$ is to the north-west of $(f,a)$ and not on the diagonal
(\emph{i.e.}, $d\neq b$).}
\label{fig-chinese-examples}
\end{figure}

Thanks to the fact that the $2$-Chinese staircases have values $0$ or $1$ in
each of their cells, there are a finite number of such staircases with a given
underlying alphabet.

Here are the two $2$-Chinese staircases with content $\{a,b\}$ with their
associated row words:
\begin{equation}
\begin{matrix}
\label{chin2-2}
\Yboxdim{1cm}\begin{tikzpicture}[scale=0.5,baseline={(current bounding box.east)} ]
\tgyoung(0cm,0cm,%
:!\Ygr;1!\Yw:a,%
!\Ygr;1!\Yw;:b,%
:b:a)
\draw[ultra thick] (0, -1) -- (1, -1) -- (1, 0) -- (2, 0) -- (2, 1) -- (2, 1);
\end{tikzpicture}
&
\Yboxdim{1cm}\begin{tikzpicture}[scale=0.5,baseline={(current bounding box.east)} ]
\tgyoung(0cm,0cm,%
:!\Ygr;!\Yw:a,%
!\Ygr;;1!\Yw:b,%
:b:a)
\draw[ultra thick] (0, -1) -- (2, -1) -- (2, 0) -- (2, 0) -- (2, 1) -- (2, 1);
\end{tikzpicture}
\\[.8cm]
ab & ba \\
\end{matrix}
\end{equation}

And here are the five $2$-Chinese staircases with content $\{a,b,c\}$ with
their associated row words:
\begin{equation}
\begin{matrix}
\label{chin2-3}
\Yboxdim{1cm}\begin{tikzpicture}[scale=0.5,baseline={(current bounding box.east)} ]
\tgyoung(0cm,0cm,%
::!\Ygr;1!\Yw:a,%
:!\Ygr;1!\Yw;:b,%
!\Ygr;1!\Yw;;:c,%
:c:b:a)
\draw[ultra thick] (0, -2) -- (1, -2) -- (1, -1) -- (2, -1) -- (2, 0) -- (3, 0) -- (3, 1) -- (3, 1);
\end{tikzpicture}
&
\Yboxdim{1cm}\begin{tikzpicture}[scale=0.5,baseline={(current bounding box.east)} ]
\tgyoung(0cm,0cm,%
::!\Ygr;1!\Yw:a,%
:!\Ygr;!\Yw;:b,%
!\Ygr;;1!\Yw;:c,%
:c:b:a)
\draw[ultra thick] (0, -2) -- (2, -2) -- (2, -1) -- (2, -1) -- (2, 0) -- (3, 0) -- (3, 1) -- (3, 1);
\end{tikzpicture}
&
\Yboxdim{1cm}\begin{tikzpicture}[scale=0.5,baseline={(current bounding box.east)} ]
\tgyoung(0cm,0cm,%
::!\Ygr;!\Yw:a,%
:!\Ygr;;1!\Yw:b,%
!\Ygr;1!\Yw;;:c,%
:c:b:a)
\draw[ultra thick] (0, -2) -- (1, -2) -- (1, -1) -- (3, -1) -- (3, 0) -- (3, 0) -- (3, 1) -- (3, 1);
\end{tikzpicture}
&
\Yboxdim{1cm}\begin{tikzpicture}[scale=0.5,baseline={(current bounding box.east)} ]
\tgyoung(0cm,0cm,%
::!\Ygr;!\Yw:a,%
:!\Ygr;;1!\Yw:b,%
!\Ygr;;1!\Yw;:c,%
:c:b:a)
\draw[ultra thick] (0, -2) -- (2, -2) -- (2, -1) -- (3, -1) -- (3, 0) -- (3, 0) -- (3, 1) -- (3, 1);
\end{tikzpicture}
&
\Yboxdim{1cm}\begin{tikzpicture}[scale=0.5,baseline={(current bounding box.east)} ]
\tgyoung(0cm,0cm,%
::!\Ygr;!\Yw:a,%
:!\Ygr;1;!\Yw:b,%
!\Ygr;;;1!\Yw:c,%
:c:b:a)
\draw[ultra thick] (0, -2) -- (3, -2) -- (3, -1) -- (3, -1) -- (3, 0) -- (3, 0) -- (3, 1) -- (3, 1);
\end{tikzpicture}
\\[1.2cm]
abc & acb & bac & bacb & bca \\
\end{matrix}
\end{equation}

In general, the number of 2-Chinese staircases with content
$\{a_1,\dots,a_n\}$ is the $n$-th Catalan number
$C_n := \frac1{n+1}\binom{2n}{n}$ (Sequence A000108 of~\cite{oeis}). We shall
prove this property by providing a bijection between these and the Dyck paths
of (semi-)length $n$, a well-known combinatorial object enumerated by Catalan
numbers.

Given a non-negative integer $n$, recall that a
\emph{Dyck path of (semi-)length $n$} is a path on an $n\times n$ grid going
from $(0,0)$ to $(n,n)$ having only east and north steps, and staying under
the diagonal; see~\cref{fig-chinese-to-path-example} for some examples.

\begin{algo}
\label{algo-chin2dyck}
Given a Chinese staircase $S$ on $\A$ with content $\A$, that is, a reduced
Chinese staircase, we associate a Dyck path $\dyck(S)$ as follows:
color in gray the boxes that are covered by a box containing a non-zero entry.
The associated path is the south-east border of the gray boxes.

If a Chinese staircase $S$ on $\A$ is not reduced, the same algorithm
applies after removal of its empty hooks.
\end{algo}

The Dyck paths associated with Chinese staircases have been represented on
these staircases as bold lines. See \cref{fig-chinese-to-path-example} and
also Equations~\eqref{chin2-2} and~\eqref{chin2-3}.

Note that since there is at least one non-zero entry in each hook in a
Chinese staircase of content $\A$, the diagonal is always fully gray. So the
path associated with a Chinese staircase $S$ is a Dyck path. See again
\cref{fig-chinese-to-path-example} for an example.

\begin{de}
A box $(r,c)\in S$ is a \emph{peak of $\dyck(S)$} if the only gray
box that covers $(r,c)$ is itself.
\end{de}

\begin{figure}[ht]
 \[
 \Yboxdim{1cm}\begin{tikzpicture}[scale=0.5,baseline={(current bounding box.east)}]
   \tgyoung(0cm,0cm,%
   ;;;;;,%
   ;;;;;,%
   ;;;;;,%
   ;;;;;,%
   ;;;;;)
   \draw[ultra thick](0,-4)--(1,-4)--(1,-3)--(3,-3)--(3,-2)--(5,-2)--(5,1);
   \draw[dashed](0,-4)--(5,1);
 \end{tikzpicture}
 \qquad\qquad
 \Yboxdim{1cm}\begin{tikzpicture}[scale=0.5,baseline={(current bounding box.east)}]
 \tgyoung(0cm,0cm,%
 :::::!\Ygr1!\Yw:a,%
 ::::!\Ygr;;1!\Yw:b,%
 :::!\Ygr;12!\Yw;:c,%
 ::!\Ygr4;;2!\Yw;:d,%
 :!\Ygr;1;!\Yw;;;:e,%
 !\Ygr;;;3!\Yw;;;:f,%
 :f:e:d:c:b:a)
 \draw[ultra thick] (0, -5) -- (3, -5) -- (3, -4) -- (3, -4) -- (3, -3) -- (5, -3) -- (5, -2) -- (5, -2) -- (5, -1) -- (6, -1) -- (6, 0) -- (6, 0) -- (6, 1) -- (6, 1);
 \end{tikzpicture}
 \]
\caption{Examples of a Dyck path and a Dyck path associated with a Chinese
staircase $S$. The peaks of $S$ is the set $\{(f,d);(d,b);(b,a)\}$.}
\label{fig-chinese-to-path-example}
\end{figure}

\begin{algo}
\label{algo-dyck2chin}
Let $p$ be a Dyck path of semi-length $|\A|$.
The \emph{corner-diagonal filling} $\cdf_\A(p)$ of $p$ is the Chinese
staircase over $\A$ obtained by
\begin{enumerate}[(1)]
\item putting a $1$ in every peak cell,
\item putting a $1$ on any diagonal cell whose associated hook is still empty.
\end{enumerate}
\end{algo}

\begin{lemma}
Let $p$ be a Dyck path of semi-length $|\A|$.
Then $\cdf_\A(p)$ is a 2-Chinese staircase of content $\A$.
\end{lemma}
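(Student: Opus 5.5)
We check the defining conditions of \cref{de-2-chinese-staircase} directly, plus the content condition, from the one structural fact about peaks: \emph{no peak covers another peak}. Indeed, if $P=(r',c')$ and $Q=(r,c)$ are peaks with $P$ covering $Q$, then $P$ is a gray box covering $Q$. By definition of a peak, the only gray box covering $Q$ is $Q$ itself, so $P=Q$. Thus the set of peaks is an antichain for the covering relation. (This uses that peaks are gray boxes, i.e.\ boxes in the region delimited by $p$, which lie inside the staircase since $p$ stays under the diagonal.)

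First, all entries of $\cdf_\A(p)$ are $0$ or $1$. This is because step (1) puts a $1$ only in peak cells, and step (2) puts a $1$ only in diagonal cells whose hook is still empty, hence in cells that are still $0$.

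Next, condition (1). Two distinct peaks cannot lie in the same row: if $(r,c)$ and $(r,c'')$ are peaks with $c''\le c$, then $(r,c'')$ covers $(r,c)$, contradicting the antichain property. The same argument applies to columns. A diagonal $1$ placed in step (2) is put only where its hook, which is exactly its row together with its column, is empty. Later diagonal insertions in step (2) concern other hooks, and two distinct diagonal cells never share a row or column. Hence every row and every column contains at most one nonzero entry.

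Then condition (2). Let $(r',c')$ be a nonzero entry strictly to the north-west of a nonzero entry $(r,c)$, so $r'<r$ and $c'>c$. The south-east entry $(r,c)$ cannot be diagonal. Otherwise $c=r$, and then $c'>r>r'$; this is impossible since every box $(r',c')$ of the staircase satisfies $c'\le r'$. So $(r,c)$ is a peak. If $(r',c')$ were also a peak, then $(r,c)$ would strictly cover it, hence cover it, contradicting the antichain property. Therefore $(r',c')$ was placed in step (2), so it lies on the diagonal, as required.

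Finally, the content is $\A$. After step (2), every hook contains a nonzero entry: either it already did after step (1), or its diagonal cell received a $1$. A letter occurs in the row word exactly when its hook is nonempty, so $\cont(\RW(\cdf_\A(p)))=\A$.

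The only delicate point is making precise the antichain property of peaks and the fact that peak cells lie within the staircase. Both follow from the definition of the gray region attached to $p$, in the same way as in \cref{algo-chin2dyck}. Everything else is a case check.
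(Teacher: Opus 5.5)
Your proof is correct and follows the same route as the paper: peaks form an antichain for the covering order, which gives condition (1) and rules out SE--NW pairs of peaks for condition (2), and the diagonal filling of empty hooks gives content $\A$. You also spell out two points the paper leaves implicit: step (2) never puts a second nonzero entry in a row or column, and the staircase shape prevents a diagonal entry from being the south-east member of an SE--NW pair.
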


\begin{proof}
All corners of the Dyck path are in different rows and columns, so the first
condition of \cref{de-2-chinese-staircase} holds. Moreover, all these corners
are to the north-east of each other so that the second condition of
\cref{de-2-chinese-staircase} also holds.
Finally, the corner-diagonal filling ensures that each hook has a nonzero
entry, so $\cont(\cdf_\A(p)) = \A$.
\end{proof}

\begin{prop}
\label{p:unicity2stairs}
The maps $\dyck$ and $\cdf_\A$ are mutually inverse bijections between the
\mbox{2-Chinese} staircases of content $\A$ and the Dyck paths of semi-length $|\A|$.
\end{prop}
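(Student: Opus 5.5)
We show that both compositions are the identity. The previous lemma already gives that $\cdf_\A$ sends Dyck paths to 2-Chinese staircases of content $\A$, and \cref{algo-chin2dyck} sends any reduced staircase to a Dyck path. So it suffices to check (a) $\dyck(\cdf_\A(p)) = p$ for every Dyck path $p$, and (b) $\cdf_\A(\dyck(S)) = S$ for every 2-Chinese staircase $S$ of content $\A$.

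For (a), the gray region of a Dyck path $p$ (the boxes weakly NW of the path, diagonal included) is the union of the boxes covered by its peaks. In $\cdf_\A(p)$, the non-zero entries are the peaks of $p$ together with some diagonal cells. Diagonal cells are always gray, and a box covered by a diagonal cell $(r,r)$ is on or above the path, since it lies weakly NW of a diagonal cell. Hence the gray region of $\cdf_\A(p)$ is exactly the region above $p$, and its south-east border is $p$.

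For (b), let $S$ be a 2-Chinese staircase with content $\A$ and set $p=\dyck(S)$. We will prove three facts.
\begin{enumerate}[(1)]
\item Every peak of $p$ is a non-zero entry of $S$. A peak is a maximal box for the covering order in the gray region, and each gray box is covered by a non-zero entry; so a peak must itself be a non-zero entry.
\item Every off-diagonal non-zero entry of $S$ is a peak. Suppose an off-diagonal non-zero entry $(r,c)$ is covered by another non-zero entry $(r',c')\neq(r,c)$. Condition (1) of \cref{de-2-chinese-staircase} forbids them sharing a row or a column, so the covering is strict. Then condition (2) forces $(r,c)$ to be diagonal, a contradiction. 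Thus off-diagonal non-zero entries are maximal, and a maximal non-zero entry is a peak.
\item The diagonal non-zero entries of $S$ are exactly the diagonal cells whose hook contains no peak. Since $S$ has content $\A$, each hook contains a non-zero entry; by (2), if the hook contains no peak, that entry must be the diagonal cell. Conversely, if a hook contains a peak, condition (1) rules out a second non-zero entry in the same row or column, unless the diagonal cell is itself that peak.
\end{enumerate}
The last point of (3) requires care: a hook is a row together with a column, and condition (1) only constrains rows and columns separately. So we must check that a peak in row $r$ and a diagonal $1$ at $(r',r')$ with $r'$ equal to the column label cannot coexist. I expect this to be the main obstacle. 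The argument is that such a diagonal entry sits strictly NW of the peak in the relevant cases, which is compatible with condition (2). To settle this we may need to recheck that a diagonal cell whose hook already contains a peak does not lie in the same column as that peak. This follows from condition (1) applied to the column, since the diagonal cell $(c,c)$ lies in column $c$.

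Combining (1)–(3), the non-zero entries of $S$ are exactly those prescribed by \cref{algo-dyck2chin}, so $S=\cdf_\A(p)$.
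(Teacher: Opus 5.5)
Your proof is correct and follows the same route as the paper's. $\dyck(\cdf_\A(p))=p$ because $\cdf_\A(p)$ has a $1$ at every corner of $p$ and its other $1$s are diagonal, and $\cdf_\A(\dyck(S))=S$ because the off-diagonal $1$s of $S$ are exactly the off-diagonal peaks of $\dyck(S)$, with the diagonal ones then forced by the content and condition (1); your steps (1)--(3) make explicit what the paper leaves implicit.

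Two minor points. First, claim (3) as literally stated fails when a diagonal cell is itself a peak (e.g.\ the all-diagonal staircase), though your ``unless'' clause together with (1) covers this. Second, the ``main obstacle'' paragraph is unnecessary: the hook of $(d,d)$ is row $d$ together with column $d$, and $(d,d)$ lies in both, so condition (1) applies at once.
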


\begin{proof}
First, both maps have images inside the expected set.

Now, starting with a Dyck path $p$ of semi-length $|\A|$, the 2-Chinese
staircase $\cdf_\A(p)$ has a~$1$ in each corner of the path, ensuring that
$\dyck(\cdf_\A(p)) = p$.
Conversely, given a 2-Chinese staircase $S$, the $1$s that are not on the
diagonal are in the corners of the path $\dyck(S)$.
These $1$s are exactly those not on the diagonal in $\cdf_\A(\dyck(S))$ which
therefore must be equal to $S$.
\end{proof}

\begin{cor}
\label{cor-counting-2chin}
The number of 2-Chinese staircases of content $\A$ of size $n$ is the $n$-th
Catalan number.
\end{cor}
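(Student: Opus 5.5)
This follows directly from \cref{p:unicity2stairs}. Fix an alphabet $\A$ of size $n$. The map $\dyck$ is a bijection from the set of 2-Chinese staircases of content $\A$ to the set of Dyck paths of semi-length $n$, with inverse $\cdf_\A$. Hence the two sets have the same cardinality. It then suffices to recall the classical fact that Dyck paths of semi-length $n$ are counted by $C_n=\frac{1}{n+1}\binom{2n}{n}$.

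For completeness, I would indicate this fact briefly rather than simply cite it. One route is the first-return decomposition. A nonempty Dyck path factors uniquely as $E\,p_1\,N\,p_2$, where $p_1$ and $p_2$ are Dyck paths whose semi-lengths sum to $n-1$. This gives $C_n=\sum_{k=0}^{n-1}C_kC_{n-1-k}$ with $C_0=1$, which is the Catalan recurrence. An alternative is André's reflection principle. Reflecting the bad paths (those that cross the diagonal) gives $\binom{2n}{n}-\binom{2n}{n+1}=\frac{1}{n+1}\binom{2n}{n}$.

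I would also note a subtlety in the statement. It concerns staircases indexed by $\A$ with content exactly $\A$, i.e.\ reduced 2-Chinese staircases. So no empty-hook removal is needed, and the semi-length of $\dyck(S)$ is exactly $|\A|=n$.

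There is no real obstacle here. All the work was done in \cref{p:unicity2stairs}, namely checking that $\dyck$ and $\cdf_\A$ are mutually inverse and land in the right sets.
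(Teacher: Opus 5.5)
Your proposal is correct and takes the same approach as the paper. There the corollary is an immediate consequence of the bijection between $\dyck$ and $\cdf_\A$ in \cref{p:unicity2stairs}, combined with the classical fact that Dyck paths of semi-length $n$ are counted by $C_n$. The paper cites that fact rather than re-deriving it as you do.
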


Let us now state our main result on the 2-Chinese monoid:

\begin{thm}
\label{thm-2-chinese}
Let $W$ and $W'$ be two words.
Then $W \equiv_{\chinese(2)} W'$ if and only if \mbox{$\cont(W) = \cont(W')$}
and $\dyck(S(W)) = \dyck(S(W'))$.

Moreover, the shortlex 2-Chinese normal form of $W$ is the row word of the
2-Chinese staircase of the same content and Dyck path as $S(W)$.
\end{thm}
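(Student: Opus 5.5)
The plan is to prove the two implications separately. The statement then follows from \cref{p:unicity2stairs}, which shows that the 2-Chinese staircase attached to a content and a Dyck path is unique.

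\emph{Invariance.} I first show that $W\mapsto(\cont(W),\dyck(S(W)))$ is constant on $\equiv_{\chinese(2)}$-classes. Content is clearly invariant, since all relations involved are content-preserving. Chinese relations do not change $S(W)$ at all, by \cref{thm-chinese}. So it remains to check a single elementary rewriting $UaV\leftrightarrow UaaV$.

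Observe that $\dyck(S)$ depends only on the \emph{support} of $S$, namely the set of nonzero cells. More precisely, it depends only on the set of cells covered by some nonzero cell.

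I would prove the following by induction on $|V|$, using the insertion algorithm and \cref{note-insertion}: $S(UaaV)$ and $S(UaV)$ have the same ``covered region''.
\begin{itemize}
\item The base step compares $S(Ua)$ with $S(Uaa)$. The second insertion of $a$ either adds $1$ on the diagonal cell $(a,a)$, or moves a bump. The bumped letter goes NW of a cell that is already nonzero in column $a$, so the covered region is unchanged. I will verify this carefully by a case analysis on the bottom row.
\item For the inductive step, I need a stronger invariant that is stable under insertion of a letter. I would prove that if two staircases have the same covered region and agree on their ``bottom-most nonzero entry in each row'' (the data driving bumps), then inserting the same letter preserves both properties.
\end{itemize}
Formulating this invariant correctly is the main obstacle. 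My first attempt is to take the invariant to be the pair (covered region, position of the rightmost nonzero cell in each row). I would then check by the three-case insertion rule that it is preserved.

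\emph{Reduction to normal forms.} Conversely, I show every $W$ is $\equiv_{\chinese(2)}$-equivalent to $\RW(T)$, where $T$ is the 2-Chinese staircase with content $\cont(W)$ and Dyck path $\dyck(S(W))$. Start from $\RW(S(W))\equiv_\chinese W$. Exponents of diagonal entries and of pairs $(rc)$ can be reduced to $1$ using $a^2=a$, together with $(rc)^2\equiv_\chinese r^2c\cdot$(rearranged). Indeed $bab=bba$ gives $(ba)^2\equiv b\,bab\,a$-type rewritings, which yields $(rc)^k\equiv_{\chinese(2)} rc$.

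Next I eliminate nonzero cells that are not peaks and not needed diagonals. If cell $(r',c')$ strictly covers $(r,c)$, I would show $(rc)$ can be deleted or merged into diagonal letters using the relations $cba=cab=bca$ and idempotency. Two nonzero cells in the same row or column can be absorbed similarly. I would check these local moves on $2$ and $3$ letters, guided by the examples \eqref{chin2-2}--\eqref{chin2-3}, then argue inductively on the number of non-conforming cells, noting that each move preserves the covered region. The endpoint is a 2-Chinese staircase with the same covered region, hence equal to $\cdf(\dyck(S(W)))$ by \cref{p:unicity2stairs}.

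\emph{Shortlex claim.} By the two parts, the classes of $\chinese(\A,2)$ are in bijection with pairs (content, Dyck path). Each class contains the row word of its 2-Chinese staircase. Any word $W'$ in the class has $\RW(S(W'))$ shortlex $\le W'$ by \cref{thm-chinese}, and the reduction above only shortens words or keeps them lexicographically no larger. Hence $\RW(T)$ is the shortlex minimum of its class.
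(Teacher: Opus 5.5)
Your overall plan matches the paper's: prove invariance under the relations, reduce to $2$-Chinese staircases, and conclude with Proposition~\ref{p:unicity2stairs}. However, both halves have a real gap.

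\textbf{Invariance.} The strengthened invariant you propose fails already at the base of your induction on $|V|$. Take $a<b$ and $U=b$. Then $S(ba)$ has the single nonzero cell $(b,a)$, while $S(baa)$ has the nonzero cells $(b,a)$ and $(a,a)$ (indeed $baa\equiv_{\chinese}aba$). The covered regions agree, but any per-row data, rightmost or bump-driving, differ on row $a$.

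The missing observation, which is Lemma~\ref{lem-chin-aa}, is that no extra data is needed. Work over a fixed alphabet containing all letters involved. Inserting $x$ adds $1$ to a single cell $(r,x)$ and modifies only cells weakly north-west of it (Note~\ref{note-insertion}). So the new covered region is the old one together with the cells covered by $(r,x)$. This adds something only in two cases:
\begin{enumerate}[(1)]
\item all rows $\geq x$ are empty, and then $r=x$;
\item the bottom-most nonempty row $R_0$ satisfies $R_0>x$ and has all its nonzero entries in columns $>x$, and then $r=R_0$.
\end{enumerate}
Both cases can be read off the covered region alone. $R_0$ is the lowest row meeting the region, and since the rows below it are empty, the rightmost covered cell of row $R_0$ is its rightmost nonzero cell. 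Hence equal covered regions stay equal after inserting any letter, and your induction closes with the bare covered region.

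\textbf{Converse.} Here the key step is missing. Once every row and column carries at most one $1$, the only remaining violation is a pair of non-diagonal nonzero cells $(c,b)$ and $(d,a)$ in strict SE--NW position. This needs four letters $a<b<c<d$, so checking moves on two or three letters cannot settle it. Also, in $\RW(S)$ the factors $cb$ and $da$ are separated by other letters. Moreover, a rewritten word is no longer a row word, so an induction on the number of non-conforming cells would require computing the new staircase, which you do not do.

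The paper avoids both problems as follows.
\begin{enumerate}[(1)]
\item Take the shortlex minimum $N$ of a $2$-Chinese class. It is also minimal in its Chinese class, so $N=\RW(S(N))$ by Theorem~\ref{thm-chinese}.
\item Show that the row word of every non-$2$-Chinese staircase has a shorter or lexicographically smaller equivalent word.
\item Rows of weight at least $2$ are handled by $c^2\to c$, $cca\to ca$ and $cacb\equiv_{\chinese}ccab\to cab$. The same moves on the column reading handle columns.
\item For a SE--NW pair, choose one closest in reading order. Then only diagonal letters $b'$ with $a<b'<d$ lie between the two factors, and these satisfy $b'da\equiv_{\chinese}dab'$.
\item The chain $cbda\equiv_{\chinese(2)}cbdaa\equiv_{\chinese}cbada\equiv_{\chinese}bcada\equiv_{\chinese}bcdaa\equiv_{\chinese(2)}bcda$ then gives a lexicographically smaller equivalent word.
\end{enumerate}
Invariance and Proposition~\ref{p:unicity2stairs} then give both the equivalence and the shortlex statement at once. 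With these moves, your final descent argument becomes exactly the paper's.
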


In order to prove this result, we first need two simple lemmas on the
(classical) chinese insertion algorithm.

\begin{lemma}
\label{lem-chin-aa2}
Let $W$ be a word and $a$ a letter. Then
$\dyck(S(W a))=\dyck(S(W aa))$.
\end{lemma}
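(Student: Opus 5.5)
The plan is to prove the slightly stronger statement that, for every Chinese staircase $S$ and letter $a$, inserting $a$ into $S\leftarrow a$ does not change the gray region of \cref{algo-chin2dyck}. The gray region is the set of boxes covered by some box with a non-zero entry. It depends only on the support of the staircase, and in fact only on the covering-maximal non-zero boxes. Since it is a down-set for covering, it is enough to check two things: every newly non-zero box is covered by a box that was already non-zero, and every box that becomes zero is covered by a box that stays non-zero. The content is unchanged because $\cont(Wa)=\cont(Waa)$, so the removal of empty hooks acts in the same way on both staircases and can be ignored.

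I would argue by induction on the number $n$ of rows, using the recursive description of the insertion. Split $S$ into its bottom row $R$ (label $a_n$) and the rest $S'$, and let $a=a_k$. The gray region of $S$ is the union of the down-set generated by the support of $R$ and the gray region of $S'$, regarded inside $S$. If $k=n$, both insertions only increment the diagonal cell $(a_n,a_n)$, which is already non-zero after the first one, so the support does not change. If $k<n$ and the first insertion does not bump, then the largest index $\ell$ with $R(a_\ell)\neq 0$ satisfies $\ell\le k$ and $R$ is untouched. The second insertion is then in the same situation, so both insertions send $a_k$ into $S'$, and the induction hypothesis applied to $S'$ concludes this case.

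The delicate case is when the first insertion bumps $a_\ell$ with $\ell>k$. Afterwards $R(a_k)\ge 1$, and $a_\ell$ has been inserted into $S'$ if $\ell<n$. Let $\ell'$ be the new largest index with $R(a_{\ell'})\neq 0$. If $\ell'>k$, which forces $\ell'=\ell$ (because $R(a_\ell)$ was at least $2$), the second insertion bumps $a_\ell$ again. The row support can only lose $(a_n,a_\ell)$, and the box $(a_n,a_k)$ covers it, since it lies in the same row with $k<\ell$. The letter sent into $S'$ is again $a_\ell$, so the induction hypothesis applied to $S'$ and the word producing $S'$ followed by $a_\ell$ handles $S'$. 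If $\ell'=k$, the second insertion leaves $R$ unchanged and sends $a_k$ into $S'$. By \cref{note-insertion}, this only modifies entries in column $a_k$ and entries to the non-strict north-west of the modified one. All such boxes are covered by the non-zero box $(a_n,a_k)$ of the bottom row.

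I expect this geometric bookkeeping to be the main obstacle. It requires checking carefully that "north-west" in \cref{note-insertion} matches the covering relation $r'\ge r$, $c'\le c$ used for the gray region. Granting this, newly non-zero boxes are covered by $(a_n,a_k)$, and anything previously covered by a box that is now zeroed is still covered by $(a_n,a_k)$, by transitivity of covering. Hence the gray region, and therefore $\dyck$, is unchanged in every case, which gives $\dyck(S(Wa))=\dyck(S(Waa))$.
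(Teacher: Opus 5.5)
Your reduction to the support and your treatment of the case $k=n$ and of the no-bump case are fine. The gap is in the delicate case: the claim that $\ell'>k$ forces $\ell'=\ell$ is false. The bottom row of a general Chinese staircase can have several non-zero entries. If $R(a_\ell)=1$ and $R(a_m)\neq 0$ for some $k<m<\ell$, the first insertion empties $(a_n,a_\ell)$ and the second one bumps $a_m$.

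For instance, over $\{a<b<c<d\}$ take $S=S(dcdb)$, whose only non-zero entries are $S(d,c)=S(d,b)=1$. Inserting $a$ bumps $c$: $(d,c)$ becomes $0$, $(d,a)$ becomes $1$, and $c$ is sent into $S'$, giving $(c,c)=1$. Inserting $a$ again bumps $b$, not $c$: $(d,b)$ becomes $0$, $(d,a)$ becomes $2$, and $b$ is sent into $S'$, which moves the $1$ from $(c,c)$ to $(c,b)$. So $S'$ receives $c$ and then $b$, and your induction hypothesis (the same letter inserted twice) says nothing here. In fact the gray region of $S'$ by itself grows, from $\{(c,c)\}$ to $\{(c,c),(c,b),(b,b)\}$. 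So showing that the contributions of $R$ and of $S'$ are separately unchanged cannot work in this subcase.

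The repair is to not split the bump case at all. Once the first insertion has made $(a_n,a_k)$ non-zero, the second insertion does one of two things, whatever $\ell'$ is. Either it increments $(a_n,a_k)$ itself, or it leaves $R$ untouched and increments a cell higher up in column $a_k$. By \cref{note-insertion}, every other entry it modifies is weakly north-west of that cell, hence covered by $(a_n,a_k)$, which remains non-zero. So the argument you already give for $\ell'=k$ handles every subcase, and induction on $S'$ is only needed in the no-bump case.

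The repaired proof is then essentially the paper's. The second $a$ always lands in column $a$ at or above the cell incremented by the first $a$. Everything it touches is therefore covered by that cell, which stays non-zero, so the path does not move.
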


\begin{proof}
Consider the Chinese staircase $S$ of $W$ on an alphabet containing both
its content and $a$.

When one inserts the letter $a$ in $S$, it inserts in its column
and then only changes values to the north-west of this insertion position 
(see Note~\ref{note-insertion}). When inserting a second $a$ immediately
after, this second $a$ necessarily inserts to the north of the first one and
then bumps letters to its own north-west, hence not modifying the path.
Removing the empty hooks now proves the result.
\end{proof}

\begin{lemma}
\label{lem-chin-aa}
Let $W$ and $W'$ be two words with same content such that

\mbox{$\dyck(S(W))=\dyck(S(W'))$} and let $a$ be a letter. Then
$\dyck(S(W a))=\dyck(S(W' a))$.
\end{lemma}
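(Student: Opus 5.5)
The plan is to replace the Dyck path by a description that can be read off the word itself, and which visibly behaves well when a letter is appended. For a word $W$, let $\Gamma(W)$ be the set of boxes $(r,c)$ of the staircase, $r\geq c$, such that
\begin{itemize}
\item either $r=c\in\cont(W)$,
\item or $r>c$ and there are positions $i<j$ with $w_i\geq r$, $w_j\leq c$ and $w_i>w_j$.
\end{itemize}
The second condition says that some inversion of $W$ lies weakly to the south-east of $(r,c)$. I would prove the claim $(\ast)$: the gray region of $S(W)$, i.e.\ the set of boxes covered by a non-zero entry, equals $\Gamma(W)$. Since $\dyck(S(W))$ is the south-east border of that region, after deleting empty hooks (which depend only on $\cont(W)$), the hypothesis of the lemma then says exactly that $\cont(W)=\cont(W')$ and $\Gamma(W)=\Gamma(W')$.

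Granting $(\ast)$, the lemma is immediate. Appending $a$ to $W$ only adds two kinds of boxes to $\Gamma$.
\begin{itemize}
\item The diagonal box $(a,a)$ is added.
\item The boxes $(r,c)$ with $r>c\geq a$ are added for which some letter $x\in\cont(W)$ satisfies $x\geq r$ and $x>a$; the witnessing inversion is $x$ followed by the new final $a$.
\end{itemize}
Any inversion of $Wa$ not ending at the last letter is already an inversion of $W$. Hence $\Gamma(Wa)=\Gamma(W)\cup\Delta(\cont(W),a)$, where $\Delta$ depends only on the content and on $a$. Therefore $\Gamma(Wa)=\Gamma(W'a)$, and also $\cont(Wa)=\cont(W'a)$, which gives $\dyck(S(Wa))=\dyck(S(W'a))$.

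To prove $(\ast)$ I would proceed in two steps.

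First, show that $\Gamma$ is a Chinese invariant by checking each relation of \eqref{eq-chinese} applied to a factor $U\,xyz\,V$. Letters outside the factor see the same multiset, so only inversions with both ends inside the factor, or with one end inside and one outside, need attention. For outside–inside pairs, the set of letters in the factor is unchanged, so those pairs are preserved. For pairs inside the factor, the relevant data is, for each threshold $c$, the largest letter placed before some letter $\leq c$ that is smaller than it. One checks this case by case.
\begin{itemize}
\item $cba=cab=bca$ all realise the pairs $(c,b)$ and $(c,a)$, and $(b,a)$ is dominated by $(c,a)$ in the covering order.
\item $aba$ and $baa$ both realise exactly the inversion $(b,a)$.
\item $bba$ and $bab$ both realise exactly the inversion $(b,a)$.
\end{itemize}
Second, since $\RW(S(W))\equiv_{\chinese}W$, it suffices to verify $(\ast)$ for row words of Chinese staircases. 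In $\RW(S)$ each non-zero off-diagonal entry $(r,c)$ contributes a factor $rc$, an inversion placed exactly at $(r,c)$. So the gray region is contained in $\Gamma(\RW(S))$.

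The reverse inclusion is the main obstacle. I must show that every inversion $x\ldots y$ of the row word lies weakly south-east of some non-zero entry, including inversions formed by letters taken from different boxes, such as a diagonal letter $r^{S(r,r)}$ read before a later letter $c$ of a lower row. My approach would use the reading order: rows are read top to bottom and left to right. If $x$ comes from row $r_1$ and $y$ from row $r_2$, then $r_2\geq r_1$ (or the same row with $y$ to the right). The letter $y$ is either the row label $r_2\geq x$, which is impossible for an inversion, or a column label $c\geq y$ of a non-zero entry $(r_2,c)$. In the latter case I would show $(r_2,c)$ covers $(x,y)$, using $r_2\geq r_1\geq x$ and $c\leq y$ after choosing $y$ as a column letter. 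If this bookkeeping fails in some configuration, the fallback is to prove $(\ast)$ instead by induction along the insertion algorithm, using Note~\ref{note-insertion}.
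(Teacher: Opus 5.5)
Your argument is correct, but it takes a different route from the paper. The paper works inside the insertion algorithm. Since $S(W)$ and $S(W')$ have the same gray region, the new letter $a$ lands in column $a$ either on the same non-gray cell of both staircases or on gray cells in both. Every further bump happens weakly to the north-west (Note~\ref{note-insertion}), so the two paths change identically or not at all.

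You never use the bumping mechanics. You describe the gray region directly from the word, as the diagonal of the content together with the south-east down-closure of the inversion value pairs. You prove this by invariance under the Chinese relations plus a direct reading of $\RW(S)$, and the lemma then reduces to a one-line computation of $\Gamma(Wa)$.

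The paper's proof is shorter and fits the insertion framework used throughout that section, but its case analysis is left informal. Yours is more self-contained and gives more. Doubling a letter does not change the set of inversion value pairs, so $\Gamma$ is directly an invariant of $\equiv_{\chinese(2)}$. This gives Lemma~\ref{lem-chin-aa2} for free, and also the implication from $W\equiv_{\chinese(2)}W'$ to equal Dyck paths in Theorem~\ref{thm-2-chinese}. Your description is essentially the geometric form of the function $\rdb_W$ of Definition~\ref{def-rd}.

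A few details to tighten:
\begin{itemize}
\item State $(\ast)$ only on the rows and columns indexed by $\cont(W)$. Over a larger alphabet it fails on the diagonal: for $W=ca$ with $a<b<c$, the box $(b,b)$ is covered by $(c,a)$ but is not in $\Gamma(W)$.
\item The step you flag as the main obstacle is already complete, so no fallback is needed. Every letter read in row $r_1$ is at most $r_1$, so $x\leq r_1\leq r_2$. Then $y$ is exactly the column label of a non-zero entry $(r_2,y)$ (not merely some $c\geq y$), and that entry covers $(x,y)$.
\item In the case check, $bca$ does not contain the inversion $(c,b)$. What matters is that all three words contain $(c,a)$, and $(c,a)$ covers all their other inversion pairs.
\end{itemize}
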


\begin{proof}
Consider the staircases of both $W$ and $W'$ on an alphabet containing both
their content and $a$. Having same content and same Dyck paths implies that
the paths below the nonzero values of $S(W)$ and $S(W')$ are the same.
Now, $a$ inserts in its column either on the same nongray cell of both $S(W)$
and $S(W')$ or it inserts in gray cells in both $S(W)$ and $S(W')$.
In the first case, it modifies both the paths in the same way whereas in the
second case, it modifies neither paths.
Removing the empty hooks now proves the result.
\end{proof}

\begin{proof}[Proof of the Theorem]
Let us first prove that if $W \equiv_{\chinese(2)} W'$ then they have
same contents and same Dyck paths.

The content property is direct thanks to the definition of the congruence.
Now, we shall prove the property for an elementary rewriting of the 2-Chinese
congruence. If $W$ and $W'$ differ by a classical Chinese rewriting, the
property is direct since they have the same Chinese staircase.
If they differ by doubling a letter $a$, that is
$W=W_1 a W_2$ and $W'=W_1 a^2 W_2$, then $\dyck(W_1\cdot a)=\dyck(W_1\cdot a^2)$
(see Lemma~\ref{lem-chin-aa2}) and then the Dyck paths evolve in the
same way when inserting all letters of $W_2$ (see Lemma~\ref{lem-chin-aa}).

\medskip
So the 2-Chinese classes are subsets of words having same content and same
Dyck paths. We shall now show that the words having same content and same
Dyck paths are all equivalent. To do that, we will show that all but one
are not shortlex normal forms by showing a sequence of 2-Chinese rewritings
either making these shorter or lexicographically smaller.

Since the shortlex element of a given Chinese staircase is its row word, we
can restrict ourselves to row words.
Now, if the sum of the values in row $c$ of $S$ is more than $1$, use either
$c^2\equiv_{\chinese(2)} c$ if there is a value at least $2$ in the diagonal,
or $cca\equiv_{\chinese(2)} ca$ if there is a $1$ in the diagonal and another
nonzero value elsewhere on the row, or
$cacb \equiv_{\chinese(2)} ccab \equiv_{\chinese(2)} cab$ with $a\leq b$
if there is either a value at least $2$ in a cell or two nondiagonal cells
with at least a $1$.
So these Chinese row words are not the shortlex normal form of their
class.
The same holds for columns because of the \emph{column reading} of a Chinese
staircase described in~\cite{CASSAIGNE2001} on which we apply the same
rewritings as before, reducing the length of the word. So the row word
associated with this reduced column word is strictly shorter than the initial
one.

Now, if two nonempty nondiagonal cells in $S$ are in SE-NW position,
consider such two cells $(d,a)$ and $(c,b)$ with $a<b<c<d$ and no such cells
in between in the order of the row reading of $S$.
There might still be nonempty rows between these two but these nonempty rows
necessarily have their one on their diagonal or it would contradict the
previous assumption.

Now consider the factor of $\RW(S)$ beginning with $cb$ and ending with $da$.
If there are no values in between, then one can rewrite this part as $bcad$
using only 2-Chinese elementary rewritings:
\begin{equation}
cbda \equiv_{\chinese(2)} cbdaa
     \equiv_{\chinese} cbada
     \equiv_{\chinese} bcada
     \equiv_{\chinese} bcdaa
     \equiv_{\chinese(2)} bcda.
\end{equation}

If there are values in between, these are diagonal elements, hence with values
between $a$ and $d$. But thanks to the Chinese congruence, all these values
commute with the block $da$: it is the rewriting $b'da\equiv dab'$ if
$a<b'<d$, so that
\begin{equation}
cb\, {b'_1}\dots {b'_p}\, da
   \equiv_{\chinese}  cbda\,  {b'_1}\dots {b'_p}
   \equiv_{\chinese(2)}  bcda\,  {b'_1}\dots {b'_p}
   \equiv_{\chinese}  bc\, {b'_1}\dots {b'_p}\, da
\end{equation}
the middle rewriting being done as before.

So in both cases, we found a word that is equivalent to $\RW(S)$ but
lexicographically smaller. Putting all the previous observations together, we
see that the set of shortlex normal forms are row words of the 2-Chinese
staircases (see Definition~\ref{de-2-chinese-staircase}). And since we know
that there is only one such staircase with given contents and Dyck paths (see
Proposition~\ref{p:unicity2stairs}), all words with a given content and a
given Dyck path are 2-Chinese equivalent and that their shortlex normal form
is the row reading of their corresponding 2-Chinese staircase.
\end{proof}

\begin{cor}
\label{cor-main-chinese}
Let $\A$ be a finite alphabet.
Then the set of shortlex normal forms of $\chinese(\A,2)$ coincide with the
set of row words of the 2-Chinese staircases of content $\B\subseteq\A$.
\end{cor}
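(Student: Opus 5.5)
The plan is to deduce the corollary directly from \cref{thm-2-chinese}, applied content by content. Every element of $\chinese(\A,2)$ is the class of some word $W$, whose content is a subalphabet $\B\subseteq\A$. Because the 2-Chinese congruence is content-preserving, every word in that class lies in $\B^*$. We may therefore view the class inside $\chinese(\B,2)$, which is a submonoid of $\chinese(\A,2)$. Both the Chinese relations and the relations $a^2=a$ only involve letters already present, so equivalence of words over $\B$ is the same in $\B^*$ and in $\A^*$.

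The first step is the inclusion of the set of shortlex normal forms in the set of row words. Take a class with content $\B$ and representative $W$. By the second part of \cref{thm-2-chinese}, its shortlex normal form is $\RW(S')$, where $S'$ is the 2-Chinese staircase with content $\B$ and Dyck path $\dyck(S(W))$. Such an $S'$ exists and is unique by \cref{p:unicity2stairs} (namely $S'=\cdf_\B(\dyck(S(W)))$). We should check that the lexicographic order on $\A^*$ restricted to $\B^*$ is the one on $\B^*$, which is immediate.

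The second step is the reverse inclusion. Let $S$ be a 2-Chinese staircase of content $\B\subseteq\A$, and set $W=\RW(S)$. By \cref{thm-chinese}, the Chinese staircase of $W$ is $S$ itself. The class of $W$ in $\chinese(\A,2)$ therefore has content $\B$ and Dyck path $\dyck(S)$. By \cref{thm-2-chinese} again, its shortlex normal form is the row word of the unique 2-Chinese staircase with content $\B$ and path $\dyck(S)$. By \cref{p:unicity2stairs} that staircase is $S$, so $\RW(S)$ is a shortlex normal form.

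The only delicate point is bookkeeping about the indexing alphabet. A 2-Chinese staircase of content $\B$ can be drawn over $\A$ with empty hooks, or over $\B$ after removing those hooks. Its row word does not depend on this choice, since empty hooks contribute no letters. Likewise, \cref{algo-chin2dyck} defines the Dyck path after removing empty hooks. With this observation both steps go through, and I expect no real obstacle.
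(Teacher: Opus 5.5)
Your proposal is correct and follows the paper's route: the paper gives no separate proof, reading the corollary off the second part of \cref{thm-2-chinese} together with the uniqueness in \cref{p:unicity2stairs}, as you do content by content. The one step resting on an unstated fact is $S(\RW(S))=S$. It needs the correctness of Chinese insertion from \cite{CASSAIGNE2001}, \cref{thm-chinese}, and the injectivity of the row reading, which is the same identification of Chinese classes with staircases that the paper adopts right after \cref{thm-chinese}.
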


\subsubsection{2-Chinese Functions}
\label{subsection-2-chinese-functions}

In this section, we introduce an alternative to 2-Chinese staircases and
use these objects to provide an equivalent of the Chinese insertion algorithm
in the context of the 2-Chinese monoid. Given ~\cref{thm-2-chinese}, one could
directly adapt the classical Chinese insertion algorithm to our case but our
version in one-line notation makes more sense given the combinatorial object
it encodes.

\paragraph{2-Chinese functions: definition and first property\\}

\begin{de}
\label{2-chinese-func}
A \emph{2-Chinese function} on an alphabet $\A$ is a function $\f:\B\to \B$ 
for some $\B\subseteq\A$ that satisfies for all $x, y\in \B$ 
\begin{enumerate}[(i)]
\item $x\leq y$ implies $\f(x)\leq \f(y)$;
\item $\f(x)\leq x$.
\end{enumerate}
\end{de}

We shall represent the the $2$-Chinese functions as a biword where the top
word is the domain of the function and the bottom word the images of each
element of the domain, see the left of~\cref{fig-2ch-red-dyck} for an example.

If $\B$ is fixed, the number of such functions is the Catalan number
$C_{|\B|}$: indeed, if $\B=\{1,\dots,n\}$, their images are known as
non-decreasing parking function, one of the many combinatorial objects
enumerated by the Catalan numbers (see~\cite{oeis}).
And indeed, there is a very natural bijection that makes sense in our
context between 2-Chinese functions on a given alphabet and Dyck paths of
semi-length $\cont(\f)$.

\begin{prop}
\label{prop-bij-2fns-2stair}
Consider a 2-Chinese function $\f$ with domain $\B$. Represent it in the
Chinese staircase indexed by $\B$ with a $1$ (a $2$ if it is on the diagonal)
at all coordinates $(x,\f(x))$ for $x$ in~$\B$. This staircase is reduced
since there is exactly one nonzero value in each row. Thus it has an
associated Dyck path.

The second map goes as follows: consider a Dyck path of semi-length
$\ell$ and an alphabet $\B$ of size~$\ell$.
Draw the path in the Chinese staircase indexed by $\B$.
Then for any $x$ in $\B$, define $\f(x)$ as the column label of the box
immediately to the left of the Dyck path on row $x$.

These maps are inverses of each other.
\end{prop}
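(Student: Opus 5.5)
We will prove that the two maps are well defined and then check both compositions; throughout, $\B=\{b_1<\dots<b_\ell\}$. Recall the staircase conventions: row $b_i$ consists of the cells $(b_i,b_j)$ with $j\le i$, and column labels decrease from left to right.

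\emph{First map.} Since $\f(x)\le x$, the cell $(x,\f(x))$ lies in the staircase. Each row contains exactly one nonzero entry, so every hook is nonempty, the staircase is reduced, and \cref{algo-chin2dyck} applies. We then identify the gray region explicitly. A cell $(r,c)$ is gray iff it is covered by some nonzero cell $(x,\f(x))$, i.e. iff there is $x\ge r$ with $\f(x)\le c$. By monotonicity (i), the smallest value of $\f$ on $\{x\ge r\}$ is $\f(r)$. So $(r,c)$ is gray iff $c\ge \f(r)$.

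Hence on row $r$ the gray cells are exactly the columns from $r$ down to $\f(r)$. These occupy a right-justified segment ending at the diagonal, and the leftmost gray cell in row $r$ is $(r,\f(r))$. Because $\f$ is weakly increasing down the rows, the left boundaries move weakly leftward as we go down. The south-east border of the gray region is therefore a lattice path whose vertical step on row $r$ sits immediately to the left of column $\f(r)$. It stays on the correct side of the diagonal because all diagonal cells are gray.

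\emph{Second map.} Start from a Dyck path drawn in the staircase on $\B$. Reading row $x$, the path has exactly one vertical step there. Let $\f(x)$ be the label of the box just to the left of that step; we must check it is a box of the staircase. By the orientation in the figures, that box lies weakly to the right of the diagonal cell $(x,x)$. Here "right" means smaller labels, so $\f(x)\le x$, which is (ii). Condition (i) holds because the path only moves in one horizontal direction as we proceed through the rows, so the column of the vertical step is monotone in the row. This is the step needing care: one must match the east/north conventions of the Dyck path in \cref{fig-chinese-to-path-example} with the staircase labelling, and verify that larger rows (lower down) give weakly larger column labels. We expect this convention bookkeeping, rather than any combinatorial content, to be the only obstacle.

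\emph{Inverses.} By the gray-region description, the box immediately to the left of the path on row $r$ is $(r,\f(r))$. So applying the second map after the first returns $\f$. Conversely, a Dyck path is determined by the horizontal position of its vertical step in each row. The first map applied to the function read off a path produces, on each row, a vertical step at exactly that position, so it recovers the path. Both compositions are the identity, and the maps are mutually inverse bijections.
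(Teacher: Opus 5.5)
Your route is the paper's: everything rests on describing the gray region of the staircase of $f$ through the monotonicity of $f$. The paper phrases this as ``no two nonzero cells are in SE--NW position, so each nonzero cell has a non-gray cell immediately to its right''. Your criterion that $(r,c)$ is gray iff $f(r)\le c\le r$ is correct.

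However, left and right are reversed in your first-map paragraph. Since the proposition is precisely about which box lies immediately to the \emph{left} of the path, this is the step that carries the content. With the conventions you yourself recall (column labels decrease from left to right, and row $r$ consists of the columns $c\le r$):
\begin{itemize}
\item the diagonal cell $(r,r)$ is the \emph{leftmost} cell of row $r$;
\item the gray cells of row $r$ form a block starting at the diagonal and extending to the right, whose \emph{rightmost} cell is $(r,f(r))$;
\item the vertical step of the path on row $r$ is the right edge of column $f(r)$.
\end{itemize}
For instance, in the paper's example with images $a,b,c,c,c,d,d$ on $\{a<\dots<g\}$, the step on the two bottom rows is the right edge of column $d$.

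As you wrote it (``right-justified segment ending at the diagonal'', ``leftmost gray cell is $(r,f(r))$'', ``vertical step immediately to the left of column $f(r)$''), the cell $(r,f(r))$ would sit immediately to the \emph{right} of the path. That contradicts your Inverses paragraph, where you correctly use that it sits immediately to the left. Likewise, what moves weakly leftward as one goes down, and forms the path, is the right end of each gray block (at column $f(r)$), not its left end.

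You should also settle the monotonicity direction you left open in the second map. Once the picture is right it takes one line. The path goes from the bottom-left corner to the top-right corner using only north and east steps, so moving up the rows its vertical step moves weakly to the right, i.e.\ to smaller labels. Hence $x<y$ (row $x$ above row $y$) gives $f(x)\le f(y)$, which is condition (i).

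With these corrections your argument is complete. It is also slightly more thorough than the paper's, which only verifies that going from $f$ to the path and back returns $f$; you also check the other composition.
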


\begin{proof}
First, both maps have the expected image: the first one returns a Dyck path
since the staircase is reduced and the second one returns a 2-Chinese
function. Indeed, the path being above the diagonal implies that $\f(x)\leq x$
and being a path with only east and north steps implies that $\f(x)\leq \f(y)$
if $x<y$.

Now, consider a 2-Chinese function $\f$ and its associated staircase.
It has a nonzero value on each row and there are no two values in a SE-NW
position. So each nonzero value has a non gray cell to its right (or no cell
at all if it is on the rightmost column). So the nonzero cells are exactly the
ones that are immediately to the left of the corresponding Dyck path.
\end{proof}

For example, \cref{fig-2ch-red-dyck} shows a 2-Chinese function, its
associated reduced staircase and Dyck path.

\begin{figure}[ht]
\begin{equation*}
\begin{pmatrix}
a&b&c&d&e&f&g\\
a&b&c&c&c&d&d\\
\end{pmatrix}
\qquad 
\Yboxdim{1cm}\begin{tikzpicture}[scale=0.5,baseline={(current bounding box.east)}]
 \tgyoung(0cm,0cm,%
 ::::::;2!:a,%
 :::::;2!;:b,%
 ::::;2!;;:c,%
 :::;;1!;;:d,%
 ::;;;1!;;:e,%
 :;;;1!;;;:f,%
 ;;;;1!;;;:g,%
 :g:f:e:d:c:b:a)
 \draw[ultra thick] (0, -6) -- (4, -6) -- (4, -4) -- (5, -4) -- (5, -1) -- (6, -1) -- (6, 0) -- (7, 0) -- (7, 1);
\end{tikzpicture}
\end{equation*}
\caption{A $2$-Chinese function, its staircase and its Dyck path}
\label{fig-2ch-red-dyck}
\end{figure}

\begin{de}
\label{def-stair2func}
Let $S$ be a Chinese staircase. We shall denote by $\gd(S)$ the 2-Chinese
function of same content as $S$ and associated with the Dyck path of $S$.
\end{de}

With a 2-Chinese function, we associate a \emph{reading word} the following
way.

\begin{de}
\label{2-chinese-func-word}
Let $\f$ be a 2-Chinese function. Its reading word is the reading word of its
corresponding staircase.
\end{de}

\begin{exam}
Here are the images by $\gd$ of the 2-Chinese staircases of
Equation~\eqref{chin2-3} with their reading words.
\begin{equation*}
\begin{matrix}
\begin{pmatrix} a&b&c\\ a&b&c\\ \end{pmatrix}
&
\begin{pmatrix} a&b&c\\ a&b&b \end{pmatrix}
&
\begin{pmatrix} a&b&c\\ a&a&c\\ \end{pmatrix}
&
\begin{pmatrix} a&b&c\\ a&a&b\\ \end{pmatrix}
&
\begin{pmatrix} a&b&c\\ a&a&a\\ \end{pmatrix}
\\[.3cm]
aabbcc & aabbcb & aabacc & aabacb & aabaca \\
\end{matrix}
\end{equation*}
\end{exam}

\begin{prop}\label{2chifunct}
Let $S$ be a 2-Chinese staircase and conside $W$ its row reading and $\f$
its corresponding 2-Chinese function through the bijection with Dyck paths.
Then $W$ and the reading word of $\f$ are $2$-Chinese equivalent.

In particular, the set of all reading words of the 2-Chinese functions on
$\A$, is a set of normal forms for $\chinese(\A, 2)$.

We shall denote by $\rd_W$ the $2$-Chinese function associated with the
$2$-Chinese class of $W$.
\end{prop}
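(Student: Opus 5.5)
The natural route is through \cref{thm-2-chinese}. Two words are $\chinese(\A,2)$-equivalent exactly when they have the same content and the same Dyck path. So I will show that the reading word of $\f$ has content $\cont(S)$ and that its Chinese staircase has the same Dyck path as $S$.

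Let $T_\f$ be the staircase of \cref{prop-bij-2fns-2stair}. It has a $1$ at $(x,\f(x))$ when $\f(x)\neq x$ and a $2$ at $(x,x)$ when $\f(x)=x$. The reading word of $\f$ is $\RW(T_\f)$.
\begin{enumerate}[(1)]
\item \emph{Same content.} Each row of $T_\f$ contains exactly one nonzero entry, so every hook is nonempty. Hence $\cont(\RW(T_\f))=\B=\cont(S)$.
\item \emph{Staircase of the reading word.} By \cref{thm-chinese}, $\RW(T_\f)$ is the shortlex normal form of a Chinese class whose staircase is $T_\f$ itself. Hence $S(\RW(T_\f))=T_\f$ and $\dyck(S(\RW(T_\f)))=\dyck(T_\f)$.
\item \emph{Same Dyck path.} By \cref{prop-bij-2fns-2stair} and \cref{def-stair2func}, $\dyck(T_\f)=\dyck(S)$.
\end{enumerate}
By \cref{thm-2-chinese}, $W$ and the reading word of $\f$ are therefore $2$-Chinese equivalent.

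One subtlety needs checking: $T_\f$ is a genuine Chinese staircase, since entries are arbitrary non-negative integers. It need not be $2$-Chinese, because its off-diagonal entries may lie in SE-NW position. This does not matter, because \cref{thm-2-chinese} only compares Dyck paths of arbitrary Chinese staircases.

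For the normal-form statement, I use \cref{cor-main-chinese} together with \cref{p:unicity2stairs}. For each content $\B\subseteq\A$, the classes of $\chinese(\A,2)$ with content $\B$ are in bijection with the Dyck paths of semi-length $|\B|$. By \cref{prop-bij-2fns-2stair}, these paths are in bijection with the $2$-Chinese functions with domain $\B$.

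The first part shows that the reading word of $\f$ lies in the class corresponding to $\dyck(T_\f)$. Distinct functions with the same domain give distinct Dyck paths, hence distinct classes. Every class is reached, because every Dyck path comes from some $\f$. So the reading words form a set of normal forms, one per class, and $\rd_W$ is well defined.

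The main obstacle is only bookkeeping. I must make sure the Dyck path of $T_\f$ is computed from the same gray-box rule as in \cref{algo-chin2dyck}, in which a diagonal $2$ is treated like any nonzero entry. I must also check that $S$ and $T_\f$ are indexed by the same alphabet after empty hooks are removed. Both follow directly from the definitions.
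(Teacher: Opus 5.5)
Your proof is correct and is essentially the paper's argument, just spelled out: equal content and equal Dyck path give $2$-Chinese equivalence via \cref{thm-2-chinese}, and the normal-form claim is transported through the bijections between classes, Dyck paths and $2$-Chinese functions. One small correction to your aside: by monotonicity of $\f$, no two nonzero entries of $T_\f$ are strictly in SE-NW position, so $T_\f$ fails to be $2$-Chinese only because of its diagonal $2$s and possibly several nonzero entries in one column; as you say, this does not affect the argument.
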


\begin{proof}
First, both words in the statement are $2$-Chinese equivalent by
\cref{thm-2-chinese} since their staircases have the same underlying alphabet
and correspond to the same Dyck path. Since the \mbox{$2$-Chinese} staircases are a
set of normal forms for $\chinese(\A, 2)$, so are the reading words of
the $2$-Chinese functions.
\end{proof}

\paragraph{An insertion algorithm on 2-Chinese functions\\}

Let us now provide an insertion algorithm on 2-Chinese functions that
generalizes the usual insertion of words into Chinese staircases. The main
result of this section is that two words are $2$-Chinese equivalent iff they
give the same result by our insertion algorithm.
Let us denote by $\emptyf$ the unique function whose domain is empty.

\begin{de}
\label{2-chinese-func-insert}
The \emph{right insertion} of $y\in\A$ in a 2-Chinese function $\f$ is the
function $\f\leftarrow y$ whose domain is $\dom(\f)\cup\{y\}$ such that for
all $x\in\dom(\f\leftarrow y)$
\begin{align}
(\f\leftarrow y)(x) := & 
\min(\{y\} \cup \f(x^\uparrow)),
\end{align}
where $x^{\uparrow}$ is the set of elements of $\A$ greater than or equal to
$x$ and $\f(x^{\uparrow})$ the image by $\f$ of the subset of these belonging
to the domain of $\f$.

Given a 2-Chinese function $\f$ and $W=w_1\dots w_n\in \A ^*$, we denote by
$\f\leftarrow W$ the result of inserting $w_1$, then $w_2$, and so on
into $\f$.
\end{de}

A left insertion will be defined in \cref{ss-2-chinese-more}.

Let us highlight two subcases of particular interest:
\begin{itemize}
\item if $y>\max(\dom(\f))$ then
\begin{equation}
(\f\leftarrow y)(x)=\begin{cases}
\f(x),&\text{if $x\neq y$},\\
y,&\text{if $x=y$},\\
\end{cases}
\end{equation}
so that, for example,
\begin{equation}
\begin{pmatrix}
a&b&c&e&f\\
a&b&c&c&f\\
\end{pmatrix}
\leftarrow g =
\begin{pmatrix}
a&b&c&e&f&g\\
a&b&c&c&f&g
\end{pmatrix}.
\end{equation}
\end{itemize}

\begin{itemize}
\item if $x\in\dom(\f)$, then $\min(\f(x^\uparrow))=\f(x)$ so that
\begin{equation}
x\in\dom(\f) \implies
(\f\leftarrow y)(x)=\min(y, \f(x))\,,
\end{equation}
so that, for example,
\begin{equation}
\begin{pmatrix}
a&b&c&e&f\\
a&b&c&c&f\\
\end{pmatrix}
\leftarrow d =
\begin{pmatrix}
a&b&c&d&e&f\\
a&b&c&c&c&d\\
\end{pmatrix}.
\end{equation}
\end{itemize}

\begin{prop}
Let $\f$ be a 2-Chinese function and let $y\in\A$. Then $\f\leftarrow y$
is a 2-Chinese function.
\end{prop}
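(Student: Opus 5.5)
We have to check that $g := \f\leftarrow y$ is a map from $\B' := \dom(\f)\cup\{y\}$ to $\B'$ that satisfies conditions (i) and (ii) of \cref{2-chinese-func}. The plan is to work straight from the formula $g(x)=\min(\{y\}\cup \f(x^\uparrow))$ and handle one property at a time. One preliminary remark: for every $x\in\B'$ the set $\{y\}\cup\f(x^\uparrow)$ contains $y$, so it is nonempty and the minimum exists.

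\emph{Values lie in $\B'$.} The minimum is attained either at $y\in\B'$ or at some $\f(z)$ with $z\in\dom(\f)$. Since $\f$ maps $\dom(\f)$ into itself, $\f(z)\in\dom(\f)\subseteq\B'$. Hence $g$ indeed maps $\B'$ to $\B'$.

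\emph{Monotonicity (i).} Take $x\le x'$ in $\B'$. Then $x'^\uparrow\subseteq x^\uparrow$, so
\[
\{y\}\cup\f(x'^\uparrow)\subseteq\{y\}\cup\f(x^\uparrow).
\]
The minimum over the larger set is smaller or equal, which gives $g(x)\le g(x')$.

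\emph{Decreasing (ii).} This is the only step that needs a case split, and it is the main point to check. We must show $g(x)\le x$, and it suffices to exhibit one element of $\{y\}\cup\f(x^\uparrow)$ that is at most $x$.
\begin{itemize}
\item If $x\in\dom(\f)$, then $x\in x^\uparrow$, so $\f(x)$ belongs to the set, and $\f(x)\le x$ because $\f$ is a 2-Chinese function.
\item If $x\notin\dom(\f)$, then $x=y$ because $x\in\B'$, and $y$ itself belongs to the set with $y\le x$.
\end{itemize}
In both cases $g(x)\le x$. Together with the two previous paragraphs, this shows that $\f\leftarrow y$ is a 2-Chinese function.
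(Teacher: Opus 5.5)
Your proof is correct and follows the same route as the paper: monotonicity comes from the inclusion of the sets $\{y\}\cup f(x^{\uparrow})$ as $x$ increases, and $(f\leftarrow y)(x)\le x$ from the case split $x\in\dom(f)$ versus $x=y$. You additionally check that $f\leftarrow y$ takes its values in $\dom(f)\cup\{y\}$, which the paper leaves implicit. In the first case you use $f(x)\in f(x^{\uparrow})$ directly, whereas the paper uses the remark $\min f(x^{\uparrow})=f(x)$.
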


\begin{proof}
Let $a<b\in\dom(\f)\cup\{y\}$.
Then $\f(a^{\uparrow}) \supseteq \f(b^{\uparrow})$ so that
 $\{y\}\cup\f(a^{\uparrow}) \supseteq \{y\}\cup\f(b^{\uparrow})$.

Taking the $\min$ on both sides gives
\[
(\f\leftarrow y)(a) = \min(\{y\}\cup\f(a^{\uparrow})) \leq
\min(\{y\}\cup\f(b^{\uparrow})) = (\f\leftarrow y)(b).
\]

It remains to show that $(\f\leftarrow y)(a) \leq a$ for all $a\in
\dom(\f\leftarrow y)$.
If $a\in\dom(\f)$ then we know that $\min(\f(a^\uparrow))=\f(a)$ so that
$(\f\leftarrow y)(a) = \min(y, \f(a)) \leq a$ since $\f(a) \leq a$.
Now, if $a\notin\dom(\f)$, then $a=y$ and $(\f\leftarrow y)(y)
\leq y$ by definition.
\end{proof}

\begin{prop}
Let $S$ be a 2-Chinese staircase and let $W$ be its row reading and $\f$
its corresponding 2-Chinese function through the bijection with Dyck paths.

We have $\emptyf \leftarrow W = \f$.
\end{prop}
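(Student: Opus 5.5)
We will describe the row word $W$ of $S$ explicitly and then follow the insertion letter by letter. By \cref{de-2-chinese-staircase} and \cref{prop-bij-2fns-2stair}, the nonzero entries of $S$ are of two kinds. The off-diagonal ones are the peaks $(r,c)$ of the Dyck path, so $\f(r)=c$. The diagonal ones sit in rows $x$ whose hook is otherwise empty; for these $\f(x)=x$ exactly when no peak lies in row $x$, and the diagonal $1$ is present only if column $x$ also contains no peak. Reading rows from top to bottom, $W$ is the concatenation of factors: $x$ for a diagonal entry, $rc$ for a peak $(r,c)$, and the empty word for a row $x$ whose hook is covered by a peak in column $x$.

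The plan is an induction on the rows. Let $x_1<\dots<x_k$ be the labels of the rows treated so far, let $W_k$ be the corresponding prefix of $W$, and let $\f_k$ be the restriction of $\f$ to $\{x_1,\dots,x_k\}$. The claim is that $\emptyf\leftarrow W_k$ equals a function $\f'_k$ with domain $\cont(W_k)$. This function agrees with $\f$ on rows whose value has already been set. Any letter $c$ read so far only as a column letter (so $c<r$ for a later row $r$ not yet reached) satisfies $\f'_k(c)=c$.

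The key computation uses the second highlighted subcase of \cref{2-chinese-func-insert}. At a new row $r$, every letter inserted is either $r$, which is larger than the current maximum of the domain, or $c<r$. In the second case $c$ is already present, or it is new with no larger element of the domain below $r$.

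1. Inserting $r$ alone is the first subcase, giving $r\mapsto r$.
2. Inserting $rc$ first gives $r\mapsto r$. Then inserting $c$ sets every $x\ge c$ in the domain to $\min(c,\cdot)$.
3. We must check that this $\min$ changes nothing except $r$ (now $\f(r)=c$) and possibly $c$ itself. When $c$ is new, the rule sets $c\mapsto\min(\{c\}\cup\f(c^\uparrow))=c$, since all images above $c$ are at least $c$.

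Step 3 is the main obstacle. It requires showing that every $x$ in the domain with $c\le x<r$ already has $\f(x)\le c$. This should follow from condition (2) of \cref{de-2-chinese-staircase}, the monotonicity of $\f$, and the fact that no two peaks are in SE--NW position. Concretely, a previously read row $x\in[c,r)$ with $\f(x)>c$ would either be a peak strictly NW of $(r,c)$ or a diagonal $x\ge$... Such a row must then satisfy $\f(x)=x>c$, which contradicts monotonicity with the peak $(r,c)$ only when $x$ lies between. This is exactly the delicate point, and we would verify it carefully from the Dyck path picture, where the path is weakly decreasing in columns as rows go down.

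Finally, the case of an empty row $x$, hidden by a peak in column $x$, is handled as follows. The letter $x$ was inserted earlier as a column letter, with value $x$, and this is correct because $\f(x)=x$ there: the path has a corner at $x$. The induction thus ends with $\emptyf\leftarrow W=\f$.
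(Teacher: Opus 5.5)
Your plan of following the insertion row by row is reasonable, but the invariant you carry along is false, so Step~3 cannot be proved as stated. Since $\f(x)$ is the column of the rightmost gray box in row $x$, one has $\f(x)=\min\{c' : S(r',c')\neq 0,\ r'\geq x,\ c'\leq x\}$. It therefore depends on nonzero entries in rows \emph{below} $x$, which have not been read when row $x$ is processed. The paper's own \cref{fig-2ch-insere} refutes each of your claims. There $W=abecfgd$ ends with the peak $(g,d)$, and inserting the final $d$ turns the function $(a,b,c,e,f,g)\mapsto(a,b,c,c,f,g)$ into $(a,b,c,d,e,f,g)\mapsto(a,b,c,c,c,d,d)$. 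Consequently:
\begin{itemize}
\item The value at the earlier diagonal row $f$ drops from $f$ to $d$. So the function obtained after row $f$ did not agree with $\f$ there, and the $\min$ changes more than the peak's row and column letter.
\item The inequality you need in Step~3 fails at $x=f\in[d,g)$.
\item The new letter $d$, read only as a column letter, receives the value $c\neq d$, because $e\mapsto c<d$ is already present. So neither ``all images above the inserted letter are at least that letter'' nor ``column-only letters are fixed points'' holds.
\item The same misconception underlies your description of diagonal rows: row $f$ contains no peak, yet $\f(f)=d$.
\item Your treatment of empty rows has the order reversed: such a letter is inserted \emph{after} its empty row, when the row of its peak is read.
\end{itemize}

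What is missing is the right object to compare with. $\emptyf\leftarrow W_k$ should equal the 2-Chinese function of the \emph{truncated} staircase, not a restriction of the final $\f$. This is how the paper argues, by induction letter by letter. Every prefix of $W$ is itself the row word of a 2-Chinese staircase $S'$: keep the rows already read, and if the prefix stops inside a factor $rc$, put the $1$ at $(r,r)$ instead of $(r,c)$. By induction, $\emptyf\leftarrow W'=\gd(S')$.
\begin{itemize}
\item Appending a row letter $r$ is the first highlighted subcase.
\item Appending the column letter $c<r$ moves the $1$ from $(r,r)$ to $(r,c)$. The entry $(r,c)$ covers exactly the boxes in rows $\leq r$ and columns $\geq c$. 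Hence the Dyck path changes by capping at $c$ every value larger than $c$.
\item If $c$ is a new letter, it receives $\min(\{c\}\cup\gd(S')(c^{\uparrow}))$.
\end{itemize}
Together these are precisely the right insertion of $c$. With the formula for $\f(x)$ above this is a short check. The lowering of values at earlier rows is exactly what has to happen, not something to rule out.
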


\begin{proof}
Insert $W=w_1\dots w_n$ as a $2$-Chinese function. Thanks to the
definition of the $2$-Chinese staircases, the word $W'=w_1\dots w_{n-1}$ is
also the row reading of a $2$-Chinese staircase, so we can assume by induction
on $n$ that $\emptyf\leftarrow W'=\f'$ where $\f'$ is the function having the
same Dyck path and content as $W'$.

Now, if $w_n$ is the maximal value of $W$, that is, if the last row of $S$ has
its nonzero value on the diagonal, then the property holds since we are in the
first hightlighted subcase $y>\max(\dom(\f))$ of the insertion algorithm.

Otherwise, we insert $w_n$ that is smaller than $w_{n-1}$. This insertion
modifies the Dyck path from $S'$ to $S$ is a very simple way: all north steps
that were to the left of the column indexed by $w_n$ become north steps
immediately after the column indexed by $w_n$ and the east step in between
move to the beginning of the Dyck path.

The same happens with the function $\f$: 
it modifies the values of $\f(x)=y$ to $\f(x)=w_n$ if $y>x$, hence changing
the values that were greater than $w_n$ uniformly to $w_n$ so its
corresponding Dyck path changes as the one of $S$, hence the result.
\end{proof}

For example, \cref{fig-2ch-insere} shows a 2-Chinese staircase and its
corresponding 2-Chinese function and the same objects after insertion of the
letter $d$.

\begin{figure}[ht]
\begin{equation*}
\begin{matrix}
\Yboxdim{1cm}\begin{tikzpicture}[scale=0.5,baseline={(current bounding box.east)}]
 \tgyoung(0cm,0cm,%
 ::::::;1!:a,%
 :::::;1!;:b,%
 ::::;;;:c,%
 :::;;;;:d,%
 ::;;;1!;;:e,%
 :;1!;;;;;:f,%
 ;1!;;;;;;:g,%
 :g:f:e:d:c:b:a)
 \draw[ultra thick] (0,-6) -- (1,-6) -- (1,-5) -- (2,-5) -- (2,-4) -- (4, -4) -- (5, -4) -- (5, -1) -- (6, -1) -- (6, 0) -- (7, 0) -- (7, 1);
\end{tikzpicture}
\qquad
&
\qquad
\Yboxdim{1cm}\begin{tikzpicture}[scale=0.5,baseline={(current bounding box.east)}]
 \tgyoung(0cm,0cm,%
 ::::::;1!:a,%
 :::::;1!;:b,%
 ::::;;;:c,%
 :::;;;;:d,%
 ::;;;1!;;:e,%
 :;1!;;;;;:f,%
 ;;;;1!;;;:g,%
 :g:f:e:d:c:b:a)
 \draw[ultra thick] (0, -6) -- (4, -6) -- (4, -4) -- (5, -4) -- (5, -1) -- (6, -1) -- (6, 0) -- (7, 0) -- (7, 1);
\end{tikzpicture}
\\[2.2cm]
\begin{pmatrix} a&b&c&e&f&g\\ a&b&c&c&f&g\\ \end{pmatrix} &
\qquad
\begin{pmatrix} a&b&c&d&e&f&g\\ a&b&c&c&c&d&d\\ \end{pmatrix} \\
\end{matrix}
\end{equation*}
\caption{The $2$-Chinese staircases associated with $abecfg$ and $abecfgd$
(insertion of $d$) and their $2$-Chinese functions.}
\label{fig-2ch-insere}
\end{figure}

\begin{cor}
\label{chinese2-rd}
For $W\in\A^*$, $\rd_W=\emptyf\leftarrow W$.
\end{cor}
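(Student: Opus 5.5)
The statement says that for every word $W$, inserting $W$ letter by letter into $\emptyf$ returns the 2-Chinese function $\rd_W$ attached to the 2-Chinese class of $W$. The plan is to combine the two preceding propositions with one new ingredient: the insertion is compatible with the 2-Chinese congruence. Concretely, I would show that the map $W\mapsto \emptyf\leftarrow W$ is constant on $\equiv_{\chinese(2)}$-classes. Let $S$ be the 2-Chinese staircase whose row word $\RW(S)$ is the shortlex normal form of the class of $W$ (\cref{thm-2-chinese}). The previous proposition gives $\emptyf\leftarrow \RW(S)=\f$, where $\f$ corresponds to the Dyck path and content of $S$. By \cref{2chifunct} and \cref{def-stair2func}, this $\f$ is $\rd_W$, and the corollary follows.

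To prove invariance, note that $\f\leftarrow(UV)=(\f\leftarrow U)\leftarrow V$. It therefore suffices to show that for every 2-Chinese function $\f$ and every defining relation $U=V$ of $\chinese(\A,2)$ we have $\f\leftarrow U=\f\leftarrow V$.

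For a single letter, the key identity is
\[
(\f\leftarrow y)(x)=\min(\{y\}\cup \f(x^\uparrow)),
\qquad x\in\dom(\f)\cup\{y\}.
\]
Iterating it, I expect a closed form for $\f\leftarrow w_1\cdots w_k$. The domain is $\dom(\f)\cup\{w_1,\dots,w_k\}$, and the value at $x$ is the minimum of $\f(x^\uparrow)$ together with all $w_i$ that are at least the smallest element of the domain at or above $x$, subject to the appropriate ordering condition. Rather than pin down the general formula, I would verify each relation directly from the two-step or three-step composition.

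The relations to check are the following.
\begin{itemize}
\item The idempotent relation $yy=y$: inserting $y$ again leaves $y$ in the domain, and $\min(y,\min(y,\cdot))=\min(y,\cdot)$, so nothing changes.
\item The relations $aba=baa$ and $bba=bab$ for $a<b$.
\item The relations $cba=cab=bca$ for $a<b<c$.
\end{itemize}
For the last two families, the domain after insertion is the same on both sides, namely $\dom(\f)\cup\{a,b\}$ or $\dom(\f)\cup\{a,b,c\}$. So I would compare the values pointwise, splitting into cases according to the position of $x$ relative to $a$, $b$, $c$. The useful facts are these. For $x\le a$, both sides give $\min(a,\f(x^\uparrow))$, since $a$ is the overall minimum and every inserted letter lies in $x^\uparrow$ at each stage. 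For $x>c$, no inserted letter is $\ge x$, so the only contributions are the minima of the letters themselves applied to existing values, and these are symmetric. The middle ranges need care because the newly inserted letter $b$ or $c$ enters the domain and then affects later minima through $x^\uparrow$.

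The main obstacle will be the middle cases, for example $a<x\le b$ in $cba$ versus $bca$. There one must track that a later letter $a$ lowers every value at positions in the domain and at $y$, while earlier larger letters have already propagated. I would first prove a small lemma giving the closed form $(\f\leftarrow y z)(x)$ for two letters, and then handle the three-letter relations by applying this lemma once more. As a sanity check, I would test all relations on the two- and three-letter examples listed in \eqref{chin2-2} and \eqref{chin2-3}.
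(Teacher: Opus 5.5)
Your proposal is correct and follows the paper's route. The corollary comes from the preceding proposition ($\emptyf\leftarrow\RW(S)=\f$ for a 2-Chinese staircase $S$), combined with the fact that $W\mapsto\emptyf\leftarrow W$ is constant on $\equiv_{\chinese(2)}$-classes; the paper leaves this invariance implicit at this point and checks it rewriting by rewriting only in the proof of the next theorem. Your plan to verify each defining relation directly from the one-letter formula, for an arbitrary starting function, is sound, and the relations do hold. This includes $aba=baa$ and $bba=bab$, which the paper's check does not spell out.
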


Let us now state the main result of this section.
  
\begin{thm}
For $W_1, W_2\in \A ^ *$, we have $W_1\equiv_{\chinese(2)} W_2$ if and only if
$(\emptyf \leftarrow W_1) = (\emptyf \leftarrow W_2)$.
\end{thm}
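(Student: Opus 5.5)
The plan is to reduce everything to \cref{chinese2-rd}, which identifies $\emptyf\leftarrow W$ with $\rd_W$, the 2-Chinese function attached to the 2-Chinese class of $W$. Once this is used, the theorem is a statement about the map $W\mapsto\rd_W$, and it splits into two directions.

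For the direct implication, assume $W_1\equiv_{\chinese(2)}W_2$. Then $W_1$ and $W_2$ lie in the same 2-Chinese class. By definition $\rd_{W_1}$ and $\rd_{W_2}$ are the function associated with that single class, so they coincide. Concretely, \cref{thm-2-chinese} tells us the two words have the same content and $\dyck(S(W_1))=\dyck(S(W_2))$. Since $\rd_W=\gd(S(W))$ depends only on the content and the Dyck path (\cref{def-stair2func}), the two functions are equal. \cref{chinese2-rd} then converts this into $(\emptyf\leftarrow W_1)=(\emptyf\leftarrow W_2)$.

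For the converse, assume $\emptyf\leftarrow W_1=\emptyf\leftarrow W_2$, so $\rd_{W_1}=\rd_{W_2}=:\f$. I will extract from $\f$ both the content and the Dyck path.
\begin{enumerate}[(1)]
\item The domain of $\f$ is $\cont(W_i)$. This is immediate from the insertion rule, since each insertion of $y$ adds $y$ to the domain and nothing else.
\item Under the bijection of \cref{prop-bij-2fns-2stair}, $\f$ determines a unique Dyck path, and by definition of $\gd$ this path is $\dyck(S(W_i))$.
\end{enumerate}
Hence the two words have equal contents and equal Dyck paths, and \cref{thm-2-chinese} gives $W_1\equiv_{\chinese(2)}W_2$.

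An alternative for the converse uses the normal forms of \cref{2chifunct}. Each $W_i$ is 2-Chinese equivalent to the reading word of $\rd_{W_i}$. Equal functions have equal reading words, so the two classes coincide.

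The only delicate step is \cref{chinese2-rd} itself, which I take as given. It rests on the previous proposition, which treats only row readings of 2-Chinese staircases. What is needed is that $\emptyf\leftarrow W$ depends only on the 2-Chinese class of $W$, for arbitrary words $W$. If I had to establish this, I would show that $\f\leftarrow W$ is invariant under the defining relations. First, $\f\leftarrow aa=\f\leftarrow a$, which holds since $\min(a,\min(a,\cdot))=\min(a,\cdot)$. Second, the Chinese relations $cba=cab=bca$, $aba=baa$ and $bba=bab$ act identically on functions. Checking the second point is a direct pointwise computation of minima over the sets $\{y\}\cup\f(x^\uparrow)$, and I expect it to be the main computational obstacle.
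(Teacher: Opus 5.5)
Your reduction to \cref{chinese2-rd} is circular, and it hides the entire content of the statement. The paper does state that corollary before the theorem, but the proposition it is drawn from only proves $\emptyf\leftarrow W=\f$ when $W$ is the row reading of a 2-Chinese staircase. Extending this to an arbitrary word requires knowing that $\emptyf\leftarrow W$ is constant on $\equiv_{\chinese(2)}$-classes. That is exactly the forward implication you are asked to prove; together with the row-reading proposition, the corollary is in fact equivalent to the whole theorem. So both of your directions, including the alternative converse through \cref{2chifunct}, rest on the one step you leave undone. You identify that step correctly: $\f\leftarrow aa=\f\leftarrow a$, and $\f\leftarrow cba=\f\leftarrow cab=\f\leftarrow bca$, $\f\leftarrow aba=\f\leftarrow baa$, $\f\leftarrow bba=\f\leftarrow bab$ for every 2-Chinese function $\f$, which suffices because $\leftarrow$ is a right action. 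But you only announce it as ``the main computational obstacle''. That verification is precisely what the paper's proof consists of.

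The check is short once the insertion is unfolded. By induction on $|W|$, for $x\in\cont(W)$ the value $(\emptyf\leftarrow W)(x)$ is the smallest letter of the suffix of $W$ beginning at the first letter $\geq x$; this is the description $\rdb_W$ through the statistics $x_W$ that the paper uses. Each relation is then a one-line check. For instance, compare $UcbaV$ with $UbcaV$. If the first letter $\geq x$ lies in $U$ or in $V$, the relevant suffixes have the same content. If it lies in the three-letter block, the minimum is $\min(\{a\}\cup\cont(V))$ in both words, because $a$ is the smallest letter of the block and comes last. Once invariance is established, your converse is a valid substitute for the paper's counting argument: every $W$ is equivalent to $\RW(\rd_W)$, whose insertion is $\rd_W$ by the row-reading proposition, and distinct classes have distinct functions. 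The paper instead argues that the induced map from classes to functions is well defined, surjective via row readings, and between finite sets of equal size.
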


\begin{proof}
Let us first show that if $W_1$ and $W_2$ are $2$-Chinese equivalent, they
have the same result by the insertion algorithm. We can restrict ourselves to
the case where $W_1$ and $W_2$ differ by a single $2$-Chinese rewriting.

By definition of the insertion algorithm, if $W_2$ differs by a contraction
(or a duplication) of a letter, $\rd_{W_1}=\rd_{W_2}$.

Now let $W_1=UbcaV$, $W_2=UcbaV$ and $W_3=UcabV$.
For any $x\in\cont(W_1)\setminus \{b\}$, \mbox{$x_{W_1}=x_{W_2}=x_{W_3}$}.

If $b\in\cont(U)$, $b_{W_1}=b_{W_2}=b_{W_3}$ so that
$\rd_{W_1}=\rd_{W_2}=\rd_{W_3}$.
If $b\notin\cont(U)$, we have
\mbox{$c_{W_1}\leq \min(\cont(aV))\leq b_{W_1}=b_{W_2}\leq b_{W_3}$}.
Because of the definition of $\rd$, we have \mbox{$\rd_{W_1}=\rd_{W_2}=\rd_{W_3}$}.

We now know that being $2$-Chinese equivalent implies having same $2$-Chinese
functions by the insertion algorithm. But since the row readings of the
$2$-Chinese staircases have their corresponding $2$-Chinese functions by the
insertion algorithm and since there are as many $2$-Chinese functions as
$2$-Chinese equivalence classes, we get the result.
\end{proof}

\subsubsection{Properties of the 2-Chinese monoid}
\label{ss-2-chinese-more}

Let us conclude the study of the $2$-Chinese monoid with some properties that
are direct corollaries of the previous results.

\paragraph{A direct way to compute the insertion algorithm\\}
\label{p-2-chinese}

For $W\in A^*$ a word on the finite ordered alphabet $A$ and $x\in \cont(W)$, we let
\begin{equation}
x_W=\min\set{\cont(xV)}{W=UxV,\; x\notin\cont(U)}.
\end{equation}

\begin{defi}
\label{def-rd}
For $W\in\A^*$, define $\rdb_W$ as the function such that
$\dom(\rdb_W)=\cont(W)$ and for all $x\in\cont(W)$,
$\rdb_W(x)=\min\set{y_W}{y\in x^{\uparrow}\cap\cont(W)}$.
\end{defi}

\begin{lemma}
For $W\in\A^*$, $\rdb_W=\rd_W$.
\end{lemma}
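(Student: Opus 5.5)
We prove $\rdb_W=\rd_W$ by induction on $|W|$, using \cref{chinese2-rd}, which gives $\rd_W=\emptyf\leftarrow W$. So it suffices to show that $\rdb_{Wy}=\rdb_W\leftarrow y$ for every word $W$ and letter $y$. The base case $W=\varepsilon$ is trivial, since both functions have empty domain. The domains agree: $\dom(\rdb_{Wy})=\cont(W)\cup\{y\}=\dom(\rdb_W\leftarrow y)$.

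The first step is to compare the quantities $x_{Wy}$ and $x_W$. For $x\in\cont(W)$, the first occurrence of $x$ in $Wy$ is its first occurrence in $W$, and the suffix after it gains the extra letter $y$. Hence
\[
x_{Wy}=\min(x_W,y).
\]
If $y\notin\cont(W)$, then $y$ first occurs at the last position, so $y_{Wy}=y$.

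The second step is to compute, for $x\in\cont(Wy)$,
\[
\rdb_{Wy}(x)=\min\set{z_{Wy}}{z\in x^\uparrow\cap\cont(Wy)}.
\]
We split according to whether $y$ is new.

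If $y\in\cont(W)$, this minimum equals $\min\bigl(\{y\}\cup\set{z_W}{z\in x^\uparrow\cap\cont(W)}\bigr)$, provided $x^\uparrow\cap\cont(W)\neq\varnothing$. That holds because $x\in\cont(W)$.

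If $y\notin\cont(W)$, the set $x^\uparrow\cap\cont(Wy)$ may additionally contain $y$, which contributes exactly $y$ to the minimum. Since every $z_{Wy}$ is already $\leq y$, the minimum is again $\min\bigl(\{y\}\cup\set{z_W}{z\in x^\uparrow\cap\cont(W)}\bigr)$. This also covers the case where $x^\uparrow\cap\cont(W)$ is empty, for instance $x=y>\max\cont(W)$.

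On the other side, $(\rdb_W\leftarrow y)(x)=\min(\{y\}\cup\rdb_W(x^\uparrow))$. So the third step is the identity
\[
\min \rdb_W(x^\uparrow)=\min\set{z_W}{z\in x^\uparrow\cap\cont(W)}.
\]
This follows by unfolding the definition of $\rdb_W$: for $z'\geq x$ we have $\rdb_W(z')=\min\set{z_W}{z\geq z'}$, and the union over $z'\geq x$ of the sets $\set{z}{z\geq z'}$ is just $x^\uparrow\cap\cont(W)$. This is a nested-minimum rearrangement, and the two sides also agree when both index sets are empty, since both are then $\min\{y\}$.

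The main point of care is the bookkeeping when $y$ is a new letter: the $x_W$-values are only defined on $\cont(W)$, and one must check that $y_{Wy}=y$ is harmless under the outer minimum. Once this is handled, the induction closes.
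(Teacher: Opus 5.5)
Your computation is correct. The identities $x_{Wy}=\min(x_W,y)$ and $y_{Wy}=y$ for a new letter, together with the nested-minimum rearrangement, give $\rdb_{Wy}=\rdb_W\leftarrow y$, so $\rdb_W=\emptyf\leftarrow W$ by induction. Given \cref{chinese2-rd}, this proves the lemma.

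This is a different route from the paper's, which never uses the insertion algorithm. The paper checks three things: that $\rdb_W$ is a 2-Chinese function, that $\rdb$ is unchanged by every elementary $\chinese(2)$-rewriting, and that $\rdb_{\RW(\f)}=\f$. It then concludes from $W\equiv_{\chinese(2)}\RW(\rd_W)$ (\cref{2chifunct}, resting on the staircase normal forms of \cref{thm-2-chinese}). Your route gives an explicit, case-free proof that insertion computes $\rdb$. The subsequent theorem uses exactly this identity tacitly when it argues with the quantities $x_W$. The paper's route, by contrast, is what actually shows that $\rdb$ is a class invariant.

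Be aware that all of the class-invariance has been delegated to \cref{chinese2-rd}, and this creates a circularity risk. The proposition just before it only gives $\emptyf\leftarrow W=\rd_W$ when $W$ is a row word of a 2-Chinese staircase. Extending this to arbitrary $W$ requires knowing that $\emptyf\leftarrow W$ is constant on $\chinese(2)$-classes. The paper proves that only in the next theorem, and it does so by reasoning with the $x_W$, i.e.\ with the content of the present lemma.

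To make your argument stand alone, combine your identity with that proposition and add the missing check: $\rdb$ is unchanged under $a\equiv aa$, $aba\equiv baa$, $bba\equiv bab$ and $cba\equiv cab\equiv bca$. This check is not vacuous. Individual values do change, e.g.\ $b_{cba}=a$ while $b_{cab}=b$. However, $\rdb(b)$ stays the same because $c_W\leq a$ in both words. Once this invariance is verified, your identity plus the proposition give $\rdb_{\RW(\f)}=\f$, and the lemma follows directly as in the paper.
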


\begin{proof}
First, it is immediate to check that $\rdb_W$ is indeed a $2$-Chinese function.
And it is as immediate to check that it is invariant under the $2$-Chinese
rewritings and that $\rdb$ applied to the reading word of a $2$-Chinese
function $\f$ gives back $\f$.
\end{proof}

\paragraph{Enumeration\\}
\label{p-chinese-enum}

Recall that the \emph{Catalan triangle}
(\href{https://oeis.org/A009766}{A009766} of~\cite{oeis}) is the triangle of
integers $T_{n,k}$ (sometimes called ballot numbers) that count the number of
Dyck paths of semi-length $n$ starting with exactly $k$ east steps.

These results are corollaries of \cref{cor-main-chinese}.

\begin{cor}
Let $\A=\{a_1<\cdots<a_n\}$.
The number of 2-Chinese shortlex normal forms with content $\A$ ending by
$a_k$ is the coefficient $T_{n, k}$ of the Catalan triangle.
\end{cor}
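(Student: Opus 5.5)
The plan is to go through the bijections already established. By \cref{cor-main-chinese}, the 2-Chinese shortlex normal forms with content $\A$ are exactly the row words $\RW(S)$ of the 2-Chinese staircases $S$ of content $\A$. By \cref{p:unicity2stairs}, these staircases are in bijection with the Dyck paths of semi-length $n$ via $\dyck$ and $\cdf_\A$. It therefore suffices to read off the last letter of $\RW(S)$ from the path $\dyck(S)$, and then to count the Dyck paths giving each prescribed last letter.

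First, I identify the last letter. The row word is read from top to bottom, so its last block comes from the bottom row, which is labelled $a_n$. Since $S$ is reduced and is a 2-Chinese staircase, the hook of $a_n$, which is just its row, contains exactly one nonzero entry, say at column $a_j$. If $j=n$, the entry is on the diagonal, its contribution is $a_n$, and the word ends with $a_n$. Otherwise the contribution is the factor $a_na_j$, and the word ends with $a_j$. In either case $\RW(S)$ ends with $a_k$ exactly when the unique $1$ of the bottom row sits in column $a_k$.

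Second, I translate this position into the shape of the path. In the bottom row, a box $(a_n,c)$ is gray if and only if it is covered by a nonzero entry. No row lies below the bottom row, so the only candidate is the entry $(a_n,a_k)$ itself, and it covers $(a_n,c)$ exactly when $c\geq a_k$. Columns are labelled $a_n,\dots,a_1$ from left to right, so the gray boxes of the bottom row are the leftmost $n-k+1$ boxes. Hence $\dyck(S)$ begins with a run of exactly $n-k+1$ east steps before its first north step. Conversely, $\cdf_\A$ places the bottom-row $1$ at the corner that ends this first run, so the correspondence is a bijection between normal forms ending with $a_k$ and Dyck paths whose initial east run has this prescribed length.

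The last step, which I expect to be the only delicate point, is to match this with the indexing of $T_{n,k}$. I would check the small cases against the lists in \eqref{chin2-2} and \eqref{chin2-3}. For $n=3$, the endings are $c$ twice and $b$ three times. Under the correspondence above, the two words ending with $c=a_3$ come from the $C_2=2$ paths with initial east run of length $1$, and the three words ending with $b=a_2$ come from the $C_3-C_2=3$ paths with initial east run of length $2$. I would then fix the precise convention of the Catalan triangle (equivalently, by reversing the path and swapping east and north steps, counting by the final north run) so that the statistic ``initial east run of length $n-k+1$'' is the one indexed by $k$. Once the indexing is settled, the corollary follows directly.
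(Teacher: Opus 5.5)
You have the right reduction, and it is what the paper intends; the paper only says the statement is a corollary of Corollary~\ref{cor-main-chinese}. The last letter of $\RW(S)$ is the column label $a_k$ of the unique $1$ in the bottom row. The gray part of that row is its leftmost $n-k+1$ boxes. So, via $\dyck$ and $\cdf_\A$, the normal forms of content $\A$ ending in $a_k$ are in bijection with the Dyck paths of semi-length $n$ whose first run of east steps has length exactly $n-k+1$. All of this is correct.

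The gap is the final identification with $T_{n,k}$. You leave it open, and both things you offer for it are wrong.

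\begin{itemize}
\item Your $n=3$ check is miscounted. The words $abc, acb, bac, bacb, bca$ end in $c,b,c,b,a$, so the counts for $a,b,c$ are $1,2,2$. The number $C_3-C_2=3$ counts paths whose first east run has length \emph{at least} $2$. One of them, $EEENNN$, has run $3$ and corresponds to $bca$. Read correctly, the example confirms your bijection.
\item Reversing the path and exchanging east and north steps turns the first east run into a final north run of the \emph{same} length. It therefore cannot turn the statistic $n-k+1$ into $k$.
\end{itemize}

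Take the paper's own description of $T_{n,k}$: Dyck paths of semi-length $n$ starting with exactly $k$ east steps. With that reading, you have proved the count is $T_{n,n-k+1}$, which is not $T_{n,k}$. For $n=3$, $k=1$ there is one word ($bca$), while $T_{3,1}=2$.

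The statement does hold with the indexing of OEIS A009766, counting rows and columns from $0$. There the entry in row $n-1$, column $k-1$ is $\frac{n-k+1}{n}\binom{n+k-2}{k-1}$. To finish, adopt that convention explicitly and check that it equals your count. For instance, use the first-ascent formula $\frac{j}{2n-j}\binom{2n-j}{n}$ at $j=n-k+1$, which gives $\frac{n-k+1}{n}\binom{n+k-2}{k-1}$.
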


\begin{cor}
\label{cor:enum2chin}
Let $\A$ be a finite alphabet of cardinality $n$.
Then, $|\chinese(\A,2)|=\sum_{k=0}^n\binom{n}{k} C_k$ where $C_k$ is the
$k$-th Catalan number.
\end{cor}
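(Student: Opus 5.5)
The plan is to partition the elements of $\chinese(\A,2)$ by their content and count each block separately; this is the same argument as the first formula of \cref{thm:Type1-Enum}. The $2$-Chinese congruence is content-preserving: the Chinese relations and the relations $a^2=a$ do not change the set of letters occurring. Hence $\cont$ is well defined on $\chinese(\A,2)$, and the monoid is the disjoint union, over $\B\subseteq\A$, of the sets of classes of content $\B$.

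Next, fix $\B\subseteq\A$ with $|\B|=k$. By \cref{cor-main-chinese}, the classes of content $\B$ are in bijection with their shortlex normal forms. These are exactly the row words of the $2$-Chinese staircases of content $\B$. To make this precise, I would note that a $2$-Chinese staircase on $\A$ whose content is $\B$ has empty hooks for the letters outside $\B$. Removing these hooks gives a $2$-Chinese staircase on $\B$ with full content $\B$, and this operation is reversible. So I would identify these staircases with the $2$-Chinese staircases of content $\B$ over the alphabet $\B$ itself. By \cref{cor-counting-2chin}, which relies on the bijection of \cref{p:unicity2stairs} with Dyck paths, there are exactly $C_k$ of them. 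Equivalently, \cref{thm-2-chinese} says a class is determined by its content together with a Dyck path of semi-length $k$, and every such pair occurs, since $\cdf_\B$ produces a staircase realizing it.

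Finally, the count $C_k$ depends only on $k=|\B|$. Summing over the $\binom{n}{k}$ subsets of each size $k$ gives $|\chinese(\A,2)|=\sum_{k=0}^n\binom{n}{k}C_k$. The case $k=0$ is the empty word, and $C_0=1$ accounts for it.

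The only point requiring care is the identification of staircases indexed by $\A$ of content $\B$ with staircases indexed by $\B$ of full content. I must check that deleting empty hooks preserves conditions (1) and (2) of \cref{de-2-chinese-staircase} and the row word, and that reinserting empty hooks also preserves them. Both hold because the NW/SE relations and the row/column exclusivity only involve the nonzero cells. This is the same convention already used in \cref{algo-chin2dyck}, so I do not expect a real obstacle there.
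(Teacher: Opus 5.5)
Your proposal is correct and is essentially the paper's argument: the paper obtains this count directly from \cref{cor-main-chinese}, partitioning the classes of $\chinese(\A,2)$ by content, using the count $C_{|\B|}$ of \cref{cor-counting-2chin} for each $\B\subseteq\A$, and summing over subsets. The one step you leave implicit, as does the paper, is that distinct $2$-Chinese staircases have distinct row words; this is immediate because each nonempty row $r$ contributes either $r$ or $rc$ with $c<r$, so a row word parses uniquely back into its staircase.
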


\paragraph{Idempotents\\}

For $\B=\{b_1< \ldots< b_k\}\subseteq \A$
define $e_{2,\B} := b_2b_3\cdots b_{k-1}b_{k}b_{1}$.

\begin{prop}
The idempotents of $\chinese(\A,2)$ are exactly the $e_{2,\B}$ for every
$\B\subset\A$.
As a consequence, the number of idempotents in $\chinese(\A, 2)$ is
$2^{|\A|}$.
\end{prop}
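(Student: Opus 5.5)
The plan is to work entirely with the $2$-Chinese functions. By the last theorem of \cref{subsection-2-chinese-functions}, an element $X$ of $\chinese(\A,2)$ represented by a word $W$ is determined by $\rd_W$. By the lemma identifying $\rdb_W$ with $\rd_W$, we may compute it through $\rdb_W(x)=\min\set{y_W}{y\in x^{\uparrow}\cap\cont(W)}$. Hence $X$ is idempotent if and only if $\rdb_{WW}=\rdb_W$. Both functions have domain $\cont(WW)=\cont(W)=:\B=\{b_1<\cdots<b_k\}$, so everything reduces to comparing values.

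First, compute $\rdb_{WW}$. For $x\in\B$, the first occurrence of $x$ in $WW$ lies in the first copy of $W$. The suffix $xV$ starting there therefore contains the entire second copy of $W$, so $\cont(xV)=\B$ and $x_{WW}=b_1$. Consequently $\rdb_{WW}$ is the constant function $\B\to\B$ with value $b_1$. So $X$ is idempotent if and only if $\rd_W$ is the constant function equal to $\min\B$ on $\B$.

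Next, characterize when this happens. Since $\rdb_W$ is non-decreasing, it is constant equal to $b_1$ if and only if its value at the maximum $b_k$ is $b_1$. That value is $\rdb_W(b_k)=(b_k)_W$, the minimum of the content of the suffix of $W$ starting at the first occurrence of $b_k$. So $X$ is idempotent if and only if $b_1$ occurs somewhere after the first occurrence of $b_k$ in $W$ (trivially true when $k\le 1$). For each content $\B$ there is exactly one constant function $\B\to\B$ with value $b_1$; it is a valid $2$-Chinese function because $b_1\le x$. Hence there is exactly one idempotent of each content $\B\subseteq\A$.

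Finally, identify that idempotent. The word $e_{2,\B}=b_2\cdots b_kb_1$ has content $\B$ and contains $b_1$ after $b_k$, so it satisfies the criterion. For $k=0$ this is the empty word, and for $k=1$ it is the word $b_1$; both cases need only a brief check of the convention. This gives the list of idempotents and the count $2^{|\A|}$. The only delicate point is the justification that $\rdb_W=\rd_W$ is a complete invariant, which is already established earlier in the paper, so I expect no real obstacle beyond correctly reading off $x_{WW}$.
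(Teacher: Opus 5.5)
Your proof is correct and follows essentially the paper's route. Both arguments show that the $2$-Chinese function of $X^2$ is always the constant function $\B\to\B$ with value $\min\B$, so $X$ is idempotent exactly when its own function is that constant, and both then identify $e_{2,\B}$ as its representative. The only difference is the tool: you evaluate the square through the closed formula $\rdb_{WW}$ on an arbitrary representative, whereas the paper inserts $\RW(\f)$ into $\f$ and uses that $\RW(\f)$ begins with $b_1$. Your version also gives an explicit criterion: $W$ is idempotent if and only if $\min\B$ occurs after the first occurrence of $\max\B$.
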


\begin{proof}
Consider a 2-Chinese function $\f$ representing an idempotent of
$\chinese(\A,2)$.
This means that $\RW(\f)\RW(\f)\equiv_{\chinese(2)}\RW(\f)$, which
equivalently means $\f\leftarrow \RW(\f)=\f$.
Let $b_1=\min\dom(\f)$.
Since $\RW(\f)$ starts with $b_1$, for all $x\in\dom(\f)$, $(\f\leftarrow
\RW(\f))(x)=\f(x)=b_1$.

Conversely, since $e_{2,\B}$ is the reading word of the 2-Chinese staircase
associated with $\f$, it ends the proof.
\end{proof}

\paragraph{Order reversing involution\\}
For a (finite) ordered alphabet $\A$, recall that $\Theta$ is the unique
anti-automorphism on $\A^*$ that extends the order-reversing permutation of
$\A$.

As pointed out in \cite[Section 1.2]{CASSAIGNE2001}, by simply looking at the
relations \cref{eq-chinese}, one can remark the following results.

\begin{lemma}
\label{lem:thetaChinese}
If $W\equiv_{\chinese}U$ then $\Theta(W)\equiv_{\chinese}\Theta(U)$.
\end{lemma}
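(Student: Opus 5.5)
Since $\equiv_{\chinese}$ is the least congruence on $\A^*$ containing the relations \eqref{eq-chinese}, the plan is to reduce to elementary rewritings and then check that $\Theta$ maps the defining relations into the congruence.

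First we check that the relation $\{(W,U) : \Theta(W)\equiv_{\chinese}\Theta(U)\}$ is itself a congruence on $\A^*$. It is an equivalence relation because $\equiv_{\chinese}$ is one. For compatibility with multiplication, suppose $\Theta(W)\equiv_{\chinese}\Theta(U)$ and let $X,Y\in\A^*$. Since $\Theta$ is an anti-automorphism,
\[
\Theta(XWY)=\Theta(Y)\Theta(W)\Theta(X)\equiv_{\chinese}\Theta(Y)\Theta(U)\Theta(X)=\Theta(XUY).
\]
Hence, by minimality of $\equiv_{\chinese}$, it suffices to prove $\Theta(L)\equiv_{\chinese}\Theta(R)$ for each defining pair $(L,R)$ of \eqref{eq-chinese}.

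Next we verify the generating relations one by one, writing $\bar x=\Theta(x)$, so that $x<y$ if and only if $\bar x>\bar y$. The word is reversed and each letter is replaced by its image.
\begin{itemize}
\item For $a<b<c$, the relation $cba=cab=bca$ is sent to $\bar a\bar b\bar c=\bar b\bar a\bar c=\bar a\bar c\bar b$, where $\bar c<\bar b<\bar a$. Renaming $\bar c,\bar b,\bar a$ as $a'<b'<c'$, this reads $c'b'a'=b'c'a'=c'a'b'$, which is again the first Chinese relation with its three members listed in a different order.
\item For $a<b$, the relation $aba=baa$ is sent to $\bar a\bar b\bar a=\bar a\bar a\bar b$ with $\bar b<\bar a$. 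Renaming $a'=\bar b<b'=\bar a$, this is $b'a'b'=b'b'a'$, which is the relation $bba=bab$.
\item Symmetrically, the relation $bba=bab$ is sent to $\bar a\bar b\bar b=\bar b\bar a\bar b$. With the same renaming this is $b'a'a'=a'b'a'$, which is the relation $aba=baa$.
\end{itemize}

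So $\Theta$ permutes the set of defining relations, and each image pair lies in $\equiv_{\chinese}$. Together with the first step, this gives the lemma. There is no real obstacle. The only care needed is in tracking the reversal of both the order and the word in the three-letter case, so as to confirm that the image is exactly the same triple of words rather than a new relation.
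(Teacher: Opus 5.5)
Your proof is correct and follows essentially the same route as the paper, which simply observes (following Cassaigne et al.) that inspecting the relations \eqref{eq-chinese} shows $\Theta$ sends defining Chinese relations to defining Chinese relations. You spell out the implicit congruence-minimality argument and the three case checks, and your renamings are accurate: the first relation's triple is mapped to itself, and $aba=baa$ and $bba=bab$ are exchanged.
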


\begin{cor}
\label{cor:theta2Chinese}
If $W\equiv_{\chinese(2)}U$ then $\Theta(W)\equiv_{\chinese(2)}\Theta(U)$.
\end{cor}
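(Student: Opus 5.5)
The plan is to derive the statement from \cref{lem:thetaChinese} by treating the 2-Chinese congruence as generated by two kinds of elementary moves. By \cref{def:quotient-monoid}, $\equiv_{\chinese(2)}$ is the least congruence on $\A^*$ that contains $\equiv_{\chinese}$ and the relations $a^2=a$ for all $a\in\A$. So $W\equiv_{\chinese(2)}U$ holds exactly when there is a finite chain $W=W_0,W_1,\dots,W_m=U$ in which each step is one of the following:
\begin{itemize}
\item a Chinese equivalence $W_{i}\equiv_{\chinese}W_{i+1}$;
\item an elementary contraction $W_i=XaaY$, $W_{i+1}=XaY$, or the reverse expansion.
\end{itemize}
Since $\equiv_{\chinese(2)}$ is transitive, it is enough to check that $\Theta$ carries each elementary step to a $\chinese(2)$-equivalence.

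For the first kind of step, \cref{lem:thetaChinese} gives $\Theta(W_i)\equiv_{\chinese}\Theta(W_{i+1})$ directly. The inclusion $\equiv_{\chinese}\subseteq\equiv_{\chinese(2)}$ then yields the required $\chinese(2)$-equivalence.

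For the second kind, suppose $W_i=XaaY$ and $W_{i+1}=XaY$. Because $\Theta$ is an anti-automorphism of $\A^*$, we have $\Theta(XaaY)=\Theta(Y)\,\Theta(a)\Theta(a)\,\Theta(X)$ and $\Theta(XaY)=\Theta(Y)\,\Theta(a)\,\Theta(X)$. Here $\Theta(a)$ is again a single letter of $\A$. Applying the defining relation $\Theta(a)^2=\Theta(a)$, which is compatible with multiplication on both sides, shows that these two words are $\chinese(2)$-equivalent. The reverse direction (expansion) is handled symmetrically.

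Concatenating these equivalences along the chain gives $\Theta(W)\equiv_{\chinese(2)}\Theta(U)$. The only point requiring some care is the description of the generated congruence as chains of elementary steps, applied up to two-sided context. This is standard, and I do not expect any real obstacle.
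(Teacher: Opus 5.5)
Your proposal is correct and matches the paper's (implicit) argument: the corollary follows from \cref{lem:thetaChinese} together with the fact that $\Theta$, being an anti-automorphism induced by a permutation of $\A$, sends each relation $a^2=a$ to the relation $\Theta(a)^2=\Theta(a)$. Your decomposition of $\equiv_{\chinese(2)}$ into chains of Chinese equivalences and contractions or expansions in context is just the explicit form of that observation.
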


Therefore, $\Theta$ induce an antiautomorphism on the Chinese monoid.
In \cite{CASSAIGNE2001}, the authors describe the image by $\Theta$ of a
staircase, $\Theta(S)$, as the symmetry of $S$ by the anti-diagonal
\cite[Proposition 2.20]{CASSAIGNE2001}.
Using this and recalling that, by \cref{thm-2-chinese}, a $2$-Chinese staircase
$\f$ is completely determined by $\cont(\f)$ and $\dyck(\f)$, one can deduce the
following result.

\begin{thm}
Let $\A$ a finite ordered alphabet and $W\in \A^*$.
Then $\cont(\Theta(W))=\Theta(\cont(W))$ and $\dyck(\Theta(W))$ is the mirror
of $\dyck(W)$ along the anti-diagonal.
\end{thm}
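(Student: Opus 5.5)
The content statement is immediate. The map $\cont$ is a homomorphism to $\Pp(\A)$, and $\Theta$ acts letterwise as the order-reversing permutation. Reversing the order of letters does not change the set of letters, so $\cont(\Theta(W))=\Theta(\cont(W))$ as sets. The substance is the Dyck path statement, which I would prove through Chinese staircases.

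First, recall \cite[Proposition 2.20]{CASSAIGNE2001}: $S(\Theta(W))$ is the reflection of $S(W)$ along the anti-diagonal, once the row and column labels are renamed via $\Theta$. To be precise, the staircase on $\A$ has row $a_i$ (from the top) and column $a_i$ (from the right). Under $x\mapsto\Theta(x)$, the cell $(r,c)$ with $r\geq c$ is sent to $(\Theta(c),\Theta(r))$, which again satisfies $\Theta(c)\geq\Theta(r)$. Geometrically, this is the reflection of the triangle across its anti-diagonal, and diagonal cells are sent to diagonal cells.

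Next, I check that \cref{algo-chin2dyck} commutes with this reflection. Suppose $(r',c')$ covers $(r,c)$, that is, $r'\geq r$ and $c'\leq c$. Then the image $(\Theta(c'),\Theta(r'))$ satisfies $\Theta(c')\geq\Theta(c)$ and $\Theta(r')\leq\Theta(r)$. So it covers $(\Theta(c),\Theta(r))$, and the covering relation is preserved by the reflection. It follows that the set of gray boxes of $S(\Theta(W))$ is the reflection of the gray set of $S(W)$, and hence so is its south-east border. Removing empty hooks is also compatible: the hook of $x$ is sent to the hook of $\Theta(x)$, since row $x$ goes to column $\Theta(x)$ and column $x$ goes to row $\Theta(x)$. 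This is consistent with the content statement. Therefore $\dyck(S(\Theta(W)))$ is the anti-diagonal mirror of $\dyck(S(W))$.

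Finally, for well-definedness in the $2$-Chinese setting, I invoke \cref{cor:theta2Chinese} together with \cref{thm-2-chinese}. A $2$-Chinese class is determined by its content and its Dyck path, and $\Theta$ respects $\equiv_{\chinese(2)}$. Hence the statement holds for any representative $W$, and the $2$-Chinese staircase of $\Theta(W)$ is the corner-diagonal filling of the mirrored path.

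The main obstacle is the careful bookkeeping of the labelling conventions, namely rows indexed from the top and columns from the right. I need to ensure that the reflection in \cite{CASSAIGNE2001} really is the anti-diagonal one in this picture, and that the south-east border maps to the south-east border rather than to the north-west one. I would check this on the two-letter examples in \eqref{chin2-2}. Since $\Theta$ sends $ab$ to $ab$ and $ba$ to $ba$, the two staircases must each be fixed by the reflection, and indeed both paths shown there are symmetric.
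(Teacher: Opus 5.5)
Your proposal is correct and follows the paper's route. The paper deduces the statement directly from \cite[Proposition 2.20]{CASSAIGNE2001}, which says that the staircase of $\Theta(W)$ is the anti-diagonal mirror of that of $W$, together with \cref{thm-2-chinese}. Your check that the covering relation is equivariant under $(r,c)\mapsto(\Theta(c),\Theta(r))$ (and hence so are the gray region, its south-east border, and the removal of empty hooks) makes explicit exactly the step the paper leaves to the reader.
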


\paragraph{Left Insertion\\}

Let us define the left 2-Chinese-insertion of a letter $x$ using the right
insertion and $\Theta$.

\begin{prop}
For a letter $x\in\A$ and a word $W\in\A^*$,
$x\rightarrow \rd(W)=\Theta(\Theta(\rd(W))\leftarrow\Theta(x))$.
\end{prop}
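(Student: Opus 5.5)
The plan is to read the left insertion $x\rightarrow\rd(W)$ as the 2-Chinese function of the class of $xW$, that is $x\rightarrow\rd(W):=\rd_{xW}$. The proposition then says that this class can be computed with the right insertion of \cref{2-chinese-func-insert} after conjugating by $\Theta$. The whole argument rests on the fact that $\Theta$ is a well-defined anti-automorphism of $\chinese(\A,2)$, which is \cref{cor:theta2Chinese}.

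The first step is to give a meaning to $\Theta$ on 2-Chinese functions. For a 2-Chinese function $\f$, choose any word $U$ with $\rd_U=\f$ (for instance its reading word) and set $\Theta(\f):=\rd_{\Theta(U)}$. By the main theorem of \cref{subsection-2-chinese-functions}, $\rd_U=\rd_{U'}$ is equivalent to $U\equiv_{\chinese(2)}U'$. By \cref{cor:theta2Chinese} this gives $\Theta(U)\equiv_{\chinese(2)}\Theta(U')$, hence $\rd_{\Theta(U)}=\rd_{\Theta(U')}$, so $\Theta(\f)$ does not depend on the choice of $U$. Since $\Theta$ is an involution on words, $\Theta$ is also an involution on 2-Chinese functions. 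In particular $\Theta(\rd(W))=\rd_{\Theta(W)}$.

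The second step is to compute the right-hand side. By \cref{chinese2-rd}, for any word $V$ and letter $y$ we have $\rd_V\leftarrow y=(\emptyf\leftarrow V)\leftarrow y=\emptyf\leftarrow Vy=\rd_{Vy}$. Applying this with $V=\Theta(W)$ and $y=\Theta(x)$ gives
\[
\Theta(\rd(W))\leftarrow\Theta(x)=\rd_{\Theta(W)\Theta(x)}=\rd_{\Theta(xW)},
\]
where the last equality uses that $\Theta$ is an anti-automorphism of $\A^*$. Applying $\Theta$ once more and using the definition from the first step with the word $\Theta(xW)$ yields $\rd_{\Theta(\Theta(xW))}=\rd_{xW}$, which is the left-hand side.

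The only delicate point is the first step: checking that $\Theta$ on 2-Chinese functions is well defined and is the operation actually intended in the statement. If a more explicit description is wanted, I would add a check of the following. By the theorem above, $\Theta(\f)$ is the function with content $\Theta(\dom\f)$ whose Dyck path is the mirror of that of $\f$ along the anti-diagonal. Combined with \cref{prop-bij-2fns-2stair}, this gives a direct combinatorial recipe for the left insertion on functions.
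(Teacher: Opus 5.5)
Your argument is correct and complete. The paper gives no proof of this statement. It introduces it as the way the left 2-Chinese insertion is \emph{defined} (``using the right insertion and $\Theta$''), relying implicitly on \cref{cor:theta2Chinese}. The identification with $\rd_{xW}$ appears only in the next proposition, and there it is proved for a different, explicit formula: $y\mapsto\min(\dom(\f)\cup\{x\})$ for $y\leq x$ and $y\mapsto\f(y)$ for $y>x$. That proof works by directly computing $\rd_x\leftarrow\RW(\rd_W)$.

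Your route is structural instead. You first make precise what $\Theta$ means on 2-Chinese functions: $\Theta$ descends to an involutive anti-automorphism of $\chinese(\A,2)$, so $\Theta(\rd_U):=\rd_{\Theta(U)}$ is well defined. You then conjugate the identity $\rd_V\leftarrow y=\rd_{Vy}$ from \cref{chinese2-rd} by $\Theta$, which yields $\rd_{xW}$ without any computation on functions.

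Your approach supplies the justification the paper leaves implicit: the $\Theta$-conjugated right insertion really computes the class of $xW$, so it deserves the name ``left insertion''. The paper's computational route instead produces the explicit closed formula. Yours gives that formula only after the extra step you sketch at the end, mirroring the Dyck path along the anti-diagonal.
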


Let us give a more explicit definition of the left 2-Chinese insertion of $x$
into a 2-Chinese function~$\f$.
Denote by $x\rightarrow \f(W)$, the 2-Chinese function such that
\begin{enumerate}[(i)]
\item $\dom(x\rightarrow \f)=\dom(\f)\cup\{x\}$,
\item $\forall y\in \dom(x\rightarrow \f)$,
      $(x\rightarrow \f)(y)=
          \left\lbrace\begin{array}{l}\min(\dom(x\rightarrow \f)),\text{ if }y\leq x,\\ 
          \f(y),\text{ if }y>x.\end{array}\right.$
\end{enumerate}
\newpage

\begin{prop}
For $x\in\A$ and $W\in\A^*$, $x\rightarrow \rd_W=\rd_{xW}$.
\end{prop}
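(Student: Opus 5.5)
The plan is to bypass the insertion algorithm and use the closed formula of \cref{def-rd} together with the lemma stating $\rdb_W=\rd_W$. We then compute $\rdb_{xW}$ directly and compare it with the explicit description of $x\rightarrow \rd_W$ given just before the statement. Both functions have domain $\cont(W)\cup\{x\}=\cont(xW)$, so it suffices to compare their values at every $y$ in this set.

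The first step is to compute the quantities $z_{xW}$ for $z\in\cont(xW)$.
\begin{itemize}
\item If $z=x$, the first occurrence of $x$ in $xW$ is the leading letter. Hence $x_{xW}=\min\cont(xW)=\min\dom(x\rightarrow\rd_W)$.
\item If $z\neq x$, then $z\in\cont(W)$. Its first occurrence in $xW$ lies inside $W$, and the suffix following that occurrence is the same in $xW$ as in $W$. Hence $z_{xW}=z_W$.
\end{itemize}
Note also that every $z_W$ is at least $\min\cont(W)\geq\min\cont(xW)$. So $x_{xW}$ is a lower bound for all the values $z_{xW}$.

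The second step is to evaluate $\rdb_{xW}(y)=\min\set{z_{xW}}{z\in y^{\uparrow}\cap\cont(xW)}$ in two cases.
\begin{itemize}
\item If $y\leq x$, then $x$ belongs to $y^{\uparrow}\cap\cont(xW)$. By the lower-bound remark, the minimum is $x_{xW}=\min\dom(x\rightarrow\rd_W)$.
\item If $y>x$, then $x\notin y^{\uparrow}$, and every $z$ in the index set lies in $\cont(W)$ and satisfies $z_{xW}=z_W$. Moreover $y\in\cont(W)$, since $y\neq x$. The minimum is therefore $\min\set{z_W}{z\in y^{\uparrow}\cap\cont(W)}=\rdb_W(y)=\rd_W(y)$.
\end{itemize}
These two cases are exactly the two clauses defining $x\rightarrow\rd_W$. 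Hence $\rd_{xW}=\rdb_{xW}=x\rightarrow\rd_W$.

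The only delicate point is the first step: checking that prepending $x$ leaves the values $z_W$ unchanged for $z\neq x$, which relies on the definition of $z_W$ via the \emph{first} occurrence of $z$. This should be routine, so I expect no real obstacle. If one preferred to avoid \cref{def-rd}, the alternative would be to use $\Theta$ and the proposition $x\rightarrow\rd(W)=\Theta(\Theta(\rd(W))\leftarrow\Theta(x))$. That route, however, requires describing $\Theta$ on 2-Chinese functions via the anti-diagonal symmetry of Dyck paths, which is more cumbersome, so I would use the direct computation.
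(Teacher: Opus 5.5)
Your proof is correct, but it follows a different route from the paper's. The paper stays inside the insertion framework. It writes $\rd_{xW}=\rd_x\leftarrow\RW(\rd_W)$, using $W\equiv_{\chinese(2)}\RW(\rd_W)$, and notes that $\RW(\rd_W)$ begins with $\min\cont(W)$. So the very first right insertion already lowers the value at $x$ to $\min(\cont(W)\cup\{x\})$, and by monotonicity the value at every $y\leq x$. For $y>x$ it argues that the insertion only ever looks at $y^{\uparrow}$, which does not contain $x$, so the values coincide with those of $\emptyf\leftarrow W$.

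You instead pass to the closed form $\rdb$. Everything then reduces to one observation: prepending $x$ changes only $x_{xW}$, which becomes $\min\cont(xW)$, while $z_{xW}=z_W$ for $z\neq x$. Your version is more explicit: the case $y>x$, which the paper dismisses with \emph{clearly}, becomes a one-line comparison of index sets. It also needs neither the normal form nor the shape of the reading word.

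On the other hand, your argument rests entirely on the lemma $\rdb_W=\rd_W$, whose proof in the paper is only sketched. To make your argument self-contained, prove that lemma by induction on $|W|$. For $z\in\cont(W)$ one has $z_{Wy}=\min(z_W,y)$, and $y_{Wy}=y$ if $y\notin\cont(W)$. Together these give exactly $\rdb_{Wy}=\rdb_W\leftarrow y$.
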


\begin{proof}
Recall that $W\equiv_{\chinese(\A,2)}\RW(\rd_W)$.
Letting $\fp=\rd_x$, we have
\begin{equation}
\rd_{xW}=\fp\leftarrow W=\fp\leftarrow\RW(\rd_W).
\end{equation}
Since $\RW(\rd_W)$ starts with $b_1=\min(W)$, we have
$\rd_{xW}(x)=\min\{\dom(W)\cup\{x\}\}$.
Therefore, for every $y\leq x$, $\rd_{xW}(y)=\min\{\dom(W)\cup\{x\}\}$.
And clearly, for $y>x$, $(\fp\leftarrow W)(y)=\rd_W(y)$, which concludes the proof.
\end{proof}

\paragraph{$\J$-triviality and $\J$-order\\}
\label{p-chinese-Jtriv}

Given a 2-Chinese function $\f$, let
$\operatorname{area}(\f)=\operatorname{area}(\dyck(\f))$, which is the number
of boxes between the diagonal and $\dyck(\f)$.
Define the map $\operatorname{sz}:\chinese(\A,2)\to\NN$ as
$\operatorname{sz}(\f)=|\dom(\f)|+\operatorname{area}(\f)$.

\begin{lemma}\label{sz}
For a 2-Chinese function $\f$ and a letter $x\in A$, one has
\begin{align*}
\operatorname{sz}(x\rightarrow \f)\geq \operatorname{sz}(\f),\\
\operatorname{sz}(\f\leftarrow x)\geq \operatorname{sz}(\f).
\end{align*}

Moreover, one has the equality if and only if $x\rightarrow \f= \f$ (resp.
$\f=\f\leftarrow x$).
\end{lemma}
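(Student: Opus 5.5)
The plan is to rewrite $\operatorname{sz}$ as a sum of local terms, one for each element of the domain, and then to check that both insertions can only enlarge these terms.

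\textbf{Step 1: a formula for $\operatorname{sz}$.} Let $\f$ have domain $\B$. Under the bijection of \cref{prop-bij-2fns-2stair}, row $x$ of the staircase indexed by $\B$ has its nonzero cell at column $\f(x)$, immediately to the left of the Dyck path. The boxes of that row lying between the diagonal and the path are the columns $y\in\B$ with $\f(x)\le y<x$. Summing over rows gives
\[
\operatorname{area}(\f)=\sum_{x\in\B}\bigl|\{y\in\B : \f(x)\le y<x\}\bigr|.
\]
Adding $|\B|$ then yields
\[
\operatorname{sz}(\f)=\sum_{x\in\B}\bigl|[\f(x),x]\cap\B\bigr|.
\]
Each summand is at least $1$, since it counts $x$ itself.

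\textbf{Step 2: both insertions enlarge the domain and lower the values.} Let $\rd$ be either $\f\leftarrow x$ or $x\rightarrow\f$, and put $\B'=\B\cup\{x\}=\dom(\rd)$. For $z\in\B$ I claim $\rd(z)\le\f(z)$ and $\rd(z)\in\B'$.
\begin{itemize}
\item For the right insertion, $(\f\leftarrow x)(z)=\min(x,\f(z))$ by the second highlighted subcase.
\item For the left insertion, $(x\rightarrow\f)(z)$ is either $\f(z)$ or $\min\B'$, and $\min\B'\le\f(z)$.
\end{itemize}
In both cases the value is an element of $\B'$.

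\textbf{Step 3: comparing the sums.} For $z\in\B$, the facts $\rd(z)\le\f(z)$ and $\B\subseteq\B'$ give
\[
[\rd(z),z]\cap\B'\ \supseteq\ [\f(z),z]\cap\B.
\]
If $x\notin\B$, the sum for $\rd$ also contains the extra term indexed by $x$, which is at least $1$. Summing over $z$ gives $\operatorname{sz}(\rd)\ge\operatorname{sz}(\f)$, and the inequality is strict whenever $x\notin\B$.

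\textbf{Step 4: the equality case.} This is the only delicate point, though it is short. Suppose equality holds. Then $x\in\B$ by Step 3, so $\B'=\B$. Moreover every inclusion $[\rd(z),z]\cap\B\supseteq[\f(z),z]\cap\B$ must be an equality. Both $\rd(z)$ and $\f(z)$ lie in $\B$ (Step 2) and are the minima of their respective sets, so $\rd(z)=\f(z)$ for every $z$. Hence $\rd=\f$. The converse is trivial: if $\rd=\f$, the two values of $\operatorname{sz}$ coincide.
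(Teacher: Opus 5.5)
Your proof is correct and follows the same route as the paper. Both arguments rest on two facts: each insertion only enlarges the domain and weakly lowers every value, and equality forces the same domain and the same values. Your explicit formula $\operatorname{sz}(\f)=\sum_{x\in\B}\bigl|[\f(x),x]\cap\B\bigr|$ justifies a step the paper leaves implicit in its ``thus'': that these two facts prevent $\operatorname{sz}$ from decreasing, even though the staircase grows when $x$ is a new letter.
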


\begin{proof}
By definition of the left insertion, one has $\dom(\f)\subseteq
\dom(x\rightarrow \f)$ and $(x\rightarrow \f)(y)\leq \f(y)$ for all $y\in
\dom(\f)$. Thus, $sz(x\rightarrow \f)\geq sz(\f)$.

Moreover, in order to have the equality, we need $x\in\dom(\f)$ and
$(x\rightarrow \f)(y)=\f(y)$ for all $y\in\dom(\f)$.

The right insertion is proved in a similar manner.
\end{proof}

\begin{thm}
The 2-Chinese monoid is $\J$-trivial.
\end{thm}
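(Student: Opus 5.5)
We use the size function $\operatorname{sz}$ of \cref{sz}, which never decreases under left or right insertion of a letter. We identify each element of $\chinese(\A,2)$ with its 2-Chinese function $\rd_W$, as allowed by the insertion theorem. The aim is to show that $u \leq_\J v$ and $v \leq_\J u$ force $u = v$. Throughout we use the left-insertion proposition $x\rightarrow \rd_W=\rd_{xW}$ and \cref{chinese2-rd}, which gives $\rd_{Wx} = \rd_W \leftarrow x$.

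First, fix $f = \rd_V$ and words $X = x_1\cdots x_p$ and $Y = y_1\cdots y_q$. Then $\rd_{XVY}$ is obtained from $f$ by a sequence of single-letter insertions: right insertions of $y_1,\dots,y_q$, followed by left insertions of $x_p,\dots,x_1$. Iterating \cref{sz} along this sequence gives $\operatorname{sz}(\rd_{XVY}) \ge \operatorname{sz}(f)$. Equality holds only if every step of the chain is an equality, and by the equality case of \cref{sz} each such step leaves the function unchanged. Hence $\operatorname{sz}(\rd_{XVY}) = \operatorname{sz}(f)$ implies $\rd_{XVY} = f$.

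Now suppose $u \J v$. Then $u = XvY$ and $v = X'uY'$ for suitable elements $X,Y,X',Y'$. The first step gives $\operatorname{sz}(u)\ge\operatorname{sz}(v)$ and $\operatorname{sz}(v)\ge \operatorname{sz}(u)$, so the two sizes are equal. The equality case then yields $u = XvY = v$, so the 2-Chinese monoid is $\J$-trivial. As a byproduct, $\operatorname{sz}$ is strictly monotone along the strict $\J$-order.

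The main obstacle is the equality case of \cref{sz}, since the whole argument depends on it. For left insertion this follows as in the proof of \cref{sz}. Insertion never increases any value, $(x\rightarrow \f)(y)\le \f(y)$, and never shrinks the domain. The size $|\dom| + \operatorname{area}$ is therefore unchanged only if the domain is unchanged and all values are preserved; here the area is the sum over $y$ of $y$ minus $\f(y)$ in rank terms.

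The right insertion case needs one check to be spelled out. Namely, $(\f\leftarrow x)(y)=\min(x,\f(y))\le\f(y)$ for $y\in\dom(\f)$, and the domain can only grow. One must also confirm that area and domain size are measured on a common footing when the domain grows, so that adding a new point cannot lower the area term. We will compute the area as $\sum_{y}(\operatorname{rank}(y)-\operatorname{rank}(\f(y)))$ with ranks taken in $\dom(\f)$. If that causes trouble when the domain grows, we will instead use ranks in the full alphabet $\A$, which makes both terms monotone.
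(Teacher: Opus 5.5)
Your proposal is correct and follows essentially the same route as the paper: both use the potential $\operatorname{sz}$ from \cref{sz}, compare sizes in both directions, and invoke the equality case. You only make explicit the chain of single-letter right and left insertions, and the step-by-step propagation of equality, that the paper's short proof leaves implicit. Your fallback to ranks in the full alphabet is unnecessary: adding a new point $x$ to the domain raises $\operatorname{rank}(y)-\operatorname{rank}(\f(y))$ by one when $\f(y)<x<y$ and leaves it unchanged otherwise, so the rank-in-domain area is already monotone.
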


\begin{proof}
Let $X,Y\in A^*$ such that $X\in \chinese(\A, 2)\cdot Y\cdot \chinese(\A, 2)$
and $Y\in \chinese(\A, 2)\cdot X\cdot \chinese(\A, 2)$. Because $X\in
\chinese(\A, 2)\cdot Y\cdot \chinese(\A, 2)$, there exists $U,V\in A^*$ such
that $UYV\equiv_{\chinese(2)}X$. By \cref{sz} one has $sz(Y)\leq sz(X)$.
Similarly, one has $sz(X)\leq sz(Y)$ thus $sz(Y)= sz(X)$. We can conclude
using the second part of \cref{sz} that $X\equiv_{\chinese(2)}Y$.
\end{proof}

By looking at \cref{fig:2-chinese j-order}, one easily sees that the
$\J$-order of the 2-Chinese monoid is not ranked for $|\A|\geq3$.

\begin{figure}
\centering
\scalebox{0.75}{
\begin{tikzpicture}
	\node []   (1) at (0,12) {$\emptyf$};
	\node []   (a) at (-3,10) {$\begin{pmatrix}
		a\\
		a
		\end{pmatrix}$
		};
	\node []   (b) at (0,10) {$\begin{pmatrix}
		b\\
		b
		\end{pmatrix}$
		};
	\node []   (c) at (3,10) {$\begin{pmatrix}
		c\\
		c
		\end{pmatrix}$
		};
	\node []   (ab) at (-3,8) {$\begin{pmatrix}
		a&b\\
		a&b
		\end{pmatrix}$
		};
	\node []   (ac) at (0,8) {$\begin{pmatrix}
		a&c\\
		a&c
		\end{pmatrix}$
		};
	\node []   (bc) at (3,8) {$\begin{pmatrix}
		b&c\\
		b&c
		\end{pmatrix}$
		};
	\node []   (ab-aa) at (-3,6) {$\begin{pmatrix}
		a&b\\
		a&a
		\end{pmatrix}$
		};
	\node []   (abc) at (0,6) {$\begin{pmatrix}
		a&b&c\\
		a&b&c
		\end{pmatrix}$
		};
	\node []   (bc-bb) at (3,6) {$\begin{pmatrix}
		b&c\\
		b&b
		\end{pmatrix}$
		};
	\node []   (aac) at (-3,4) {$\begin{pmatrix}
		a&b&c\\
		a&a&c
		\end{pmatrix}$
		};
	\node []   (ac-aa) at (0,4) {$\begin{pmatrix}
		a&c\\
		a&a
		\end{pmatrix}$
		};
	\node []   (abb) at (3,4) {$\begin{pmatrix}
		a&b&c\\
		a&b&b
		\end{pmatrix}$
		};
	\node []   (aab) at (0,2) {$\begin{pmatrix}
		a&b&c\\
		a&a&b
		\end{pmatrix}$
		};
	\node []   (aaa) at (0,0) {$\begin{pmatrix}
		a&b&c\\
		a&a&a
		\end{pmatrix}$
		};
	\draw (1) -- (a);
	\draw (1) -- (b);
	\draw (1) -- (c);
	\draw (a) -- (ab);
	\draw (a) -- (ac);
	\draw (b) -- (ab);
	\draw (b) -- (bc);
	\draw (c) -- (ac);
	\draw (c) -- (bc);
	\draw (ab) -- (ab-aa);
	\draw (ab) -- (abc);
	\draw [bend left=45] (ac) edge (ac-aa);
	\draw [bend left=10] (ac) edge (abb);
	\draw [bend right=10] (ac) edge (aac);
	\draw (bc) -- (abc);
	\draw (bc) -- (bc-bb);
	\draw (ab-aa) -- (aac);
	\draw (abc) -- (aac);
	\draw (abc) -- (abb);
	\draw (bc-bb) -- (abb);
	\draw (aac) -- (aab);
	\draw [bend left=50] (ac-aa) edge (aaa);
	\draw (abb) -- (aab);
	\draw (aab) -- (aaa);
\end{tikzpicture}}
\hfill
\scalebox{0.75}{
\begin{tikzpicture}
	\node []   (1) at (0,12) {
$\emptyset$
};
	\node []   (a) at (-3,10) {
$\Yboxdim{.5cm}\begin{tikzpicture}[scale=0.6,baseline={(current bounding box.east)} ]
\tgyoung(0cm,0cm,%
!\Ygr;1!:a,%
:a,%
)
\draw[ultra thick] (0,0) -- (.5,0) -- (.5,.5);
\end{tikzpicture}$
};
	\node []   (b) at (0,10) {
$\Yboxdim{.5cm}\begin{tikzpicture}[scale=0.6,baseline={(current bounding box.east)} ]
\tgyoung(0cm,0cm,%
!\Ygr;1!:b,%
:b,%
)
\draw[ultra thick] (0,0) -- (.5,0) -- (.5,.5);
\end{tikzpicture}$
};
	\node []   (c) at (3,10) {
$\Yboxdim{.5cm}\begin{tikzpicture}[scale=0.6,baseline={(current bounding box.east)} ]
\tgyoung(0cm,0cm,%
!\Ygr;1!:c,%
:c,
)
\draw[ultra thick] (0,0) -- (.5,0) -- (.5,.5);
\end{tikzpicture}$
};
	\node []   (ab) at (-3,8) {
$\Yboxdim{.5cm}\begin{tikzpicture}[scale=0.6,baseline={(current bounding box.east)} ]
\tgyoung(0cm,0cm,%
:!\Ygr;1!:a,%
!\Ygr;1!\Yw;!:b,%
:b:a,%
)
\draw[ultra thick] (0, -0.5) -- (.5, -0.5) -- (.5, 0) -- (1, 0) -- (1, 0.5);
\end{tikzpicture}$
};
	\node []   (ac) at (0,8) {
$\Yboxdim{.5cm}\begin{tikzpicture}[scale=0.6,baseline={(current bounding box.east)} ]
\tgyoung(0cm,0cm,%
:!\Ygr;1!:a,%
!\Ygr;1!\Yw;!:c,%
:c:a,%
)
\draw[ultra thick] (0, -0.5) -- (.5, -0.5) -- (.5, 0) -- (1, 0) -- (1, 0.5);
\end{tikzpicture}$
};
	\node []   (bc) at (3,8) {
$\Yboxdim{.5cm}\begin{tikzpicture}[scale=0.6,baseline={(current bounding box.east)} ]
\tgyoung(0cm,0cm,%
:!\Ygr;1!:b,%
!\Ygr;1!\Yw;!:c,%
:c:b,%
)
\draw[ultra thick] (0, -0.5) -- (.5, -0.5) -- (.5, 0) -- (1, 0) -- (1, 0.5);
\end{tikzpicture}$
};
	\node []   (ab-aa) at (-3,6) {
$\Yboxdim{.5cm}\begin{tikzpicture}[scale=0.6,baseline={(current bounding box.east)} ]
\tgyoung(0cm,0cm,%
:!\Ygr;!:a,%
!\Ygr;!;1!:b,%
:b:a,%
)
\draw[ultra thick] (0, -0.5) -- (1, -0.5) -- (1, 0.5);
\end{tikzpicture}$
};
	\node []   (abc) at (0,6) {
$\Yboxdim{.5cm}\begin{tikzpicture}[scale=0.6,baseline={(current bounding box.east)} ]
\tgyoung(0cm,0cm,%
::!\Ygr;1!:a,%
:!\Ygr;1!\Yw;!:b,%
!\Ygr;1!\Yw;!;!:c,%
:c:b:a,%
)
\draw[ultra thick] (0, -1) -- (.5, -1) -- (.5, -.5) -- (1, -.5) -- (1, 0) -- (1.5, 0) -- (1.5,.5);
\end{tikzpicture}$
};
	\node []   (bc-bb) at (3,6) {
$\Yboxdim{.5cm}\begin{tikzpicture}[scale=0.6,baseline={(current bounding box.east)} ]
\tgyoung(0cm,0cm,%
:!\Ygr;!:b,%
!\Ygr;!;1!:c,%
:c:b,%
)
\draw[ultra thick] (0, -0.5) -- (1, -0.5) -- (1, 0.5);
\end{tikzpicture}$
};
	\node []   (aac) at (-3,4) {
$\Yboxdim{.5cm}\begin{tikzpicture}[scale=0.6,baseline={(current bounding box.east)} ]
\tgyoung(0cm,0cm,%
::!\Ygr;!:a,%
:!\Ygr;!;1!:b,%
!\Ygr;1!\Yw;!;!:c,%
:c:b:a,%
)
\draw[ultra thick] (0, -1) -- (.5, -1) -- (.5, -.5) -- (1.5, -.5) -- (1.5,.5);
\end{tikzpicture}$
};
	\node []   (ac-aa) at (0,4) {
$\Yboxdim{.5cm}\begin{tikzpicture}[scale=0.6,baseline={(current bounding box.east)} ]
\tgyoung(0cm,0cm,%
:!\Ygr;!:a,%
!\Ygr;!;1!:c,%
:c:a,%
)
\draw[ultra thick] (0, -0.5) -- (1, -0.5) -- (1, 0.5);
\end{tikzpicture}$
};
	\node []   (abb) at (3,4) {
$\Yboxdim{.5cm}\begin{tikzpicture}[scale=0.6,baseline={(current bounding box.east)} ]
\tgyoung(0cm,0cm,%
::!\Ygr;1!:a,%
:!\Ygr;!\Yw;!:b,%
!\Ygr;!;1!\Yw;!:c,%
:c:b:a,%
)
\draw[ultra thick] (0, -1) -- (1, -1) -- (1, 0) -- (1.5, 0) -- (1.5,.5);
\end{tikzpicture}$
};
	\node []   (aab) at (0,2) {
$\Yboxdim{.5cm}\begin{tikzpicture}[scale=0.6,baseline={(current bounding box.east)} ]
\tgyoung(0cm,0cm,%
::!\Ygr;!:a,%
:!\Ygr;!;1!:b,%
!\Ygr;!;1!\Yw;!:c,%
:c:b:a,%
)
\draw[ultra thick] (0, -1) -- (1, -1) -- (1, -.5) -- (1.5, -.5) -- (1.5,.5);
\end{tikzpicture}$
};
	\node []   (aaa) at (0,0) {
$\Yboxdim{.5cm}\begin{tikzpicture}[scale=0.6,baseline={(current bounding box.east)} ]
\tgyoung(0cm,0cm,%
::!\Ygr;!:a,%
:!\Ygr;1!;!:b,%
!\Ygr;!;!;1!:c,%
:c:b:a,%
)
\draw[ultra thick] (0, -1) -- (1.5, -1) -- (1.5,.5);
\end{tikzpicture}$
};
	\draw (1) -- (a);
	\draw (1) -- (b);
	\draw (1) -- (c);
	\draw (a) -- (ab);
	\draw (a) -- (ac);
	\draw (b) -- (ab);
	\draw (b) -- (bc);
	\draw (c) -- (ac);
	\draw (c) -- (bc);
	\draw (ab) -- (ab-aa);
	\draw (ab) -- (abc);
	\draw [bend left=45] (ac) edge (ac-aa);
	\draw [bend left=10] (ac) edge (abb);
	\draw [bend right=10] (ac) edge (aac);
	\draw (bc) -- (abc);
	\draw (bc) -- (bc-bb);
	\draw (ab-aa) -- (aac);
	\draw (abc) -- (aac);
	\draw (abc) -- (abb);
	\draw (bc-bb) -- (abb);
	\draw (aac) -- (aab);
	\draw [bend left=50] (ac-aa) edge (aaa);
	\draw (abb) -- (aab);
	\draw (aab) -- (aaa);
\end{tikzpicture}}
\caption{2-chinese $\J$-order for $n=3$ with element represented as 2-chinese functions on the left and 2-Chinese staircases on the right. The anti-automorphism $\Theta$ stabilizes the middle column and, for each line, it exchanges the elements on the right with the ones on the left. \label{fig:2-chinese j-order}}
\end{figure}


\subsection{Quotient Type}
\label{ss-chinese-type} 

We are now in a position to prove that $\chinese(\A)$ is of type 1. As in the
case of the plactic monoid, we need to exhibit some properties of
$\chinese(\A,\sigma)$ throught a series of technical lemmas.

Note that, if $|\A|\geq 2$, $\chinese(\A)$ is not of type 2 for the same
reason than $\plax(\A)$: for $a<b$, $abab\equiv_{\chinese}
abba\equiv_{\chinese} baab$ but they are equivalent to neither $aabb$ nor
$bbaa$.

\begin{lemma}\label{lemma-push-pull}
If $A = a_1\cdots a_m \in \A ^ *$ and $b\in \A$ are such that $a_i < b$ for all
$i$, then $b ^ 2 A\equiv_{\chinese(\sigma)} b A b$.
\end{lemma}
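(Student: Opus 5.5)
The plan mirrors the proof of the Plactic Gathering Lemma (\cref{lemma-plactic-gathering}). I first prove an identity in $\chinese(\A)$ itself for large powers of $b$, and then transfer it to $\chinese(\A,\sigma)$ by inflating and deflating $b$ using $b^{1+k(\sigma(b)-1)}\equiv_{\chinese(\sigma)} b$. Concretely, I aim to show that for $u\geq m+1$ and $v\geq 0$, the Chinese class of $b^uAb^v$ depends only on $u+v$ (and on $A$). Granting this, choose $k$ so large that $u=2+k(\sigma(b)-1)\geq m+1$. Then
\[
b^2A\equiv_{\chinese(\sigma)} b^{2+k(\sigma(b)-1)}A
\equiv_{\chinese} b^{1+k(\sigma(b)-1)}Ab
\equiv_{\chinese(\sigma)} bAb,
\]
where the middle step compares $(u,v)=(2+k(\sigma(b)-1),0)$ with $(1+k(\sigma(b)-1),1)$, both of which have $u\geq m+1$.

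For the identity in $\chinese(\A)$, I would argue directly with the relations rather than with the staircase insertion. The basic step is $bba\equiv_{\chinese} bab$ for $a<b$. Iterating it gives $b^ja\equiv_{\chinese} bab^{j-1}$ for $j\geq 2$, and symmetrically any trailing block of $b$'s can be pulled back in front of a single smaller letter. I would then induct on $m$: write $A=A'a_m$ and assume $b^{u}A'b^{v}\equiv_{\chinese} b^{u'}A'b^{v'}$ whenever $u+v=u'+v'$ and $u,u'\geq m$. The goal is to move one copy of $b$ across the last letter $a_m$, i.e. to show $b^uA'b^va_m\equiv_{\chinese} b^{u}A'b^{v-1}a_mb$ when $v\geq 1$ and there are enough $b$'s available to the left.

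To do so, I first use the induction hypothesis to rearrange $b^uA'b^v$ into $b^{u+v-1}A'b$ or a similar convenient shape. I then move the final $b$ using $bba\equiv bab$ and the relations $cba\equiv cab\equiv bca$ ($a<b<c$ does not apply directly since all $a_i<b$). The relations actually needed are $x b a_m\equiv \dots$ with $x\leq b$, for which I use $aba\equiv baa$ and $bba\equiv bab$ together with the commutations of the triple relations when letters of $A'$ lie strictly between others.

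The main obstacle is exactly this bookkeeping: a $b$ must be transported past a letter $a_m$ that sits after $A'$, and the Chinese relations only allow this when a suitable letter precedes the pair. If the relational route becomes too messy, my fallback is the insertion algorithm of \cite{CASSAIGNE2001}. Inserting $b^u$ and then $A$ into the staircase, and then $b^v$, I would check (as in the lemma preceding \cref{lemma-plactic-gathering}) that for $u\geq m+1$ the resulting staircase has the entries $(b,a_i)$ determined by $A$ and the diagonal entry $(b,b)$ equal to $u+v-m$, which again depends only on $u+v$. \cref{thm-chinese} then gives the desired Chinese equivalence.
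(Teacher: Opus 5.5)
Your proof is correct, but it is organized differently from the paper's.

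\textbf{The paper's route.} The paper stays inside $\chinese(\A,\sigma)$ throughout. It inducts on $m$ with $b^2A'\equiv_{\chinese(\sigma)}bA'b$ as hypothesis. It uses $b^{\sigma(b)}=b$ to manufacture the extra copies of $b$ that let it apply the hypothesis twice and reach $b^{\sigma(b)-1}A'b^2a_m$. It then applies $bba_m\equiv_{\chinese}ba_mb$ and shifts back.

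\textbf{Your route.} You instead isolate a stable identity in $\chinese(\A)$: for $u\geq m+1$, the class of $b^uAb^v$ depends only on $u+v$. You then perform a single inflation/deflation at the end, exactly as in the Plactic Gathering Lemma.

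\textbf{Your staircase fallback.} This proves the identity completely. Since $b$ is the largest letter, the diagonal entry $(b,b)$ stays positive while $A$ is inserted (this already holds for $u\geq m$). So each $a_i$ merely turns one diagonal $b$ into a pair $(b,a_i)$, and nothing is pushed into $S'$. The resulting staircase has diagonal entry $u+v-m$ and entries $(b,c)=|A|_c$. Equal staircases then give Chinese-equivalent words because $\RW(S(W))\equiv_{\chinese}W$; that fact, rather than the shortlex statement you cite, is what is needed. This route buys an explicit normal form, and even shows the order of $A$ is irrelevant. The cost is relying on the insertion theorem of Cassaigne et al., whereas the paper uses only the defining relations.

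\textbf{Your relational route.} This one is left vague, and the shape you propose, $b^{u+v-1}A'b\,a_m$, is not the useful one: a single $b$ before $a_m$ triggers no relation. The fix is the paper's manoeuvre transplanted to $\chinese(\A)$. For $u\geq m+1$, the induction hypothesis gives $b^{u+1}A'\equiv_{\chinese}b^{u-1}A'b^2$. Then $bba_m\equiv_{\chinese}ba_mb$. Then $b^{u-1}A'b\equiv_{\chinese}b^uA'$. Together these give $b^{u+1}A\equiv_{\chinese}b^uAb$. Neither the triple relations nor $aba\equiv baa$ are needed, and your threshold $u\geq m+1$ is exactly what this induction requires.
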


\begin{proof}
We proceed by induction on $m = |A|$. If $|A| = 0$, then the statement holds
trivially. Suppose that $b ^ 2 a_1 \cdots a_{m- 1}\equiv_{\chinese(\sigma)}
ba_1 \cdots a_{m- 1} b$. Then
\begin{align*}
 b ^ 2 a_1 \cdots a_m &
 \equiv_{\chinese(\sigma)} b ^ {\sigma(b) + 1}a_1\cdots a_{m - 1}a_m,
 && b ^ {\sigma(b)} = b, \\
  &
  \equiv_{\chinese(\sigma)} b ^ {\sigma(b) - 1} a_1 \cdots a_{m -1} b ^ 2 a_m,
  &&
  \text{induction (twice)},
  \\
  &
  \equiv_{\chinese} b ^ {\sigma(b) - 1} a_1 \cdots a_{m -1}b a_m b,
  &&
  bba_m \equiv_{\chinese} ba_mb,
  \\
  &
  \equiv_{\chinese(\sigma)} b ^ {\sigma(b)} a_1 \cdots a_m b,
  &&
  \text{induction}
  \\
  &
  \equiv_{\chinese(\sigma)} b a_1 \cdots a_m b.
  &&
  \qedhere
\end{align*}
\end{proof}

\begin{cor}\label{cor-pull-push}
If $C = c_1\cdots c_m \in \A ^ *$ and $b\in \A$ are such that $b < c_i$ for
all $i$, then \mbox{$Cb ^ 2\equiv_{\chinese(\sigma)} b C b$.}
\end{cor}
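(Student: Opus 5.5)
Corollary~\ref{cor-pull-push} follows from \cref{lemma-push-pull} by the order-reversing anti-automorphism $\Theta$. Put $\Theta$ on the same footing as in the Chinese case: since $\Theta$ maps the Chinese relations to Chinese relations (\cref{lem:thetaChinese}) and maps each relation $a^{\sigma(a)}=a$ to $\Theta(a)^{\sigma(a)}=\Theta(a)$, it sends $\equiv_{\chinese(\sigma)}$ to $\equiv_{\chinese(\sigma\circ\Theta)}$. Concretely, if $U\equiv_{\chinese(\sigma)}V$ then $\Theta(U)\equiv_{\chinese(\sigma')}\Theta(V)$, where $\sigma'=\sigma\circ\Theta$. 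This is checked on elementary rewritings, as in \cref{cor:theta2Chinese}.

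Now take $C=c_1\cdots c_m$ and $b$ with $b<c_i$ for all $i$. Set $A=\Theta(C)=\Theta(c_m)\cdots\Theta(c_1)$ and $b'=\Theta(b)$. Since $\Theta$ reverses the order, every letter of $A$ is strictly smaller than $b'$. Applying \cref{lemma-push-pull} to the function $\sigma'$ (the lemma holds for arbitrary $\sigma$) gives
\[
b'^2A\equiv_{\chinese(\sigma')} b'Ab'.
\]

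Apply $\Theta$ to this equivalence. By the first paragraph with the roles of $\sigma$ and $\sigma'$ exchanged, using $\sigma'\circ\Theta=\sigma$, we obtain
\[
\Theta(A)\,b^2\equiv_{\chinese(\sigma)} b\,\Theta(A)\,b.
\]
Since $\Theta$ is an involution, $\Theta(A)=C$, so this reads $Cb^2\equiv_{\chinese(\sigma)}bCb$, which is the claim.

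The only point needing care is the twisting of $\sigma$ by $\Theta$: the symmetry is not an anti-automorphism of $\chinese(\A,\sigma)$ itself unless $\sigma\circ\Theta=\sigma$. Because the lemma is stated for every $\sigma$, this causes no difficulty.

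As a fallback, one could instead redo the induction of \cref{lemma-push-pull} directly on $m=|C|$. In that version the rule $bba_m\equiv bab$ is replaced by its mirror $c_1bb\equiv bc_1b$, which comes from $aba=baa$ with $a=b$ and the larger letter $c_1$.
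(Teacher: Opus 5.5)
Your proof is correct and takes the same approach as the paper, which also derives the corollary from \cref{lemma-push-pull} using the order-reversing anti-automorphism $\Theta$. Your explicit twist $\sigma\mapsto\sigma\circ\Theta$ is a welcome precision. The paper only cites \cref{cor:theta2Chinese}, which is stated for the $2$-quotient, where $\sigma$ is constant and therefore automatically $\Theta$-invariant.
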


\begin{proof}
It is a direct corollary of \cref{cor:theta2Chinese} and \cref{lemma-push-pull}.
\end{proof}

\begin{lemma}
Let $i,j,k\in\NN_{\geq 1}$ and $a<b<c\in\A$. Then
\begin{enumerate}[(i)]
\item if $i\leq k$, $c^kb^ja^i\equiv_{\chinese} c^ka^ib^j$,
\item if $i\geq k$, $c^kb^ja^i\equiv_{\chinese} b^jc^ka^i$.
\end{enumerate}
\end{lemma}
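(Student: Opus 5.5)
The plan is to compute the Chinese staircases of both sides of each equivalence with the insertion algorithm of \cite{CASSAIGNE2001} recalled above, and then compare them. Since $\RW(S(W))\equiv_{\chinese} W$ and the row word of a staircase is the normal form of its class (\cref{thm-chinese}), two words with the same staircase are $\chinese$-equivalent. Everything takes place on the three-letter alphabet $\{a<b<c\}$. The staircase then has rows $a$, $b$, $c$ from top to bottom: row $c$ has cells $(c,c),(c,b),(c,a)$, row $b$ has cells $(b,b),(b,a)$, and row $a$ has the single cell $(a,a)$.

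First I will compute $S(c^kb^j)$. Inserting $c^k$ puts $k$ in $(c,c)$. Each subsequent $b$ reaches the bottom row, where the largest nonzero column is $c>b$. It therefore moves one unit from $(c,c)$ to $(c,b)$ and reinserts $c$ into $S'$; in the two-row staircase on $\{a,b\}$ the letter $c$ plays the role of the top diagonal, so it is absorbed on the diagonal $(c,c)$ of the smaller picture. I will carry out this step carefully, because the recursive insertion into $S'$ reuses the labels of the sub-alphabet, and I expect the cumulative result to be a simple closed form in $j$ and $k$, treating the cases $j\le k$ and $j>k$ separately. Next I insert $a^i$. Each $a$ bumps the largest nonzero column of the bottom row, first $c$ and then $b$, and the bumped letter is recursively inserted into row $b$. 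The count $i$ compared with $k$ governs how many bumps go through $(c,c)$ and how many go through $(c,b)$; this is exactly where the dichotomy $i\le k$ versus $i\ge k$ appears.

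I will then compute $S(c^ka^ib^j)$ for (i) and $S(b^jc^ka^i)$ for (ii) in the same way. These are easier: in $c^ka^i$ the $a$'s only bump $c$'s, and after that the $b$'s either go to row $b$ or bump remaining $c$'s. For $b^jc^ka^i$, the word $b^jc^k$ is non-decreasing and fills diagonals, and the $a$'s then bump. Finally I will check that the resulting entries agree under the hypothesis $i\le k$, respectively $i\ge k$.

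As a cross-check, and as a fallback if the bookkeeping becomes unwieldy, I will also use a purely relational route. From $cba\equiv cab\equiv bca$ one gets $b(ca)\equiv(ca)b$, that is, $b$ commutes with the factor $ca$. From $aba\equiv baa$, $bba\equiv bab$ (and the analogous ones with $c$) one gets moves that shift a single letter across a block of equal letters. For (i), I would pair each $a$ with a $c$ and commute the resulting $ca$ blocks through $b^j$ one at a time, using $i\le k$ to guarantee enough $c$'s. For (ii), the surplus $a$'s, together with the relation $c\,b\,a\equiv b\,c\,a$, let me move $b^j$ to the front. I expect the main obstacle to be the first step: correctly tracking the recursive bumping into $S'$ in $S(c^kb^ja^i)$, where bumps alternate between the $c$-column and the $b$-column.
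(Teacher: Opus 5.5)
Your staircase strategy is valid and genuinely different from the paper's, which does not use staircases here. The paper proves (i) by induction on $k$ inside the presentation. It rewrites $c^kb^ja^i\equiv_{\chinese} c^k\,ba\,b^{j-1}a^{i-1}$ (by $bba\equiv bab$), then $c^k\,ba\equiv c^k\,ab$ (by $cba\equiv cab$), then $c^ka\equiv ca\,c^{k-1}$ (by $cca\equiv cac$), and applies the induction hypothesis to $c^{k-1}b^ja^{i-1}$; (ii) is dismissed as similar. That argument is self-contained. Yours imports from \cite{CASSAIGNE2001} the fact that $\RW(S(W))\equiv_{\chinese}W$, but in return it is mechanical, treats (i) and (ii) uniformly, and exhibits the normal forms.

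However, your description of the insertion is wrong at exactly the step you flag, and following it would produce wrong staircases.

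\begin{itemize}
\item When a letter bumps the diagonal entry of the bottom row ($\ell=n$, here $(c,c)$), nothing is reinserted. The algorithm reinserts $a_\ell$ only when $\ell<n$, because the cell $(c,b)$ already encodes the two-letter factor $cb$. There is also no cell $(c,c)$ in $S'$, which is indexed by $\{a,b\}$, and keeping $(c,c)$ at $k$ while $(c,b)$ grows would not preserve the evaluation.
\item For the letters $a$, the same applies: a bumped diagonal $c$ is not reinserted. A $b$ bumped out of $(c,b)$ is reinserted and lands on $(b,b)$.
\item Once row $c$ contains only $(c,a)$, each further $a$ is itself inserted into $S'$. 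There it either turns a unit of $(b,b)$ into one of $(b,a)$, or lands on $(a,a)$. This happens after $k$ letters $a$, so precisely in case (ii) with $i>k$, and your plan omits it.
\end{itemize}

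With the correct rule, the two cases come out as follows.
\begin{itemize}
\item For $i\le k$, both $c^kb^ja^i$ and $c^ka^ib^j$ give the staircase $(c,c)=\max(k-i-j,0)$, $(c,b)=\min(j,k-i)$, $(c,a)=i$, $(b,b)=\max(i+j-k,0)$.
\item For $i\ge k$, both $c^kb^ja^i$ and $b^jc^ka^i$ give the staircase $(c,a)=k$, $(b,b)=\max(j+k-i,0)$, $(b,a)=\min(j,i-k)$, $(a,a)=\max(i-k-j,0)$.
\end{itemize}
Until you actually carry out this computation, the proposal is a plan rather than a proof.

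For your relational fallback, note that (ii) follows from (i) at $i=k$. Indeed, $c^kb^ja^k\equiv c^ka^kb^j\equiv(ca)^kb^j\equiv b^j(ca)^k\equiv b^jc^ka^k$, using $cca\equiv cac$ and $bca\equiv cab$. Then multiply on the right by $a^{i-k}$.
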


\begin{proof}
We shall prove the first statement.
If $i=0$, it is trivial.
Assume that it is true for all $0\leq i\leq k\leq n$. Consider $0< i\leq k=n+1$. We have
\begin{align*}
c^kb^ja^i&\equiv_{\chinese} c^kbab^{j-1}a^{i-1},\\
&\equiv_{\chinese} c^kab^ja^{i-1},\\
&\equiv_{\chinese} cac^{k-1}b^ja^{i-1},\\
&\equiv_{\chinese} cac^{k-1}a^{i-1}b^j, &\text{induction}\\
&\equiv_{\chinese} c^ka^{i}b^j.
\end{align*}

The second statement is proved in a similar way.
\end{proof}

As we have already seen in the case of the plactic monoid, the previous lemma
directly implies this one, see~\cref{cor:genrelsigmaplactic}.

\begin{cor}\label{cor:genrelsigmachinese}
For $i,j,k\in\NN_{\geq 1}$ and $a<b<c\in\A$, we have
\begin{enumerate}[(i)]
\item $c^k b^j a^i\equiv_{\chinese(\sigma)} c^k a^i b^j\equiv_{\chinese(\sigma)} b^j c^k a^i$,
\item $b^j a^i b^k\equiv_{\chinese(\sigma)} b^j b^k a^i$,
\item $b^j a^i a^k\equiv_{\chinese(\sigma)} a^i b^j a^k$.
\end{enumerate}
\end{cor}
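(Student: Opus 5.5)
Each item is proved by the same inflate--apply--deflate trick used in \cref{cor:genrelsigmaplactic}. In $\chinese(\A,\sigma)$ any power $x^m$ with $m\geq 1$ can be replaced by $x^{m+t(\sigma(x)-1)}$ for any $t\geq 0$, since $x^{\sigma(x)}\equiv_{\chinese(\sigma)} x$. So we first inflate the relevant letter until the inequality hypothesis of the previous lemma (or a classical Chinese relation) holds. We then apply the Chinese relation, and finally deflate back to the original exponent.

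For (i), write $c^kb^ja^i\equiv_{\chinese(\sigma)} c^{k+t(\sigma(c)-1)}b^ja^i$ with $t$ large enough that $k+t(\sigma(c)-1)\geq i$. Part (i) of the previous lemma gives $\equiv_{\chinese} c^{k+t(\sigma(c)-1)}a^ib^j$, and deflation gives $c^ka^ib^j$. For the second equality, inflate $a$ instead: take $i'=i+s(\sigma(a)-1)\geq k$, apply part (ii) of the lemma to get $b^jc^ka^{i'}$, and deflate.

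For (ii) and (iii), the previous lemma does not apply directly since only two letters are involved, so we first establish the two-letter statements in $\chinese(\A)$. For (ii) the target is $b^ja^ib^k\equiv b^{j+k}a^i$ when $j$ is large compared to $i$. This follows by induction from $bba\equiv bab$: each $b$ to the right of $a^i$ can be moved left past one $a$ at a time, provided a $b$ precedes it. This is essentially the computation in \cref{lemma-push-pull}, which in fact gives $b^2A\equiv_{\chinese(\sigma)} bAb$ with $A=a^i$. Iterating \cref{lemma-push-pull} $k$ times moves the $b^k$ to the left of $a^i$, using the copies of $b$ in $b^j$ (inflated if necessary) as the leading $b^2$. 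For (iii) the target is $b^ja^ia^k\equiv a^ib^ja^k$, i.e.\ $b^ja^{i+k}\equiv a^ib^ja^k$. This uses $aba\equiv baa$ repeatedly: moving one $a$ from the right past $b^j$ to the front requires an $a$ to its right. Doing this $i$ times requires $k\geq 1$ extra $a$'s after, so by induction on $i$ and $j$ we get $b^ja^{i+k}\equiv a^ib^ja^k$ for $k\geq 1$. Indeed $ab^ja\equiv b^jaa$ by induction on $j$ from $aba\equiv baa$, and then we peel off one $a$ at a time. If this needs $k$ large compared to $i$, inflate $a^k$ first and deflate afterward.

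The main obstacle is the bookkeeping in (ii) and (iii): verifying the exact induction from the two-letter relations $aba=baa$ and $bba=bab$, and checking that the required inflations only involve a letter sitting in a block that is untouched at the end, so that deflation is legitimate. Once the classical identities are in place, the $\sigma$-versions follow verbatim as in \cref{cor:genrelsigmaplactic}.
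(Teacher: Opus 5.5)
Your (i) is the paper's inflate--apply--deflate argument. Your (ii) is sound: write $b^ja^ib^k=b^{j-1}(ba^ib)b^{k-1}$ and apply \cref{lemma-push-pull} with $A=a^i$ a total of $k$ times. This gives $b^{j+k}a^i$ for every $j\geq 1$, with no inflation needed. The paper itself only cites the three-letter lemma, which covers neither (ii) nor (iii).

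\textbf{The gap is in (iii).} The identity you invoke, $ab^ja\equiv_{\chinese} b^jaa$ ``by induction on $j$ from $aba\equiv baa$'', is false for $j\geq 2$. For $j\geq 2$ the word $ab^ja$ contains no factor $aba$, so the induction cannot start. Concretely, the staircase $S(abba)$ has a $1$ in each of the cells $(a,a)$, $(b,b)$ and $(b,a)$ (row word $abba$). By contrast, $S(bbaa)$ has the single nonzero entry $2$ in cell $(b,a)$ (row word $baba$). Hence $abba\not\equiv_{\chinese} bbaa$.

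Consequently your classical identity $b^ja^{i+k}\equiv_{\chinese} a^ib^ja^k$ ``for $k\geq 1$'' fails, for example at $j=2$, $i=k=1$. Running the two-letter insertion shows it holds exactly when $k\geq j$: both sides then have row word $a^{i+k-j}(ba)^j$. When $k<j$, the $(b,a)$ entries are $\min(i+k,j)$ and $k$, which differ. So the threshold is in terms of $j$, not $i$ as your fallback suggests.

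There are two ways to repair (iii).
\begin{enumerate}[(1)]
\item Inflate the trailing block to $a^{k+t(\sigma(a)-1)}$ with exponent at least $j$, apply the correct classical identity, and deflate. This is legitimate because that block stays last on both sides.
\item Closer to your treatment of (ii), apply \cref{cor-pull-push} with the letter $a$ and $C=b^j$, which gives $b^ja^2\equiv_{\chinese(\sigma)} ab^ja$. Apply this $i$ times, each time moving one $a$ out of the right-hand block. Every step has at least two $a$'s to the right of $b^j$ because $k\geq 1$, and the result is $b^ja^{i+k}\equiv_{\chinese(\sigma)} a^ib^ja^k$.
\end{enumerate}
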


\begin{prop}\label{prop:2chinese2sigmachinese}
Let $W,U\in\A^*$ such that $W\equiv_{\chinese(2)}U$. If $W^\#$ is an inflation
of $W$, then there exists $U^\#$ an inflation of $U$ such that
$W^\#\equiv_{\chinese(\sigma)}U^\#$.
\end{prop}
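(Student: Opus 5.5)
The argument mirrors \cref{prop:2plax2sigmaplax}. Since $\equiv_{\chinese(2)}$ is the reflexive–transitive closure of elementary rewritings $V_1 L V_2 \leftrightarrow V_1 R V_2$, it suffices to treat a single such step. Indeed, if $W^\#$ is an inflation of $W$ and $W$ rewrites to $U$ in one step, we produce an inflation $U^\#$ of $U$ with $W^\#\equiv_{\chinese(\sigma)}U^\#$. Iterating along the chain from $W$ to $U$ then gives the result, because the relation ``there is an inflation of the target equivalent to the given inflation of the source'' is transitive. Throughout, an inflation of $V_1 L V_2$ has the form $V_1^\# L^\# V_2^\#$, where $L^\#$ is an inflation of $L$. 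Since $\equiv_{\chinese(\sigma)}$ is a congruence, it is enough to rewrite $L^\#$ into some inflation of $R$.

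The idempotent-type steps are handled exactly as in the plactic case. If $L=xx$ and $R=x$, then $L^\#=x^ix^j=x^{i+j}$ is already an inflation of $x$. If $L=x$ and $R=xx$, then $x^i\equiv_{\chinese(\sigma)} x^{i}x^{\sigma(x)-1}$, which is an inflation of $xx$ because $\sigma(x)\geq 2$.

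The genuine Chinese relations are covered by \cref{cor:genrelsigmachinese}:
\begin{enumerate}[(i)]
\item For $a<b<c$, the triple $cba=cab=bca$ inflates to $c^kb^ja^i$, $c^ka^ib^j$ and $b^jc^ka^i$, with arbitrary exponents $i,j,k\geq1$. Item (i) of the corollary says exactly that any one of these is $\chinese(\sigma)$-equivalent to the other two with the same exponents. This covers all three pairwise relations in either direction.
\item For $a<b$, the relation $bba=bab$ inflates to $b^jb^ka^i\leftrightarrow b^ja^ib^k$, which is item (ii).
\item For $a<b$, the relation $aba=baa$ inflates to $a^ib^ja^k\leftrightarrow b^ja^ia^k$, which is item (iii).
\end{enumerate}
In every case the exponents simply carry over, so the right-hand side is an inflation of $R$.

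The main obstacle is therefore not in this proposition but in \cref{cor:genrelsigmachinese} itself. Its proof uses the inflate–apply–deflate trick of \cref{cor:genrelsigmaplactic}. For item (i), one first inflates $c$ or $a$ by multiples of $\sigma-1$ until the inequality $i\leq k$ or $i\geq k$ required by the preceding lemma holds, then applies that lemma, and finally deflates. Items (ii) and (iii) concern the two-letter relations; they need the analogous generalized identities $b^{j+k}a^i\equiv_{\chinese} b^ja^ib^k$ and $a^ib^ja^k\equiv b^ja^{i+k}$ once exponents are large enough, and these can be obtained from \cref{lemma-push-pull} and \cref{cor-pull-push}. Since the corollary is stated earlier, we may invoke it directly. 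The only care needed is to check that each orientation of each defining relation in \eqref{eq-chinese} matches one of its items, as done above.
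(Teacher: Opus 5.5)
Your proposal is correct and follows the paper's proof. You reduce to a single elementary $\chinese(2)$-rewriting, handle the Chinese relations with \cref{cor:genrelsigmachinese} (carrying the exponents over unchanged), and treat $x^2\leftrightarrow x$ either by merging exponents or by appending $x^{\sigma(x)-1}$. Your version is more explicit than the paper's in matching each defining relation, in each direction, to items (i)--(iii) of that corollary; the only quibble is in your closing side remark, since \cref{lemma-push-pull} and \cref{cor-pull-push} are $\chinese(\sigma)$-statements that give items (ii) and (iii) directly, not $\equiv_{\chinese}$ identities.
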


\begin{proof}
Since $W\equiv_{\chinese(2)}U$, there exists a sequence of 2-Chinese relations
going from $W$ to $U$. We only need to check the result for elementary
2-Chinese relations.

Assume that $W=V_1cbaV_2$ and $U=V_1cabV_2$, with $a<b<c$. This means that
\mbox{$W^\#=V_1^\# c^k a^i b^j V_2^\#$} for some inflations $V_1^\#,V_2^\#$ and
$i,j,k\in\NN$. By \cref{cor:genrelsigmachinese}, we have $$W^\#=V_1^\# c^k b^j
a^i V_2^\#\equiv_{\chinese(\sigma)}V_1^\# c^k a^i b^j V_2^\#=U^\#.$$

Using \cref{cor:genrelsigmachinese}, one can prove it the same way for any
elementary Chinese relations.

Now assume that $W=V_1xxV_2$ and $U=V_1xV_2$ for some $x\in\A$. This means
that \mbox{$W^\#=V_1^\# x^i x^j V_2^\#$} for some inflations $V_1^\#,V_2^\#$ and
$i,j\in\NN$. We then have $$W^\#=V_1^\# x^i x^j V_2^\#=V_1^\# x^{i+j}
V_2^\#=U^\#.$$

Finally, assume that $W=V_1xV_2$ and $U=V_1xxV_2$ for some $x\in\A$. This
means that \mbox{$W^\#=V_1^\# x^i V_2^\#$} for some inflations $V_1^\#,V_2^\#$ and
$i\in\NN$. We then have $$W^\#=V_1^\# x^i V_2^\#\equiv_{\chinese(\sigma)}V_1^\#
x^i x^{\sigma(x)-1} V_2^\#=U^\#,$$ hence the result.
\end{proof}


We now need an analogue of \cref{lemma-plactic-gathering}:

\begin{lemma}[Chinese Gathering Lemma]
\label{lemma-chinese-gathering-function}
Let $a_0a_1\cdots a_{k-1}b$ be an increasing word, $bc_1\cdots c_k$ astrictly increasing word and
$\alpha_0,\cdots,\alpha_{k-1},\beta,\gamma_1,\cdots,\gamma_k$ positive
integers.
Then
\begin{equation}
bA_0C_1A_1\cdots C_{k-1}A_{k-1}C_kB\equiv_{\chinese(\sigma)}BA_0C_2A_1\cdots
C_{k-1}A_{k-1}C_kb,
\end{equation}
where $A_i=a_i^{\alpha_i}$, $B=b^{\beta}$ and $C_i=c_i^{\gamma_i}$.
\end{lemma}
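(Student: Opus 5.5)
We read the right-hand side of the statement as $BA_0C_1A_1\cdots C_{k-1}A_{k-1}C_kb$; the $C_2$ after $A_0$ looks like a typo for $C_1$. Put $W=A_0C_1A_1\cdots C_{k-1}A_{k-1}C_k$. The claim is then $bWb^\beta\equiv_{\chinese(\sigma)}b^\beta Wb$. The plan is to move $\beta-1$ copies of $b$ from the right end of the word to the left end in three stages. All letters of the $A_i$ are smaller than $b$, and all letters of the $C_i$ are larger than $b$.

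\emph{Stage 1: crossing $C_k$.} I will show $C_kb^{m+1}\equiv_{\chinese(\sigma)}b^mC_kb$ for all $m\geq 0$, by induction on $m$ using \cref{cor-pull-push}. That corollary gives $Cb^2\equiv bCb$ for any word $C$ whose letters exceed $b$. For the induction step,
\[
C_kb^{m+1}=(C_kb^2)\,b^{m-1}\equiv bC_kb\,b^{m-1}=b\,(C_kb^{m}),
\]
and the induction hypothesis then turns this into $b\cdot b^{m-1}C_kb$. Taking $m=\beta-1$, the left-hand side becomes
\[
bA_0C_1A_1\cdots C_{k-1}A_{k-1}\,b^{\beta-1}\,C_kb.
\]

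\emph{Stage 2: crossing the blocks $C_iA_i$.} For $1\leq i\leq k-1$, \cref{cor:genrelsigmachinese}(i), applied to $a_i<b<c_i$, gives $c_i^{\gamma_i}a_i^{\alpha_i}b^{j}\equiv_{\chinese(\sigma)} b^{j}c_i^{\gamma_i}a_i^{\alpha_i}$. So the block $b^{\beta-1}$ passes leftward through each $C_iA_i$ in turn, yielding
\[
b\,A_0\,b^{\beta-1}\,C_1A_1\cdots C_{k-1}A_{k-1}C_kb .
\]
The strictness hypotheses ($a_i<b$ and $b<c_i$) are exactly what these triple relations require. If $\beta=1$ there is nothing to prove, since the two sides coincide.

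\emph{Stage 3: crossing $A_0$.} I will show $bA_0b^{m}\equiv_{\chinese(\sigma)}b^{m+1}A_0$ by induction on $m$ using \cref{lemma-push-pull}, which gives $b^2A\equiv bAb$ for any word $A$ whose letters are below $b$. Reading that relation right to left gives the base case $bA_0b\equiv b^2A_0$. For the induction step,
\[
bA_0b^{m}=(bA_0b^{m-1})\,b\equiv b^{m}A_0b=b^{m-1}(bA_0b)\equiv b^{m-1}b^2A_0=b^{m+1}A_0 .
\]
Applying this with $m=\beta-1$ gives $B\,A_0C_1A_1\cdots C_kb$, which is the right-hand side.

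The only delicate point I expect is the bookkeeping in the two inductions of Stages~1 and~3, namely making sure the relations are applied to factors of the right shape. The rest is direct application of \cref{cor:genrelsigmachinese}. Notably, the increasing conditions on the $a_i$ and $c_i$ are never used beyond the comparisons $a_i<b<c_i$.
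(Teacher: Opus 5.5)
Your proof is correct, including your reading of the $C_2$ after $A_0$ as a typo for $C_1$. It follows the paper's route: the paper's one-line proof also slides the surplus copies of $b$ leftward block by block by repeatedly applying \cref{cor:genrelsigmachinese}, using part (i) for the blocks $C_iA_i$ and, implicitly, the two-letter parts (ii) and (iii) for the end blocks. The only difference is that you derive the end moves $C_kb^{\beta}\equiv_{\chinese(\sigma)} b^{\beta-1}C_kb$ and $bA_0b^{\beta-1}\equiv_{\chinese(\sigma)} b^{\beta}A_0$ by induction from \cref{cor-pull-push} and \cref{lemma-push-pull}, which makes your argument slightly more self-contained, since parts (ii) and (iii) of \cref{cor:genrelsigmachinese} are not separately justified in the paper.
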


\begin{proof}
Note that, for all $1\leq i\leq k$, $a_i\leq b<c_i$. Then apply
\cref{cor:genrelsigmachinese} $k$ times to obtain the result. 
\end{proof}

Before stating the next result, we need a definition of some particular words.

\begin{de}
Let $\f$ be a 2-Chinese function.
A word $W\in\A^*$ is a \emph{left gathered $\sigma$-inflation of} $\RW(\f)$ if:
\begin{enumerate}[(i)]
\item $W$ is an inflation of $\RW(\f)$,
\item for any letter $a\in \cont(\RW(\f))$, only its first occurrence in
      $\RW(\f)$  may have been duplicated in $W$,
\item if $a^\alpha$ is a factor of $W$, $\alpha<\sigma(a)$.
\end{enumerate}
\end{de}

\begin{prop}
Let $\f$ be a 2-Chinese function.
If $W$ is an inflation of $\RW(\f)$ then there exists $U$ a left gathered
inflation of $\RW(\f)$ such that $W\equiv_{\chinese(\sigma)} U$.

Moreover, if $U$ and $V$ are two left gathered inflations of $\RW(\f)$,
$U\equiv_{\chinese(\sigma)}V$ implies $U=V$.
\end{prop}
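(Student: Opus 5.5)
Existence is a gathering argument, and uniqueness follows by passing to the $\sigma$-evaluation.

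\textbf{Existence.} Recall that $\RW(\f)$ is the row word of the reduced staircase of $\f$: row $x$ contributes $x^2$ if $\f(x)=x$, and $x\f(x)$ otherwise. Reading top to bottom, the word has the shape
\[
x_1\f(x_1)\,x_2\f(x_2)\cdots .
\]
Since $\f$ is non-decreasing with $\f(x)\le x$, consider a letter $b$ occurring several times in $\RW(\f)$. Its first occurrence is either in its own row, if no earlier row maps to $b$, or as $\f(x)=b$ for the smallest such $x<b$. Its later occurrences are of the form $\f(y)=b$ for rows $y$ beyond, together with possibly the row label $b$ itself.

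Between two consecutive occurrences of $b$, the letters alternate between row labels $c_i>b$ (strictly increasing) and images $a_i=\f(c_i)\le b$ (non-decreasing, and in fact $<b$ until $b$ reappears). This is exactly the pattern of \cref{lemma-chinese-gathering-function}. I would use that lemma, together with \cref{lemma-push-pull} and \cref{cor-pull-push} for the degenerate cases where one of the blocks $A$ or $C$ is empty, to move the exponent of any later occurrence of $b$ onto the first occurrence. Iterating from the last occurrence backward, and then over all letters, turns an arbitrary inflation $W$ into an inflation in which only first occurrences are inflated.

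Finally, each exponent $\alpha\ge\sigma(a)$ is reduced by $\sigma(a)-1$ using $a^{\sigma(a)}=a$ until $\alpha<\sigma(a)$. The moves stay within inflations of $\RW(\f)$ because the gathering lemma only changes exponents, not the underlying word.

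\textbf{Uniqueness.} Suppose $U\equiv_{\chinese(\sigma)}V$ are left gathered inflations of $\RW(\f)$. Then $\ev_\sigma(U)=\ev_\sigma(V)$ in $\Com(\A,\sigma)$. For each letter $a$, its total exponent in $U$ is $(n_a-1)+\alpha_a$, where $n_a$ is the number of occurrences of $a$ in $\RW(\f)$ and $1\le\alpha_a\le\sigma(a)-1$ is the exponent of its first occurrence. In $\Com(\A,\sigma)$, the exponents $\ge 1$ are determined modulo $\sigma(a)-1$, and $\alpha_a$ ranges over a complete residue system $\{1,\dots,\sigma(a)-1\}$. Since $n_a$ is the same for $U$ and $V$, equality of the $\sigma$-evaluations forces $\alpha_a(U)=\alpha_a(V)$ for every $a$. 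The underlying word is fixed, so $U=V$.

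\textbf{Main obstacle.} The delicate part is existence: one must verify that the factor between consecutive occurrences of $b$ really has the alternating shape required by the Chinese Gathering Lemma. This includes the boundary cases where the later occurrence is the diagonal row $b$ itself, or where occurrences are adjacent, in which case they simply merge. One must also check that diagonal rows contribute $x^2$, so that letters $x$ appearing only on the diagonal merge trivially. I expect a careful case check on the structure of $\RW(\f)$ for a 2-Chinese function to handle this.
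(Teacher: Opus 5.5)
Your proposal is correct and follows the paper's route. The paper's proof only cites the Chinese Gathering Lemma (\cref{lemma-chinese-gathering-function}) together with the definitions, and your residue-window argument via $\ev_\sigma$ is exactly what the uniqueness part needs.

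One small correction to your structural description: since $\f(x)\le x$ and rows are read in increasing order, the first occurrence of $b$ is always its own row label, never an image $\f(x)$ with $x<b$. Consequently, apart from the trivial adjacent pair $bb$ coming from the diagonal, the only configuration not literally covered by the gathering lemma is a factor $b\,c^{\gamma}\,b$ arising from two consecutive rows both mapped to $b$. That case is handled by \cref{cor-pull-push}, as you anticipated.
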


\begin{proof}
It is a direct consequence of \cref{lemma-chinese-gathering-function} and the
definitions of a 2-Chinese function and its reading word.
\end{proof}

\begin{cor}\label{cor-chinese-inflations-function}
Let $\f$ be a 2-Chinese function, and $U$ and $V$ be inflations of $\RW(\f)$.
If \mbox{$\ev_{\sigma}(U) = \ev_{\sigma}(V)$}, then $U \equiv_{\chinese(\sigma)} V$.
\end{cor}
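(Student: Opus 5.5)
We reduce both $U$ and $V$ to a single canonical representative, following the proof of \cref{cor-inflations} in the plactic case. By the preceding proposition, there exist left gathered inflations $U'$ and $V'$ of $\RW(\f)$ with $U\equiv_{\chinese(\sigma)}U'$ and $V\equiv_{\chinese(\sigma)}V'$. Since $\ev_\sigma$ is a homomorphism from $\chinese(\A,\sigma)$ to $\Com(\A,\sigma)$, it is constant on $\chinese(\sigma)$-classes. Hence
\[
\ev_\sigma(U')=\ev_\sigma(U)=\ev_\sigma(V)=\ev_\sigma(V').
\]
It therefore suffices to show that two left gathered inflations of $\RW(\f)$ with the same $\sigma$-evaluation are equal as words. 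Then $U\equiv U'=V'\equiv V$ in $\chinese(\A,\sigma)$.

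To prove this, fix a letter $a\in\cont(\RW(\f))$ and let $m_a=|\RW(\f)|_a$. In a left gathered inflation $W$, every occurrence of $a$ except the first has exponent $1$. The first occurrence has exponent $\alpha_a\in[1,\sigma(a)-1]$. So $|W|_a=\alpha_a+m_a-1$.

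In $\Com(\A,\sigma)$, the power $a^n$ with $n\geq1$ is determined by $n$ if $n<\sigma(a)$. Otherwise it is determined by $n$ modulo $\sigma(a)-1$, taken as a representative in $[1,\sigma(a)-1]$. Thus equality of $\sigma$-evaluations gives
\[
\alpha_a+m_a-1\equiv\alpha'_a+m_a-1 \pmod{\sigma(a)-1},
\]
with both exponents at least $1$. The main obstacle is the bookkeeping here: deducing $\alpha_a=\alpha'_a$ also requires the small-power case. When $\alpha_a+m_a-1<\sigma(a)$, the exponent is recorded exactly, not just modulo $\sigma(a)-1$. In every case $\alpha_a\equiv\alpha'_a\pmod{\sigma(a)-1}$, and since both lie in $[1,\sigma(a)-1]$, they are equal. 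The exact-value case is consistent with this, because equality of exact values already forces $\alpha_a=\alpha'_a$.

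Both words are inflations of the same word $\RW(\f)$ and agree on every exponent. Hence $U'=V'$, which concludes the proof.

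Alternatively, one can avoid the congruence check by invoking the second half of the preceding proposition, the uniqueness of left gathered representatives, applied to $U'$ and $V'$. However, that requires first knowing $U'\equiv V'$, which is exactly what we want, so the direct exponent comparison above is the route to take.
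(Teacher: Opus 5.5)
Your proof is correct and is the derivation the paper intends: the paper gives no separate proof and treats the statement as an immediate consequence of the preceding proposition. That is, you reduce $U$ and $V$ to left gathered inflations of $\RW(\f)$ and observe that such an inflation is determined by its $\sigma$-evaluation. The ``small-power case'' digression is unnecessary and slightly misleading, since $a=a^{\sigma(a)}$ means no exponent is ``recorded exactly''. For $n,n'\geq 1$ one simply has $a^n=a^{n'}$ in $\Com(\A,\sigma)$ if and only if $n\equiv n'\pmod{\sigma(a)-1}$, and together with $\alpha_a,\alpha'_a\in[1,\sigma(a)-1]$ this gives $\alpha_a=\alpha'_a$ directly.
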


\begin{de}[$\sigma$-Chinese Reading]
For $(\f, E)\in \chinese(2)\times \Com(A, \sigma)$, if
\mbox{$\cont(\f)=\cont(E)$}, its \emph{$\sigma$-reading} $\RW_\sigma(\f, E)$ is the
left gathered inflation of $\RW(\f)$ such that
$\ev_\sigma(\RW_{\sigma}(W))=\ev_\sigma(E)$.
\end{de}

\begin{thm}\label{thm-sigma-chinese-normal-forms-function}
Let $W\in \A ^ *$.
Then $W \equiv_{\chinese(\sigma)} \RW_{\sigma}(\rd_W,\ev_{\sigma}(W))$.
\end{thm}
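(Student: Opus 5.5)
The plan is to mirror the argument used for \cref{cor-plax-type1} in the plactic case. The proof chains three equivalences: pass from $W$ to an inflation of $\RW(\rd_W)$, then use \cref{cor-chinese-inflations-function} to reach the canonical left gathered inflation.

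First, $W$ is trivially an inflation of itself. By \cref{2chifunct} (together with \cref{chinese2-rd}), $W\equiv_{\chinese(2)}\RW(\rd_W)$. Applying \cref{prop:2chinese2sigmachinese} with $W^\#=W$ then gives an inflation $U$ of $\RW(\rd_W)$ with $W\equiv_{\chinese(\sigma)}U$. Since $\ev_\sigma$ is a homomorphism on $\chinese(\A,\sigma)$, we get $\ev_\sigma(U)=\ev_\sigma(W)$. Since inflations preserve content and $\equiv_{\chinese(2)}$ is content-preserving, we also get $\cont(U)=\cont(\rd_W)=\cont(W)$. Hence the pair $(\rd_W,\ev_\sigma(W))$ satisfies the content condition, and $\RW_\sigma(\rd_W,\ev_\sigma(W))$ is defined.

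Second, let $V=\RW_\sigma(\rd_W,\ev_\sigma(W))$. I need to check that $V$ exists and is well defined. Every letter of $\cont(\rd_W)$ occurs in $\RW(\rd_W)$. We may therefore inflate the first occurrence of each letter $a$ to an exponent $\gamma\in[1,\sigma(a)-1]$. The value of $\gamma$ is chosen so that the total number of occurrences of $a$ is congruent to the exponent of $a$ in $\ev_\sigma(W)$ modulo $\sigma(a)-1$, with exponents read in $[1,\sigma(a)-1]$. This is exactly the argument of \cref{lem-type1}. One small point: condition (iii) of the definition of left gathered inflation also constrains factors $a^\alpha$ formed by adjacent equal letters. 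Since $\RW(\rd_W)$ has no two consecutive equal letters in the 2-Chinese staircase reading, this causes no trouble. With this choice, $V$ is an inflation of $\RW(\rd_W)$ with $\ev_\sigma(V)=\ev_\sigma(W)=\ev_\sigma(U)$.

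Finally, $U$ and $V$ are both inflations of $\RW(\rd_W)$ with the same $\sigma$-evaluation. By \cref{cor-chinese-inflations-function}, $U\equiv_{\chinese(\sigma)}V$, and hence $W\equiv_{\chinese(\sigma)}U\equiv_{\chinese(\sigma)}V$, as claimed.

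The only delicate point I expect is the well-definedness of $V$ in the second step. This means making sure the modular adjustment can always be carried out with positive exponents below $\sigma(a)$. It also means that uniqueness (via the preceding proposition) makes ``the'' left gathered inflation meaningful. Everything else is a direct invocation of earlier results.
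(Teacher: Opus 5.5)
Your proof is correct and is exactly the paper's argument. You use $W\equiv_{\chinese(2)}\RW(\rd_W)$, then \cref{prop:2chinese2sigmachinese} with $W^\#=W$ to get a $\chinese(\sigma)$-equivalent inflation of $\RW(\rd_W)$, and finally \cref{cor-chinese-inflations-function} to identify it with $\RW_\sigma(\rd_W,\ev_\sigma(W))$. Your extra check that this left gathered inflation exists and is unique fills in a point the paper leaves implicit; it relies on reading $\RW(\rd_W)$ as the 2-Chinese staircase row word, which has no adjacent equal letters, and on $[1,\sigma(a)-1]$ being a complete residue system modulo $\sigma(a)-1$.
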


\begin{proof}
By \cref{chinese2-rd}, we know that $W\equiv_{\chinese(2)} \RW(\rd_W)$.
The rest is a direct corollary of \cref{prop:2chinese2sigmachinese} and
\cref{cor-chinese-inflations-function}.
\end{proof}

We can now state and prove the main theorem of this section, from which the
fact that the Chinese monoid is of type 1 is a direct corollary.

\begin{thm}
\label{thm-sigma-chinese-normal-forms}
Let $W_1, W_2\in \A ^ *$ and let $\sigma: \A \to \NN_{\geq 2}$ be arbitrary.
Then $W_1\equiv_{\chinese(\sigma)} W_2$ if and only if $W_1
\equiv_{\chinese(2)} W_2$ and $\ev_{\sigma}(W_1) =\ev_{\sigma}(W_2)$.
\end{thm}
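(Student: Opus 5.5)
The forward direction uses only the homomorphisms already defined. If $W_1\equiv_{\chinese(\sigma)} W_2$, then applying $\theta:\chinese(\A,\sigma)\to\chinese(\A,2)$ gives $W_1\equiv_{\chinese(2)} W_2$. This works because $\equiv_{\chinese(2)}$ contains the Chinese relations and $a^{\sigma(a)}\equiv_{\chinese(2)} a$, since $a^p\equiv a$ modulo $a^2=a$ for every $p\geq 2$. Similarly, $\ev_\sigma$ is a well-defined homomorphism: the Chinese relations are evaluation-preserving and $a^{\sigma(a)}=a$ holds in $\Com(\A,\sigma)$. Hence $\ev_\sigma(W_1)=\ev_\sigma(W_2)$.

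For the converse, suppose $W_1\equiv_{\chinese(2)} W_2$ and $\ev_\sigma(W_1)=\ev_\sigma(W_2)$. By \cref{thm-2-chinese} and \cref{chinese2-rd}, the 2-Chinese functions coincide: $\rd_{W_1}=\rd_{W_2}=:\f$. By \cref{thm-sigma-chinese-normal-forms-function},
\begin{equation*}
W_1\equiv_{\chinese(\sigma)}\RW_\sigma(\f,\ev_\sigma(W_1))=\RW_\sigma(\f,\ev_\sigma(W_2))\equiv_{\chinese(\sigma)} W_2,
\end{equation*}
which proves the converse.

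The only point needing care is that $\RW_\sigma(\f,E)$ is well defined: for given $\f$ and $E$ of the same content, there should be exactly one left gathered $\sigma$-inflation of $\RW(\f)$ with $\sigma$-evaluation $E$. Existence holds because each letter $a$ of $\cont(\f)$ has a first occurrence in $\RW(\f)$ whose exponent can be chosen in $[1,\sigma(a)-1]$. The total power of $a$ modulo $\sigma(a)-1$ then runs over all residues, and all other occurrences of $a$ stay at exponent $1$. Uniqueness follows because the exponent is determined by this residue within the range $[1,\sigma(a)-1]$. Alternatively, one can bypass the uniqueness question entirely. Let $\RW(\f)$ arise from $W_1$ via \cref{prop:2chinese2sigmachinese}, giving inflations $U_1\equiv_{\chinese(\sigma)} W_1$ and $U_2\equiv_{\chinese(\sigma)} W_2$ of $\RW(\f)$. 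Their $\sigma$-evaluations equal those of $W_1$ and $W_2$, since $\ev_\sigma$ is invariant under $\equiv_{\chinese(\sigma)}$, so they agree, and \cref{cor-chinese-inflations-function} gives $U_1\equiv_{\chinese(\sigma)} U_2$. I expect no real obstacle here; the substance lies in the earlier gathering lemma, and this theorem assembles it. As a consequence, $\phi_\sigma$ is injective, so $\chinese(\A)$ is of type $1$.
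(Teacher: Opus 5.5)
Your proof is correct and is essentially the paper's argument: the forward direction follows from the quotient maps $\theta$ and $\ev_\sigma$, and the converse follows by noting $\rd_{W_1}=\rd_{W_2}$ and applying \cref{thm-sigma-chinese-normal-forms-function} to both words. Your check that $\RW_\sigma$ is well defined, and your alternative route through \cref{prop:2chinese2sigmachinese} and \cref{cor-chinese-inflations-function}, simply unfold the proof of that earlier theorem.
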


\begin{proof}
Since $\chinese(\A,2)$ and $\Com(\A,\sigma)$ are both quotients of
$\chinese(\A,\sigma)$, the direct implication is trivial.
\cref{thm-sigma-chinese-normal-forms-function} gives the converse.
\end{proof}

\begin{cor}\label{cor-char-chinese}
The function $\phi_\sigma$ is an embedding and
$$\im(\phi) = \{(S, E) \in \chinese(\A,2) \times \Com(\A, \sigma) : \cont(S)=\cont(E)\}.$$
\end{cor}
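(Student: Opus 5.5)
Injectivity of $\phi_\sigma = \theta\times\ev_\sigma$ is exactly the content of \cref{thm-sigma-chinese-normal-forms}. Two elements $W_1,W_2\in\chinese(\A,\sigma)$ have the same image precisely when $W_1\equiv_{\chinese(2)}W_2$ and $\ev_\sigma(W_1)=\ev_\sigma(W_2)$. The theorem then gives $W_1\equiv_{\chinese(\sigma)}W_2$. Hence $\phi_\sigma$ is injective. Since $\sigma$ is arbitrary, $\chinese(\A)$ is of type 1 in the sense of \cref{def:type1}. Note that $\chinese(\A)$ is evaluation-preserving, so the definition applies.

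For the image, I would invoke \cref{lem-type1} directly. It says that for any type 1 monoid the image of $\phi_\sigma$ consists of the pairs in $\M(\A,2)\times\Com(\A,\sigma)$ with equal content. Here elements of $\chinese(\A,2)$ are identified with 2-Chinese staircases $S$, by \cref{cor-main-chinese}.

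Alternatively, one can exhibit preimages explicitly. Given $(S,E)$ with $\cont(S)=\cont(E)$, let $\f=\gd(S)$, so that $\RW(\f)\equiv_{\chinese(2)}\RW(S)$ by \cref{2chifunct}. The $\sigma$-reading $\RW_\sigma(\f,E)$ is a left gathered inflation of $\RW(\f)$. Since inflating letters does not change the $\chinese(\A,2)$-class, it maps to $S$ under $\theta$. By construction it maps to $E$ under $\ev_\sigma$.

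The only point needing care is that such an inflation exists. This requires that every letter of $\cont(E)$ occurs in $\RW(\f)$, which is guaranteed by the content hypothesis. It also requires that exponents can be adjusted modulo $\sigma(a)-1$ to reach any value in $[1,\sigma(a)-1]$, which is the same observation used in the proof of \cref{lem-type1}. The inclusion of the image in the stated set is immediate, because $\theta$ and $\ev_\sigma$ both preserve content.
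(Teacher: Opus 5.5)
Your proof is correct and is essentially the paper's argument. Injectivity is exactly \cref{thm-sigma-chinese-normal-forms}; the paper phrases it as $\RW_\sigma\circ\phi_\sigma=\id$ via \cref{thm-sigma-chinese-normal-forms-function}, which is the same fact. Your explicit preimage $\RW_\sigma(\f,E)$ is the paper's own construction for the image.

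Of your two image arguments, the one via \cref{lem-type1} is the more robust, because it lets you inflate any occurrence with no upper bound on the exponent. A left gathered inflation of $\RW(\f)$, by contrast, must keep every factor $a^\alpha$ with $\alpha<\sigma(a)$. Under the paper's conventions, $\RW(\f)$ carries a square $x^2$ at each fixed point $x$ of $\f$ (for instance $x=\min\dom(\f)$), and this already violates that bound when $\sigma(x)=2$.
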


\begin{proof}
We already know that $\phi_{\sigma}$ is a monoid morphism.
To prove the injectivity of $\phi_{\sigma}$, it is enough to show that
$\RW_{\sigma}\circ \phi_{\sigma} = \id_{\chinese(\A, \sigma)}$.
This follows from
$\RW_{\sigma}(\phi_{\sigma}(W))
 =\RW_{\sigma}(\rd(W),\ev_{\sigma}(W))\equiv_{\chinese(\A, \sigma)} W$,
which holds by \cref{thm-sigma-chinese-normal-forms-function}.

If $X \in \chinese(\A, \sigma)$ is arbitrary,
then $\cont(\theta(X)) = \cont(\ev_{\sigma}(X))$ because both $\chinese(\A,2)$
and $\Com(\A,\sigma)$ are content-preserving.
Conversely, for $(\f, E) \in \chinese(\A,2) \times \Com(\A, \sigma)$ such that $\cont(\f)=\cont(E)$,
$\rd(\RW_\sigma(\f, E)))=S$ because $\RW_\sigma(\f, E)$ is an inflation of $\RW(\f)$ and,
\mbox{$\ev_\sigma(\RW_{\sigma}(\f, E)) = E$} by definition of $\RW_\sigma$.
\end{proof}

\begin{cor}\label{cor:M-Type}
For a finite ordered alphabet $\A$, $\chinese(\A)$ is of type 1.
\end{cor}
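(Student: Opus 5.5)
The plan is to unwind \cref{def:type1}. Two things have to be checked: that $\chinese(\A)$ is evaluation-preserving, and that for every $\sigma:\A\to\NN_{\geq 2}$ the map $\phi_\sigma=\theta\times\ev_\sigma$ from $\chinese(\A,\sigma)$ to $\chinese(\A,2)\times\Com(\A,\sigma)$ is injective.

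Evaluation preservation is immediate from the defining relations \eqref{eq-chinese}. Each relation $cba=cab=bca$, $aba=baa$ and $bba=bab$ relates two words that are permutations of one another, so both sides have the same evaluation. This was already noted right after \eqref{eq-chinese}.

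For injectivity, I would take $X,Y\in\chinese(\A,\sigma)$ with $\phi_\sigma(X)=\phi_\sigma(Y)$ and choose representative words $W_1,W_2$. By definition of $\theta$ and $\ev_\sigma$, the hypothesis says exactly two things:
\begin{itemize}
\item $W_1\equiv_{\chinese(2)}W_2$;
\item $\ev_\sigma(W_1)=\ev_\sigma(W_2)$.
\end{itemize}
The converse direction of \cref{thm-sigma-chinese-normal-forms} then gives $W_1\equiv_{\chinese(\sigma)}W_2$, that is, $X=Y$. Alternatively, one can quote \cref{cor-char-chinese} directly, since it already states that $\phi_\sigma$ is an embedding. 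In that corollary injectivity was obtained from $\RW_\sigma\circ\phi_\sigma=\id$, which rests on \cref{thm-sigma-chinese-normal-forms-function}.

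None of this is a real obstacle, because the work has been done upstream. The delicate ingredients are \cref{prop:2chinese2sigmachinese}, which lifts 2-Chinese rewritings to inflations, and \cref{cor-chinese-inflations-function}, which shows that all inflations of $\RW(\f)$ with a fixed $\sigma$-evaluation are equivalent via the Chinese Gathering Lemma. The only point I would double-check is that the argument holds uniformly for arbitrary $\sigma$, not just constant $\sigma$. It does, since every cited result was stated for general $\sigma:\A\to\NN_{\geq 2}$.
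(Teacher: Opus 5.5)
Your proposal is correct and is essentially the paper's own argument. The paper states this corollary without separate proof, as an immediate consequence of \cref{thm-sigma-chinese-normal-forms}, equivalently \cref{cor-char-chinese}, which already asserts that $\phi_\sigma$ is an embedding for every $\sigma$; evaluation preservation is the observation made right after \eqref{eq-chinese}.
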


\subsection{The \texorpdfstring{$\sigma$}{s}-Chinese Monoids}
\label{ss-chinese-enum-idem-iso}

As a corollary of \cref{thm-sigma-chinese-normal-forms-function}, the sizes of
the $\sigma$-Chinese monoids can be determined;
see \cref{table-sigma-chinese-sizes} for some values. 

\begin{cor}\label{cor-sigma-chinese-size}
If $\A$ is a finite alphabet and $\sigma: \A \to \NN$ is any function such that
$\sigma(a)\geq 2$ for all $a\in \A$, then
\begin{equation}\label{eq-sigma-chinese-size}
|\chinese(\A, \sigma)| = \sum_{\B\subseteq \A} \left(C_{|\B|} \prod_{b\in
\B}\left(\sigma(b) - 1\right)\right), \\
\end{equation}
where $C_{k}$ is the $k$-th Catalan number. If $\sigma(a) = p$, for every $a\in
\A$, then \eqref{eq-sigma-chinese-size} simplifies into
\begin{equation}
\label{eq-sigma-chinese-size-uniform}
|\chinese(\A, p)| = \sum_{k = 0}^{|\A|} \left(\binom{|\A|}{k}C_k (p - 1) ^
k\right)\,.
\end{equation}
If we write $S_p(n)=|\chinese(\A,p)|$ for any $\A$ of size $n$, then the
exponential generating function of $S_p(n)$ is given by
\begin{align}
\sum_{n=0}^\infty S_p(n)\frac{z^p}{p!}=\exp((2p-1)z) (I_0(2(p-1)z)-I_1(2(p-1)z)),
\end{align}
where $I_i$ is the $i$-th modified Bessel function of the first kind.
\end{cor}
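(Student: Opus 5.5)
The plan is to feed the type~1 structure of the Chinese monoid into the general enumeration machinery. The first two formulas then follow directly, and the generating function comes from a Bessel-function identity.

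The first step is to invoke \cref{cor:M-Type}, which says $\chinese(\A)$ is of type~1, so \cref{thm:Type1-Enum} applies. Its hypothesis is that the number of classes of $\chinese(\A,2)$ with content $\B$ depends only on $|\B|$. This holds by \cref{cor-main-chinese} together with \cref{cor-counting-2chin}: classes of content $\B$ are in bijection with 2-Chinese staircases of content $\B$. These in turn correspond, via \cref{p:unicity2stairs}, to Dyck paths of semi-length $|\B|$, so $m_k=C_k$. Substituting into \cref{thm:Type1-Enum} gives \eqref{eq-sigma-chinese-size}, and the specialisation $\sigma\equiv p$ gives \eqref{eq-sigma-chinese-size-uniform}.

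For the generating function, apply \cref{lem:expgenfunct} with $s_k=C_k$. This gives
$\sum_n S_p(n)z^n/n! = \exp(z)\sum_k C_k((p-1)z)^k/k!$.
Next use the classical identity
$\sum_{k\ge0} C_k x^k/k! = e^{2x}\bigl(I_0(2x)-I_1(2x)\bigr)$.
To verify it, write $C_k=\binom{2k}{k}-\binom{2k}{k+1}$. Then match the coefficients of $x^k/k!$ in $e^{2x}I_0(2x)$ and $e^{2x}I_1(2x)$, using $I_\nu(2x)=\sum_j x^{2j+\nu}/(j!(j+\nu)!)$. The coefficient of $x^k/k!$ in $e^{2x}I_0(2x)$ is $\sum_j \binom{k}{2j}\binom{2j}{j}2^{k-2j}$, which equals $\binom{2k}{k}$: count lattice words of length $2k$ with exactly $k$ up-steps by pairing positions. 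The $I_1$ term gives $\binom{2k}{k+1}$ in the same way.

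Setting $x=(p-1)z$ yields $\exp(z+2(p-1)z)\bigl(I_0(2(p-1)z)-I_1(2(p-1)z)\bigr)=\exp((2p-1)z)(\cdots)$, which is the stated formula. The left-hand side printed with $z^p/p!$ is evidently a typo for $z^n/n!$.

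The only nonroutine point is the Bessel identity for the Catalan egf, in particular the binomial convolution. If that direct check becomes messy, an alternative is to verify that both sides satisfy the same first-order linear ODE. This works because the Catalan recurrence $(k+2)C_{k+1}=(4k+2)C_k$ translates into a differential equation for the egf, and the right-hand side can be checked against it using $I_0'=I_1$ and $I_1'=I_0-I_1/x$.
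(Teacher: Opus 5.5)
Your proposal is correct and follows the paper's route. You get type~1 from \cref{cor:M-Type}, feed it into \cref{thm:Type1-Enum} with $m_k=C_k$ from the Dyck-path count, and then apply \cref{lem:expgenfunct} with $s_k=C_k$ and the Catalan exponential generating function $e^{2x}\bigl(I_0(2x)-I_1(2x)\bigr)$. The only difference is that you verify this Bessel identity by the pairing and coefficient argument, which checks out, whereas the paper simply cites it from OEIS A000108. One small slip: your fallback ODE would be second-order, $xF''+(2-4x)F'-2F=0$, not first-order, but your main argument does not need it.
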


\begin{proof}
Recall that the exponential generating series for Catalan numbers is
$$\sum_k C_k\frac{z^k}{k!} = \exp(2z)(I_0(2z)-I_1(2z)),$$ where $I_n$ is a
modified Bessel functions of the first kind, see the E.g.f. of
Sequence \href{https://oeis.org/A000108}{A000108} of~\cite{oeis}.
Using this with \cref{lem:expgenfunct} and
\cref{eq-sigma-chinese-size-uniform}, one gets
\begin{alignat*}{1}
\sum_{n=0}^\infty S_{p}(n)
= \exp(2(p-1)z) (I_0(2(p-1)z)-I_1(2(p-1)z)) \exp(z)\,.
\end{alignat*}
\end{proof}

\begin{table}[ht]
    \[
 \begin{array}{@{}l*{8}{r}@{}}
 \toprule
 \multirow{2}{*}{$p$} & \multicolumn{7}{c@{}}{|\A|}\\
     \cmidrule(lr){2-8}
     & 2 & 3 & 4 & 5 & 6 & 7 & \text{OEIS}\\
 \midrule
   2 & 5 & 15 & 51 & 188 & 731 & 2\,950 & \href{https://oeis.org/A007317}{A007317} \\
   3 & 13 & 71 & 441 & 2\,955 & 20\ 805 & 151\ 695 & \href{https://oeis.org/A162326}{A162326}\\
   4 &25& 199& 1\,795& 17\,422& 177\,463& 1\,870\,960 & \href{https://oeis.org/A337167}{A337167} \\
   5 & 41& 429& 5\,073& 64\,469& 859\,385& 11\,853\,949 & \href{https://oeis.org/A386387}{A386387}  \\
   6 & 61& 791& 11\,571& 181\,776& 2\,994\,531& 51\,038\,586 \\
 \bottomrule
 \end{array}
 \]
\caption{The sizes of $\chinese(\A,p)$ for $|\A|$ up to $7$ and $p$ up to $6$.}
\label{table-sigma-chinese-sizes}

\end{table}

Note that thanks to Equation~\eqref{eq-sigma-chinese-size-uniform}, each
column of~\cref{table-sigma-chinese-sizes} is a polynomial in $p$ of degree
$|\A|$, easily described as a polynomial in $p-1$.

The idempotents in the $\sigma$-Chinese monoids are similar to those in the
$\sigma$-plactic monoids (see~\cref{prop-plax-idempotents}).
For $\B=\{b_1 < b_2 < \cdots < b_k\}\subseteq \A$, define
$$e_{\sigma,\B} = b_2^{\sigma(b_2)-1}b_3^{\sigma(b_3) - 1}\cdots
b_k^{\sigma(b_k)-1} b_1^{\sigma(b_1)-1}.$$
If $\phi$ is the embedding defined in~\cref{def:type1},
then
\begin{equation}
\phi(e_{\B}) = \left(
\begin{pmatrix}
b_1 & b_2 & \cdots & b_k\\
b_1 & b_1 & \cdots & b_1\\
\end{pmatrix}, b_1 ^ {\sigma(b_1)-1} \cdots b_k ^ {\sigma(b_k)-1}\right).
\end{equation}

\begin{prop}
\label{prop-sigma-chinese-idempotents}
The idempotents of $\chinese(\A, \sigma)$ are precisely the $e_{\sigma,\B}$
for every $\B\subseteq \A$. In particular, there are $2^{|\A|}$ idempotents in
$\chinese(\A, \sigma)$.
\end{prop}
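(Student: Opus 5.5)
We follow the argument used for \cref{prop-plax-idempotents}. By \cref{cor-char-chinese}, $\phi_\sigma=\theta\times\ev_\sigma$ is an injective monoid homomorphism. Hence $X\in\chinese(\A,\sigma)$ is idempotent if and only if $\phi_\sigma(X)$ is idempotent in $\chinese(\A,2)\times\Com(\A,\sigma)$. This holds exactly when $\theta(X)$ is idempotent in $\chinese(\A,2)$ and $\ev_\sigma(X)$ is idempotent in $\Com(\A,\sigma)$.

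We identify the idempotents of the two factors separately.
\begin{enumerate}[(1)]
\item By the proposition on idempotents of $\chinese(\A,2)$, these are exactly the classes of $e_{2,\B}=b_2\cdots b_kb_1$. Equivalently, they are the 2-Chinese functions on $\B$ sending every element to $b_1=\min\B$.
\item In $\Com(\A,\sigma)$ the element $\prod_c c^{m_c}$ is idempotent if and only if $2m_c\equiv m_c$ for every $c$ in its content. Powers of $c$ with positive exponent form the cyclic group $\{c,\dots,c^{\sigma(c)-1}\}$ of order $\sigma(c)-1$, whose identity is $c^{\sigma(c)-1}$. So the condition forces $m_c\equiv \sigma(c)-1$ in that group. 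The idempotents are therefore exactly $\prod_{c\in\C}c^{\sigma(c)-1}$ for $\C\subseteq\A$.
\end{enumerate}

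Since $\phi_\sigma$ preserves content, an idempotent $X$ has $\theta(X)=e_{2,\B}$ and $\ev_\sigma(X)=\prod_{b\in\B}b^{\sigma(b)-1}$ for the same subset $\B=\cont(X)$. By \cref{lem-type1} or \cref{cor-char-chinese}, each such pair is in the image of $\phi_\sigma$. By injectivity it corresponds to exactly one element of $\chinese(\A,\sigma)$. This gives exactly one idempotent per $\B\subseteq\A$, and so $2^{|\A|}$ idempotents in total.

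It remains to check that this element is $e_{\sigma,\B}$, i.e.\ that $\phi_\sigma(e_{\sigma,\B})$ is the pair above. The evaluation is immediate from the definition of $e_{\sigma,\B}$. For the first component, $\theta(e_{\sigma,\B})$ is the 2-Chinese class of $b_2^{\sigma(b_2)-1}\cdots b_k^{\sigma(b_k)-1}b_1^{\sigma(b_1)-1}$. Contracting powers with $a^2\equiv a$ turns this into $b_2\cdots b_kb_1=e_{2,\B}$. Alternatively, compute $\rdb$ of this word from \cref{def-rd}: the letter $b_1$ occurs last, so $y_W=b_1$ for every $y$, and the resulting function is constant equal to $b_1$.

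The main point needing care is the idempotent computation in $\Com(\A,\sigma)$, in particular that the exponent set of $c$ is $\{0\}$ together with a cyclic group of order $\sigma(c)-1$. The rest is formal from the type~1 embedding.
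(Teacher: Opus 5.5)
Your proof is correct and takes the paper's route: the paper gives no separate proof, but points to \cref{prop-plax-idempotents} and displays $\phi_\sigma(e_{\sigma,\B})$. Its intended argument is the one you give: transfer idempotency through the injective homomorphism $\phi_\sigma$ of \cref{cor-char-chinese}, then combine the idempotents $e_{2,\B}$ of $\chinese(\A,2)$ with those of $\Com(\A,\sigma)$, so your content matching, cyclic-group computation and check that $\theta(e_{\sigma,\B})=e_{2,\B}$ just make explicit what the paper leaves implicit.
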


As a corollary of \cref{thm:isom}.(ii), one has

\begin{cor}\label{chinese-iso}
Let $\sigma, \tau:\A \to \NN_{\geq2}$.
Then $\chinese(\A,\sigma)\cong\chinese(\A,\tau)$ if and only if
$\sigma=\tau$.
\end{cor}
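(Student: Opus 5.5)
The plan is to apply \cref{thm:isom}.(ii) to the Chinese presentation \eqref{eq-chinese}. The ``if'' direction is trivial, so all the work is in the ``only if'' direction. To use the theorem we must check two things.

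First, the presentation is content-preserving. In fact it is evaluation-preserving, as already noted after \eqref{eq-chinese}, since every relation there just permutes letters.

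Second, we need that for all $a<b$ the pair $(aba,baa)$ belongs to $\R$, while $(bab,abb)$ does not. The first half holds because $aba=baa$ is literally one of the defining relations in \eqref{eq-chinese}.

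The second half is the only real point. The proof of \cref{thm:isom} needs more than the absence of the pair from the list of relations: it needs $\psi(x)\psi(y)\psi(x)\equiv\psi(y)\psi(x)\psi(x)$ to fail in $\chinese(\A,\tau)$ when $\psi(x)>\psi(y)$. Concretely, writing $a<b$ for the two image letters, we must show that $bab\not\equiv_{\chinese(\tau)} abb$. I would argue this via \cref{thm-sigma-chinese-normal-forms}: it suffices to show $bab\not\equiv_{\chinese(2)} abb$.

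To prove this, compute the 2-Chinese functions with the explicit formula of \cref{def-rd} (equivalently \cref{chinese2-rd}):
\begin{itemize}
\item For $W=bab$: $b_W=\min\{a,b\}=a$ and $a_W=a$, so $\rd_W(a)=\rd_W(b)=a$.
\item For $W=abb$: $a_W=a$ and $b_W=b$, so $\rd_W(b)=b$.
\end{itemize}
The two functions differ, so the words are not 2-Chinese equivalent, and hence not $\tau$-Chinese equivalent. The same computation applies inside any two-letter subalphabet, so it covers the general case.

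With both hypotheses verified, \cref{thm:isom}.(ii) gives the result. The main obstacle is interpreting ``$\notin\R$'' correctly as non-equivalence in the quotient monoid, which the normal-form computation above settles.
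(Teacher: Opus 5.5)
Your proposal is correct and follows the paper, which obtains this corollary directly from \cref{thm:isom}.(ii) applied to the relations \eqref{eq-chinese}. Your extra check that $bab\not\equiv_{\chinese(2)} abb$ via \cref{def-rd} is a worthwhile addition, since the proof of \cref{thm:isom} really uses non-equivalence in the quotient rather than mere absence from $\R$; and because powers of a letter collapse in $\chinese(\A,2)$, the same computation also covers the case where $\psi$ sends a letter to a proper power $c^{\alpha}$ of a letter, a case the proof of \cref{thm:isom} glosses over.
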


\section{The \texorpdfstring{$\sigma$}{s}-Hypoplactic Monoids}
\label{section-hypoplax}
\subsection{The Hypoplactic Monoid}
\label{ss-hypoplax}

The \emph{hypoplactic monoid} $\HPl(\A)$ was originally defined in~\cite{KT}
as a quotient of the plactic monoid $\plax(\A)$ by some extra quartic relations.
It has a simpler definition as the quotient of $\A^*$ by the following
relations~\cite{N}:
\begin{equation}\label{rel:hypoplax}
\begin{split}
acWb= caWb, \quad & \text{for } a \leq b < c\in\A,\\
bWac= bWca, \quad & \text{for } a < b \leq c\in\A,\\
\end{split}
\end{equation}
for any $W\in\A^*$. We define the \emph{hypoplactic congruence},
$\equiv_{\HPl}$ to be the least congruence on $\A ^ *$ containing
the relations \cref{rel:hypoplax}.
The hypoplactic monoid is evaluation-preserving.

As in \cref{def:quotient-monoid}, for $\sigma: \A \to \NN_{\geq 2}$,
we define the \emph{$\sigma$-hypoplactic congruence} to be the least
congruence on $\A ^ *$ containing the hypoplactic congruence $\equiv_{\HPl}$
and $a ^ {\sigma(a)} = a$ for every $a\in \A$; we denote this by
$\equiv_{\HPl(\sigma)}$. The $\sigma$-hypoplactic monoid on $\A$ is the
quotient $\A ^ * / {\equiv_{\HPl(\sigma)}}$ and is denoted by
$\HPl(\A, \sigma)$.

\paragraph{Quasi-Ribbon Tableaux\\}
\label{ss:QRTableaux}

A \emph{quasi-ribbon tableau} $Q$ is a pair $(D,f)$ consisting in a ribbon
diagram $D$ and a filling $f$ of $D$ by letters that are weakly increasing on
rows from left to right and strictly increasing on columns from \emph{top to
bottom} (see \cref{insere-d} for examples of quasi-ribbon tableaux).


\begin{defi}[Quasi-ribbon tableau right insertion]\label{def:QRTinsert}
Let $Q$ be a quasi-ribbon tableau over $\A$ and $x\in\A$.
Let $C$ be the rightmost and bottom-most box of $Q$ whose content is
smaller than or equal to $x$.
\begin{enumerate}[(1)]
\item If such a box does not exist, let $Q'$ be the quasi-ribbon tableau
obtained by gluing $Q$ underneath a box containing $x$.
\item If such a box $C$ exists, let $Q'$ be the quasi-ribbon tableau obtained
by
  \begin{enumerate}[(i)]
  \item cutting $Q$ in two after $C$ (the second part might be empty if $C$ is
        the last cell of $Q$),
  \item gluing a cell $C'$ containing $x$ to the right of $C$,
  \item gluing the remaining part of $Q$ underneath that new cell $C'$.
 \end{enumerate}
\end{enumerate}
The \emph{right insertion of $x$ into $Q$} is the quasi-ribbon tableau $Q'$.
We denote the insertion $Q\leftarrow x$.
\end{defi}

\begin{figure}
\centering
\ytableausetup{notabloids}
\begin{ytableau}
a\\
b & c & c\\
 \none & \none & d\\
 \none & \none & e & e\\
\none & \none & \none & f & g
\end{ytableau}
$\leftarrow d$
\qquad $ =$
\qquad
\begin{ytableau}
a\\
b & c & c\\
 \none & \none & d & d\\
 \none & \none & \none & e & e\\
\none & \none & \none & \none & f & g
\end{ytableau}
\caption{\label{insere-d}The right insertion of $d$ in the quasi-ribbon
tableau of $fgeedbcca$. It gives the tableau of canonical reading word
$fgeedbcca\cdot d\equiv_{\HPl} fgeeddbcca$.}
\end{figure}

\cref{insere-d} gives an example of such insertion.
As for Schensted insertion, one then defines iteratively the \emph{right
insertion of a word} $W=a_1\cdots a_n$ in a quasi-ribbon tableau by inserting
$a_1$, then $a_2$, and so on.
The resulting tableau is denoted by $\Q(W)=\emptyset\leftarrow W$.
We then have

\begin{thm}[\cite{KT}]
Two words $U,V\in\A^*$ are hypoplactically equivalent if and only if
\mbox{$\Q(U)=\Q(V)$}.
\end{thm}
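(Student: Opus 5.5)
We need to prove that $U\equiv_{\HPl}V$ if and only if $\Q(U)=\Q(V)$. This is the Krob--Thibon/Novelli theorem. The plan is to show that $\Q$ is constant on hypoplactic classes and that every class contains the reading word of its own quasi-ribbon tableau.

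\emph{Step 1: reading words are fixed points.} Define the reading word $\RW(Q)$ of a quasi-ribbon tableau by reading rows from bottom to top, each left to right, as in \cref{insere-d}. We check that $\Q(\RW(Q))=Q$ by induction on the number of cells. Appending the next letter either continues a weakly increasing row or starts a new row above. In the latter case the letter is strictly smaller than the first cell of the previous row, so case (1) or (2) of \cref{def:QRTinsert} rebuilds exactly the glued ribbon. This also gives injectivity of $Q\mapsto \RW(Q)$.

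\emph{Step 2: $\Q$ is a class invariant.} Since $\Q(WX)=\Q(W)\leftarrow X$, it suffices to show that $\Q(U\,acWb)=\Q(U\,caWb)$ for $a\le b<c$, and similarly for the second relation. A cleaner route is a non-insertive description of $\Q$. The shape of $\Q(W)$ is determined by the evaluation together with the following rule: for consecutive letters $x<y$ of $\cont(W)$, the last occurrence of $y$ sits in a new row (a descent) if and only if some occurrence of $y$ precedes some occurrence of $x$ in $W$.

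I will prove this characterisation by induction on $|W|$ using \cref{def:QRTinsert}. Inserting $x$ places it just after the last cell $\le x$. The cells $>x$ are glued underneath, that is, into later rows. So the relative row position of $x$ and its successor $y$ changes exactly when $y$ already occurred.

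Once the characterisation holds, invariance is immediate. The relation $acWb=caWb$ with $a\le b<c$ only swaps an adjacent pair $a,c$. If $c$ is not the successor of $a$ in the content, the swap is irrelevant. If it is, then $a\le b<c$ forces $b=a$. In that case the factor $ca$ already occurs in both words, via the later $b=a$ in the first word and directly in the second. Hence the descent indicator is unchanged. The second relation is symmetric: $b\le c$ with $a<b$ forces $b=c$, and a $c$ before an $a$ already appears.

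\emph{Step 3: conclusion.} Conversely, we show $W\equiv_{\HPl}\RW(\Q(W))$. By induction it suffices that $\RW(Q)\,x\equiv_{\HPl}\RW(Q\leftarrow x)$. For this, the letter $x$ must be commuted leftward past the cells greater than $x$ that get glued underneath. Each such cell $z>x$ can be moved because some letter $w$ with $x\le w<z$ or $x<w\le z$ lies suitably placed. This is exactly what the hypoplactic relations with an arbitrary $W$ in the middle allow. Combining this with Step 2 gives both directions.

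\emph{Main obstacle.} I expect the main obstacle to be Step 2's characterisation, namely the careful induction relating insertion to the "$y$ before $x$" criterion. Step 3's commutation argument also needs the correct choice of witness letter $w$; I would take $w=x$ itself at its right place, or the cell $C$ in the tableau.
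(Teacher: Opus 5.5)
The paper gives no proof of this statement; it quotes it from Krob and Thibon.

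\textbf{Steps 1 and 2.} These are sound. The evaluation together with the pattern ``some $y$ precedes some $x$'', for content-consecutive $x<y$, is the right invariant, and the induction through the insertion rule works. The condition is better phrased as ``the $y$'s lie in a lower row than the $x$'s'' rather than ``the last occurrence of $y$ sits in a new row''. Step 1 is not actually needed for this statement.

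\textbf{The gap in Step 3.} Step 3 carries the whole converse, and the rearrangement you describe is the wrong one. Number the rows $R_1,\dots,R_k$ from top to bottom and let $C$ lie in $R_j$. Split $R_j=R_j'R_j''$, where $R_j'$ holds the letters $\le x$ (ending with $C$) and $R_j''$ those $>x$. Then
\[
\RW(Q)\,x=R_k\cdots R_{j+1}\,R_j'R_j''\,R_{j-1}\cdots R_1\,x,
\qquad
\RW(Q\leftarrow x)=R_k\cdots R_{j+1}\,R_j''\,R_j'x\,R_{j-1}\cdots R_1 .
\]
Only two kinds of pairs change relative order:
\begin{enumerate}[(i)]
\item $x$ against the letters of $R_{j-1}\cdots R_1$, all of which are strictly smaller than $x$;
\item the letters of $R_j'$ against those of $R_j''$.
\end{enumerate}
So $x$ never has to cross a larger letter, and commuting it past larger cells gives the wrong word. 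For example, take $Q$ to be the single row $ac$ and $x=b$. Then $\RW(Q)\,b=acb$ and $\RW(Q\leftarrow b)=cab$. Pushing $b$ left past $c$ gives $abc$, which your own Step 2 invariant separates from $cab$. Likewise, your witness conditions ``$x\le w<z$ or $x<w\le z$'' are the ones for transposing $x$ with some $z>x$, a transposition that never occurs here.

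\textbf{How to repair it.} Use the two witnesses you mention at the end, but for different swaps.
\begin{enumerate}[(i)]
\item Push $x$ leftward through $R_{j-1}\cdots R_1$ one letter at a time, using $bWac=bWca$ with $b=C$. This applies because every letter $p$ of those rows is below the first cell of $R_j$ (column strictness and weak increase along the ribbon), so $p<C\le x$.
\item Bubble each letter $c$ of $R_j''$ leftward past each letter $a$ of $R_j'$, using $acWb=caWb$ with $b=x$. This applies because $x$ lies to the right and $a\le x<c$.
\end{enumerate}
If no cell $C$ exists, $\RW(Q\leftarrow x)=\RW(Q)\,x$ already. With this lemma, your induction $W\equiv_{\HPl}\RW(\Q(W))$ goes through, and together with Step 2 it gives the theorem.
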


The \emph{canonical row-reading} $\RW(Q)$ of a quasi-ribbon tableau $Q$
consists in reading its rows from left to right and from bottom to top.
A word is a \emph{quasi-ribbon word} if it is the canonical row-reading of a
quasi-ribbon tableau. For example, $fgeedbcca$ is a quasi-ribbon word since
it is the canonical reading of the quasi-ribbon tableau on the left of
\cref{insere-d}.

Thanks to its definition, a quasi-ribbon $Q$ is uniquely determined by
$(I,\ev(Q))$, where $I$ is the composition of its ribbon diagram and
$\ev(Q)=\ev(\RW(Q))$. We also define $\cont(Q)=\cont(\RW(Q))$.

The following result from \cite{KT} gives a set of representatives of
the hypoplactic monoid.

\begin{thm}[\cite{KT}]\label{hypoplax-rep}
The quasi-ribbon words form a section of the hypoplactic monoid and they are
the lexicographic maximal elements of their class.
\end{thm}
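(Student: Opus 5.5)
The plan is to combine the insertion theorem of~\cite{KT} with a direct check that every quasi-ribbon word is the lexicographically largest word in its class. By the preceding theorem, $U\equiv_{\HPl}V$ if and only if $\Q(U)=\Q(V)$. So it suffices to show two things. First, for every quasi-ribbon tableau $Q$ we have $\Q(\RW(Q))=Q$. Second, two distinct quasi-ribbon tableaux have distinct canonical readings. Together these show that every class contains exactly one quasi-ribbon word, namely $\RW(\Q(W))$, and hence that the quasi-ribbon words form a section.

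\textbf{Step 1: $\Q(\RW(Q))=Q$.} I argue by induction on the number of cells, reading $\RW(Q)$ row by row from bottom to top. Suppose we have already inserted the bottom rows, forming a ribbon $Q_0$, and the next row $r_1\le\cdots\le r_m$ sits above the first cell of $Q_0$. By strictness on columns, $r_m$ is strictly smaller than the first entry of $Q_0$, and all entries of $Q_0$ are at least that first entry. Hence inserting $r_1$ finds no box $\le r_1$, so case (1) of \cref{def:QRTinsert} glues $Q_0$ underneath a new cell $r_1$. Each following $r_j$ then has as rightmost-bottommost box $\le r_j$ the cell $r_{j-1}$. It is glued to its right, and the rest ($Q_0$) is reglued underneath, so the row is rebuilt exactly. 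Injectivity of $\RW$ is immediate, since $\RW(Q)$ determines $\ev(Q)$, and the descents of the row reading determine the row breaks, so it determines the composition $I$. This step is routine.

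\textbf{Step 2: lexicographic maximality.} Here I would show that applying a hypoplactic relation in the decreasing direction, or simply any word $W\ne\RW(Q)$ in the class of $Q$, is lexicographically smaller than $\RW(Q)$. The approach is to compare $W$ with $\RW(Q)$ at the first position where they differ, using the structure of $\Q$. Concretely, I would prove by induction on $|W|$ that $\RW(\Q(W))\ge_{\mathrm{lex}} W$. The key facts are these. The first letter of $\RW(\Q(W))$ is the largest letter $x$ occurring in $W$ such that everything after the last... (more precisely, the first letter is the first entry of the bottom row). One then shows, using the relations $acWb=caWb$ and $bWac=bWca$, that any word of the class can be transformed into a word beginning with this maximal admissible first letter. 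After that, the argument recurses on the remaining factor.

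An alternative I would try first, since it is cleaner, is a local argument. Consider a word $W$ that is not quasi-ribbon. I would exhibit a factor $ac$ with $a<c$ where the relation $ac\to ca$ applies; this requires a witness $b$ with $a\le b<c$ later, or $a<b\le c$ earlier. Such a factor must exist because a non-quasi-ribbon word has an ascent pattern violating the ribbon condition. Swapping it strictly increases $W$ lexicographically. Since classes are finite (evaluation-preserving), the lex-maximum of a class admits no such increasing move and is therefore quasi-ribbon. By uniqueness from Step~1, it equals $\RW(\Q(W))$.

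\textbf{Main obstacle.} The main obstacle is this combinatorial characterization: a word admits no increasing move $ac\mapsto ca$ (with a valid witness $b$) exactly when it is a quasi-ribbon word. I would prove it by contrapositive. In a word with no such move, look at each ascent $w_iw_{i+1}=ac$ with $a<c$ (an ascent, or an equality at a row continuation). The absence of a witness forces every later letter to be $<a$ or $\ge c$, and every earlier letter to be $\le a$ or $>c$ in the appropriate sense. These conditions are exactly the ones forcing the maximal weakly increasing runs to stack as rows of a ribbon, with strict column decrease between consecutive runs.
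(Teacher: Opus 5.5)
The paper gives no proof of this statement. It is quoted from \cite{KT}, like the insertion theorem just before it, so your argument is a self-contained route rather than a variant of anything in the text. It is sound, with one step left as an assertion. Step~1 is correct: it is the last cell $r_m$ of the upper row, not the whole row, that sits above the first cell of $Q_0$, but that is what you actually use. Together with both directions of the insertion theorem, Step~1 gives exactly one quasi-ribbon word per class.

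For lexicographic maximality, keep only your second, local argument and drop the first one, which trails off. Also drop the parenthetical about ``an equality at a row continuation'', since swapping equal letters is not a move. The logic is right. The maximum of a finite class admits no move $ac\to ca$ with $a<c$ and a valid witness, so it is quasi-ribbon, so it is the unique quasi-ribbon word of its class. Compared with a bare citation, this gives an elementary proof directly from the presentation \eqref{rel:hypoplax}. Orienting the relations as $ac\to ca$ gives a terminating system whose irreducible words are the quasi-ribbon words. This is the ``orient the rewritings'' technique the paper later uses for $\HS$ and $\MS$.

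The step you only assert is the heart of the matter: a word with no such move is quasi-ribbon. The conditions you list are per ascent, and the actual work is choosing which ascent to test against which letter of the neighbouring run. Here is the missing argument.

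By definition, a word is quasi-ribbon exactly when its maximal weakly increasing runs $B_1B_2\cdots B_k$ satisfy $\min B_p>\max B_{p+1}$ for all $p$. If this fails, take consecutive runs $X=x_1\le\cdots\le x_s$ and $Y=y_1\le\cdots\le y_t$ with $x_1\le y_t$. The inequality $x_s>y_1$ holds by maximality of the runs.
\begin{itemize}
\item If some $x_i$ satisfies $y_1<x_i\le y_t$, pick $j$ with $y_j<x_i\le y_{j+1}$. Then $x_i$ is an earlier witness for the second relation at the ascent $y_jy_{j+1}$.
\item Otherwise $x_1\le y_1\le y_t<x_s$, so there is $i$ with $x_i\le y_1<x_{i+1}$. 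Then $y_1$ is a later witness for the first relation at the ascent $x_ix_{i+1}$.
\end{itemize}
In both cases the swap strictly increases the word lexicographically, which closes your proof.
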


\subsection{Quotient Type}
\label{ss-hypoplac-type}

A quasi-ribbon word $W$ is \emph{$\sigma$-reduced} if $|W|_a<\sigma(a)$ for
all $a\in \A$ and similarly, a quasi-ribbon tableau $Q$ is
\emph{$\sigma$-reduced} if $\RW(Q)$ is $\sigma$-reduced.

\begin{prop}
\label{cor-hypo-gathered}
The hypoplactic monoid is of type 2.
\end{prop}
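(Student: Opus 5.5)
We have to check the two conditions of \cref{def:type2}, and the natural candidate for the canonical \gathered element of a class is its quasi-ribbon word $\RW(\Q(W))$. By \cref{hypoplax-rep}, this word is the lexicographically maximal element of the class. So it suffices to show that it is \gathered; it will then automatically be the lexicographically largest \gathered word, i.e.\ $G(W)=\RW(\Q(W))$.

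For condition (1), note that in a quasi-ribbon tableau all occurrences of a letter $a$ lie in a single row. Columns strictly increase from top to bottom, so two equal letters cannot share a column. Since the diagram is a ribbon and entries weakly increase along rows and strictly down columns, the cells containing $a$ form one horizontal segment. Hence in $\RW(\Q(W))$ all occurrences of each letter are consecutive, forming a factor $a^{|W|_a}$. Any pair of successive occurrences (the $k$-th and $(k+1)$-st) is therefore adjacent in $\RW(\Q(W))$. This holds regardless of what happens in other words of the class, so the defining implication is satisfied trivially.

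For condition (2), let $W'$ be the $(a,i)$-expansion of $W$. I would show that $\Q(W')$ is obtained from $\Q(W)$ by adding one more $a$ to the row segment of $a$'s, with the ribbon shape otherwise unchanged (one row is lengthened by one). The cleanest route is to use that a quasi-ribbon tableau is determined by $(I,\ev)$, and to characterise the descent composition intrinsically. The standard description says that $b<c$ are consecutive letters of the content separated by a descent in $\Q(W)$ exactly when some occurrence of $c$ appears before some occurrence of $b$ in $W$ (equivalently, $W$ restricted to $\{b,c\}$ is not $b^*c^*$). This condition is unchanged by duplicating an occurrence of $a$, because relative orders of distinct letters are preserved. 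Content is also unchanged. So $\Q(W')$ has the same row/descent structure, with evaluation increased by one at $a$. Its reading word is then $\RW(\Q(W))$ with the block $a^{|W|_a}$ replaced by $a^{|W|_a+1}$, which is exactly the $(a,i)$-expansion of $G(W)$ for every $i$, since all $a$'s are consecutive.

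The main obstacle is justifying the intrinsic descent characterisation, since it is not stated earlier in the paper. I would prove it by induction on the insertion of \cref{def:QRTinsert}. Inserting $x$ glues a new cell to the right of the last cell with entry $\le x$, and pushes everything after it to a new row. Consequently, a pair $b<c$ becomes separated by a descent exactly when a letter $\le b$ is inserted after some $c$ is already present. Alternatively, one can check directly that the relations \cref{rel:hypoplax} preserve the property that $c$ occurs before $b$ for $b<c$, and that the quasi-ribbon word realises it. Equal evaluation and equal descent data then give equal tableaux.
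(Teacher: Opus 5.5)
Your proof is correct. Condition (1) is handled as in the paper: equal letters are consecutive in quasi-ribbon words, and lexicographic maximality makes $\RW(\Q(W))$ the canonical regrouped element. For condition (2) you take a genuinely different route.

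The paper argues locally with the insertion. Right-inserting $a$ into $\Q(Ua)$ just glues a second $a$ next to the first, so doubling an occurrence duplicates one cell ``without changing anything else''. This tacitly also requires that inserting the remaining suffix $V$ commutes with having one extra $a$ in the $a$-row. That is true, since an extra $a$ inside the $a$-block never moves a later cut, but the paper does not spell it out.

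You instead describe $\Q(W)$ globally: by its evaluation together with the set of pairs of consecutive content letters $b<c$ such that some $c$ precedes some $b$. This data is clearly unchanged by duplicating an occurrence. So you obtain $\Q(W')$ in one stroke, with no commutation argument, and independence from $i$ is built in. The price is that you must establish this characterisation, which the paper never states.

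Your second justification does establish it. In $acWb=caWb$ and $bWac=bWca$, the swapped letters are consecutive in the content only when $a=b$, resp.\ $b=c$, and then ``some $c$ before some $a$'' holds on both sides. And in a quasi-ribbon word, $c$ precedes $b$ iff they lie in different rows.

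Your inductive sketch, however, is misstated. Inserting $x<b$ never touches the boundary between $b$ and $c$, because everything after the cut is glued back as a block. So that descent is created exactly when $b$ itself is inserted while $c$ is present, and it is never destroyed afterwards. As you wrote it, $bca$ would be a counterexample: $a\le b$ is inserted after $c$, yet $\Q(bca)$ has $b$ and $c$ in the same row.
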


\begin{proof}
First, all equal letters are consecutive in the quasi-ribbon words so each
class has a \gathered element and since they are maximal in their classes, they
are their canonical \gathered elements, so that $\RW(W)\in\G_{\HPl}$.

Moreover, inserting two $x$ one immediately after the other amounts to
duplicate the $x$ in the quasi-ribbon tableau without changing anything else,
so the canonical \gathered element of an expansion is the same as the expansion
of the original canonical \gathered element. The same property holds for the
contraction.
\end{proof}

Thanks to \cref{gen-gathered}, we have

\begin{cor}\label{sqrandhsm}
The $\sigma$-reduced quasi-ribbon words form a section of the
$\sigma$-hypoplactic monoid.
\end{cor}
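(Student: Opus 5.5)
The plan is to deduce the statement directly from \cref{gen-gathered}, applied to $\M=\HPl$. This is legitimate because \cref{cor-hypo-gathered} shows that the hypoplactic monoid is \gathered. \cref{gen-gathered} says that the $\sigma$-reduced words $G_\sigma(W)$, for $W\in\A^*$, form a set of normal forms for $\HPl(\A,\sigma)$. So the only remaining task is to identify this set $\G_{\HPl(\sigma)}$ with the set of $\sigma$-reduced quasi-ribbon words, i.e.\ quasi-ribbon words $W$ with $|W|_a<\sigma(a)$ for all $a$.

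First I describe $G(W)$ and $G_\sigma(W)$ concretely. By the proof of \cref{cor-hypo-gathered} and \cref{hypoplax-rep}, $G(W)=\RW(\Q(W))$ is the quasi-ribbon word of the class.

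In a quasi-ribbon tableau, equal letters cannot share a column, because columns are strictly increasing. Since rows weakly increase and the reading goes row by row, all occurrences of a letter $a$ therefore form a single block $a^{k}$ lying inside one row. Hence every factor $a^{\sigma(a)}$ of $G(W)$ lies in that block. The reduction defining $G_\sigma$ then simply replaces each block $a^k$ by $a^{k'}$, where $k'\in[1,\sigma(a)-1]$ and $k'\equiv k \pmod{\sigma(a)-1}$.

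Next I check that shrinking a block of equal letters inside a row still gives a quasi-ribbon tableau. The ribbon diagram only changes by shortening that row, which keeps it a ribbon with at least one box per row. The row and column conditions are untouched, since the last box of the row still carries the same letter sitting above the first box of the next row. So $G_\sigma(W)$ is a quasi-ribbon word with $|G_\sigma(W)|_a<\sigma(a)$ for all $a$, that is, a $\sigma$-reduced quasi-ribbon word.

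Conversely, let $W$ be a $\sigma$-reduced quasi-ribbon word. By \cref{hypoplax-rep} it is the canonical \gathered element of its class, so $G(W)=W$. Since no block reaches length $\sigma(a)$, no reduction applies and $G_\sigma(W)=W$. The two sets therefore coincide, and \cref{gen-gathered} gives the claim.

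The only step requiring any care is verifying that the contracted word remains a quasi-ribbon word, i.e.\ the block-in-one-row argument above. I expect this to be routine.
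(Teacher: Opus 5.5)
Your proposal is correct and follows the paper's route: the paper obtains this corollary in one line from \cref{gen-gathered}, using that $\HPl(\A)$ is regrouped by \cref{cor-hypo-gathered}. Your additional identification of the words $G_\sigma(W)$ with the quasi-ribbon words satisfying $|W|_a<\sigma(a)$ is the step the paper leaves implicit. The reason each letter fills a single block inside one row is that consecutive rows of a ribbon share a column, which forces the last entry of a row to be strictly smaller than the first entry of the next row.
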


For a word $W$, $G_\sigma(W)$ is its $\sigma$-reduced quasi-ribbon word.

Denote by $\theta$ and $\ev_\sigma$ the projection of $\HPl(\A,\sigma)$ into
$\HPl(\A,2)$ and $\Com(\A,\sigma)$ respectively, as in \cref{ss:Type1Monoids}.

\begin{lemma}
Let $W\in\G_{\HPl(\sigma)}$. Then $W$ is an inflation of
$\theta(W)\in\G_{\HPl(2)}$ such that \mbox{$\ev(W)=\ev_\sigma(W)$}.
\end{lemma}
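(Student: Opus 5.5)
The plan is to read everything off the quasi-ribbon tableau of $W$. Let $W\in\G_{\HPl(\sigma)}$. By \cref{sqrandhsm}, $W$ is a $\sigma$-reduced quasi-ribbon word, i.e.\ $W=\RW(Q)$ for a quasi-ribbon tableau $Q$ with $|W|_a<\sigma(a)$ for every $a$. In a quasi-ribbon word all occurrences of a given letter are consecutive, because equal letters lie in a single row segment. Hence we can write $W=x_1^{\delta_1}\cdots x_m^{\delta_m}$ with the $x_i$ pairwise distinct and $\delta_i\geq 1$. Define $W_0=x_1\cdots x_m$; then $W$ is by construction an inflation of $W_0$.

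The first key step is to show that $W_0$ is itself a quasi-ribbon word. Let $Q_0$ be obtained from $Q$ by deleting, in each maximal horizontal run of equal letters, all but one box. This keeps a ribbon shape, since a run of equal letters lies inside a single row of the ribbon. Rows stay weakly increasing, and columns are unaffected: the column steps of a ribbon occur only between the last box of a row and the first box of the next row, and these keep their letters. Thus $Q_0$ is a quasi-ribbon tableau with $\RW(Q_0)=W_0$.

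Next, since every $\delta_i\geq 1$ satisfies $1<2$ only when $\delta_i=1$, the word $W_0$ is $2$-reduced. By \cref{sqrandhsm} with $\sigma=2$, it is therefore the canonical element of its class, so $W_0\in\G_{\HPl(2)}$. It remains to identify $W_0$ with $\theta(W)$. Contracting each $x_i^{\delta_i}$ to $x_i$ uses $a^2\equiv a$, so $W\equiv_{\HPl(2)}W_0$. Hence $\theta(W)$ is represented by $W_0$, and $W_0$ is its normal form. Equivalently, by the same argument as in \cref{cor-hypo-gathered}, deleting duplicate consecutive insertions only shortens runs in the tableau.

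Finally, for the evaluation, each letter $a$ appears in $W$ with $|W|_a<\sigma(a)$. In $\Com(\A,\sigma)$, the elements $a^k$ with $1\leq k<\sigma(a)$ are distinct representatives (together with $a^0$). So the element $\ev_\sigma(W)$ is exactly represented by the commutative word $\ev(W)=\prod a^{|W|_a}$, which gives $\ev(W)=\ev_\sigma(W)$.

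The main obstacle is the shape check for $Q_0$: we must verify that collapsing runs never destroys the strict column condition. The ribbon structure handles this, since only row ends and row starts are vertically adjacent and collapsing preserves those boxes.
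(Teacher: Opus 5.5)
Your proof is correct and follows the paper's approach, which is a one-line appeal to the characterization of $\sigma$-reduced quasi-ribbon words together with \cref{sqrandhsm}; you apply it for both $\sigma$ and $2$ and fill in the details the paper leaves implicit (collapsing runs, checking the ribbon column condition, and identifying $\ev(W)$ with the normal form of $\ev_\sigma(W)$). The only blemish is the garbled sentence about the $\delta_i$: what you need there is simply that every letter of $W_0$ occurs exactly once, so $W_0$ is $2$-reduced.
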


\begin{proof}
The proof is straightforward given the characterization of the
$\sigma$-reduced words and \cref{sqrandhsm}.
\end{proof}

\begin{cor}\label{hypoplax-type1}
The hypoplactic monoid is of type 1.
\end{cor}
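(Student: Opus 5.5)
To prove that the hypoplactic monoid is of type 1, we must show that $\phi_\sigma=\theta\times\ev_\sigma$ is injective on $\HPl(\A,\sigma)$ for every $\sigma$. We work with normal forms throughout. By \cref{sqrandhsm}, every element of $\HPl(\A,\sigma)$ is represented by a unique $\sigma$-reduced quasi-ribbon word. Such a word is $G_\sigma(W)$, obtained from the canonical \gathered element $G(W)=\RW(\Q(W))$ by reducing each block $a^{m}$ modulo $\sigma(a)-1$ into the range $[1,\sigma(a)-1]$. The same applies to $\sigma=2$, where every block becomes a single letter. So the task is: given two $\sigma$-reduced quasi-ribbon words $U,V$ with $\theta(U)=\theta(V)$ and $\ev_\sigma(U)=\ev_\sigma(V)$, show that $U=V$.

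\emph{Step 1.} Use the preceding lemma. The word $U$ is an inflation of its 2-reduced form $\theta(U)\in\G_{\HPl(2)}$. In $U$ all occurrences of a letter $a$ form a single consecutive block $a^{|U|_a}$ with $|U|_a<\sigma(a)$, and $\ev(U)=\ev_\sigma(U)$. Concretely, $\theta(U)$ is obtained from $U$ by collapsing each block $a^{k}$ to $a$. This is an application of the second condition of \cref{def:type2}: contractions of the canonical \gathered element commute with taking canonical \gathered elements. Consequently $\theta(U)$ is the quasi-ribbon word with the same content and with the shape induced on distinct letters.

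\emph{Step 2.} Recover $U$ from the pair $(\theta(U),\ev_\sigma(U))$. The word $\theta(U)$ lists the distinct letters of $U$, each once, in the order in which their blocks appear. The evaluation $\ev_\sigma(U)\in\Com(\A,\sigma)$ determines the exponent of each letter $a$ as its unique representative in $[1,\sigma(a)-1]$. This uniqueness holds because in $\Com(\A,\sigma)$ the powers $a^1,\dots,a^{\sigma(a)-1}$ are pairwise distinct, while $a^{\sigma(a)}=a$. Since $|U|_a<\sigma(a)$, this representative equals $|U|_a$. Hence $U$ is obtained by replacing each letter $a$ of $\theta(U)$ by $a^{e_a}$, where $e_a$ is read off from $\ev_\sigma(U)$. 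The same reconstruction applied to $V$ gives the same word, so $U=V$ and $\phi_\sigma$ is injective.

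\emph{Main obstacle.} The delicate point is Step 1. We must justify that $\theta$ of a $\sigma$-reduced quasi-ribbon word is literally its block-collapse, and not merely some other word in the same $\HPl(\A,2)$ class. We would check this by noting that collapsing blocks preserves quasi-ribbon shape. Rows stay weakly increasing and columns stay strictly increasing, since a block of equal letters lies in a single row. Moreover, the letters of a block are inserted consecutively, so insertion merely duplicates cells. Therefore the collapsed word is again a quasi-ribbon word, hence $2$-reduced, and by \cref{gen-gathered} it is the normal form in $\HPl(\A,2)$. Finally, the evaluation-preserving hypothesis of \cref{def:type1} holds because the hypoplactic relations preserve evaluation.
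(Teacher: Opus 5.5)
Your proof is correct and follows essentially the paper's route. The paper derives this corollary from the preceding lemma, that each $\sigma$-reduced quasi-ribbon word is an inflation of its $2$-reduced form $\theta(W)$ with $\ev(W)=\ev_\sigma(W)$, together with \cref{sqrandhsm}; this is exactly your Step~1 followed by your reconstruction in Step~2, and your check that collapsing blocks preserves the quasi-ribbon property (equal letters lie in a single row, and each row keeps its first and last letters) just spells out what the paper calls ``straightforward''.
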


\subsection{The 2-Hypoplactic Monoid}
\label{ss-2-hypoplax}

Before studying the general $\sigma$-quotients, let us consider the special
case $\HPl(\A,2)$.

\paragraph{Reversing anti-automorphism\\}
\label{2-Hypo-Theta}

It is easy to check (it is direct if one looks at the rewriting rules) that
$\Theta$ preserves hypoplactic classes and $2$-hypoplactic classes.
In the latter case, this involution is quite easy.

Let $I=(i_1,\cdots,i_n)$ be a composition. Define $flip(I)$ as
$(i_n,\cdots,i_1)$. Given a ribbon diagram $D$ associated with $I$,
let $flip(D)$ be the ribbon diagram associated with $flip(I)$. Now consider
$Q$, a $n$-reduced quasi-ribbon tableau of shape $D$ and define $flip(Q)$
as the reduced quasi-ribbon tableau of shape $flip(D)$ with
$\ev(flip(Q))=(\Theta(k_n),\cdots,\Theta(k_1))$ if $\ev(Q)=(k_1,\cdots,k_n)$.

For example, with $A=\{a,b,c,d,e,f,g\}$, the image by $flip$ of the
quasi-ribbon diagram on the left of~\cref{insere-d} is
\begin{equation}
\ytableausetup{notabloids}
\begin{ytableau}
a & b \\
\none & c & c \\
\none & \none & d\\
 \none & \none & e & e & f \\
\none & \none & \none & \none & g
\end{ytableau}
\end{equation}

The following result is immediate.

\begin{lemma}
Let $Q=(I,\B)$ be a $2$-reduced quasi-ribbon tableau.
Then $\Theta(\RW(Q))=\RW(flip(Q))$.
\end{lemma}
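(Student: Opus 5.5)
The statement to prove is that $\Theta(\RW(Q))=\RW(\mathit{flip}(Q))$ for a $2$-reduced quasi-ribbon tableau $Q=(I,\B)$. I would compare the two words directly, row by row, using only the description of a $2$-reduced tableau.

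First, unpack the data. Since $Q$ is $2$-reduced, each letter of $\B=\{b_1<\dots<b_k\}$ occurs exactly once in $\RW(Q)$. Moreover $Q$ is determined by $\B$ and the composition $I=(i_1,\dots,i_n)$ of $k$. The rows of $Q$, read from top to bottom, are consecutive blocks of the increasing sequence $b_1<\dots<b_k$ of sizes $i_1,\dots,i_n$. Indeed, rows increase and each row starts just below the last box of the previous row, where the column condition forces strict increase downwards. So the top row is $b_1\cdots b_{i_1}$, the next is $b_{i_1+1}\cdots b_{i_1+i_2}$, and so on. Write $R_1,\dots,R_n$ for these row words from top to bottom. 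Then $\RW(Q)=R_nR_{n-1}\cdots R_1$, since the reading goes from bottom to top and each row is read left to right.

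Next, apply $\Theta$. It is an anti-automorphism, so
\[
\Theta(\RW(Q))=\Theta(R_1)\Theta(R_2)\cdots\Theta(R_n).
\]
Each $\Theta(R_j)$ is an increasing word, because $\Theta$ reverses both the order of the letters and their positions. It has length $i_j$, and its letters are the images of the $j$-th block. The set $\Theta(\B)=\{\Theta(b_k)<\dots<\Theta(b_1)\}$ is therefore cut into consecutive increasing blocks $\Theta(R_n),\dots,\Theta(R_1)$ of sizes $i_n,\dots,i_1$. This is exactly the top-to-bottom row decomposition of the $2$-reduced quasi-ribbon tableau with content $\Theta(\B)$ and shape $\mathit{flip}(I)$, which is $\mathit{flip}(Q)$ by definition.

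Finally, read $\mathit{flip}(Q)$ canonically, from bottom to top. Its bottom row is $\Theta(R_1)$ (size $i_1$, containing the largest letters), followed by $\Theta(R_2)$, and so on up to $\Theta(R_n)$. This gives $\Theta(R_1)\cdots\Theta(R_n)$, which is the word obtained above.

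The main point needing care is interpreting the definition of $\mathit{flip}$, since the evaluation is stated as a tuple $(\Theta(k_n),\dots,\Theta(k_1))$. For a $2$-reduced tableau, I read this as saying that the content is $\Theta(\B)$, with all multiplicities equal to $1$, and the shape is $\mathit{flip}(I)$. I would also check, as a consistency test, that this matches the paper's example for $fgeedbcca$: its flip is read as $g\,eef\,d\,cc\,ab$, which is $\Theta(fgeedbcca)$ on $\{a,\dots,g\}$. The rest is routine.
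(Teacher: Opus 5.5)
Your proof is correct and takes the approach the paper intends. The paper only says the lemma is immediate and checks it on the example $\Theta(fgeedbcca)=geefdccab$; your decomposition $\RW(Q)=R_n\cdots R_1$, so that $\Theta(\RW(Q))=\Theta(R_1)\cdots\Theta(R_n)$, with the $\Theta(R_j)$ forming the rows of $flip(Q)$ from bottom to top, is the verification behind that claim.
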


One easily checks the result on the previous example since $\Theta(fgeedbcca)
= geefdccab$.

\paragraph{The 2-hypoplactic insertions\\}
\label{2-Hypo-insert}

One can define the \emph{right insertion} of a letter $x$ in a
2-reduced quasi-ribbon tableau~$Q$. It is similar to \cref{def:QRTinsert}
but if $x\in\cont(R)$, one omits \emph{2.(ii)} and glues the second part 
of $Q$ underneath $C$ instead of $C'$ in step \emph{2.(iii)}.
We use the same notation, $Q\leftarrow x$, for the resulting tableau.
For the rest of this section, we only consider the insertion into
2-reduced quasi-ribbon tableaux.

One also defines the \emph{right insertion of a word} $W=w_1\cdots w_n$ in
a 2-reduced quasi-ribbon tableau $Q$ by right-inserting successively the
letters from $w_1$ to $w_n$ in $Q$.
Let $\Q_{2}(W)$ be the 2-reduced quasi-ribbon tableau obtained by right
inserting $W$ in the empty tableau.

Thanks to \cref{sqrandhsm}, two words $U,V\in\A^*$ are 2-hypoplactic
equivalent if and only if \mbox{$\Q_2(U)=\Q_2(V)$}.

\begin{figure}
\centering
\scalebox{0.86}{
\ytableausetup{notabloids}
\begin{ytableau}
a\\
b & d \\
 \none & e & f & g
\end{ytableau}
$\Hlleft ced =$
\begin{ytableau}
a\\
b & c\\
 \none & d\\
 \none & e & f & g
\end{ytableau}
$\Hlleft ed =$
\begin{ytableau}
a\\
b & c\\
 \none & d\\
 \none & e\\
 \none & f & g
\end{ytableau}
$\Hlleft d =$
\begin{ytableau}
a\\
b & c\\
 \none & d\\
 \none & e\\
 \none & f & g
\end{ytableau}}
\caption{\label{ninsere}The right insertion of $ced$ in the quasi-ribbon
tableau $\Q_2(efgbda)$. It gives the tableau of canonical reading word
$fgedbca$.}
\end{figure}

Thanks to the flip property, the \emph{left insertion of} $x\in \A$ in the
2-reduced quasi-ribbon tableau $Q$ is
$x\rightarrow Q = flip(flip(Q)\leftarrow \Theta(x)).$

\paragraph{Idempotents\\}
\label{2-Hypo-idem}

Knowing both the left and right insertions, it is easy to describe the set
of idempotents of the 2-hypoplactic monoid:
they are exactly the same than in the stylic monoid: the strictly decreasing
words.
For $\B=\{b_1 < b_2 < \cdots < b_k\} \subseteq \A$, let
$e_{\B} = b_k\cdots b_1$.

\begin{prop}
\label{hypo-idem}
The idempotents of $\HPl(\A, 2)$ are precisely the $e_{\B}$ for every
$\B\subseteq \A$.
In particular, there are $2^{|\A|}$ idempotents in $\HPl(\A, 2)$. 
\end{prop}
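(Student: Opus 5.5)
We work with the normal forms of \cref{sqrandhsm}. An element of $\HPl(\A,2)$ is represented by a unique $2$-reduced quasi-ribbon tableau $Q$, hence by a quasi-ribbon word $\RW(Q)$ with no repeated letter. Such a tableau is determined by its content $\B$ together with a composition of $|\B|$. Equivalently, $\B=\{b_1<\cdots<b_k\}$ is cut into consecutive intervals, one per row of the ribbon, and $\RW(Q)$ reads these blocks, each increasing, from the block of largest letters down to the block of smallest letters. We must show that $\RW(Q)\RW(Q)\equiv_{\HPl(2)}\RW(Q)$ forces every block to be a singleton, and that $e_\B=b_k\cdots b_1$ is indeed idempotent. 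Since $\HPl(\A,2)$ has $2^{|\A|}$ subsets $\B$, the count of idempotents then follows at once.

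\emph{Idempotency of $e_\B$.} We compute $\Q_2(e_\B e_\B)$ with the $2$-hypoplactic right insertion. First, $\Q_2(e_\B)$ is the single column $b_1,\dots,b_k$ read top to bottom: each new letter is smaller than everything already present, so case (1) of \cref{def:QRTinsert} applies and glues the tableau under a new box. Next we insert $b_k,\dots,b_1$ again. When $b_j$ is inserted, the box $C$ is the rightmost bottom-most box with content $\le b_j$, and this is the box $b_j$ itself, since all boxes below it hold larger letters. Because $b_j$ already lies in the content, the modified rule applies: we omit step 2.(ii) and reglue the part after $C$ under $C$. This leaves the tableau unchanged. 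Hence $e_\B e_\B\equiv_{\HPl(2)}e_\B$.

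\emph{Only these.} Let $Q$ be a $2$-reduced quasi-ribbon tableau of content $\B$ with some row containing two consecutive letters $x<y$ adjacent in that row, so $\RW(Q)$ contains the factor $xy$. We track the insertion of the second copy of $\RW(Q)$ into $Q$. The key claim is that re-inserting a letter $z\in\cont(Q)$ cuts the ribbon right after the box of $z$: any box to the right of $z$ in the same row becomes glued underneath. When $x$ is reinserted, $y$, which sits to its right, is therefore pushed under $x$. Later letters of the second copy never rejoin $x$ and $y$ into a single row, since the re-insertion rule never adds boxes to the right of an existing box and only cuts. Consequently $\Q_2(\RW(Q)\RW(Q))$ has strictly more rows than $Q$, and $Q$ is not idempotent. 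It follows that an idempotent has all rows of length $1$, that is, $Q$ is a column, and its reading word is $e_\B$.

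The main obstacle is to make the ``cuts persist'' claim precise: once $x$ and $y$ lie in different rows, no subsequent $2$-insertion of a letter already in the content merges rows again. We would prove this as a separate lemma, by induction on the number of inserted letters. The induction rests on the observation that, for $z\in\cont(Q)$, the operation $Q\leftarrow z$ yields a composition finer than the one for $Q$. This uses only the modified steps 2.(i) and 2.(iii).
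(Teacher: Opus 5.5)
Your proof is correct and follows the same route as the paper. You show that a column is unchanged when any of its letters is re-inserted, and that a row containing adjacent $x<y$ is split when $x$ is re-inserted during the second copy of $\RW(Q)$. Your refinement observation fills in a step the paper leaves implicit: re-inserting a letter of the content only adds a descent, so the extra row survives the rest of the second copy. It follows directly from the definition of the $2$-insertion, so it does not need a separate induction.
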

 
\begin{proof}
Note that the insertion of a strictly decreasing word is a column quasi-ribbon
tableau, that is, a quasi-ribbon tableau of shape $(1,1,\dots,1)$. It is a
$2$-reduced quasi-ribbon tableau.

Now, inserting any letter already present is such a tableau yields the same
tableau as result (see the last step in ~\cref{ninsere}).
So any strictly decreasing word is an idempotent.

Conversely, if a quasi-ribbon tableau has two letters $x<y$ on the same row,
it cannot be an idempotent since inserting $x$ in that tableau would yield a
tableau with one more row than the previous one. So this element is not an
idempotent.
\end{proof}

\paragraph{$\J$-triviality and $\J$-order\\}
\label{2-Hypo-Jtriv}

Theorem 11.1 of~\cite{AR} shows that the stylic monoid is $\J$-trivial.
It is well-known that the class of finite $\J$-trivial monoids is a
pseudovariety (see \emph{e.g.}, Chapter 3 Section 4 of \cite{P}) which is, by
definition, closed under quotienting. Therefore, the 2-hypoplactic monoid,
being a quotient of the stylic monoid by the quartic relations, is
$\J$-trivial.

Using 2-reduced quasi-ribbon tableaux and their insertions, one can describe
the $\J$-order of the 2-hypoplactic monoid.
See \cref{posetJ} for the $\J$-order for $|\A|=3$.

\begin{figure}[ht]
\centering
\ytableausetup{notabloids}
\begin{tikzpicture}
\node []   (1) at (0,9.35) {$\emptyset$};

\node []   (a) at (-1.5,8) {\scalebox{0.86}{$\begin{ytableau}	a\end{ytableau}$}};
\node []   (b) at (0,8) {\scalebox{0.86}{$\begin{ytableau}	b\end{ytableau}$}};
\node []   (c) at (1.5,8) {\scalebox{0.86}{$\begin{ytableau}	c\end{ytableau}$}};

\node []   (ab) at (-2,6.25) {\scalebox{0.86}{$\begin{ytableau} a & b\end{ytableau}$}};
\node []   (ac) at (0,6.25) {\scalebox{0.86}{$\begin{ytableau} a & c\end{ytableau}$}};
\node []   (bc) at (2,6.25) {\scalebox{0.86}{$\begin{ytableau} b & c\end{ytableau}$}};

\node []   (ba) at (-2.5,4.25) {\scalebox{0.86}{$\begin{ytableau} a \\ b\end{ytableau}$}};
\node []   (ca) at (-1,4.25) {\scalebox{0.86}{$\begin{ytableau} a \\ c\end{ytableau}$}};
\node []   (abc) at (1,4.25) {\scalebox{0.86}{$\begin{ytableau} a & b & c\end{ytableau}$}};
\node []   (cb) at (3,4.25) {\scalebox{0.86}{$\begin{ytableau} b \\ c\end{ytableau}$}};

\node []   (acb) at (-1,2) {\scalebox{0.86}{$\begin{ytableau} a \\ b & c\end{ytableau}$}};
\node []   (bac) at (1,2) {\scalebox{0.86}{$\begin{ytableau} a & b \\ \none & c\end{ytableau}$}};

\node []   (cba) at (0,-0.25) {\scalebox{0.86}{$\begin{ytableau} a \\ b \\ c\end{ytableau}$}};
		
\draw (1) -- (a); \draw (1) -- (b); \draw (1) -- (c);
		
\draw (a) -- (ab); \draw (a) -- (ac); \draw (b) -- (ab); \draw (b) -- (bc);
\draw (c) -- (ac); \draw (c) -- (bc);
		
\draw (ab) -- (ba); \draw (ab) -- (abc); \draw (ac) -- (ca); \draw (bc) --
(cb); \draw (bc) -- (abc);

\draw (ba) -- (acb); \draw (ca) -- (acb); \draw (ca) -- (bac); \draw (abc) --
(acb); \draw (abc) -- (bac); \draw (cb) -- (bac);
		
\draw (acb) -- (cba); \draw (bac) -- (cba);
\end{tikzpicture}
\caption{\label{posetJ}%
The poset of the $\J$-order of the 2-hypoplactic monoid on $\A=\{a,b,c\}$, with
$\emptyset$ being the empty tableau.}
\end{figure}

Let $Q$ be the 2-reduced quasi-ribbon tableau associated with the pair
$(I,\B)$, with $I=(i_1,\cdots,i_k)$ and $\B\subseteq\A$.
Define $\Delta(Q)$ as 0 if $Q$ is the empty tableau and $|B|+k-1$ if $Q$ is not empty.

\begin{thm}
The $\J$-order of the 2-hypoplactic monoid on $\A$ is graded. The co-rank
function is $\Delta$.
\end{thm}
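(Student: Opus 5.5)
The plan is to show two things. First, $\Delta$ strictly increases along covers of the $\J$-order. Second, every cover increases $\Delta$ by exactly one. Together these say that the $\J$-order is graded with co-rank function $\Delta$. Note that $\Delta(Q)=|\B|+k-1$ counts the letters of $Q$ plus the number of "row breaks" of $Q$ (since $Q$ has $k$ rows). Because $\HPl(\A,2)$ is $\J$-trivial, $Y\le_\J X$ means $Y = UXV$ for some words $U,V$. Such a $Y$ can be reached from $X$ by a sequence of single-letter left or right insertions. Each of these is again an element $\le_\J X$, so every cover $Y\lessdot X$ must arise from a single left or right insertion of one letter. By the flip anti-automorphism, $x\rightarrow Q = flip(flip(Q)\leftarrow\Theta(x))$, and $\Delta(flip(Q))=\Delta(Q)$. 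Hence it suffices to analyse right insertions $Q\leftarrow x$.

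\textbf{Step 1 (monotonicity).} I will go through the 2-reduced right insertion case by case. If $x\notin\cont(Q)$ and there is no cell $\le x$, then $x$ becomes a new top row: $|\B|$ and $k$ each grow by $1$, so $\Delta$ grows by $2$. If $x\notin\cont(Q)$ and the cell $C$ exists, a new letter is added; the cut after $C$ creates a new row exactly when $C$ is not the last cell. So $\Delta$ grows by $1$ or $2$. If $x\in\cont(Q)$, no letter is added, but gluing the remainder under $C$ can create one new row break. This happens exactly when $C$ is not at the end of its row in the right place, so $\Delta$ grows by $0$ or $1$. In every case $\Delta(Q\leftarrow x)\ge\Delta(Q)$, with equality if and only if $Q\leftarrow x=Q$. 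This mirrors \cref{sz} for the 2-Chinese monoid and gives strict monotonicity along the strict order.

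\textbf{Step 2 (covers have difference one).} This is the main obstacle. Suppose $Y=Q\leftarrow x$ with $\Delta(Y)=\Delta(Q)+2$. I must exhibit an intermediate element $Z$ with $Y<_\J Z<_\J Q$.

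In the first jump-by-$2$ case ($x$ smaller than every letter of $Q$), I would take $Z=x\rightarrow Q$. Left-inserting a minimal letter should simply prepend $x$ to the first row, giving a jump of $1$. Then I would check that $Z\leftarrow x = Y$, using that $x$ is now present and that cutting after $x$'s cell produces $Y$.

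In the second jump-by-$2$ case ($x$ new, inserted in the middle after $C$), I would try $Z=Q\leftarrow y$, where $y$ is the letter of $C$. Reinserting a letter already present cuts after $C$ without adding a letter, giving a jump of exactly $1$. Then $Z\leftarrow x$ should equal $Y$: $x$ attaches after $C$ and the remainder is already glued below. If this fails in an edge case (for example, when $C$ is at the end of its row), I would instead use a left insertion of $x$ via the flip description.

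\textbf{Step 3 (conclusion).} Finally, I will verify that the bottom and top behave correctly. The empty tableau is the maximum, with $\Delta=0$. Each element $Q$ can be reached from $\emptyset$ by a chain in which $\Delta$ increases by exactly one at each step. This follows by induction from Steps 1 and 2, since every strict relation refines into covers and each cover has difference one. Hence all maximal chains from $\emptyset$ to $Q$ have length $\Delta(Q)$, which proves gradedness with co-rank $\Delta$. As a sanity check, I will compare with \cref{posetJ}, where the levels are $0,1,2,3,4,5$ as predicted; for instance, $\Delta(cba)=3+3-1=5$.
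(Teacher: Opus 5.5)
Your proposal is correct and follows the paper's route. The paper deduces the theorem from \cref{couverture}, which is exactly your Steps 1--2: a one-letter right insertion raises $\Delta$ by $0$, $1$ or $2$, a jump of $2$ factors through an intermediate element, and $flip$ transfers everything to left insertions. Your intermediate $Q\leftarrow x^-$ in the middle case is in fact the same element as the paper's $x^+\rightarrow Q$, since both just cut the row between $x^-$ and $x^+$. Your Step 3 also spells out the cover argument that the paper leaves implicit.

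One small correction to Step 1: when $C$ exists, a right insertion creates a new row exactly when $C$ is not the last cell of \emph{its row}, not of $Q$. Also, inserting into $\emptyset$ raises $\Delta$ only by $1$. So the ``edge case'' you worry about in Step 2 is only a jump of $1$ and needs no fallback.
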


The proof is a direct consequence of the upcoming \cref{couverture}. We
first need a simple result.

\begin{lemma}
\label{deltaflip}
Let $Q$ be a 2-reduced quasi-ribbon tableau,
then $\Delta(Q)=\Delta(flip(Q))$.
\end{lemma}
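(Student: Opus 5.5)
The plan is to compare the two ingredients of $\Delta$ directly: the content size $|\B|$ and the number of rows $k$ of the ribbon diagram. Both should be seen to survive $flip$ unchanged. Recall that a 2-reduced quasi-ribbon tableau $Q$ is determined by the pair $(I,\B)$, where $I=(i_1,\dots,i_k)$ is the composition of its ribbon shape and $\B=\cont(Q)$. Each letter of $\B$ occurs exactly once, since $|W|_a<2$.

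First, dispose of the empty tableau. If $Q=\emptyset$ then $flip(Q)=\emptyset$, so both sides of the statement are $0$.

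Next, let $Q$ be nonempty. By definition, $flip(Q)$ has shape $flip(D)$, whose composition is $flip(I)=(i_k,\dots,i_1)$. This composition has the same length $k$ as $I$. Its evaluation is obtained by applying $\Theta$ to the letters (with positions reversed). Since $\Theta$ is a bijection of $\A$, the content of $flip(Q)$ is $\Theta(\B)$, and $|\Theta(\B)|=|\B|$. Equivalently, one can argue through the preceding lemma: $\RW(flip(Q))=\Theta(\RW(Q))$, and $\Theta$ is length-preserving and maps distinct letters to distinct letters. Either way, $flip(Q)$ is again a nonempty 2-reduced quasi-ribbon tableau with content of cardinality $|\B|$ and with $k$ rows.

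Therefore $\Delta(flip(Q))=|\Theta(\B)|+k-1=|\B|+k-1=\Delta(Q)$. The only point requiring care is that $flip(Q)$ is genuinely a well-defined 2-reduced quasi-ribbon tableau: its number of cells $\sum i_j=|\B|$ must match the number of letters, which holds by construction. Once this is checked, the equality is immediate, so no real obstacle is expected.
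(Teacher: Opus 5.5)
Your proof is correct and follows essentially the same route as the paper's: $flip$ preserves the number of boxes (for a 2-reduced tableau this is $|\B|$) and the number of rows $k$, so $\Delta$ is unchanged. Your extra checks, the empty tableau and the well-definedness of $flip(Q)$, are harmless details that the paper leaves implicit.
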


\begin{proof}
The map $flip$ does not change the number of boxes neither the height of a
diagram.
Therefore, $\Delta(Q)=\Delta(flip(Q))$.
\end{proof}

\begin{lemma}
\label{couverture}
Let $Q$ be a 2-reduced quasi-ribbon tableau and $x\in \A$.
\begin{enumerate}[(1)]
\item
  Then $\Delta(Q\leftarrow x)= \Delta(Q) + \gamma$
  with $\gamma\in \{0,1,2\}$.
  And if $\gamma=2$, there exists $y$ such that
  $\Delta(y\rightarrow Q)=\Delta(Q)+1$ and $(y\rightarrow Q)\leftarrow x=Q\leftarrow x$.
\item
  Then $\Delta(x\rightarrow Q)= \Delta(Q) + \gamma$
  with $\gamma\in \{0,1,2\}$.
  And if $\gamma=2$, there exists $y$ such that
  $\Delta(Q\leftarrow y)=\Delta(Q)+1$ and $x\rightarrow (Q\leftarrow y)=x\rightarrow Q$.
\end{enumerate}
\end{lemma}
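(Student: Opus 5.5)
The plan is a direct case analysis of the 2-reduced right insertion $Q\leftarrow x$, tracking separately how the content $\B$ and the number of rows $k$ change. Part (2) then follows from part (1) by symmetry. Throughout, write $C$ for the rightmost bottom-most box of $Q$ with entry $\le x$, as in \cref{def:QRTinsert}. When $C$ exists and is not the last box of $Q$, write $D$ for the box following $C$ in the canonical row-reading. The degenerate case $Q=\emptyset$ gives $\Delta(Q\leftarrow x)=1=\Delta(Q)+1$, so assume $Q\ne\emptyset$.

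\emph{Step 1: the value of $\gamma$.}
\begin{itemize}
\item If $x\notin\cont(Q)$ and $C$ does not exist, that is $x<\min\B$, then $x$ becomes a new one-box first row on top of $Q$. Both $|\B|$ and $k$ increase by one, so $\gamma=2$.
\item If $x\notin\cont(Q)$ and either $C$ is the last box, or $D$ lies in the row below $C$, then $x$ is simply appended to the row of $C$. The remainder is glued underneath as it already was, so $\gamma=1$.
\item If $x\notin\cont(Q)$ and $D$ lies in the same row as $C$, then that row is split after the new box $x$, and $\gamma=2$.
\item If $x\in\cont(Q)$, then no box is added and $C$ is the box containing $x$. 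If $D$ is in the same row, the row splits after $C$ and $\gamma=1$. Otherwise nothing changes and $\gamma=0$.
\end{itemize}
In all cases $\gamma\in\{0,1,2\}$.

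\emph{Step 2: the two cases with $\gamma=2$.} I first need an explicit description of left insertion, obtained by unwinding $x\rightarrow Q=flip(flip(Q)\leftarrow\Theta(x))$. Under $flip$, the rightmost bottom-most box $\le$ becomes the leftmost top-most box $\ge$, and "glue underneath" becomes "glue above". So $y\rightarrow Q$ takes the leftmost top-most box $E$ with entry $\ge y$ and cuts $Q$ before $E$. If $y\notin\cont(Q)$, it puts a new box $y$ to the left of $E$ and glues the first part above $y$. If $y\in\cont(Q)$, it glues the first part above $E$ and adds no box.

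\begin{itemize}
\item Case $x<\min\B$: take $y=x$. Then $E$ is the first box of $Q$, so $x\rightarrow Q$ is $Q$ with $x$ prepended to its first row, which gives $\Delta+1$. Right-inserting $x$ into this tableau hits the case $x\in\cont$, with $C$ the new box $x$ and the next box in the same row. The row therefore splits right after $x$, which yields exactly $Q\leftarrow x$.
\item Case $x\notin\cont(Q)$ with $C,D$ in the same row: take $y$ to be the entry $d$ of $D$. Since $d\in\cont(Q)$ and $D$ is the leftmost top-most box $\ge d$ (here $C<x<d$), the tableau $d\rightarrow Q$ cuts before $D$ and glues the first part above $D$. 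This splits the row between $C$ and $D$ without changing the content, so $\Delta$ increases by one. In $d\rightarrow Q$ the box $C$ now ends its row and $D$ starts the next one. Right-inserting $x$ therefore appends $x$ after $C$ and glues the rest below, which coincides with $Q\leftarrow x$.
\end{itemize}

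\emph{Step 3: part (2).} Apply part (1) to $flip(Q)$ and $\Theta(x)$. Then transport the result back using $x\rightarrow Q=flip(flip(Q)\leftarrow\Theta(x))$, the fact that $flip$ intertwines right and left insertion (with $\Theta$ on letters), and \cref{deltaflip}. This gives both the bound on $\gamma$ and the witness $y'=\Theta(y)$.

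The main obstacle is Step 2, in particular making the left-insertion description precise enough to confirm that $d\rightarrow Q$ really only splits the row before $D$. It also requires checking the tie-breaking conventions, "rightmost bottom-most" versus "leftmost top-most", for repeated letters in the 2-reduced setting.
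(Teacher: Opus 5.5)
Your proposal is correct and is essentially the paper's proof. You bound $\gamma\le 2$ because at most one box and one row are added, and you handle the two $\gamma=2$ cases with the same witnesses: $y=x^+$ (the entry of $D$) when $x$ falls strictly inside a row, and $y=x$ when $x<\min\B$. Part (2) then follows from part (1) via $flip$ and \cref{deltaflip}, and your unwinding of the left insertion through $flip$ is accurate. The tie-breaking worry you flag at the end is moot, since a 2-reduced tableau has no repeated letters.

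One cosmetic fix concerns the definition of $D$. It should be the successor of $C$ in the ribbon order (top-left to bottom-right, where entries increase), not in the canonical row-reading. That reading goes through the rows from bottom to top, so it would make $D$ the first box of the row above $C$ whenever $C$ ends its row.
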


\begin{proof}
Using $flip$, \cref{deltaflip}, and the definition of the left
insertion, $(2)$ is a direct consequence of $(1)$. Therefore we only have
to prove $(1)$.

Note that when doing the right insertion of $x$ in $Q$, one adds at most
one box to the quasi-ribbon tableau and this only happens if
$x\notin \cont(Q)$.
Moreover, inserting $x$ in $Q$ can add at most a row and this happens only if
there exist two letters $y\leq x<z$ in $\cont(Q)$ that are on the same row in
$Q$ or if $x<z$ for all $z\in\cont(Q)$. Therefore, $\gamma\in \{0,1,2\}$,
hence the first part of the statement.

Now, consider the case where $\gamma=2$. Then by definition of
$\Delta$, $Q$ cannot be the empty tableau.
By definition of the insertion, we have
$x\notin \cont(Q)$ and either there exist letters $y< x<z$ in $\cont(Q)$
that are in the same row or $x<z$ for all $z\in\cont(Q)$.

Let $x^-=\max(z\in\cont(Q)|z<x)$ and $x^+=\min(z\cont(Q)|z>x)$.
In the first case, both letters exist. The left insertion of $x^+$ in $Q$ does
not add any box and only adds a row by gluing $x^+$ and the rest of $R$ below
$x^-$.
Then doing the right insertion of $x$ afterwards only adds the box containing
$x$, gluing $x^+$ and the rest of $R$ below it, which is exactly the tableau
$R\leftarrow x$.

In the second case, $x^-$ does not exist. Then, the left insertion of $x$ only
glues $x$ on the left of $x^+$, adding only one box and no row. The right
insertion of $x$ only adds a row and clearly $(x\rightarrow Q)\leftarrow
x=Q\leftarrow x$, hence the second part of the statement in all cases.
\end{proof}

\paragraph{Enumeration\\}
\label{2-Hypo-enum}

The total number of 2-hypoplactic classes is equal to the number of
2-reduced quasi-ribbon tableaux.
Since a 2-reduced quasi-ribbon tableau $Q$ is uniquely determined by its
ribbon diagram and its content, \emph{i.e.}, $Q=(I,\B)$ such that
$\B\subset \A$, and $I$ being a composition of $|\B|$,
one can easily enumerate classes of $\HPl(\A,2)$.

\begin{prop}\label{2-hypo-enum}
Let $\A$ be a finite alphabet. Then
\begin{equation}
|\HPl(\A,2)| = 1+\sum_{k=1}^{|\A|} \binom{|\A|}{k} 2^{k-1}
             = \frac{3^{|\A|} + 1}{2}.
\end{equation}
\end{prop}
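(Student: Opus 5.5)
The count will come from the section of $\HPl(\A,2)$ given by \cref{sqrandhsm}: the $2$-reduced quasi-ribbon words. Each of them is the canonical row-reading of a $2$-reduced quasi-ribbon tableau. So $|\HPl(\A,2)|$ equals the number of $2$-reduced quasi-ribbon tableaux on $\A$, and I will count these directly.

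First, I will show that a $2$-reduced quasi-ribbon tableau is the same thing as a pair $(I,\B)$. Here $\B\subseteq\A$ is its content and $I$ is a composition of $|\B|$. Since every letter occurs at most once, the evaluation is the indicator of $\B$. As noted after \cref{hypoplax-rep}, a quasi-ribbon tableau is determined by its shape and evaluation, so the map $Q\mapsto(I,\B)$ is injective. For surjectivity, take $\B=\{b_1<\dots<b_k\}$ and a composition $I\models k$. Fill the ribbon of shape $I$ with $b_1,\dots,b_k$ in reading order along the ribbon, going left to right along each row and from one row to the next downwards. Rows are then strictly increasing. In each column two boxes are stacked, and the lower one comes later in this order, so it carries a larger letter; columns therefore strictly increase from top to bottom. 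This gives a valid quasi-ribbon tableau with no repeated letter, hence $2$-reduced.

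Next I count. For $\B=\varnothing$ there is exactly one tableau, the empty one; I treat this case separately because the empty composition is the only composition of $0$. For $|\B|=k\geq1$ there are $2^{k-1}$ compositions of $k$, one for each subset of $\{1,\dots,k-1\}$ taken as descent set. There are $\binom{|\A|}{k}$ choices of $\B$, which gives
\[
|\HPl(\A,2)| = 1+\sum_{k=1}^{|\A|}\binom{|\A|}{k}2^{k-1}.
\]
Writing $n=|\A|$, the binomial theorem gives $\sum_{k=0}^{n}\binom{n}{k}2^k=3^n$. Hence $\sum_{k\geq1}\binom{n}{k}2^{k-1}=(3^n-1)/2$, and adding $1$ yields $(3^n+1)/2$.

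The only step that needs care is the bijection, specifically showing that every composition is realized for every content. I will check this via the column condition above. The rest is bookkeeping. Alternatively, I could derive the formula from \cref{thm:Type1-Enum} with $m_k=2^{k-1}$ for $k\ge1$ and $m_0=1$, since \cref{hypoplax-type1} places $\HPl$ among the type 1 monoids.
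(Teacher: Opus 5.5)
Your proposal is correct and follows the paper's own argument: it counts the $2$-reduced quasi-ribbon tableaux, which form a section of $\HPl(\A,2)$ by \cref{sqrandhsm}, as pairs $(I,\B)$ with $I$ a composition of $|\B|$, giving $2^{k-1}$ shapes for each nonempty $k$-subset of $\A$. You also spell out two steps the paper leaves implicit: that every composition is realized by the increasing filling along the ribbon, and the binomial-theorem evaluation of the closed form $\frac{3^{|\A|}+1}{2}$.
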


\subsection{The \texorpdfstring{$\sigma$}{s}-Hypoplactic Monoids}
\label{ss-hypoplax-enum-idem-iso}

Let us begin with the enumeration of the $\sigma$-hypoplactic monoids.
Since the hypoplactic monoid is a type 1 monoid, one has the following result.

\begin{cor}\label{hypo-enum}
Let $\A$ be a finite alphabet.
Then the $\HPl(\A,\sigma)$ satisfies
\begin{equation}\label{equ}
|\HPl(\A,\sigma)| = 
1+ \sum_{\B\subseteq \A ; \B\neq \emptyset}
     2^{|\B|-1}\prod_{b\in\B} \sigma(b).
\end{equation}
If $\sigma(a)=p$ for all $a\in \A$ then Equation~\eqref{equ} simplifies into
\begin{equation}
|\HPl(\A,p)|=1+\sum_{k=1}^{|\A|} \binom{|\A|}{k} 2^{k-1} (p-1)^{k}
= \frac{(2p-1)^{|\A|}+1}{2}.
\end{equation}

If we let $S_p(n)=|\HPl(\A,p)|$ for any $\A$ of size $n$, then the exponential
generating function of $S_p(n)$ is given by
\begin{align}
	\sum_{n=0}^\infty S_p(n)\frac{z^n}{n!}=\frac{\exp((2p-1)z)+\exp(z)}{2}.
\end{align}
\end{cor}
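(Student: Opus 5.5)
We derive this from \cref{thm:Type1-Enum}, since \cref{hypoplax-type1} shows that $\HPl(\A)$ is of type 1. First we check the extra hypothesis of that theorem. By the discussion preceding \cref{2-hypo-enum}, a 2-reduced quasi-ribbon tableau is determined by a pair $(I,\B)$, where $\B\subseteq\A$ is its content and $I$ is a composition of $|\B|$. By \cref{sqrandhsm} (with $\sigma=2$) these tableaux index the classes of $\HPl(\A,2)$. So the number of classes of content $\B$ depends only on $k=|\B|$. It equals the number of compositions of $k$, namely $m_k=2^{k-1}$ for $k\geq 1$, and $m_0=1$ (the empty word). Plugging this into the second formula of \cref{thm:Type1-Enum} gives
\[
|\HPl(\A,\sigma)| = 1+\sum_{\emptyset\neq\B\subseteq\A} 2^{|\B|-1}\prod_{b\in\B}(\sigma(b)-1).
\]
This is \eqref{equ}, where the factor in the product is to be read as $\sigma(b)-1$. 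That reading is the one consistent with \cref{thm:Type1-Enum} and with the uniform specialisation stated afterwards, and I would correct the display accordingly.

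For constant $\sigma=p$, the third formula of \cref{thm:Type1-Enum} gives $1+\sum_{k\geq1}\binom{n}{k}2^{k-1}(p-1)^k$ with $n=|\A|$. Adding and subtracting $\tfrac12$ for the $k=0$ term, this equals
\[
\tfrac12+\tfrac12\sum_{k=0}^{n}\binom{n}{k}(2(p-1))^k=\frac{(2p-1)^n+1}{2}
\]
by the binomial theorem. As a sanity check, $p=2$ recovers \cref{2-hypo-enum}.

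For the exponential generating function, the closed form makes the computation immediate:
\[
\sum_n \frac{(2p-1)^n+1}{2}\,\frac{z^n}{n!}=\frac{\exp((2p-1)z)+\exp(z)}{2}.
\]
Alternatively, one can apply \cref{lem:expgenfunct} with $s_0=1$ and $s_k=2^{k-1}$. Then $\sum_k s_k((p-1)z)^k/k! = (\exp(2(p-1)z)+1)/2$, and multiplying by $\exp(z)$ gives the same result.

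No step is a genuine obstacle, since everything reduces to earlier results. The only point needing care is the bijection between classes of content $\B$ in $\HPl(\A,2)$ and compositions of $|\B|$. I would justify it as follows. By \cref{sqrandhsm}, the $2$-reduced quasi-ribbon words are normal forms, and each letter of $\B$ appears exactly once in such a word. Any ribbon shape with $|\B|$ cells then admits exactly one filling by $\B$, obtained by placing the letters in increasing reading order. Therefore distinct compositions give distinct normal forms, and every composition occurs.
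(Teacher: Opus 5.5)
Your proposal is correct and follows the paper's route: the paper simply invokes \cref{thm:Type1-Enum} for the type~1 monoid $\HPl(\A)$, with $m_0=1$ and $m_k=2^{k-1}$ taken from \cref{2-hypo-enum}, and the closed form and exponential generating function follow exactly as you compute them. You are also right that the product in \eqref{equ} must read $\prod_{b\in\B}(\sigma(b)-1)$. As printed it already fails for $|\A|=1$: there $\HPl(\{a\},p)$ has $p$ elements, whereas the printed formula gives $p+1$.
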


The next result is a corollary of \cref{hypo-idem} and \cref{hypoplax-type1}.

\begin{cor}\label{HPsigmaIdem}
Let $\sigma:\A\to\NN_{\geq2}$. The set of idempotents of the
$\sigma$-hypoplactic monoid is exactly the set of word of the form
$b_k^{\sigma(b_k)-1}\cdots b_1^{\sigma(b_1)-1}$ for
$\{b_1<\cdots<b_k\}\subset\A$.
In particular, there are $2^{|\A|}$ of them.
\end{cor}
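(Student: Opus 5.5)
Since $\HPl(\A)$ is of type 1 (\cref{hypoplax-type1}), the map $\phi_\sigma=\theta\times\ev_\sigma$ is an injective monoid homomorphism from $\HPl(\A,\sigma)$ into $\HPl(\A,2)\times\Com(\A,\sigma)$. I will follow the argument of \cref{prop-plax-idempotents}. For an injective homomorphism, $X$ is idempotent if and only if $\phi_\sigma(X)$ is idempotent. Idempotents of a direct product are exactly the pairs of idempotents. So $X$ is idempotent if and only if $\theta(X)$ is idempotent in $\HPl(\A,2)$ and $\ev_\sigma(X)$ is idempotent in $\Com(\A,\sigma)$.

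First I identify the two factors. By \cref{hypo-idem}, the idempotents of $\HPl(\A,2)$ are exactly the classes of the strictly decreasing words $b_k\cdots b_1$ with $b_1<\cdots<b_k$.

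For $\Com(\A,\sigma)$, which is a direct product over $a\in\A$ of the cyclic monoids $\langle a\mid a^{\sigma(a)}=a\rangle$, I check one coordinate at a time. The elements of the $a$-coordinate are $a^0$ and $a^e$ with $1\le e\le\sigma(a)-1$, and exponents live in $\mathbb{Z}/(\sigma(a)-1)$ once they are positive. The condition $a^{2e}=a^e$ then forces $e\equiv 0 \pmod{\sigma(a)-1}$, so the only nontrivial idempotent is $a^{\sigma(a)-1}$. Hence the idempotents of $\Com(\A,\sigma)$ are the products $\prod_{c\in\C}c^{\sigma(c)-1}$ over subsets $\C\subseteq\A$.

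Next I combine the factors using \cref{lem-type1}. The image of $\phi_\sigma$ consists of pairs with equal content. An idempotent $X$ of content $\B=\{b_1<\cdots<b_k\}$ must therefore map to the pair $(b_k\cdots b_1,\ b_1^{\sigma(b_1)-1}\cdots b_k^{\sigma(b_k)-1})$, and this pair is realised by the word $b_k^{\sigma(b_k)-1}\cdots b_1^{\sigma(b_1)-1}$. That word is an inflation of $b_k\cdots b_1$, so its image under $\theta$ is $b_k\cdots b_1$, and its $\sigma$-evaluation is the required one.

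By injectivity, $X$ equals the class of that word. Conversely, each such word maps to a pair of idempotents and is therefore idempotent. Distinct subsets $\B$ give distinct contents, so the count is $2^{|\A|}$.

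The only step needing care is the idempotents of $\Com(\A,\sigma)$, in particular keeping the zero exponent, which corresponds to letters outside $\B$, separate from the positive exponents taken modulo $\sigma(a)-1$. Everything else is a direct application of earlier results.
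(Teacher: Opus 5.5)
Your proposal is correct and follows the same route as the paper. The paper derives the statement as a direct corollary of \cref{hypo-idem} and the type~1 property \cref{hypoplax-type1}, mirroring \cref{prop-plax-idempotents}; your explicit computation of the idempotents of $\Com(\A,\sigma)$ and your use of \cref{lem-type1} to match contents fill in details the paper leaves implicit.
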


As a corollary of \cref{thm:isom}.(i), one has

\begin{cor}\label{Hpl-iso}
Let $\sigma, \tau:\A \to \NN_{\geq2}$.
Then $\HPl(\A,\sigma)\cong\HPl(\A,\tau)$ if and only if $\sigma=\tau$.
\end{cor}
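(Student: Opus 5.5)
The plan is to reduce everything to \cref{thm:isom}.(i), exactly as for \cref{plax-iso}. One direction is trivial: if $\sigma=\tau$, the two monoids coincide. For the converse, it suffices to check that the hypoplactic presentation \eqref{rel:hypoplax} satisfies the two-letter hypotheses of \cref{thm:isom}.(i). First, it is content-preserving: it is even evaluation-preserving.

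The first hypothesis is that $bab\equiv_{\HPl} bba$ for all $a<b$. I would take the second family of relations, $bWac=bWca$ for $a<b\le c$, with $W$ empty and $c=b$. This gives exactly $bab=bba$. If one insists that the pair literally belong to $\R$, the relation is already an instance of the second family, so this causes no difficulty.

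The second hypothesis is that $aba\not\equiv_{\HPl} aab$ for $a<b$. I would compute the quasi-ribbon tableaux via \cref{def:QRTinsert}, or equivalently compare their classes using \cref{hypoplax-rep}.
\begin{itemize}
\item Inserting $a$, then $b$, then $a$: after $ab$ we have the row $a\,b$. Inserting $a$ then cuts after the first $a$, glues the new $a$ to its right, and glues $b$ underneath. The result has two rows.
\item Inserting $aab$ gives the single row $a\,a\,b$.
\end{itemize}
The shapes differ, so $\Q(aba)\neq\Q(aab)$ and the words are not hypoplactically equivalent. A fortiori, the pair $(aba,aab)$ is not a relation in $\R$.

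The only subtle point is how \cref{thm:isom} uses these hypotheses. Its proof needs the relation $yxy\equiv yyx$ to hold for the images of $y>x$ under the map $\psi$, and it needs the non-equivalence $aba\not\equiv aab$ to hold in the $\tau$-quotient, not merely in $\HPl(\A)$. So I would verify that $aba\not\equiv aab$ also holds in $\HPl(\A,\tau)$. By \cref{sqrandhsm}, the $\tau$-reduced quasi-ribbon words $aba\mapsto baa$ and $aab$ are distinct normal forms, and the ribbon shape is preserved under the reduction. With these two checks done, \cref{thm:isom}.(i) applies and yields $\sigma=\tau$.
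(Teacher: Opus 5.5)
Your proposal is correct and matches the paper, which proves this corollary simply by citing \cref{thm:isom}.(i). Your checks supply what that citation needs: $bab=bba$ is an instance of the second hypoplactic relation family, and $aba\not\equiv aab$ because their quasi-ribbon tableaux have different numbers of rows. Your additional check in $\HPl(\A,\tau)$ goes usefully beyond the paper, but three details need tightening. First, when $\tau(a)=2$ the reduced form of $aba$ is $ba$, not $baa$. Second, reduction preserves the number of rows, not the ribbon shape itself. Third, since $\psi(x)$ and $\psi(y)$ may be powers $d^{\alpha}$ and $c^{\beta}$ with $c<d$, the two-rows-versus-one-row comparison should be applied to $c^{\beta}d^{\alpha}c^{\beta}$ and $c^{2\beta}d^{\alpha}$, where it works equally well.
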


\section{The \texorpdfstring{$\sigma$}{s}-Sylvester Monoids}
\label{section-sylv}
\subsection{The Sylvester Monoid}

The \emph{sylvester congruence}~\cite{HNT05}, denoted $\equiv_{\Sylv}$, is the
smallest congruence on $\A^*$ containing the (infinite) set of relations:
\begin{equation}\label{eq-sylvester}
acWb = caWb \text{ for } a \leq b < c\in\A,
\end{equation}
where is any $W\in \A^*$. The \emph{sylvester monoid} is
$\A ^ */{\equiv_{\Sylv}}$ and is denoted by $\Sylv(\A)$.

As in \cref{def:quotient-monoid}, for $\sigma: \A \to \NN_{\geq 2}$,
we define the \emph{$\sigma$-sylvester congruence} to be the least congruence
on $\A ^ *$ containing the sylvester congruence and $a ^ {\sigma(a)} = a$ for
every $a\in \A$; we denote this by $\equiv_{\Sylv(\sigma)}$. The
$\sigma$-sylvester monoid on $\A$ is the quotient $\A ^ * /
{\equiv_{\Sylv(\sigma)}}$ and is denoted by $\Sylv(\A, \sigma)$.

\paragraph{The Binary Search Trees\\}

A \emph{binary search tree} $T = (L, R)$ is a binary tree labelled by $\A$
such that the label of each node is greater than or equal to all labels in its
left subtree $L$ and strictly smaller than all labels in its right subtree
$R$.
Given a binary search tree $T$, the \emph{right to left postfix reading} (also
called the reading) of $T$ is the word $\RW(T)$
obtained by recursively visiting the right subtree first then
the left one, and finally the root. For example, the reading of the tree on
the left of \cref{insere-3} is $768541255$.  The \emph{left to right infix
search} of a tree $T$, denoted $\R(T)$, consists in recursively visiting the
left subtree of the root, then the root, and finally its right subtree.
With the same tree as before, one obtains the word $124555678$.
The \emph{content} $\cont(T)$ of $T$ is defined to be $\cont(\RW(T))$.

Finally, the \emph{left branch} of a node $r$ in a binary tree is the subtree
consisting of all the nodes on the path starting at $r$ and only going left.
The \emph{right branch} of a node in a binary tree is defined analogously.
\newpage
\begin{note}
\label{note-bst}
Binary search trees have many well-known properties and we shall use these
without proof:
\begin{enumerate}[(1)]
\item All nodes with equal label are on a path from the root to a leaf.
\item If a node and its parent have equal labels, this node has an empty
right subtree.
\item Its left to right infix search is the weakly increasing word of content
$\cont(T)$. The letter immediately before a given node in that search in the
bottom extremity of the right branch of its left subtree.
\end{enumerate}
\end{note}

The left insertion of a letter into a binary search tree is defined
the following way.

\begin{defi}[Binary search tree insertion]
\label{def:BSTinsert}
Let $T=(L,R)$ a binary search tree and $x\in\A$.
\begin{enumerate}[(1)]
\item If $T$ is the empty tree, let $T'$ be the tree with only one node
labelled $x$.
\item If $T$ is not empty, then the root of $T$ is labelled by some $y\in \A$.
\begin{enumerate}[(i)]
\item If $x\leq y$, let $T'=(x\rightarrow L,R)$,
\item if $x > y$, let $T'=(L,x\rightarrow R)$.
\end{enumerate}
The resulting tree $T'$ is the left insertion of $x$ into $T$ and is denoted
$x\rightarrow T$.
\end{enumerate}
\end{defi}

Given a word $W=a_1\cdots a_n$, let $\T(W)$ be the binary search tree
obtained by inserting $a_n$ in the empty tree, then $a_{n-1}$ in the
resulting tree and so on. Note that, for a binary search tree $T$,
$\T(\RW(T))=T$ (see ~\cref{insere-3} for an example of the insertion
algorithm).

It is shown in \cite{HNT05} that for all $U,V\in \A ^ *$,
$U\equiv_{\Sylv}V$ if and only if $\T(U)=\T(V)$.
It is also shown that $\RW(T)$ is the lexicographic maximal element of its
class.

\begin{figure}
\centering
\begin{forest}
for tree={grow=south, minimum size=2ex, inner sep=0, s sep=1mm, l=0}
[,no edge[$3\rightarrow$,no edge[,no edge[,no edge[,no edge]]]]]
\end{forest}
\begin{forest}
for tree={grow=south, circle, draw, minimum size=3ex, inner sep=0, s sep=1mm, l=0}
[5[5
  [2[1]
      [,no edge, draw=none]
      [4[,no edge, draw=none]
        [5]]]
    [,no edge, draw=none]
    [,no edge, draw=none]]
  [8[6[,no edge, draw=none]
      [7]]
    [,no edge, draw=none]
    [,no edge, draw=none]]]
\end{forest}
\begin{forest}
for tree={grow=south, minimum size=2ex, inner sep=0, s sep=1mm, l=0}
[,no edge[\text{=},no edge[,no edge[,no edge[,no edge]]]]]
\end{forest}
\begin{forest}
for tree={grow=south, circle, draw, minimum size=3ex, inner sep=0, s sep=1mm, l=0}
[5[5
  [2[1]
      [,no edge, draw=none]
      [4[3]
        [5]]]
    [,no edge, draw=none]
    [,no edge, draw=none]]
  [8[6[,no edge, draw=none]
      [7]]
    [,no edge, draw=none]
    [,no edge, draw=none]]]
\end{forest}
\caption{\label{insere-3}The (left) insertion of $3$ in a binary search tree.}
\end{figure}

\subsection{Quotient Type}
\label{ss-sylv-type}

When $|\A|\geq 2$, the monoid $\Sylv(\A)$ is not a type 1 monoid. Indeed, for
$a<b$, using \cref{ss:Type1Monoids} notation, one has
$\theta(bbab)=\theta(babb)=bab$ and
$\ev_{\sigma}(bbab)=\ev_{\sigma}(babb)=abbb$
but they are not $\sigma$-sylvester equivalent if $\sigma(b)> 2$.

We will prove that it is of type 2.

\begin{lemma}
\label{SylvWord}
Let $\A$ be an alphabet and $W\in\A^*$. Let $T$ be its corresponding
binary search tree.
Then if $W$ has two consecutive equal letters, the left one is the
left child of the other one in $T$, so that they are consecutive in $\RW(T)$.

Moreover, if these two letters were the $i$-th and $(i+1)$-st occurrences
of an $x$ in $W$, they are also the $i$-th and $(i+1)$-st occurrences of $x$
in $\RW(T)$.
\end{lemma}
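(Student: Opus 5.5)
We prove the statement directly from the insertion procedure of \cref{def:BSTinsert}. Recall that $\T(W)$ is built by inserting the letters of $W$ from right to left. Write $W = U x x V$, where the displayed $xx$ are the $i$-th and $(i+1)$-st occurrences of $x$ in $W$. Insert $V$ first, then the right copy of $x$, then the left copy of $x$, and finally the letters of $U$ from right to left.

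\textbf{The two copies become parent and left child.} Let $T_1=\T(V)$. Inserting the right copy of $x$ into $T_1$ creates a new leaf $n_1$ labelled $x$. Now insert the left copy of $x$ into $T_2 = x\rightarrow T_1$. It follows exactly the same root-to-leaf path as the previous insertion, since every comparison is with the same labels. It therefore reaches $n_1$. There the test $x\leq x$ sends it into the left subtree of $n_1$, which is empty, so it becomes the left child $n_2$ of $n_1$.

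\textbf{Later insertions leave this pair intact.} Every later insertion only adds a leaf, so $n_2$ stays the left child of $n_1$. Any letter entering the subtree of $n_2$ has passed through $n_1$ and gone left, hence is $\leq x$. In particular, every node of the right subtree of $n_2$ has label $>x$ (from the search property at $n_2$) and $\leq x$ (from the position below $n_1$'s left). So this right subtree is empty, in accordance with \cref{note-bst}(2).

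\textbf{The two copies are consecutive in the reading.} In the right-to-left postfix reading, the subtree of $n_1$ is read as the reading of its right subtree, then the reading of its left subtree (rooted at $n_2$), then $n_1$. The subtree of $n_2$ is read as the empty right part, then the reading of the left subtree of $n_2$, then $n_2$. Hence $n_2$ is read immediately before $n_1$, and the two copies of $x$ are consecutive in $\RW(T)$.

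\textbf{Occurrence indices.} This is the step requiring care: we must check that $n_2$ and $n_1$ are the $i$-th and $(i+1)$-st occurrences of $x$ in $\RW(T)$. By \cref{note-bst}(1), all nodes labelled $x$ lie on a single root-to-leaf path. Each new $x$ reaches the lowest existing $x$-node and becomes its left child, since its own left subtree can only contain an earlier $x$-node. So the $x$-nodes form a chain of left children, ordered by insertion time, with later insertions lower. In the postfix reading, a descendant is read before its ancestor. Hence the $x$'s appear in $\RW(T)$ from bottom to top, that is, in reverse insertion order. Since insertion goes from right to left in $W$, this is exactly left-to-right order in $W$. Consequently the $k$-th occurrence of $x$ in $W$ is the $k$-th occurrence of $x$ in $\RW(T)$ for every $k$, and in particular for $k=i$ and $k=i+1$.
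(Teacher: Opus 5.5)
Your first three steps are correct. They spell out what the paper dismisses as coming ``directly from the definition of the insertion algorithm''. The emptiness of the right subtree of $n_2$ is true but not needed. In a postfix reading, $n_2$ is the last node read in its own subtree, which is the left subtree of $n_1$, so $n_1$ is read right after it regardless.

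The last paragraph, however, rests on a false claim. A new $x$ does \emph{not} in general become the left child of the lowest existing $x$-node, and the $x$-nodes need not form a chain of left children. The left subtree of an $x$-node is not reserved for copies of $x$: every letter $\leq x$ inserted later is routed into it. For instance, take $y<x$ and $W=xyx$. Then $\T(W)$ has root $x$ (the last letter of $W$) and left child $y$, and the first $x$ of $W$ becomes the \emph{right} child of $y$. In general the new $x$ lands at the bottom of the right branch of that left subtree, as in \cref{note-bst}(3).

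Your conclusion only needs that the $x$-nodes lie on one root-to-leaf path with later-inserted ones lower, and this holds for a different reason. Insertion only ever adds leaves, so every node is inserted after all its ancestors. Equivalently, iterating your observation from the second paragraph: the insertion path of $x$ runs through every existing $x$-node and turns left at each, so the new $x$-node lies in the left subtree of all earlier ones. With this repair, the rest of your argument is exactly the paper's proof. Postfix reading reads descendants before ancestors, so the $x$'s are read bottom to top, that is, in reverse insertion order, that is, left to right in $W$.
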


\begin{proof}
The first part directly comes from the definition of the insertion algorithm.

Now, consider all occurrences of $x$ in $W$. The ones before the $i$-th one
get inserted after those two so end up below those two in $T$. The ones after
the $(i+1)$-st one get inserted before those two, so end up between the
root and those two (see \cref{note-bst}) in $T$.
Reading the tree $T$ in the \emph{right to left postfix reading}, the
occurrences of $x$ are read from bottom to top, hence preserving their
original order.
\end{proof}

\begin{prop}
\label{cor-sylv-gathered}
The sylvester monoid is a \gathered monoid.
\end{prop}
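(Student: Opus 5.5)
We check the two conditions of \cref{def:type2}, taking the reading $\RW(\T(W))$ as the candidate canonical \gathered element of the class of $W$. Throughout we use that $\Sylv(\A)$ is evaluation-preserving, that $U\equiv_{\Sylv}V$ if and only if $\T(U)=\T(V)$, and that $\RW(T)$ is the lexicographically maximal word of its class~\cite{HNT05}.

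\emph{Condition (1).} Let $E=\RW(\T(W))$, and let $W'\equiv_{\Sylv}E$ with $W'=U'a^2V'$, the two displayed letters being the $i$-th and $(i+1)$-st occurrences of $a$. Since $\T(W')=\T(W)$, \cref{SylvWord} applied to $W'$ says that these two occurrences are also the $i$-th and $(i+1)$-st occurrences of $a$ in $\RW(\T(W'))=E$, and that they are consecutive there. Hence $E=Ua^2V$ with $|U|_a=|U'|_a$, so $E$ is \gathered. Every \gathered word of the class is a word of the class, and $E$ is the lexicographically largest of these. Therefore $E$ is the canonical \gathered element, that is, $G(W)=\RW(\T(W))$.

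\emph{Condition (2).} Let $W^+$ be the $(a,i)$-expansion of $W$. We must show that $\RW(\T(W^+))$ equals the $(a,i)$-expansion of $\RW(\T(W))$. Write $W=Xa Y$, where the displayed $a$ is the $i$-th occurrence, so that $W^+=XaaY$. Insertion in $\T$ is performed from right to left, so we first build $T_0=\T(aY)$ and then $\T(aaY)=a\rightarrow T_0$. By \cref{def:BSTinsert}, the new $a$ follows the search path to the node $n$ created by the previous $a$. At $n$ it goes left, since $a\le a$, and it descends along the right branch of the left subtree of $n$. The key point is that it then settles as the new bottom of that right branch, becoming the child of the in-order predecessor of $n$ (\cref{note-bst}(3)). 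We then insert the letters of $X$.

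We claim that every later insertion of a letter $z$ follows the same path in both trees, except at the new node $n'$ labelled $a$. At $n'$ the comparison with $a$ gives the same answer as at $n$, and nothing else changes. This holds because $n'$ sits in the only place where $z$ compares with a label $a$ that it had already compared with at $n$. For $z\le a$, the letter $z$ also went left at $n$. It then either reaches $n'$ and goes to its left (which was empty), or branches off earlier.

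So $\T(W^+)$ is obtained from $\T(W)$ by inserting one node labelled $a$ in a position that is read immediately before $n$ in the right-to-left postfix reading. We would verify this using \cref{SylvWord}: the two nodes are consecutive in the reading. Finally, the occurrence order of $a$ is preserved, so the duplicated occurrence is the $i$-th one.

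\emph{Main obstacle.} The hardest step is the claim about later insertions. The subtree rooted at $n$'s left child is modified by the new node $n'$, so we must justify that each later insertion path through this region is unchanged apart from the detour through $n'$. The argument is a careful comparison of search paths. If it gets messy, a cleaner alternative is a direct reading argument. By \cref{SylvWord}, the word $\RW(\T(W^+))$ has the $i$-th and $(i+1)$-st occurrences of $a$ adjacent, so deleting one of them gives a word $R$. We would show that $R\equiv_{\Sylv}W$, checking that deletion of one of two adjacent equal letters commutes with sylvester rewriting, and that $R$ is the reading of a tree. It then follows that $R=\RW(\T(W))$.
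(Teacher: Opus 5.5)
Your treatment of condition (1) is correct and matches the paper. The gap is in condition (2), at the step you call the key point.

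\textbf{What goes wrong.} In $T_0=\T(aY)=a\rightarrow\T(Y)$ the node $n$ is a \emph{leaf}, because left insertion (\cref{def:BSTinsert}) always ends by creating a new leaf. So the second $a$ follows the search path to $n$, goes left there, finds an empty subtree and stops. The new node $n'$ is therefore the left child of $n$, which is exactly the first assertion of \cref{SylvWord}. It does not descend any right branch, and it is not attached to the in-order predecessor of $n$. For instance, take $W=21$ and $a=2$ over $1<2$: the in-order predecessor of $n$ is the root $1$, whose right child is $n$ itself.

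Everything you build on the wrong placement is unfounded. This includes the dichotomy ``reaches $n'$ and goes to its left (which was empty), or branches off earlier'', and the ``main obstacle'' of a modified subtree under the left child of $n$. Your conclusion that $n'$ is read immediately before $n$ also contradicts your own placement. A leaf hung at the bottom of the right branch of a nonempty left subtree $L$ of $n$ is read \emph{first} among the nodes of $L$ in the right-to-left postfix reading, not just before $n$. Nor can \cref{SylvWord} close the step: it only describes $\T(W^+)$ and says nothing about how it compares with $\T(W)$.

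\textbf{The repair, which is the paper's proof.} With the correct placement, $n'$ is the left child of $n$ with empty right subtree. A later letter $z$ that reaches $n$ either goes right ($z>a$) and never meets $n'$, or goes left ($z\le a$). In the second case it meets $n'$ at once and goes left again, since $z\le a$. From there it makes exactly the comparisons it made below $n$ when building $\T(W)$. By induction over the letters of $X$, the tree $\T(W^+)$ is $\T(W)$ with $n'$ spliced between $n$ and its original left subtree. Hence in $\RW(\T(W^+))$ the letter of $n'$ comes right after the reading of that subtree and right before $n$. Combined with the order-preservation part of \cref{SylvWord}, this is the $(a,i)$-expansion of $\RW(\T(W))$.

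\textbf{On your fallback.} It is not cleaner. Sylvester rewritings can separate the two equal letters: $xxzy\equiv_{\Sylv}xzxy$ for $x\le y<z$. Deleting a fixed occurrence can also break a relation when that occurrence is the witness: $xzy\equiv_{\Sylv}zxy$ but $xz\not\equiv_{\Sylv}zx$. So ``deletion commutes with rewriting'' would need a real argument of its own.
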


\begin{proof}
First, \cref{SylvWord} shows that each class has a \gathered element,
$\RW(T)$. Since $\RW(T)$ is the maximal element of its class,
$$\G_\Sylv=\set{\RW(T)}{T \text{ is a binary search tree}}.$$

Now, we just need to see what happens when one expands or contracts an $x$ in
the insertion algorithm.
If one inserts two consecutive $x$ in a tree, the resulting tree is
exactly the same as the one obtained by inserting only one $x$ except near
that $x$: the node $x$ is duplicated as a left child of itself, its left
subtree becoming the left subtree of this new child.
The same is also true for the contraction and is well-defined since if a left
child has same label as its parent, its right subtree is empty, hence defining
the contraction without ambiguity (see \cref{note-bst}).
\end{proof}

A binary search tree $T$ is $\sigma$-\emph{reduced} if for all $a\in\A$, there
are no sequences of $\sigma(a)$ consecutive nodes labelled by $a$.
For example, then every child and parent in a $2$-reduced binary search tree
have different labels in $\A$.

Thanks to the fact that the sylvester monoid is \gathered, we have

\begin{cor}
\label{redtrees}
The $\sigma$-reduced trees form a section of the $\sigma$-sylvester monoid.
\end{cor}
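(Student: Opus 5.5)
The plan is to apply \cref{gen-gathered} to the sylvester monoid, using \cref{cor-sylv-gathered}, and then translate $\sigma$-reduced canonical \gathered words into trees. By \cref{gen-gathered}, the $\sigma$-reduced words $G_\sigma(W)$ form a set of normal forms for $\Sylv(\A,\sigma)$. From the proof of \cref{cor-sylv-gathered}, $G(W)=\RW(\T(W))$. It therefore suffices to show two things. First, the words obtained from readings of binary search trees by repeatedly replacing factors $a^{\sigma(a)}$ by $a$ are exactly the readings $\RW(T')$ of the $\sigma$-reduced binary search trees $T'$. Second, $T'\mapsto\RW(T')$ is injective, which is clear since $\T(\RW(T'))=T'$.

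For the correspondence, I use \cref{SylvWord} and \cref{note-bst}. In a binary search tree, a maximal factor $a^m$ of $\RW(T)$ consisting of consecutive occurrences of the same letter corresponds to a chain of $m$ nodes labelled $a$, each a left child of the previous one. This holds because equal labels lie on a root-to-leaf path, and a node whose parent has the same label has an empty right subtree. So in the right-to-left postfix reading, these nodes are read consecutively, from bottom to top.

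Conversely, nodes labelled $a$ that are not in parent/left-child relation are separated in the reading by at least one other letter. This is the step needing most care. If an $a$-node $u$ has a descendant $a$-node $v$ that is not its left child, then $v$ lies in the left subtree $L$ of $u$, and $L$ contains the root of $L$, labelled $b<a$. That root is read after $v$ and before $u$.

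Hence a factor $a^{\sigma(a)}$ in $\RW(T)$ is exactly a chain of $\sigma(a)$ consecutive nodes labelled $a$. Contracting it to $a$ corresponds to the tree contraction described in the proof of \cref{cor-sylv-gathered}: delete $\sigma(a)-1$ nodes of the chain and reattach the left subtree of the bottom node. The result is again a binary search tree whose reading is the contracted word.

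Iterating, $G_\sigma(W)=\RW(T')$, where $T'$ is obtained from $\T(W)$ by these tree contractions. $T'$ is $\sigma$-reduced precisely because $G_\sigma(W)$ contains no factor $a^{\sigma(a)}$. Conversely, every $\sigma$-reduced tree $T'$ satisfies $G(\RW(T'))=\RW(T')$ with no factor to contract, so it is $G_\sigma$ of its own reading. This gives a bijection between $\sigma$-reduced trees and classes of $\Sylv(\A,\sigma)$, as claimed.
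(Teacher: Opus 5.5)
Your proof is correct and takes the paper's route: the paper gets this corollary directly from \cref{gen-gathered} together with \cref{cor-sylv-gathered}, and your extra work only makes explicit the identification of the $\sigma$-reduced words $G_\sigma(W)$ with readings of $\sigma$-reduced trees, which the paper leaves implicit. One small slip: in your separation argument, the root of $L$ is labelled $b<a$ only if you take $v$ to be the \emph{nearest} $a$-node below $u$ (otherwise $u$'s left child may itself be labelled $a$), but that is exactly the case you need, since adjacent occurrences of $a$ in $\RW(T)$ come from consecutive $a$-nodes on the path.
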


We denote by $\G_{\Sylv(\sigma)}$ the set of $\sigma$-reduced \gathered words,
the right to left postfix reading of $\sigma$-reduced trees.

\subsection{The 2-Sylvester Monoid}
\label{ss:2-sylv}

\paragraph{Left Insertion\\}

The $2$-insertion $T'$ of $x$ in a $2$-reduced binary search tree
$T$ is defined the following way.

\begin{defi}[2-reduced binary tree insertion]
\label{def:2-redtree}
Let $T$ be a $2$-reduced binary search tree and $x\in\A$.
\begin{enumerate}[(1)]
\item If $x\notin \cont(T)$, then $T'$ is the left insertion of $x$ in $T$ as
in \cref{def:BSTinsert}.
\item If $x\in \cont(T)$, let $s$ be the deepest node labelled by $x$.
\begin{enumerate}[(i)]
\item If $s$ has a non-empty left subtree $L$, $T'$ is the left insertion of
$x$ in $T$ as in \cref{def:BSTinsert}.
\item If $s$ does not have a left child, $T'=T$.
\end{enumerate}
\end{enumerate}
\end{defi}

Then, given a word $W=w_1\cdots w_n$, we define $\T_2(W)$ to be the
$2$-reduced binary search tree obtained by $2$-inserting $w_n$ into
the empty tree, then $w_{n-1}$, and so on.

Clearly, one has $U\equiv_{\Sylv(2)} V$ if and only if $\T_2(U)=\T_2(V)$.

Note that the 2-sylvester monoid is not $\J$-trivial. Indeed, consider
the words $212$ and $21$. We have $212\cdot1=21$ and $21\cdot2=212$, so
$\Sylv(\{1,2\}, 2)\cdot212\cdot \Sylv(\{1,2\},2)
=\Sylv(\{1,2\},2)\cdot21\cdot \Sylv(\{1,2\},2)$ but
$212\not\equiv_{\Sylv(\{1,2\}, 2)} 21$.
But it is $\L$-trivial since after inserting a letter, either the tree is the
same or its number of nodes strictly increases.

\paragraph{Enumeration\\}

Given an alphabet $\A=\{a_1<a_2<\cdots<a_p\}$, a $2$-reduced binary
search tree $T$ is \emph{initial} if $T$ is either the empty tree, or if
$\cont(T)=\{a_1,a_2\cdots, a_i\}$ for some $1\leq i\leq n$.
We shall call these trees \emph{IT} trees.
A node is a \emph{pivot} if it is a left child and has a non-empty right
subtree.

In order to enumerate all $2$-reduced binary search trees on an alphabet $A$
with $p$ letters, it is enough to enumerate the trees that in addition are
initial since any tree is up to an increasing relabeling of its nodes
isomorphic to only one initial tree.
Numerical evidence suggests that the number of IT trees with $n$ nodes and $k$
different labels are counted by the so-called Borel triangle~(see
Sequence~A234950 of~\cite{oeis}), whose first rows are given
in~\cref{table-borel-std}.

\begin{table}[ht]
\begin{equation*}
\begin{array}{l|rrrrrr}
{n\backslash\, k} &  0 & 1 & 2 & 3 &  4 & 5\\
\midrule
     0 & 1 &   &   \\
     1 & 2 & 1 &   \\
     2 & 5 & 6 & 2 \\
     3 &14 &28 &20 & 5 \\
     4 &42 &120&135& 70& 14 \\
     5 &132&495&770&616&252&42\\
\bottomrule
\end{array}
\end{equation*}
   \caption{The Borel triangle $B_{n,k}$.}
   \label{table-borel-std}
\end{table}

More precisely, the statement we will prove is that there are
$B_{n\!-\!k\!-\!1,k}$ IT trees with $n$ nodes and $n-k$ different labels. For
example, the $6$ in the table means that there are $6$ such trees with $4$
nodes and $3$ different labels; see \cref{ex-sylv-6-43} to see all these six trees.
%
\begin{figure}
\centering
\begin{forest}
for tree={grow=south, circle, draw, minimum size=3ex, inner sep=0, s sep=1mm, l=0}
[3 [2 [1[,no edge, draw=none][,no edge, draw=none][2]]
      [,no edge, draw=none]
      [,no edge, draw=none]
   ]
      [,no edge, draw=none]
      [,no edge, draw=none]
]
\end{forest}
\begin{forest}
for tree={grow=south, circle, draw, minimum size=3ex, inner sep=0, s sep=1mm, l=0}
[3 [2
      [1] [,no edge, draw=none] [3]
   ]
      [,no edge, draw=none]
      [,no edge, draw=none]
]
\end{forest}
\begin{forest}
for tree={grow=south, circle, draw, minimum size=3ex, inner sep=0, s sep=1mm, l=0}
[3 [1
      [,no edge, draw=none]
      [,no edge, draw=none]
      [3 [2][,no edge, draw=none] [,no edge, draw=none]]
   ]
      [,no edge, draw=none]
      [,no edge, draw=none]
]
\end{forest}
\begin{forest}
for tree={grow=south, circle, draw, minimum size=3ex, inner sep=0, s sep=1mm, l=0}
[3 [1
      [,no edge, draw=none]
      [,no edge, draw=none]
      [2 [,no edge, draw=none] [,no edge, draw=none][3]]
   ]
      [,no edge, draw=none]
      [,no edge, draw=none]
]
\end{forest}
\begin{forest}
for tree={grow=south, circle, draw, minimum size=3ex, inner sep=0, s sep=1mm, l=0}
[2 [1
      [,no edge, draw=none]
      [,no edge, draw=none][2]]
   [,no edge, draw=none]
   [3]
]
\end{forest}
\begin{forest}
for tree={grow=south, circle, draw, minimum size=3ex, inner sep=0, s sep=1mm, l=0}
[1 [,no edge, draw=none] [,no edge, draw=none]
   [3 
      [2 [,no edge, draw=none] [,no edge, draw=none][3]]
      [,no edge, draw=none] [,no edge, draw=none]
   ]
]
\end{forest}
\caption{\label{ex-sylv-6-43} The six 2-reduced trees with four nodes using exactly 3 letters.}
\end{figure}

Using our numerical evidence, we prove this property by providing a bijection
between the IT trees and the trees considered in the paper of Cai and
Yan~\cite{CY} found in the OEIS page of the Borel triangle. Their
combinatorial objects are the binary trees with some marked leaves where
the rightmost leaf can never be marked. More precisely, the number of
such trees with $n$ vertices and $k$ marked leaves is $B_{n-k-1,k}$, with the
extra convention $B_{-1,0}=1$.

The bijection between our IT trees and their trees is the composition of two
simple bijections.
First consider an IT tree having two nodes with the same label $\ell$. We know
they are on the same path from a leaf to the root of the tree and that the
bottom one $b$ belongs to the left subtree of the top one $t$ by the
equality conditions on the binary search trees. Let $t'$ be the left child of
$t$.
Then $b$ belongs to the right subtree of $t'$ since its label is strictly
greater than the label of $t'$ since the IT tree is in particular $2$-reduced.
Moreover, the node $x$ at the bottom extremity of the right branch starting in
$t'$ has label $\ell$ again. Indeed, consider the left to
right infix reading of that tree. Node $x$ is the node that is read
immediately before node $t$ and since there is a letter equal to the label of
$t$ before it and since the left to right infix reading is weakly increasing,
the property follows. So in the path between two consecutive equal labels in
an IT tree, there is exactly one node that is a pivot, the left child of $t$.
With that in mind, the following property is very natural:

\begin{prop}
There is a bijection between IT trees and unlabelled binary trees with some
pivots marked. The number of nodes stays the same and if an IT tree has $n-k$
different letters, its corresponding tree has $k$ marked pivots.

Starting with an IT tree, its image is the tree of the same shape whose
marked pivots are the nodes such that their parent and the extremity of their
right branch have the same label.
The reverse bijection consists in linking the parent of a marked pivot with
the extremity of the right branch of its child and then number the nodes in
left to right infix order starting with $1$ and adding $0$ to the previous
value if a node is connected to the previous one and adding $1$ otherwise.
\end{prop}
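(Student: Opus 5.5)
The plan is to check that the two maps are well defined and then that they are mutually inverse. For the forward map $\Phi$ (IT tree to marked tree), the discussion preceding the statement already does most of the work: whenever two nodes $t$ and $b$ carry the same label $\ell$ and are consecutive in infix order, $b$ is the bottom extremity of the right branch of $t'$, the left child of $t$. Moreover $t'$ has a nonempty right subtree, because $b\neq t'$ by $2$-reducedness. Hence $t'$ is a pivot. So every pair of infix-consecutive equal labels produces exactly one marked pivot.

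To justify the count of marks, I would show that the pair determines $t'$ and that $t'$ determines the pair. The pair $(t,b)$ gives $t'$ as the left child of $t$. Conversely, a pivot $p$ determines its parent and the bottom of its own right branch. By \cref{note-bst}(3), this bottom node is the infix predecessor of the parent. So marked pivots correspond bijectively to the pairs of infix-consecutive nodes with equal labels. Since the infix reading is weakly increasing and the tree is initial, the number of distinct labels equals $n$ minus the number of such pairs. Thus $n-k$ letters give exactly $k$ marked pivots.

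For the reverse map $\Psi$, I start from a binary tree of shape $S$ with a set $M$ of marked pivots. For each $p\in M$, I link its parent $q$ to the bottom $r$ of the right branch of $p$. Here $r$ is the infix predecessor of $q$, and $q$ is the successor of $r$. I then label nodes in infix order: start at $1$, repeat the previous label if the current node is linked to its predecessor, and increase by $1$ otherwise. The steps are as follows.
\begin{enumerate}[(a)]
\item Prove the result is a binary search tree. The labels are weakly increasing in infix order, so the only condition to check is that equal labels never occur between a node and a node of its right subtree. An equal pair is an infix-consecutive pair $(r,q)$ with $r$ in the left subtree of $q$, so $q$ is the ancestor and all equalities go to the left, as required.
\item Prove the tree is $2$-reduced. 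A parent $q$ and its child cannot share a label. For a left child $p$ to be equal to $q$, the nodes $p$ and $q$ would have to be infix-consecutive, forcing $p$ to have an empty right subtree. But then $p$ is not a pivot, so it is not marked. A right child is always strictly larger.
\item Observe that the tree is initial by construction, with labels running over $1,\dots,n-k$.
\end{enumerate}

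Finally I check that the two maps are inverse. For $\Phi\circ\Psi$, the marked pivots recovered are exactly those whose parent equals the bottom of their right branch, which are exactly the linked ones, namely $M$. For $\Psi\circ\Phi$, the IT tree is determined by its shape together with the set of infix-consecutive equal pairs, since the labels are initial and weakly increasing. These pairs are exactly the links produced by $\Phi$.

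I expect step (a) to be the main obstacle. The delicate point is that two marked pivots may produce chains of links, so several consecutive nodes receive the same label. One must make sure such a chain always lies along a single root-to-leaf path with each equal node in the left subtree of the previous one. I would handle this by induction along the chain: if $r\to q$ and $q\to q_2$ are both links, then $q$ lies in the left subtree of $q_2$, and so does $r$, which lies in the left subtree of $q$. Hence all equal labels in the chain sit on one path descending leftwards, consistent with \cref{note-bst}(1)--(2).
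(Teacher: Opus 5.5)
Your proposal is correct and follows the same route as the paper. Both arguments match marked pivots with infix-consecutive pairs of equal labels (the paper phrases this as exactly one marked pivot between two consecutive equal labels on their common path), and both recover an IT tree from its shape together with which infix-consecutive positions carry equal labels; your explicit check that the reverse construction yields a $2$-reduced binary search tree, via the chain-of-links argument, fills in a step the paper leaves implicit when it simply asserts that the two maps are mutually inverse.
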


See \cref{f:bijMarkedPivots} for a example of the bijection.

\begin{figure}
\centering
\begin{forest}
for tree={grow=south, circle, draw, minimum size=3ex, inner sep=0, s sep=1mm, l=0}
[6 [1
      [,no edge, draw=none]
      [,no edge, draw=none]
      [6 [4 [2 [,no edge, draw=none][,no edge, draw=none][3]]
            [,no edge, draw=none]
            [5 [,no edge, draw=none][,no edge, draw=none][6]]
         ]
         [,no edge, draw=none]
         [,no edge, draw=none]
      ]
   ]
   [9 [7
         [,no edge, draw=none]
         [,no edge, draw=none]
         [9[8][,no edge, draw=none][,no edge, draw=none]]
      ]
      [,no edge, draw=none]
      [,no edge, draw=none]
   ]
]
\end{forest}
\qquad
\qquad
\qquad
\def\taille{0.9}
\def\th{0.6}
\begin{tikzpicture}
\begin{scope}[every node/.style={circle,scale=1,fill=white,draw}]
   \node (A) at (0,0) {};
   \node (B)[color=black] at (-2*\th,-1*\taille) {};
   \node (C) at (-1*\th,-2*\taille) {};
   \node (D)[color=black] at (-2*\th,-3*\taille) {};
   \node (E) at (-3*\th,-4*\taille) {};
   \node (F) at (-2*\th,-5*\taille) {};
   \node (G) at (-1*\th,-4*\taille) {};
   \node (H) at (0*\th,-5*\taille) {};
   \node (I) at (2*\th,-1*\taille) {};
   \node (J)[color=black] at (1*\th,-2*\taille) {};
   \node (K) at (2*\th,-3*\taille) {};
   \node (L) at (1*\th,-4*\taille) {};
\end{scope}
\begin{scope}[>={Stealth[black]}, every edge/.style={draw=black,thick}]
    \path [-] (A) edge (B); \path [-] (B) edge (C);
    \path [-] (C) edge (D);
    \path [-] (D) edge (E);
    \path [-] (E) edge (F);
    \path [-] (D) edge (G);
    \path [-] (G) edge (H);
    \path [-] (A) edge (I);
    \path [-] (I) edge (J);
    \path [-] (J) edge (K);
    \path [-] (K) edge (L);
\end{scope}
\end{tikzpicture}
\caption{\label{f:bijMarkedPivots}We have, on the left, an initial tree and, on the right, its associated tree with marked pivots. For the three marked pivots, on the initial tree, their parents and the last node of their right branch have the same labels and, the remaining pivot does not.}
\end{figure}

\begin{proof}
Starting with an IT tree, one gets a tree with marked pivots.
Now, the number of marked pivots is exactly the number of values that
have an equal value below in the tree.

Indeed, all nodes with the same value are all together is a path from a leaf
to the root and between two consecutive ones we marked exactly one pivot.
Moreover, any element in a tree has only one parent so a pivot cannot be
marked for two different values.

Now, an initial binary search tree is well-defined by its shape and the
sequence of the values of its nodes in the left to right infix order. Being
initial implies that each value is either the same as the previous one or that
it is one more. And the marked nodes show which values have to be equal, so
that, when applying the reverse algorithm, one directly gets the original
tree. Both algorihthms are injective (and inverses of each other), hence the
bijection.
\end{proof}

So we transformed our IT trees bijectively into trees with marked pivots,
which is quite close in spirit to trees with marked leaves.
The missing link is a standard involution on trees :
the \emph{recursive reversal of the left branch subtrees} denoted by $\psi$.
It is defined in the following way.

\begin{defi}[Recursive reversal of the left branch subtrees]
Let $T$ be a binary tree.
\begin{enumerate}[(1)]
\item Let $k$ be the number of nodes on the left branch of $T$ starting at the
root.  Running down this branch, exchange the right subtrees of the $i^{th}$
and $k\!+\!1\!-\!i^{th}$ nodes of the left branch.
\item Apply the same algorithm to each of these right subtrees.
\end{enumerate}
The tree thus obtained is the recursive reversal of the left branch subtrees
of the original one.
\end{defi}
See \cref{reversal} for an example.

\begin{figure}[ht]
	\centering
	\begin{forest}
		for tree={grow=south, circle, draw, minimum size=2ex, inner sep=0, s sep=1mm, l=0}
		[
		[
		[
		[
		[]
		[,no edge, draw=none]
		[,no edge, draw=none]
		]
		[
		[]
		[,no edge, draw=none]
		[]
		]
		]
		[,no edge, draw=none]
		[]
		]
		[,no edge, draw=none]
		[,no edge, draw=none]
		]
	\end{forest}
	\begin{forest}
		for tree={grow=south, minimum size=2ex, inner sep=0, s sep=1mm, l=0}
		[,no edge[,no edge[$\xleftrightarrow{\;\;\;\psi\;\;\;}$,no edge[,no edge[,no edge]]]]]
	\end{forest}
	\begin{forest}
		for tree={grow=south, circle, draw, minimum size=2ex, inner sep=0, s sep=1mm, l=0}
		[
		[
		[
		[
		[]
		[,no edge, draw=none]
		[]
		]
		[,no edge, draw=none]
		[
		[
		[,no edge, draw=none]
		[,no edge, draw=none]
		[]
		]
		[,no edge, draw=none]
		[,no edge, draw=none]
		]
		]
		[,no edge, draw=none]
		[,no edge, draw=none]
		]
		[,no edge, draw=none]
		[,no edge, draw=none]
		]
\end{forest}
\caption{\label{reversal}The recursive reversal of the left branch subtrees
applied to a binary tree.}
\end{figure}

\begin{prop}
The map $\psi$ induces a bijection between binary trees with $n$ nodes having
$k$ pivots and binary trees with $n$ nodes having $k+1$ leaves.
\end{prop}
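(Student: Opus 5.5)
We prove the statement by showing that $\psi$ is an involution on binary trees with $n$ nodes (hence a bijection), and that for every nonempty binary tree $T$,
\begin{equation*}
\operatorname{leaves}(\psi(T)) = \operatorname{piv}(T)+1,
\end{equation*}
where $\operatorname{piv}(T)$ denotes the number of pivots of $T$. The proof of the identity is by strong induction on the number of nodes, using the decomposition of $T$ along the left branch of its root.

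\textbf{Bijectivity.} First, $\psi$ preserves the number of nodes, since it only permutes subtrees. Second, $\psi$ is an involution, by induction on size. The left branch of $\psi(T)$ consists of the same nodes $v_1,\dots,v_k$ as that of $T$ ($v_1$ the root, $v_k$ the bottom). If $R_1,\dots,R_k$ are the right subtrees of $v_1,\dots,v_k$ in $T$, then the right subtrees of $v_1,\dots,v_k$ in $\psi(T)$ are $\psi(R_k),\dots,\psi(R_1)$. Applying $\psi$ again reverses the order once more and applies $\psi$ twice to each $R_i$. By induction $\psi(\psi(R_i))=R_i$, so $\psi(\psi(T))=T$.

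\textbf{Counting pivots of $T$.} A pivot of $T$ is either a node lying in some $R_i$, or a node of the left branch. Consider first a node inside $R_i$. Its status as a left child and its right subtree are the same in $R_i$ as in $T$. The root of $R_i$ is a right child of $v_i$ in $T$, and it is also not a left child inside $R_i$, so it is never a pivot in either. Hence the pivots of $T$ inside $R_i$ are exactly the pivots of $R_i$. Next, a node $v_i$ of the left branch is a left child exactly when $i\ge 2$, and it is then a pivot exactly when $R_i\neq\varnothing$. Therefore
\begin{equation*}
\operatorname{piv}(T)=\sum_{i}\operatorname{piv}(R_i)+\#\{\,i\ge 2 : R_i\neq\varnothing\,\}.
\end{equation*}

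\textbf{Counting leaves of $\psi(T)$.} A node inside a nonempty $\psi(R_j)$ has the same children in $\psi(T)$ as in $\psi(R_j)$, so it is a leaf of $\psi(T)$ exactly when it is a leaf of $\psi(R_j)$. Among the left-branch nodes, every $v_i$ with $i<k$ has a left child, so it is not a leaf. The bottom node $v_k$ has no left child, and its right subtree in $\psi(T)$ is $\psi(R_1)$. So $v_k$ is a leaf exactly when $R_1=\varnothing$. Each nonempty $R_j$ has fewer nodes than $T$, so the induction hypothesis gives $\operatorname{leaves}(\psi(R_j))=\operatorname{piv}(R_j)+1$. Empty $R_j$ contribute neither pivots nor leaves. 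Hence
\begin{equation*}
\operatorname{leaves}(\psi(T))=\sum_{R_j\neq\varnothing}\bigl(\operatorname{piv}(R_j)+1\bigr)+[R_1=\varnothing]
=\sum_j\operatorname{piv}(R_j)+\#\{\,j : R_j\neq\varnothing\,\}+[R_1=\varnothing].
\end{equation*}
The last two terms together equal $\#\{\,j\ge 2 : R_j\neq\varnothing\,\}+1$, because $R_1$ contributes exactly $1$ whether or not it is empty. Comparing with the formula for $\operatorname{piv}(T)$ gives $\operatorname{leaves}(\psi(T))=\operatorname{piv}(T)+1$.

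\textbf{Base case and conclusion.} The base case is the single-node tree: it has $0$ pivots, and its image under $\psi$ is itself, which has $1$ leaf. Since $\psi$ is a bijection on binary trees with $n$ nodes and carries $k$ pivots to $k+1$ leaves, it restricts to the claimed bijection.

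The only delicate point is the bookkeeping at the junction between the left branch and the subtrees. The root of each $R_i$ is never a pivot, and the only left-branch node that can be a leaf is $v_k$, whose right subtree in $\psi(T)$ is $\psi(R_1)$. It is exactly this swap, bringing $R_1$ to the bottom, that produces the extra $+1$.
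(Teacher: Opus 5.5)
Your proof is correct and follows essentially the paper's argument, made rigorous. Both decompose $T$ along the left branch of the root, and both use the fact that the reversal moves the right subtree of a node that is a left child ($v_j$, $j\ge 2$) onto a node that has a left child ($v_i$, $i<k$). The difference is only in the bookkeeping. The paper matches pivots of $T$ with nodes having two children in $\psi(T)$ and then invokes $\#\text{leaves}=1+\#\{\text{nodes with two children}\}$. You instead count the leaves of $\psi(T)$ directly, with the $+1$ coming from $v_k$ receiving $\psi(R_1)$, and you also check explicitly that $\psi$ is an involution, which the paper simply calls standard.
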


\begin{proof}
When applying Step $(i)$ of $\psi$, one reroots a right subtree of a node that
is a left child onto a node that has a left child. And, at the same time, one
reroots a right subtree of a node that has a left child onto a node that is a
left child. This is due to the fact that every node of a left branch is at the
same time a left child and has a left child, except the root and the last one
that respectively has a left child without being a left child and is a left
child without having a left child.

Therefore, there is a bijection between the number of pivots of $T$ and the
number of nodes having 2 children in $\psi(T)$. The number of leaves being 1
plus the number of nodes that have 2 children, this conclude the proof.
\end{proof}

Using both bijections one after the other, we directly conclude that
\begin{cor}
There are $B_{n\!-\!k\!-\!1,k}$ IT trees with $n$ nodes and $n-k$ different
labels.
\end{cor}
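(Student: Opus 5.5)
The corollary follows by composing the two bijections just established and tracking the two statistics. The plan is to go from IT trees to trees with marked pivots to trees with marked leaves, and then apply the count of Cai and Yan~\cite{CY}.

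\emph{Step 1.} Fix $n$ and $k$. By the proposition on marked pivots, IT trees with $n$ nodes and $n-k$ distinct labels are in bijection with unlabelled binary trees $T$ with $n$ nodes together with a set of exactly $k$ marked pivots.

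\emph{Step 2.} Apply $\psi$. The proof of the previous proposition shows more than a count. Step $(1)$ of $\psi$ exchanges right subtrees along each left branch. In doing so it transports the right subtree hanging from a pivot (a left child) to a node having a left child, and conversely. This yields an explicit bijection between the pivots of $T$ and the binary nodes of $\psi(T)$. Applied recursively, this gives an injective correspondence from pivots of $T$ to leaves of $\psi(T)$: send each binary node to, say, the leftmost leaf of its right subtree. The image avoids exactly one leaf, the rightmost one, since the rightmost leaf is never the leftmost leaf of a right subtree hanging at a binary node. So a marking of $k$ pivots of $T$ transfers to a marking of $k$ leaves of $\psi(T)$, none of them the rightmost leaf. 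Because $\psi$ is an involution, this transfer is invertible, and we get a bijection onto binary trees with $n$ nodes and $k$ marked non-rightmost leaves.

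\emph{Step 3.} By~\cite{CY}, the number of such trees is $B_{n-k-1,k}$, with the convention $B_{-1,0}=1$, which covers the empty tree. This concludes.

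The main obstacle is Step 2. The previous proposition only matches the \emph{numbers}: $k$ pivots against $k+1$ leaves. For the corollary we need a canonical identification of pivots with the non-rightmost leaves, so that markings correspond. I would make it explicit through binary nodes, pivot $\mapsto$ binary node $\mapsto$ leftmost leaf of its right subtree, and check that this composite is a bijection onto the leaves other than the rightmost one. This uses the standard fact that the map from binary nodes to leftmost leaves of their right subtrees misses only the rightmost leaf.
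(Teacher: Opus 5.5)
Your route is the paper's: compose the marked-pivot bijection with $\psi$, then apply Cai--Yan. But the explicit identification in Step~2 is false as stated. The map sending a binary node $v$ to the leftmost leaf of its right subtree does not miss the rightmost leaf; it misses the \emph{leftmost} leaf of the tree.

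To see this, take any leaf $\ell$ other than the leftmost one, and let $\ell'$ be the leaf immediately before it. Their lowest common ancestor $v$ is a binary node with $\ell'$ in its left subtree. Moreover $\ell$ is the leftmost leaf of the right subtree of $v$. So every leaf except the leftmost is hit, including the rightmost one.

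A concrete instance is $n=3$, $k=1$. The only IT tree is $2$ with left child $1$, whose right child is $2$. Its shape $T$ has root $r$, left child $a$ (the pivot) and a node $b$ below $a$ on the right. Then $\psi(T)$ is $r$ with two leaf children $a$ and $b$. Your composite sends $a\mapsto r\mapsto b$, which is the rightmost leaf. That is exactly a marking Cai and Yan exclude, so Step~3 does not apply to the image you describe.

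The repair is easy. Send each binary node to the \emph{rightmost} leaf of its \emph{left} subtree; the mirror of the argument above shows this misses exactly the rightmost leaf. Alternatively, keep your map and finish with a left--right mirror symmetry.

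More to the point, the ``main obstacle'' you identify does not exist, and no canonical matching of pivots with leaves is needed. The map $\psi$ is a bijection on binary trees with $n$ nodes, and $\psi(T)$ has $p+1$ leaves when $T$ has $p$ pivots. So for each $T$, there are $\binom{p}{k}$ ways to mark $k$ pivots of $T$, and also $\binom{p}{k}$ ways to mark $k$ non-rightmost leaves of $\psi(T)$. Summing over $T$ gives the equality of counts. This is all the corollary needs, and it is what the paper's one-line ``using both bijections one after the other'' relies on.
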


Note that the Catalan numbers appear twice in the Borel triangle:
they obviously are the number of IT trees with no pivots since these are
the trees indexed by permutations, hence in bijection with the trees
themselves. And, more surprisingly, they are also the number of IT trees
of size $2n-1$ over an alphabet of cardinality $n$ hence with $n-1$ marked
pivots. Indeed, once applied the involution reverting the left branches, one
gets binary trees with $n$ leaves. But binary trees with $2n-1$ nodes and
$n$ leaves have $n-1$ internal nodes and hence are complete binary trees,
hence the second appearance of the Catalan numbers.


Let us now consider an IT tree $T$ with labels on an alphabet of size $n$.
This tree cannot have any size since repeated letters require pivots between
them. If $T$ has more than $2n$ nodes, it has to have at least $n$ pivots
since it is IT. But the root and every right child of a pivot cannot be a
pivot. Therefore, at least $n+1$ nodes cannot be pivots and we got a
contradiction. So $T$ has at most $2n-1$ nodes.


\begin{thm}
\label{thm:2redTree}
Let $\A$ be a finite alphabet alphabet. Then, the number of 2-reduced binary
search trees labelled by $\A$ is
\begin{equation}
|\Sylv(\A,2)| = 
1 + \sum_{n=1}^{2|\A|-1} \sum_{k=0}^{\lfloor n/ 2\rfloor}
   \binom{|\A|}{n-k} B_{n-k-1,k}.
\end{equation}
\end{thm}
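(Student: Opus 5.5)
We prove the formula by combining \cref{redtrees} with the count of IT trees in the last corollary. By \cref{redtrees}, applied with $\sigma = 2$, $|\Sylv(\A,2)|$ equals the number of $2$-reduced binary search trees labelled by $\A$. We will partition these trees by their number of nodes $n$ and by the size $n-k$ of their content, counting the empty tree separately (this is the leading $1$).

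The first step is a standardization bijection. Fix a subset $\B\subseteq\A$ of size $m$. Relabelling through the unique increasing bijection from $\B$ to the initial interval of $\A$ of size $m$ preserves the binary search tree conditions, since only comparisons between labels matter. It also preserves $2$-reducedness, since equality of parent and child labels is preserved. The map is invertible, so $2$-reduced trees of content $\B$ with $n$ nodes are in bijection with IT trees of content $\{a_1,\dots,a_m\}$ with $n$ nodes. For nonempty trees, choosing the content contributes the factor $\binom{|\A|}{m}$. Writing $m=n-k$, the last corollary gives $B_{n-k-1,k}$ IT trees with $n$ nodes and $n-k$ labels, so the number of trees with $n$ nodes and content of size $n-k$ is $\binom{|\A|}{n-k}B_{n-k-1,k}$.

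The remaining step is to justify the summation ranges, and this is the main point to check. The parameter $k$ is the number of repeated occurrences, which equals the number of marked pivots. Consider the set consisting of the root and the right children of pivots. Distinct pivots have distinct right children, and the root is neither a pivot nor a right child, so this set has $k+1$ elements, none of them pivots. Together with the $k$ pivots this gives $2k+1\le n$, hence $k\le\lfloor n/2\rfloor$; in fact $k\le\lfloor (n-1)/2\rfloor$, and the extra term at $k=n/2$ is harmless because we will take $B_{-1,k}=0$ there.

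For the range of $n$, we need $n-k\le|\A|$ together with $2k+1\le n$. These give $n\le 2(n-k)-1\le 2|\A|-1$, recovering the bound already argued in the text. Any term with $n-k>|\A|$ vanishes because its binomial coefficient is zero. Summing over $n$ from $1$ to $2|\A|-1$ and over $k$ from $0$ to $\lfloor n/2\rfloor$, and adding $1$ for the empty tree, gives exactly the stated formula.
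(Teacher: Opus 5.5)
Your argument is correct and is essentially the paper's proof: standardize to IT trees by increasing relabelling, multiply the Borel count $B_{n-k-1,k}$ by $\binom{|\A|}{n-k}$, and justify the summation ranges with the same root-plus-right-children-of-pivots bound the paper uses to get $n\le 2|\A|-1$. One small slip: when $n=2k$ the extra term is $B_{k-1,k}$, not $B_{-1,k}$, and it vanishes because it lies outside the Borel triangle ($k>k-1$), which is consistent with your bound $2k+1\le n$ ruling out such trees.
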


\begin{proof}
The $1$ at the beginning represents the empty tree. The variable $n$ encodes
the size of the tree and $k$ its number of pivots and the bounds in the sum
are the ones we computed before.
We know that there are $B_{n-k-1,k}$ such IT trees, so that the total number
of not necessarily initial trees is the same multiplied by the number of ways
to pick the right number of different values (which is $n-k$) among the total
size of the alphabet, hence the formula.
\end{proof}

Let us check this formula with $\A$ of size $3$ and split the enumeration
depending on the number of nodes of the tree. We have
\begin{itemize}[$\bullet$]
\item $n=0$: one empty tree,
\item $n=1$: one tree of size $1$ with 3 possible labellings,
\item $n=2$: all trees of size 2 with 3 possible labellings,
\item $n=3$: all five trees of size $3$ with one labelling and one such tree
with its pivot marked and three labellings,
\item $n=4$: six trees of size $4$ with one pivot marked and one labelling,
\item $n=5$: two trees of size $5$ with two pivots marked and one labelling.
\end{itemize}
Therefore, one gets that there are $26$ reduced trees in $\Sylv(\{1,2,3\},2)$.

See \cref{f:pivottrees} for all the reduced trees that can be labelled using
$\{1,2,3\}$: on the first row, we have all trees with no pivots; on the second
row, we have all trees with at least 1 marked pivot (in black).

\begin{figure}
\def\taille{.5}
\centering
\begin{tikzpicture}
\node (A) at (0,0) {$\emptyset$};
\end{tikzpicture},
\begin{tikzpicture}
\begin{scope}[every node/.style={circle,scale=.5,fill=white,draw}]
   \node (A) at (0,0) {};
\end{scope}
\end{tikzpicture},
\begin{tikzpicture}
\begin{scope}[every node/.style={circle,scale=.5,fill=white,draw}]
   \node (A) at (0,0) {}; \node (B) at (-1*\taille,-1*\taille) {};
\end{scope}
\begin{scope}[>={Stealth[black]}, every edge/.style={draw=black,thick}]
    \path [-] (A) edge (B);
\end{scope}
\end{tikzpicture},
\begin{tikzpicture}
\begin{scope}[every node/.style={circle,scale=.5,fill=white,draw}]
   \node (A) at (0,0) {}; \node (B) at (1*\taille,-1*\taille) {};
\end{scope}
\begin{scope}[>={Stealth[black]}, every edge/.style={draw=black,thick}]
    \path [-] (A) edge (B);
\end{scope}
\end{tikzpicture},
\begin{tikzpicture}
\begin{scope}[every node/.style={circle,scale=.5,fill=white,draw}]
   \node (A) at (0,0) {}; \node (B) at (-1*\taille,-1*\taille) {};
    \node (C) at (-2*\taille,-2*\taille) {};
\end{scope}
\begin{scope}[>={Stealth[black]}, every edge/.style={draw=black,thick}]
    \path [-] (A) edge (B); \path [-] (B) edge (C);
\end{scope}
\end{tikzpicture},
\begin{tikzpicture}
\begin{scope}[every node/.style={circle,scale=.5,fill=white,draw}]
   \node (A) at (0,0) {}; \node (B) at (-1*\taille,-1*\taille) {};
    \node (C) at (0*\taille,-2*\taille) {};
\end{scope}
\begin{scope}[>={Stealth[black]}, every edge/.style={draw=black,thick}]
    \path [-] (A) edge (B); \path [-] (B) edge (C);
\end{scope}
\end{tikzpicture},
\begin{tikzpicture}
\begin{scope}[every node/.style={circle,scale=.5,fill=white,draw}]
   \node (A) at (0,0) {}; \node (B) at (-1*\taille,-1*\taille) {};
    \node (C) at (1*\taille,-1*\taille) {};
\end{scope}
\begin{scope}[>={Stealth[black]}, every edge/.style={draw=black,thick}]
    \path [-] (A) edge (B); \path [-] (A) edge (C);
\end{scope}
\end{tikzpicture},
\begin{tikzpicture}
\begin{scope}[every node/.style={circle,scale=.5,fill=white,draw}]
   \node (A) at (0,0) {}; \node (B) at (1*\taille,-1*\taille) {};
    \node (C) at (0*\taille,-2*\taille) {};
\end{scope}
\begin{scope}[>={Stealth[black]}, every edge/.style={draw=black,thick}]
    \path [-] (A) edge (B); \path [-] (B) edge (C);
\end{scope}
\end{tikzpicture},
\begin{tikzpicture}
\begin{scope}[every node/.style={circle,scale=.5,fill=white,draw}]
   \node (A) at (0,0) {}; \node (B) at (1*\taille,-1*\taille) {};
    \node (C) at (2*\taille,-2*\taille) {};
\end{scope}
\begin{scope}[>={Stealth[black]}, every edge/.style={draw=black,thick}]
    \path [-] (A) edge (B); \path [-] (B) edge (C);
\end{scope}
\end{tikzpicture},

\begin{tikzpicture}
\begin{scope}[every node/.style={circle,scale=.5,fill=white,draw}]
   \node (A) at (0,0) {}; \node (B)[color=black] at (-1*\taille,-1*\taille) {};
    \node (C) at (0*\taille,-2*\taille) {};
\end{scope}
\begin{scope}[>={Stealth[black]}, every edge/.style={draw=black,thick}]
    \path [-] (A) edge (B); \path [-] (B) edge (C);
\end{scope}
\end{tikzpicture},
%
\begin{tikzpicture}
\begin{scope}[every node/.style={circle,scale=.5,fill=white,draw}]
   \node (A) at (0,0) {}; \node (B)[color=black] at (-1*\taille,-1*\taille) {};
    \node (C) at (0*\taille,-2*\taille) {};
    \node (D) at (1*\taille,-3*\taille) {};
\end{scope}
\begin{scope}[>={Stealth[black]}, every edge/.style={draw=black,thick}]
    \path [-] (A) edge (B); \path [-] (B) edge (C);
    \path [-] (C) edge (D);
\end{scope}
\end{tikzpicture},
%
\begin{tikzpicture}
\begin{scope}[every node/.style={circle,scale=.5,fill=white,draw}]
   \node (A) at (0,0) {}; \node (B)[color=black] at (-1*\taille,-1*\taille) {};
    \node (C) at (0*\taille,-2*\taille) {};
    \node (D) at (-1*\taille,-3*\taille) {};
\end{scope}
\begin{scope}[>={Stealth[black]}, every edge/.style={draw=black,thick}]
    \path [-] (A) edge (B); \path [-] (B) edge (C);
    \path [-] (C) edge (D);
\end{scope}
\end{tikzpicture},
%
\begin{tikzpicture}
\begin{scope}[every node/.style={circle,scale=.5,fill=white,draw}]
   \node (A) at (0,0) {}; \node (B)[color=black] at (-1*\taille,-1*\taille) {};
    \node (C) at (0*\taille,-2*\taille) {};
    \node (D) at (-2*\taille,-2*\taille) {};
\end{scope}
\begin{scope}[>={Stealth[black]}, every edge/.style={draw=black,thick}]
    \path [-] (A) edge (B); \path [-] (B) edge (C);
    \path [-] (B) edge (D);
\end{scope}
\end{tikzpicture},
%
\begin{tikzpicture}
\begin{scope}[every node/.style={circle,scale=.5,fill=white,draw}]
   \node (A) at (0,0) {}; \node (B)[color=black] at (-1*\taille,-1*\taille) {};
    \node (C) at (0*\taille,-2*\taille) {};
    \node (D) at (1*\taille,-1*\taille) {};
\end{scope}
\begin{scope}[>={Stealth[black]}, every edge/.style={draw=black,thick}]
    \path [-] (A) edge (B); \path [-] (B) edge (C);
    \path [-] (A) edge (D);
\end{scope}
\end{tikzpicture},
%
\begin{tikzpicture}
\begin{scope}[every node/.style={circle,scale=.5,fill=white,draw}]
   \node (A) at (0,0) {}; \node (B)[color=black] at (-1*\taille,-1*\taille) {};
    \node (C) at (0*\taille,-2*\taille) {};
    \node (D) at (1*\taille,1*\taille) {};
\end{scope}
\begin{scope}[>={Stealth[black]}, every edge/.style={draw=black,thick}]
    \path [-] (A) edge (B); \path [-] (B) edge (C);
    \path [-] (A) edge (D);
\end{scope}
\end{tikzpicture},
%
\begin{tikzpicture}
\begin{scope}[every node/.style={circle,scale=.5,fill=white,draw}]
   \node (A) at (0,0) {}; \node (B)[color=black] at (-1*\taille,-1*\taille) {};
    \node (C) at (0*\taille,-2*\taille) {};
    \node (D) at (-1*\taille,1*\taille) {};
\end{scope}
\begin{scope}[>={Stealth[black]}, every edge/.style={draw=black,thick}]
    \path [-] (A) edge (B); \path [-] (B) edge (C);
    \path [-] (A) edge (D);
\end{scope}
\end{tikzpicture},
%
\begin{tikzpicture}
\begin{scope}[every node/.style={circle,scale=.5,fill=white,draw}]
   \node (A) at (0,0) {}; \node (B)[color=black] at (-1*\taille,-1*\taille) {};
    \node (C) at (0*\taille,-2*\taille) {};
    \node (D)[color=black] at (-1*\taille,-3*\taille) {};
    \node (E) at (0*\taille,-4*\taille) {};
\end{scope}
\begin{scope}[>={Stealth[black]}, every edge/.style={draw=black,thick}]
    \path [-] (A) edge (B); \path [-] (B) edge (C);
    \path [-] (C) edge (D); \path [-] (D) edge (E);
\end{scope}
\end{tikzpicture},
%
\begin{tikzpicture}
\begin{scope}[every node/.style={circle,scale=.5,fill=white,draw}]
   \node (A) at (0,0) {}; \node (B)[color=black] at (-1*\taille,-1*\taille) {};
    \node (C) at (0*\taille,-2*\taille) {};
    \node (D)[color=black] at (-2*\taille,-2*\taille) {};
    \node (E) at (-1*\taille,-3*\taille) {};
\end{scope}
\begin{scope}[>={Stealth[black]}, every edge/.style={draw=black,thick}]
    \path [-] (A) edge (B); \path [-] (B) edge (C);
    \path [-] (B) edge (D); \path [-] (D) edge (E);
\end{scope}
\end{tikzpicture}.
\caption{The shapes of all 2-reduced trees on an alphabet with 3 letters. On
the first row are all trees with no pivots starting with the empty tree;
on the second row, we have all trees with at least 1 pivot.}
\label{f:pivottrees}
\end{figure}
Among those trees, there are $13$ of those that use at least once each value
in $\A$, which is the sum of the row indexed by $2$ in the Borel triangle: the
five trees of size $3$, the six trees of size $4$ with a pivot
(see~\cref{ex-sylv-6-43}) and the two trees of size $5$ with two pivots.

\paragraph{Idempotents\\}\label{p-sylv-idem}

Let us now characterize the idempotent trees among the $2$-reduced trees.
Since the insertion can either leave the tree the same or strictly increase
its size, in order to be idempotent, we need to have that the insertion of any
letter already present in the tree does not change it. This only happens when
the lowest occurrence of that letter is on a node $S$ with an empty left
subtree since a new insertion of that letter would follow exactly the same
path up to $S$ and stop at that exact spot.

\begin{prop}
A tree is idempotent iff the lowest occurrence of every label appearing in $T$
is on a node with an empty left subtree.
\end{prop}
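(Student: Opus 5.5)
We work entirely with $2$-reduced binary search trees and the $2$-insertion of \cref{def:2-redtree}. Since $U\equiv_{\Sylv(2)}V$ if and only if $\T_2(U)=\T_2(V)$, an element $T$ is idempotent exactly when $\T_2(\RW(T)\RW(T))=T$. Because $\T_2$ reads the word from right to left, this means: starting from $T=\T_2(\RW(T))$, we $2$-insert the letters of $\RW(T)$ from last to first, and the tree must never change. We then need two facts. First, a single $2$-insertion either leaves the tree unchanged or strictly increases its number of nodes. Second, every letter of $\RW(T)$ already lies in $\cont(T)$, so only cases (2)(i) and (2)(ii) of \cref{def:2-redtree} can occur.

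For the direction ``$\Leftarrow$'', assume that the deepest node labelled $x$ has an empty left subtree, for every $x\in\cont(T)$. We show by induction that each successive $2$-insertion of a letter $x\in\cont(T)$ into $T$ falls into case (2)(ii) and so returns $T$ itself. The hypothesis is exactly the condition of case (2)(ii). The tree is unchanged after each step, so the hypothesis persists for the next letter. Hence $T\cdot T=T$.

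For the direction ``$\Rightarrow$'', suppose some $x\in\cont(T)$ has its deepest occurrence $s$ with a nonempty left subtree. Every product $T\cdot T$ in $\Sylv(\A,2)$ is obtained from $T$ by a sequence of insertions, and each insertion is non-decreasing in size. So idempotency forces every intermediate insertion to be trivial. Consider the last letter of $\RW(T)$ equal to $x$ to be inserted, that is, the first occurrence of $x$ in the word, which corresponds to the topmost $x$. We must argue that at the moment some $x$ is inserted, the deepest $x$ still has a nonempty left subtree.

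The cleanest route is to look at the very first insertion, namely the last letter of $\RW(T)$, which is the root label $r$. If $r=x$ is the offending letter, case (2)(i) applies immediately and the size grows. Otherwise, either we show that all earlier insertions fix $T$ (their letters satisfy the condition), or some earlier insertion already grows $T$. In both situations an insertion of the offending letter eventually happens while the tree still equals $T$, or the tree has already grown. Once the tree has grown, monotonicity of size gives $|T\cdot T|>|T|$, contradicting idempotency. In case (2)(i) the insertion follows the usual left insertion of \cref{def:BSTinsert}. It descends to $s$, whose left subtree is nonempty, and adds a new node, so the size strictly increases.

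The main obstacle is making the size-monotonicity claim precise: that $2$-insertion of a letter in a $2$-reduced tree never decreases the number of nodes and changes the tree only by adding a node. This is the $\L$-triviality remark stated just before. We verify it directly from \cref{def:2-redtree}: case (2)(ii) returns $T$, and cases (1) and (2)(i) perform an ordinary insertion, which adds one leaf. We also check that the result in case (2)(i) remains $2$-reduced. The new $x$ is placed in the left subtree of $s$, below a node of different label, so it does not become a child of a node labelled $x$.
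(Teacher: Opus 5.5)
Your proof is correct and follows the paper's own argument. A $2$-insertion either fixes the tree or adds a node, so $T\cdot T=T$ forces every insertion of a letter of $\RW(T)$ into $T$ to be trivial, and by \cref{def:2-redtree} that happens exactly when the deepest node with that label has an empty left subtree. One harmless slip: the right-to-left postfix reading lists the occurrences of a letter from bottom to top, so the first occurrence of $x$ in $\RW(T)$ is the deepest $x$, not the topmost one; your argument never depends on which occurrence is inserted first.
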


For example, if the alphabet has three letters and we look for the idempotent
trees using all three letters, there are six such trees; see \cref{ex-sylv-idemps}.
\begin{figure}
\centering
\begin{forest}
for tree={grow=south, circle, draw, minimum size=3ex, inner sep=0, s sep=1mm, l=0}
[1
      [,no edge, draw=none]
      [,no edge, draw=none]
      [2 [,no edge, draw=none] [,no edge, draw=none][3]]
]
\end{forest}
\begin{forest}
for tree={grow=south, circle, draw, minimum size=3ex, inner sep=0, s sep=1mm, l=0}
[3 [1
      [,no edge, draw=none]
      [,no edge, draw=none]
      [2 [,no edge, draw=none] [,no edge, draw=none][3]]
   ]
      [,no edge, draw=none]
      [,no edge, draw=none]
]
\end{forest}
\begin{forest}
for tree={grow=south, circle, draw, minimum size=3ex, inner sep=0, s sep=1mm, l=0}
[2 [1
      [,no edge, draw=none]
      [,no edge, draw=none][2]]
   [,no edge, draw=none]
   [3]
]
\end{forest}
\begin{forest}
for tree={grow=south, circle, draw, minimum size=3ex, inner sep=0, s sep=1mm, l=0}
[1 [,no edge, draw=none] [,no edge, draw=none]
   [3 
      [2 [,no edge, draw=none] [,no edge, draw=none][3]]
      [,no edge, draw=none] [,no edge, draw=none]
   ]
]
\end{forest}
\begin{forest}
for tree={grow=south, circle, draw, minimum size=3ex, inner sep=0, s sep=1mm, l=0}
[3 [1 [,no edge, draw=none] [,no edge, draw=none]
   [3 
      [2 [,no edge, draw=none] [,no edge, draw=none][3]]
      [,no edge, draw=none] [,no edge, draw=none]
   ]
]
      [,no edge, draw=none] [,no edge, draw=none]
]
\end{forest}
\begin{forest}
for tree={grow=south, circle, draw, minimum size=3ex, inner sep=0, s sep=1mm, l=0}
[3 [2
      [1 [,no edge, draw=none][,no edge, draw=none][2]] [,no edge, draw=none] [3]
   ]
      [,no edge, draw=none]
      [,no edge, draw=none]
]
\end{forest}
\caption{
\label{ex-sylv-idemps}The six idempotent trees using exactly 3 letters.}
\end{figure}

\begin{prop}\label{2sylv-idem}
Let $\A$ be a finite alphabet alphabet of size $n$.
Then the monoid $\Sylv(\A,2)$ has  
\begin{equation}
1 + \sum_{k=1}^n \binom{n}{k} S_k
\end{equation}
idempotents, where $S_n$ is the sequence of large Schr\"oder numbers
(see Sequence~A006318 of~\cite{oeis}) shifted by one:
$S_0=S_1=1$, $S_2=2$, $S_3=6$, \emph{etc}.
\end{prop}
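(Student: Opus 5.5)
We reduce the count, as for the total size of $\Sylv(\A,2)$, to trees of a fixed content. An idempotent is by the previous proposition a $2$-reduced binary search tree in which the lowest node carrying each label has an empty left subtree. This condition is invariant under increasing relabellings of the nodes. Hence the number of idempotents of content $\B$ depends only on $|\B|=k$, and the formula follows once we show that the number of idempotent IT trees using exactly the labels $1,\dots,k$ is $S_k$. The term $1$ is the empty tree, and $\binom{n}{k}$ chooses the content.

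To count idempotent IT trees of content $[k]$, we look at the smallest label $1$. Its lowest occurrence has an empty left subtree. Since no label is smaller than $1$, every node in the left subtree of any $1$-node is again labelled $1$. So the occurrences of $1$ form the left branch from some node down to its end, and $2$-reducedness forbids a $1$-node having a $1$ as left child. Therefore $1$ occurs exactly once, as the leftmost node of the tree.

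We then derive a recursion by removing this node, or more conveniently by decomposing along the root. Let the root be labelled $r$. The right subtree is an idempotent tree on the labels $>r$ (shifted to be initial). The left subtree contains labels $\le r$ and possibly further copies of $r$. We split into two cases.
\begin{itemize}
\item[(a)] $r$ occurs once. Then the left subtree is an idempotent tree on $[r-1]$. The lowest occurrence of $r$ is the root, so its left subtree must be empty, which forces $r=1$.
\item[(b)] $r$ occurs again. By Note~\ref{note-bst} the copies of $r$ lie at the ends of right branches, as in the pivot analysis preceding the Borel-triangle count.
\end{itemize}
The aim is a generating-function identity of Schröder type, $S(x)=1+xS(x)+xS(x)^2$ or an equivalent shifted form, checked against the data $1,1,2,6$ (the six trees of \cref{ex-sylv-idemps} give $S_3=6$). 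An alternative is to transport the idempotency condition through the bijection with marked-pivot trees and the involution $\psi$, and to recognise Schröder paths or Schröder trees as the resulting objects.

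The main obstacle is case (b). There we need a clean description of how the left subtree, containing repeated copies of the root label, decomposes into smaller idempotent pieces. The first attempt will be to cut at the unique pivot between consecutive copies of $r$, giving a sequence structure in which each repeated label contributes a factor. We then verify that the lowest-occurrence condition localises to each piece, and check that the induced recursion reproduces $S_4=22$ by hand before finalising.
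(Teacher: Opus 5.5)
Your reduction to idempotent initial trees of content $\{1,\dots,k\}$ is correct, and so are the decomposition at the root and case (a). In case (a) a root label $r$ occurring once forces $r=1$, leaving an arbitrary idempotent tree on $\{2,\dots,k\}$ as right subtree, which gives $S_{k-1}$ trees. All of this agrees with the paper.

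However, there is a genuine gap: the proposal stops where the counting happens. Case (b) is every root label $r\ge 2$: the root's left subtree contains $1$, so the root cannot be the lowest $r$. You leave this case as ``the main obstacle'' with only a tentative plan. Matching the data $1,1,2,6$, or even $22$, is not a proof. The plan itself is also underdetermined. The number of copies of $r$ is unbounded (see the tree with three $3$'s in \cref{ex-sylv-idemps}). So cutting at the pivots would need a sequence decomposition whose pieces and independence you have not set up.

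The missing idea is that no finer decomposition is needed. For $r\ge 2$, the left subtree $L$ of the root is itself an idempotent $2$-reduced tree of content exactly $\{1,\dots,r\}$: it must contain another $r$. Its root is not labelled $r$, by $2$-reducedness. The lowest-occurrence condition for every label $\le r$ is checked inside $L$. Conversely, take any such $L$ and any idempotent tree on $\{r+1,\dots,k\}$, possibly empty. Hanging them below a root $r$ gives an idempotent tree. Hence the trees on $\{1,\dots,k\}$ with root $r\ge2$ number $t_r S_{k-r}$, where $t_r$ counts idempotent trees on $\{1,\dots,r\}$ whose root is not $r$.

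Now apply the same observation with $k=r$. A root labelled $r$ has empty right subtree, so such trees are determined by their left subtree. The trees on $\{1,\dots,r\}$ rooted at $r$ are therefore also counted by $t_r$, which gives $S_r=2t_r$ for $r\ge 2$. With $s_1=1$ and $s_r=S_r/2$ for $r\ge2$, you obtain $S_k=\sum_{r=1}^k s_r S_{k-r}$. This is the classical large/little Schr\"oder recurrence and is exactly the paper's proof via the numbers $I_{n,k}$.

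A small correction on your target identity. With the shifted convention $S_0=S_1=1$, the series satisfying $R=1+xR+xR^2$ is $R(x)=(S(x)-1)/x$, not $S(x)$ itself.
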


\begin{proof}
Let us prove that there are $S_n$ idempotents over $n$ letters using
all those letters, the formula being a direct consequence of that property.
We shall use the alphabet $\A=\{1,\dots,n\}$.

Let us now separate those trees according to the value of their root and
define $I_{n,k}$ as the number of idempotent trees with $n$ different letters
and root $k$.
Then
\begin{equation}
I_{n,k} =
\begin{cases}
\displaystyle\sum_{m=1}^{n-1} I_{n-1,m}   &\text{if $k=1$,} \\
\displaystyle\sum_{\ell=1}^{n-1} I_{n,\ell}   &\text{if $k=n$,} \\
\displaystyle\left(\sum_{\ell=1}^{k-1} I_{k,\ell}\right)\left(\sum_{m=1}^{n-k} I_{n-k,m}\right) &\text{otherwise.} \\
\end{cases}
\end{equation}

Indeed, if the root is $1$, it has no left subtree and its right subtree
is an idempotent that contains all values from $2$ to $n$, hence the first
formula since the root can take any value. If the root is $n$, it has no right
subtree and its left subtree is an idempotent that contains all values from
$1$ to $n$ (value $n$ has to be repeated since the root has a left subtree)
and the root can take any value but $n$ since it is $2$-reduced.
The general case (third formula above) follows from those two cases since the
left and right subtrees of the root are independent.
The second formula above shows that $I_{n,n}=\sum_{\ell=1}^{n-1} I_{n,\ell}$
so that $I_{n,n}=S_n/2$ except when $n=1$ where $I_{1,1}=S_1=1$.
Let us denote by $s_n$ the sequence $s_1=1$ and $s_k=S_k/{2}$ for $k>1$.
Then $I_{n,k}=s_k S_{n-k}$.

Summing all previous equations with a given $n$ yields
\begin{equation}
S_{n} 
 = \sum_{k=1}^n s_k S_{n-k},
\end{equation}
which uniquely defines $S_n$ by induction with $S_0=S_1=1$ and $s_k$ defined
from $S_k$. It is a well-known formula on the large Schr\"oder numbers, see
Sequence A144156 of~\cite{oeis} that represents the triangle of the $I_{n,k}$,
hence the result. Note that the $s_k$ are known in the literature as the
little Schr\"oder numbers.
\end{proof}

\subsection{The \texorpdfstring{$\sigma$}{s}-Sylvester Monoids}
\label{ss-sylv-enum-iso}

If one lets $\sigma(x)=p$ for all $x\in\A$, one can compute the number of
elements in $\Sylv(\A,p)$ using \cref{thm:2redTree}.
Because $\Sylv$ is not of type 1, the formula for general $\sigma$ becomes
significantly more complicated.

\begin{cor}
\label{sylv-enum}
If $\A$ is a finite alphabet,
we have
\begin{equation}
|\Sylv(\A,p)| = 
1 + \sum_{n=1}^{2|\A|-1} (p-1)^n \sum_{k=0}^{\lfloor n/ 2\rfloor}
   \binom{|\A|}{n-k} B_{n-k-1,k} .
\end{equation}
\end{cor}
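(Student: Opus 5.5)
Via \cref{redtrees}, $|\Sylv(\A,p)|$ is the number of $p$-reduced binary search trees on $\A$, so the task is to count these and compare with the $2$-reduced trees counted in \cref{thm:2redTree}. The plan is a \emph{compression} map: collapse every maximal chain of consecutive equal-labelled nodes (parent–left child with equal labels) to one node. This turns a $p$-reduced tree into a $2$-reduced tree, and we count the fibres of this map.

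First, the structure of such chains. By \cref{note-bst}, nodes with equal labels lie on a single root-to-leaf path. A node with the same label as its parent has empty right subtree. So a maximal run of consecutive equal labels $a$ is a chain going down through left children, $a^m$ in the tree, where only the bottom node may have a left subtree $L$ and only the top node may have a right subtree $R$. Contracting the run to a single node $a$ with subtrees $(L,R)$ gives again a binary search tree; this is the reverse of the $(a,i)$-expansion described in the proof of \cref{cor-sylv-gathered}. Doing this for every run gives a $2$-reduced tree $\bar T$ with the same number of \emph{runs}, so every node of $\bar T$ corresponds to exactly one run of $T$. Conversely, given a $2$-reduced tree $\bar T$ with $n$ nodes, choosing independently a multiplicity $m_v\in[1,p-1]$ for each node $v$ and expanding $v$ into a chain of $m_v$ nodes gives a $p$-reduced tree $T$. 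Compressing $T$ returns $\bar T$, since after expansion distinct nodes of $\bar T$ cannot merge. The reason is that $\bar T$ is $2$-reduced: a parent and its left child never share a label, and a right child has a strictly larger label than its parent.

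It follows that compression is a surjection from $p$-reduced trees to $2$-reduced trees whose fibre over a tree with $n$ nodes has exactly $(p-1)^n$ elements. Summing over the $2$-reduced trees, grouped by number of nodes $n$ and number of marked pivots $k$ exactly as in the proof of \cref{thm:2redTree}, each group contributes $\binom{|\A|}{n-k}B_{n-k-1,k}$ trees. The empty tree contributes $1$, and the bound $n\le 2|\A|-1$ is unchanged. Hence
\[
|\Sylv(\A,p)| = 1+\sum_{n=1}^{2|\A|-1}(p-1)^n\sum_{k=0}^{\lfloor n/2\rfloor}\binom{|\A|}{n-k}B_{n-k-1,k}.
\]

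The main obstacle is checking that expansion and compression are mutually inverse. In particular, one must rule out two originally distinct nodes of $\bar T$ with equal label becoming adjacent, or some chain being split, after expansion. This is settled by the $2$-reducedness of $\bar T$ together with \cref{note-bst}(2), since equal labels in $\bar T$ are always separated by a pivot.
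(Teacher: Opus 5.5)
Your proposal is correct and follows the paper's route. The paper gives no written proof: it just says the count follows from \cref{redtrees} and \cref{thm:2redTree}, implicitly using the fact that each $p$-reduced tree is a $2$-reduced tree whose nodes are each expanded into a left-going chain of $1$ to $p-1$ equal labels. Your compression/expansion bijection, giving fibres of size $(p-1)^n$ over the $2$-reduced trees with $n$ nodes, makes that step explicit.
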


One then easily computes \cref{table-sigma-sylvester-sizes}.

\begin{table}[ht]
\begin{equation*}
\begin{array}{lrrrrrr}
    \toprule
    \multirow{2}{*}{$p$} & \multicolumn{6}{c@{}}{|\A|}\\
        \cmidrule(lr){2-7}
          &  2 & 3 & 4 & 5 & 6 & 7 \\
        \midrule
        2 & 6   & 26         & 142           & 882             & 5\,910             & 41\,610 \\
        3 & 21  & 255        & 3\,945        & 69\,339         & 1\,286\,589        & 25\,239\,063    \\
        4 & 52  & 1\,252     & 38\,164       & 1\,306\,564     & 47\,967\,220       & 1\,845\,549\,220     \\
        5 & 105 & 4\,205     & 212\,305      & 12\,024\,405    & 730\,040\,505      & 46\,444\,060\,605\\ 
        6 & 186 & 11\,166    & 842\,946      & 71\,345\,526    & 6\,472\,008\,906\\
        7 & 301 & 25\,291    & 2\,668\,057   & 315\,467\,887\\
        8 & 456 & 51\,080    & 7\,176\,520\\
        9 & 657 & 94\,617 \\
    \midrule
         & \text{\href{https://oeis.org/A069778}{A069778}} \\
    \bottomrule
\end{array}
\end{equation*}
\caption{The sizes of $\Sylv(\A, p)$ for some small values of $|\A|$ and small
values of $p\in \NN$.}
\label{table-sigma-sylvester-sizes}
\label{table-2-sylvester-size}
\end{table}

As a corollary of \cref{thm:isom}.(ii), one has

\begin{cor}\label{sylv-iso}
Let $\sigma, \tau:\A \to \NN_{\geq2}$.
Then $\Sylv(\A,\sigma)\cong\Sylv(\A,\tau)$ if and only if $\sigma=\tau$.
\end{cor}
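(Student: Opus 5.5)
The plan is to apply \cref{thm:isom}.(ii) to the sylvester presentation \eqref{eq-sylvester}. The ``if'' direction is trivial, so everything reduces to checking the two hypotheses of (ii) for every pair $a<b$ in $\A$. Its proof actually uses them in the following form: the first as an equivalence in $\M(\A,\tau)$, and the second as a \emph{non}-equivalence in $\M(\A,\tau)$ (membership in the congruence, not literally in $\R$). We also need the first step of that proof, namely that $\psi$ permutes the letters and $\sigma$ is a permutation of $\tau$. That step only needs the presentation to be content-preserving, which \eqref{eq-sylvester} is.

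First, I would check that $aba\equiv_{\Sylv} baa$. In \eqref{eq-sylvester}, take the letters $a\le a<b$, the first two letters $a,b$, and $W$ empty. This gives $ab\cdot a = ba\cdot a$, which holds in $\Sylv(\A)$ and therefore in every $\Sylv(\A,\tau)$.

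Second, I would check that $bab\not\equiv_{\Sylv(\tau)} abb$ for every $\tau$. For this I use the section of \cref{redtrees}, the $\tau$-reduced binary search trees. Inserting $bab$ from right to left gives a root $b$ with left child $a$, and $a$ has right child $b$. This tree is $\tau$-reduced, since no parent and child share a label, and its reading is $bab$. Inserting $abb$ gives a root $b$ with left child $b$, which has left child $a$. If $\tau(b)>2$, this tree is $\tau$-reduced with reading $abb$. If $\tau(b)=2$, it reduces to the tree of $ab$. In either case the reduced tree differs from the tree of $bab$, for instance by size or shape, so the two words lie in different classes of $\Sylv(\A,\tau)$.

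With both conditions verified, the argument of \cref{thm:isom} goes through. An isomorphism $\psi$ cannot reverse the order of any pair $x<y$, since otherwise applying it to $xyx\equiv yxx$ would give $\psi(x)\psi(y)\psi(x)\equiv\psi(y)\psi(x)\psi(x)$, which is the forbidden pattern $bab\equiv abb$ with the roles of the letters exchanged. Hence $\psi$ is order-preserving on $\A$, so it is the identity, and $\sigma=\tau$. The only delicate point is the second check, where the case $\tau(b)=2$ has to be handled separately; the section of $\tau$-reduced trees settles it.
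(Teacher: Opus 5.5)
Your route is the paper's: apply \cref{thm:isom}(ii). Your checks are correct: $aba\equiv_{\Sylv}baa$ holds, and $bab\not\equiv_{\Sylv(\tau)}abb$ follows from the $\tau$-reduced trees, including the case $\tau(b)=2$. You are also right that the second hypothesis must be read as non-equivalence in $\Sylv(\A,\tau)$, not non-membership in $\R$.

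There is a gap in the last step, however. An isomorphism $\psi$ need not permute the letters. Only the induced map $\psi'$ on contents does, and in general $\psi(a)=\psi'(a)^{\alpha_a}$ with $\alpha_a\geq 1$. This really occurs. Take $\sigma(a)=5$ and the substitution $a\mapsto a^3$, with the other letters fixed. It respects the sylvester relations and $a^5=a$, and since $a^9\equiv a$ it is an involutive automorphism of $\Sylv(\A,\sigma)$ with $a^3\neq a$. So applying $\psi$ to $xyx\equiv yxx$, with $\psi'(x)=b>a=\psi'(y)$, only gives
\[
b^{\alpha}a^{\beta}b^{\alpha}\equiv_{\Sylv(\tau)}a^{\beta}b^{2\alpha},\qquad \alpha,\beta\geq 1,
\]
and this is not the single instance $bab\equiv abb$ that you ruled out. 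The paper's proof of \cref{thm:isom} glosses over the same point.

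The repair is short. Compose with the natural surjection $\Sylv(\A,\tau)\to\Sylv(\A,2)$, which exists because $x^k\equiv x$ for all $k\geq 1$ in the $2$-quotient. It sends the two sides to $bab$ and $ab\equiv abb$. Your own reduced-tree computation with $\tau\equiv 2$ shows these differ: the $2$-reduced tree of $bab$ has three nodes, that of $abb$ only two. Alternatively, the tree of $b^{\alpha}a^{\beta}b^{\alpha}$ has a nonempty right subtree under an $a$-node, which survives $\tau$-reduction, whereas the tree of $a^{\beta}b^{2\alpha}$ is a left comb.

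Once you have this, $\psi'$ is an order-preserving permutation of $\A$. Hence $\psi'$ is the identity and $\sigma(a)=\tau(\psi'(a))=\tau(a)$, even though $\psi$ itself need not be the identity.

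A smaller inaccuracy: the step giving $\tau(\psi'(a))=\sigma(a)$ uses that $a,a^2,\dots,a^{\tau(a)-1}$ are distinct in $\Sylv(\A,\tau)$. That comes from $\ev_\tau$, so it needs the presentation to be evaluation-preserving, not merely content-preserving. This holds for the sylvester monoid, so your conclusion is unaffected.
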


\section{The \texorpdfstring{$\sigma$}{s}-Hyposylvester Monoids}
\label{section-hyposylv}


\subsection{The Hyposylvester Monoid}\label{ss-hyposylv}

The \emph{hyposylvester congruence} $\equiv_{\HS}$ is the least congruence
containing the relations
\begin{equation}\label{relhs1}
U\,ac\,V = U\,ca\,V
\end{equation}
if there exists $b\in \cont(UV)$ such that $a<b<c$, and
\begin{equation}\label{relhs2}
ab\,U\,a = ba\,U\,a,
\end{equation}
for $a<b$, where $U,V$ are arbitrary words.

Recently defined in \cite{NT}, the \emph{hyposylvester monoid} $\HS(\A)$
is defined as a quotient of the free monoid $\A^*$ by $\equiv_{\HS}$.

As in \cref{def:quotient-monoid}, for $\sigma: \A \to \NN_{\geq 2}$,
we define the \emph{$\sigma$-hyposylvester congruence} to be the least
congruence on $\A^*$ containing the hyposylvester congruence and
$a^{\sigma(a)} = a$ for every $a\in \A$; we denote this by
$\equiv_{\HS(\sigma)}$. The $\sigma$-hyposylvester monoid on $\A$ is the
quotient $\A ^ * / {\equiv_{\HS(\sigma)}}$ and is denoted by $\HS(\A, \sigma)$.

\paragraph{An Invariant of the Hyposylvester Monoid\\}

Let $W$ be a word.
For $x\in \cont(W)$ different from the maximal one, define
$x^+=\min\set{z\in W}{z>x}$. Define similarly $x^-=\max\set{z\in W}{z<x}$. Let
$\ell_x(W)$ be the number of occurrences of the letter $x^+$ to the left of
the rightmost $x$ in $W$ and let $r_x(W)$ be the number of occurrences of the
letter $x^+$ to the right of the rightmost $x$ in $W$.

By Theorem 3.4 of \cite{NT}, two words $U$ and $V$ are
hyposylvester-equivalent if and only if $\ev(U)=\ev(V)$ and
$\ell_x(U)=\ell_x(V)$ for all $x\in \cont(U)$.
The following definition gives an invariant on hyposylvester classes in form of a tree.

\begin{defi}[Hyposylvester tree building]
Let $W\in\A^*$ with $\cont(W)=\{w_1<w_2<\cdots<w_n\}$.
The \emph{hyposylvester tree of} $W$, denoted $\H(W)$, is obtained by the following procedure:
\begin{enumerate}[(1)]
\item Put a node labelled by $w_n$ with $|W|_{w_n}+1$ leaves. Let $i=n-1$.
\item Label by $w_i$ the $\ell_{w_i}(W)\!+\!1$-st child of $w_{i+1}$.
\item Add $|W|_{w_i}\!+\!1$ children to that child. Go back to step 2 with
$i=i-1$ if $i\not=1$ or stop if $i=1$.
\end{enumerate}
\end{defi}

We will use the same notation $\ell_{w_{i}^-}(T)$ and $r_{w_{i}^-}(T)$ for the
number of children of the node $w_i$ that are, respectively, to the left and
to the right of the node $w_{i-1}$.

From such a tree $T$, one can build the canonical word of its class by
labeling each sector between two consecutive edges of a given node with the
label of this node and read the sectors from left to right. This canonical
word is the maximal lexicographic element of its class and is denoted by
$\RW(T)$.
See \cref{43124} for an example of both a hyposylvester tree and how to build
its canonical word.

\begin{figure}[ht]
\centering
\forestset{
default preamble={
font=\footnotesize,
}
}
\begin{forest}
[4[][][3[][2[1[][][][]][][]]][]]
\end{forest}
\qquad\qquad
\begin{forest}
[4,color=gray
	[,l=2 mm, anchor=north east]
	[4,no edge,l=4 mm, anchor=north east]
	[,l=7 mm, anchor=north east]
	[4,no edge, l=6 mm, anchor=north east]
	[3,color=gray, l=7 mm, anchor=north west
		[,l=2 mm, anchor=north east]
		[3,no edge,l=2mm, anchor=north]
		[2,color=gray, l=2 mm, anchor=north west
			[1,color=gray, l=2mm, anchor=north
				[,l=2mm]
				[1,l=4mm, no edge]
				[,l=7mm]
				[1,l=6mm, no edge]
				[,l=7mm]
				[1,l=4mm, no edge]
				[,l=2mm]
			]
			[2,no edge,l=4 mm, anchor=north]
			[,l=7 mm, anchor=north]
			[2,no edge,l=4 mm, anchor=north]
			[,l=2 mm, anchor=north]
		]
	]
	[4,no edge,l=4mm, anchor=north west]
	[,l=2mm, anchor=north west]
]
\end{forest}
\qquad\qquad
\begin{forest}
[4[][3[][][2[1[][][]][][]]][]]
\end{forest}
\caption{\label{43124}On the left, the hyposylvester tree $T$ associated to
the word $141124324$. In the middle, the algorithm to obtain
$\RW(T)=443111224$. And on the right, the reduced hyposylvester tree of $T$,
its 2-reduced canonical word being $43124$.}
\end{figure}

\subsection{Quotient Type}\label{ss-hyposylv-type}

For $|\A|\geq 2$, $\HS(\A)$ is not a type 1 monoid. This is for the same
reason as it is with $\Sylv$: for $a<b$, using \cref{ss:Type1Monoids}
notation, one has $\theta(bbab)=\theta(babb)=bab$ and
$\ev_\sigma(bbab)=\ev_\sigma(babb)=abbb$ but they are not
$\sigma$-hyposylvester equivalent if $\sigma(b)> 2$.
We shall see that it is of type 2.

The following lemma comes directly from Theorem 3.4 of~\cite{NT} and the
definition of $\H(W)$.

\begin{lemma}
\label{HypoSylvWord}
Let $T$ be a hyposylvester tree.
The $\ell_{x^-}(T)$ first occurrences of $x$ are consecutive in $\RW(T)$ and
the same goes for the $r_{x^-}(T)$ last occurrences of $x$.

Moreover, there will always be an occurrence of $x^-$ in between the
$\ell_x(W)$-th occurrence and the $\ell_x(W)\!+\!1$-st occurrence of $x$ in
$W$.
\end{lemma}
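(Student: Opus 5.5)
The plan is to work entirely inside the canonical word $\RW(T)$, using the sector description of the tree. I will read $\RW(T)$ sector by sector. By construction, a node labelled $x$ has $|W|_x+1$ children, so it has exactly $|W|_x$ sectors, one between each pair of consecutive edges. Each sector contributes one letter $x$, and sectors are read left to right across the whole tree.

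The key observation is about where the subtree rooted at $x^-$ sits. It hangs from child number $\ell_{x^-}(T)+1$ of the node $x$. Hence the sectors of $x$ split into two groups: the first $\ell_{x^-}(T)$ sectors lie strictly to the left of the edge to $x^-$, and the last $r_{x^-}(T)$ sectors lie strictly to its right.

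From this the two claims follow.

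\emph{First claim.} I will show that between two consecutive left sectors of $x$ nothing is read. The only children of $x$ that carry a nonempty subtree are on the edge to $x^-$, because all the other children are leaves. So no sector of any smaller letter is read between two left sectors, and the first $\ell_{x^-}(T)$ occurrences of $x$ form a factor of $\RW(T)$. The same argument applies to the $r_{x^-}(T)$ right sectors. For this I need the labelling convention that the leaves are empty sectors; that is the construction in step (3), where exactly one child of each $w_{i+1}$ receives the label $w_i$.

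\emph{Second claim.} Here I translate back to a word $W$ with $\H(W)=T$. By Theorem 3.4 of \cite{NT} it suffices to know $\ell$ and the evaluation, but the statement concerns $W$ itself. Recall that $\ell_{x^-}(W)$ counts occurrences of $x$ to the left of the rightmost $x^-$. Therefore the rightmost $x^-$ lies between the $\ell_{x^-}(W)$-th and the $(\ell_{x^-}(W)+1)$-st occurrences of $x$, which gives the claim.

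The main obstacle is the indexing mismatch in the statement. It writes $\ell_x(W)$, while the intended quantity is $\ell_{x^-}(W)$, which counts occurrences of $x=(x^-)^+$. I would fix that reading first, and then check the boundary cases: $x$ minimal, where there is no $x^-$ and the claim is vacuous, and $\ell=0$ or $r=0$.
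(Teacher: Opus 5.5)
Your proof is correct and matches the paper's argument. The paper simply asserts that the lemma follows from the definition of $\H(W)$ and from Theorem~3.4 of \cite{NT}. Your sector decomposition, in which the reading of the subtree at $x$ is $x^{\ell_{x^-}(T)}$, then the reading of the subtree at $x^-$, then $x^{r_{x^-}(T)}$, is that definitional argument made explicit, and your reindexing of the second claim to $\ell_{x^-}(W)$ is the right one. The one point to state more sharply: the second claim is tautological for $W$ alone, and Theorem~3.4 is what makes it hold for \emph{every} word in the hyposylvester class of $\RW(T)$ (all such words have $\H(W)=T$). This is exactly how the lemma is used to show that $\RW(T)$ is a regrouped element.
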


\begin{prop}
\label{cor-hyposylv-gathered}
The hyposylvester monoid is a \gathered monoid.
\end{prop}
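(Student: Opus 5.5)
The plan mirrors the proofs of \cref{cor-hypo-gathered} and \cref{cor-sylv-gathered}: first show that the canonical word $\RW(T)$ of the hyposylvester tree $T=\H(W)$ is \gathered, then show that expansions and contractions act locally on $T$.

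\textbf{Step 1: $\RW(T)$ is \gathered.}
By Theorem 3.4 of \cite{NT}, the class of $W$ is determined by two data:
\begin{itemize}
\item the evaluation $\ev(W)$;
\item the numbers $\ell_x(W)$ for all $x\in\cont(W)$.
\end{itemize}
Hence the only obstruction to two consecutive occurrences of a letter $x$ being adjacent is the following. If $x^-$ exists, the $\ell_{x^-}(W)$-th and $(\ell_{x^-}(W)+1)$-st occurrences of $x$ are always separated by an occurrence of $x^-$ in every word of the class, by the second part of \cref{HypoSylvWord}.

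Every other pair of consecutive occurrences of $x$ is adjacent in $\RW(T)$, by the first part of \cref{HypoSylvWord}. Indeed, the $\ell_{x^-}$ first occurrences form a block and the $r_{x^-}$ last occurrences form a block.

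So if some $W'\equiv_{\HS}W$ contains the $k$-th and $(k+1)$-st occurrences of $x$ adjacent, then $k\neq \ell_{x^-}(W)$, and these occurrences are adjacent in $\RW(T)$ with the same index $k$. This gives condition (1) of \cref{def:type2}.

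Since $\RW(T)$ is the lexicographically maximal element of its class, it is the canonical \gathered element $G(W)$. One can also observe that every \gathered word of the class equals $\RW(T)$ up to the placement of the smaller letters, but maximality makes this unnecessary.

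\textbf{Step 2: compatibility with expansions.}
Take an $(x,i)$-expansion $W'$ of $W$, obtained by duplicating the $i$-th occurrence of $x$. Compute the invariants:
\begin{itemize}
\item $\ev(W')$ is $\ev(W)$ with $|W|_x$ increased by one.
\item $\ell_y(W')=\ell_y(W)$ for $y\neq x^-$ and $y\neq x$.
\item $\ell_x(W')=\ell_x(W)$, since the rightmost $x$ keeps the same position relative to the occurrences of $x^+$.
\item $\ell_{x^-}(W')=\ell_{x^-}(W)+1$ if the duplicated occurrence lies to the left of the rightmost $x^-$, that is if $i\le \ell_{x^-}(W)$, and $\ell_{x^-}(W')=\ell_{x^-}(W)$ otherwise.
\end{itemize}
Therefore $\H(W')$ is obtained from $\H(W)$ by adding one child to the node $x$. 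This child goes on the left of the subtree rooted at $x^-$ when $i\le\ell_{x^-}$, and on the right otherwise.

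Reading sectors, this inserts one extra $x$ inside the block containing the $i$-th occurrence of $x$ in $\RW(T)$. The result is exactly the $(x,i)$-expansion of $\RW(T)$, which is condition (2) of \cref{def:type2}.

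\textbf{Main obstacle.}
The delicate point is the boundary case $i=\ell_{x^-}(W)$ or $i=\ell_{x^-}(W)+1$. There one must check which side of $x^-$ the new child falls on. One must also check that duplicating an occurrence adjacent to the separating $x^-$ matches duplicating the correct $x$ in $\RW(T)$. In both cases the duplicate stays on the same side of $x^-$ as the original occurrence.

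Contractions are handled symmetrically, and then \cref{gen-gathered} applies.
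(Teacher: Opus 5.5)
Your proof is correct, but Step 2 takes a different route from the paper's.

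\textbf{The paper's route.} It orients the two hyposylvester relations as $U\,ac\,V\to U\,ca\,V$ and $ab\,U\,a\to ba\,U\,a$. It relies on \cite{NT} for the fact that these rewritings reach the lexicographically maximal word $\RW(T)$, since the class is an interval of the weak order. It then mimics the rewriting path from $W$ to $G(W)$ on the expanded word, moving both copies of $x$ together. The key observation is that no rule requires, or is blocked by, two equal adjacent letters.

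\textbf{Your route.} You work entirely with the invariant of Theorem 3.4 of \cite{NT}. You compute how $\ev$ and the $\ell_y$ change under an $(x,i)$-expansion: only $|W|_x$ changes, together with $\ell_{x^-}$, which increases exactly when $i\le \ell_{x^-}(W)$. You deduce that $\H(W')$ differs from $\H(W)$ by one extra leaf of $x$ on the correct side of $x^-$. You then read off that $\RW(\H(W'))$ is the $(x,i)$-expansion of $\RW(\H(W))$.

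\textbf{Comparison.} Your version is more self-contained and handles the boundary case explicitly. It also avoids the rewriting and weak-order-interval facts the paper uses to justify that orienting the rules leads to $\RW(T)$. The paper's version is less computational, and it is reused almost verbatim for the metasylvester monoid. There the invariant tree is more intricate, and a direct invariant computation like yours would be heavier.

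\textbf{Two small points.} Your Step 1 is the paper's appeal to \cref{HypoSylvWord}, made explicit. You should add the case $x=\min\cont(W)$, which has no $x^-$: all its occurrences form a single block, because that node has only leaf children. Contractions need no separate treatment. \cref{def:type2} only asks about expansions, and a contraction of $W$ is a word whose corresponding expansion is $W$.
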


\begin{proof}
Thanks to the previous lemma, we know that, for a hyposylvester tree $T$,
$\RW(T)$ is a \gathered element of its class.
It is proven in~\cite{NT} that $\RW(T)$ is the maximal element of its
class since the class is an interval of the weak order and $\RW(T)$ avoids the
patterns $ab\cdots a$, $b\cdots ac$, and $ac\cdots b$.

In other words, going from $W$ to its canonical element amounts to
orienting the rewritings in the following way,
for any $U,V\in\A^*$
\newpage

\begin{enumerate}[(1)]
\item $U\,ac\,V \to U\,ca\,V$, \quad if there exists $b\in \cont(UV)$ such that $a<b<c$,
\item $ab\,U\,a \to ba\,U\,a$, \quad if $a<b$.
\end{enumerate}

So $\RW(T)\in\G_{\HS}$, i.e. it is the canonical \gathered word of its class
as defined in \cref{ss:GatheredMonoids}.

Now, let $W, W'\in\A^*$ such that $W'$ is obtained by expanding a letter $x$
in a word $W$ and see what are their respective canonical elements.
Let us follow the path of elementary rewritings from $W$ to $G(W)$, its
canonical \gathered element.
If a rewriting does not involve that $x$, then apply the same on $W'$.
If it involves $x$, then apply it to both $x$ in $W'$ so that they end up next
to each other again. In all cases, the elementary rewritings commute with the
expansion of $x$, so that $G(W')$ is the expansion of the same $x$ of $G(W)$.

The proof of the contraction is the same and works since there are no
rewritings requiring two equal consecutive letters, so that all rewritings
implicating both $x$ in $W$ can be mimicked with only one of these.
\end{proof}

Following the notation of \cref{ss:GatheredMonoids}, denote by $G(W)$ the
canonical word of the hyposylvester class of $W$. Following \cref{def:type2} and letting $m=\min(\cont(W))$, $G(W)$ is \emph{$\sigma$-reduced} if
\begin{enumerate}[(1)]
\item for $a\in \cont(W)$,
      $\ell_{a}(W)<\sigma(a)$ and $r_{a}(W)<\sigma(a)$,
\item $|W|_{m}<\sigma(m)$.
\end{enumerate}

\begin{thm}
\label{hypotree-w}
Let $\A$ be an alphabet. The set of $\sigma$-reduced canonical words is a
section of $\HS(\A,\sigma)$.
\end{thm}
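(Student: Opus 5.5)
The statement follows from \cref{gen-gathered} combined with \cref{cor-hyposylv-gathered}. Since $\HS(\A)$ is \gathered, the $\sigma$-reduced words $G_\sigma(W)$ form a set of normal forms for $\HS(\A,\sigma)$. What remains is to check that the two combinatorial conditions in the definition above describe exactly the words obtained from canonical words $G(W)=\RW(T)$ by contracting every factor $a^{\sigma(a)}$ to $a$ until none is left. In other words, we must show that a canonical word $\RW(T)$ contains no factor $a^{\sigma(a)}$ if and only if conditions (1) and (2) hold.

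The first step is to describe the maximal blocks of equal consecutive letters in $\RW(T)$. We use \cref{HypoSylvWord} and the sector reading of the tree. For a letter $x$ other than the minimum $m$, the occurrences of $x$ are the sectors of node $x$. The child $x^-$ splits these sectors into two groups: the $\ell_{x^-}$ occurrences read before the subtree of $x^-$, and the $r_{x^-}$ occurrences read after it. Each group is a consecutive block. An occurrence of $x^-$ always separates the two blocks, and letters coming from nodes above $x$ separate these blocks from any other occurrence of $x$. For the minimal letter $m$, the node is a leaf-parent with no labelled child, so all $|W|_m$ occurrences form one block.

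Hence the maximal runs of $x$ in $\RW(T)$ have lengths $\ell_{x^-}(T)$ and $r_{x^-}(T)$ for $x\neq m$, and length $|W|_m$ for $x=m$. The quantities $\ell_{x^-}$ and $r_{x^-}$ are exactly the $\ell_a,r_a$ of the definition with $a=x^-$, up to the indexing convention used there. A run of length at least $\sigma(x)$ contains a factor $x^{\sigma(x)}$. Conversely, any factor $x^{\sigma(x)}$ lies inside one maximal run. So $\RW(T)$ has no factor $a^{\sigma(a)}$ exactly when every run is shorter than $\sigma$ of its letter, which is conditions (1) and (2).

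Next we must make sure that contractions stay inside the set of canonical words, so that $G_\sigma(W)$ is itself some $\RW(T')$ and the characterisation applies to it. This is exactly the remark after \cref{def:type2}, which uses condition (2) of type 2. Concretely, contracting a run of $x$ only decreases $\ell_{x^-}$ or $r_{x^-}$ (or $|W|_m$) while keeping the tree shape, so the result is again the reading of a hyposylvester tree. Therefore the set of $\sigma$-reduced canonical words equals $\{G_\sigma(W)\}$, and \cref{gen-gathered} gives that it is a section.

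The main obstacle I expect is the run-length description of the first step. One must verify carefully that no other letter can be absent between the two blocks of $x$, and that blocks of $x$ never merge across the subtree of $x^-$. This needs the fact that this subtree is nonempty, since it contains at least $x^-$. It also needs care with the off-by-one indexing of $\ell$ and $r$ between the word and the tree.
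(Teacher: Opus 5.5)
Your proposal is correct and follows the paper's route: the paper's proof is just the one-line appeal to \cref{gen-gathered} through \cref{cor-hyposylv-gathered}. Your run-length analysis of $\RW(T)$ adds the identification of ``no factor $a^{\sigma(a)}$'' with the explicit conditions (1)--(2), which the paper leaves implicit. Do state the indexing explicitly instead of writing ``up to the indexing convention''. Since $\ell_a(W)$ and $r_a(W)$ count occurrences of $a^+$, your analysis shows that condition (1) must read $\ell_a(W)<\sigma(a^+)$ and $r_a(W)<\sigma(a^+)$ for $a\neq\max\cont(W)$. With the literal $\sigma(a)$ the characterisation fails: for $a<b$, $\sigma(a)=3$, $\sigma(b)=2$, the canonical word $bba$ satisfies the literal conditions, yet $bba\equiv_{\HS(\sigma)}ba$.
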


\begin{proof}
The proof is straightforward using the fact that the hyposylvester monoid is
\gathered.
\end{proof}

As defined in \cref{ss:GatheredMonoids}, denote by $G_\sigma(W)$ the
$\sigma$-reduced word of $G(W)\in\G_{\HS}$ and denote by $\G_{\HS(\sigma)}$
the set of $\sigma$-reduced \gathered words.

\subsection{The 2-Hyposylvester Monoid}

Let us now study the 2-quotient of the hyposylvester monoid.
\paragraph{Reduced Hyposylvester trees\\}
\label{hyposylv-reductree}
A hyposylvester tree $T$ is called \emph{2-reduced} if its internal nodes have
exactly 3 children.
With a similar algorithm as for hyposylvester trees, one can build a 2-reduced
hyposylvester tree from a word $W\in\A^*$.

\begin{defi}[Construction of the reduced hyposylvester tree]
\label{def:redHStree}
Let $W\in\A^*$ be a word with content \mbox{$\cont(W)=\{w_1<w_2<\cdots <w_n\}$}.
\begin{enumerate}[(1)]
\item Build a node labelled $w_n$ with three children. Let $i=n-1$.
\item Label by $w_i$ the left child, the right child or the
middle child of the node labelled $w_{i+1}$, if there are occurrences of
$w_{i+1}$ respectively only to the right, only to the left or on both sides of
the rightmost $w_i$ in $W$.
\item Add 3 children to that child. Stop if $i=1$. Otherwise, go back to step 2 with $i=i-1$.
\end{enumerate}
This tree is denoted by $\H_2(W)$.
\end{defi}

From a 2-reduced hyposylvester tree, one builds its 2-reduced \gathered word
the following way.
\newpage

\begin{defi}[2-reduced hyposylvester reading]
\label{def:redHSgathered}
Let $T$ be a reduced hyposylvester tree with labels $\{w_1<\cdots<w_n\}$. 
\begin{enumerate}[(1)]
\item Start with the word $W=w_n$ and $i=n-1$.
\item The node $w_i$ is one of the three children of $w_{i+1}$.
  \begin{enumerate}[(i)]
  \item If $w_i$ is the left child of $w_{i+1}$, insert $w_i$ to the left of
        $w_{i+1}$ in $W$,
  \item if $w_i$ is the right child of $w_{i+1}$, insert $w_i$ to the right of
        $w_{i+1}$ in $W$,
  \item if $w_i$ is the middle child of $w_{i+1}$, replace $w_{i+1}$ by the
        word $w_iw_{i+1}w_i$ in $W$.
  \end{enumerate}
\item  If $i=1$ stop, otherwise go back to step (ii) with $i=i-1$.
\end{enumerate}
This word is the 2-reduced \gathered word of $T$, denoted by $\RW_2(T)$.
\end{defi}

Note that $\H_{2}(\RW_2(T))=T$ and also that for $W$,
$\RW_2(\H_2(W))=G_{\HS(2)}(W)$.
See the right tree of \cref{43124} for an example of a 2-reduced hyposylvester
tree.
Note that any $2$-reduced \gathered word is always decreasing and increasing in
values.

\begin{thm}
\label{hypotree-t}
Let $\A$ be an alphabet. The set of reduced hyposylvester trees over $\A$ is a
family of representatives of $\HS(\A,2)$.
\end{thm}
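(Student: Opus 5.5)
The plan is to derive the statement from \cref{hypotree-w} with $\sigma = 2$, by a bijection between 2-reduced canonical words and 2-reduced hyposylvester trees. By \cref{hypotree-w}, the 2-reduced canonical words form a section of $\HS(\A,2)$. It is then enough to show that $\H_2$ and $\RW_2$ are mutually inverse bijections between this set of words and the set of 2-reduced hyposylvester trees over $\A$, and that $\H_2$ is constant on $\HS(\A,2)$-classes.

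First I check invariance. For $W$ with content $\{w_1<\dots<w_n\}$, the tree $\H_2(W)$ depends only on the content and on, for each $i$, which of the three cases holds: $\ell_{w_i}(W)=0$, $r_{w_i}(W)=0$, or both $\ell_{w_i}(W)$ and $r_{w_i}(W)$ positive. Both cannot vanish, since $w_{i+1}$ occurs.

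\begin{itemize}
\item Under a hyposylvester rewriting, $\ell_x$ and $\ev$ are preserved by Theorem 3.4 of \cite{NT}, so $r_x$ is preserved as well.
\item Under doubling or contracting a letter $y$, the content is unchanged. The rightmost occurrence of $x$ stays the rightmost copy. The occurrences of $x^+$ on each side change multiplicity but not positivity. If $y=x$, duplicating the rightmost $x$ moves nothing past $x^+$.
\end{itemize}

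Hence $\H_2$ factors through $\HS(\A,2)$.

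Next I show $\H_2(\RW_2(T))=T$, which the paper notes. I would verify it by induction on $i$ from $n$ down. When $w_i$ is inserted left of, right of, or around the block containing $w_{i+1}$ in step (2) of \cref{def:redHSgathered}, the rightmost $w_i$ has $w_{i+1}$ exactly on the prescribed side(s). Later insertions of smaller letters do not change this side information. Thus every 2-reduced tree is reached, and $\RW_2$ is injective.

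Then I show that $\RW_2(\H_2(W))$ is the 2-reduced canonical word $G_{\HS(2)}(W)$, also stated in the paper. Concretely, $\RW_2(T)$ must be a canonical gathered word whose $\ell$ and $r$ values are in $\{0,1\}$, with the minimal letter occurring once. This holds because each letter $w_i$ is introduced once or twice, next to $w_{i+1}$. It is also lexicographically maximal, since it avoids $ab\cdots a$, $b\cdots ac$ and $ac\cdots b$ by construction. Its class then has the same $\ell_x$ data as $W$ after reduction, so it equals $G_2(W)$.

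Combining these, $\H_2$ induces a bijection from $\HS(\A,2)$ onto the 2-reduced trees, so these trees form a family of representatives. I expect the main obstacle to be the pattern-avoidance check showing that $\RW_2(T)$ is the lexicographically maximal canonical element, that is, that it is reduced in the sense of \cref{hypotree-w}. I would handle it by induction on the letters inserted, tracking that every new $w_i$ sits adjacent to $w_{i+1}$ in the current word.
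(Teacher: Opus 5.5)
\paragraph{The injectivity step fails.}
Your overall route is the one the paper has in mind: show $\H_2$ is invariant, get surjectivity from $\H_2(\RW_2(T))=T$, and get injectivity by identifying $\RW_2(\H_2(W))$ with $G_{\HS(2)}(W)$ and invoking \cref{hypotree-w}. Your invariance check is correct, and the computation $\H_2(\RW_2(T))=T$ is essentially fine (see the remark at the end).

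The problem is the third step. $\RW_2(T)$ does \emph{not} in general avoid $b\cdots ac$, and it need not be the lexicographically largest element of its class. Take $T=\H_2(213)$, where $2$ is the left child of $3$ and $1$ is the right child of $2$. Inserting each new letter next to its successor, as you do, gives $\RW_2(T)=213$. This word contains $2\cdots 13$, and indeed $2\,13\equiv_{\HS}2\,31$ by \eqref{relhs1} with $b=2\in\cont(U)$.

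So $G_{\HS(2)}(213)=231\neq\RW_2(T)$. Three things follow:
\begin{enumerate}[(i)]
\item $\RW_2$ does not land in the set of $2$-reduced canonical words.
\item The identity $\RW_2(\H_2(W))=G_{\HS(2)}(W)$ you take from the paper is false. The paper's own example fails too: $43124\equiv_{\HS}43142\equiv_{\HS}43412$.
\item Your sentence ``its class then has the same $\ell_x$ data \dots\ so it equals $G_2(W)$'' has nothing to stand on, so injectivity of $W\mapsto\H_2(W)$ on $\HS(\A,2)$ remains unproved.
\end{enumerate}

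\paragraph{A direct fix.}
You can bypass canonical words, and \cref{hypotree-w}, entirely and prove injectivity directly from Theorem~3.4 of \cite{NT}.

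Suppose $\H_2(W)=\H_2(W')$, with common content $\{w_1<\dots<w_n\}$. In $\HS(\A,2)$ any occurrence of a letter may be replaced by a positive power of it.
\begin{enumerate}[(i)]
\item Doubling an occurrence of $w_{i+1}$ lying to the left (resp.\ right) of the rightmost $w_i$ raises $\ell_{w_i}$ (resp.\ $r_{w_i}$) by one. It changes no other $\ell_x$ or $r_x$: if it is the rightmost $w_{i+1}$, the new rightmost copy is adjacent to it.
\item Doubling an occurrence of $w_1$ only raises $|W|_{w_1}$.
\end{enumerate}
Now $\H_2$ records exactly which of the $\ell_{w_i}$, $r_{w_i}$ vanish. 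Hence you can inflate $W$ and $W'$ to words $W^\sharp$ and $(W')^\sharp$ with the same $|\cdot|_{w_1}$ and the same $\ell_{w_i}$, $r_{w_i}$ for every $i$. These words have the same evaluation, since $|\cdot|_{w_{i+1}}=\ell_{w_i}+r_{w_i}$. Therefore $W^\sharp\equiv_{\HS}(W')^\sharp$, and
\[
W\equiv_{\HS(2)}W^\sharp\equiv_{\HS(2)}(W')^\sharp\equiv_{\HS(2)}W'.
\]
Together with your invariance argument and $\H_2(\RW_2(T))=T$, this proves the statement.

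\paragraph{A remark on $\H_2(\RW_2(T))=T$.}
Make sure the middle case uses $w_{i+1}w_iw_{i+1}$. With the literal $w_iw_{i+1}w_i$ of \cref{def:redHSgathered}, the rightmost $w_i$ sees $w_{i+1}$ only on its left, so you would get a right child instead of a middle one. Also note that when $w_{i-1}$ is later a middle child, a second copy of $w_i$ is created adjacent to the first. That adjacency is why the side information of $w_i$ is not disturbed.
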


\begin{proof}
The proof is straightforward using the fact that the hyposylvester monoid is
\gathered.
\end{proof}

Note that this monoid is not $\J$-trivial since $3212\cdot 1=321$
and $321\cdot 2=3212$.
It is well known that $\HS(\A)$ is a quotient of $\Sylv(\A)$, the same goes for $\HS(\A,2)$ and $\Sylv(\A,2)$.
Because the class of $\L$-trivial monoids is also a pseudovariety (see Chapter 3 Section 4 of \cite{P}) which is, by definition, closed under quotienting, $\HS(\A,2)$ is $\L$-trivial.


\paragraph{Enumeration\\}

The number of 2-hyposylvester classes comes naturally from
\cref{hypotree-t}.

\begin{prop}
Let $\A$ be a finite alphabet. We have
\begin{equation}
|\HS(\A,2)|=1+\sum_{k=1}^{|\A|} \binom{|\A|}{k} 3^{k-1}=\frac{4^{|\A|}+2}{3}.
\end{equation}
\end{prop}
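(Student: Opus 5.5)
By \cref{hypotree-t}, the elements of $\HS(\A,2)$ are in bijection with the 2-reduced hyposylvester trees over $\A$. The plan is therefore to count these trees, sorting them by their content $\B\subseteq\A$. The empty word gives the empty tree, which accounts for the leading $1$.

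Now fix a nonempty content $\B=\{w_1<\dots<w_k\}$ and follow \cref{def:redHStree}.
\begin{itemize}
\item The root is labelled $w_k$ and has three children.
\item For each $i=k-1,\dots,1$, the node $w_i$ is placed on one of the three children of $w_{i+1}$: left, middle or right. After that, $w_i$ receives its own three children.
\end{itemize}
Such a tree is determined exactly by its content together with a sequence of $k-1$ independent ternary choices. Hence there are at most $3^{k-1}$ trees of content $\B$.

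For the reverse inequality we must show that every one of the $3^{k-1}$ choice sequences is realised by a word. The natural witness is the reading word $\RW_2(T)$ of \cref{def:redHSgathered}, which is built by exactly these choices. One checks, by induction on $i$ going downward, that $\H_2(\RW_2(T))=T$; the paper already states this. The key point in the induction is the following. When $w_i$ is inserted to the left of $w_{i+1}$, to its right, or as $w_iw_{i+1}w_i$, the position of the rightmost $w_i$ relative to the occurrences of $w_{i+1}$ is exactly the one prescribed. Moreover, the later insertions of smaller letters do not change that relative position. Consequently the number of classes of content $\B$ is exactly $3^{|\B|-1}$, and this depends only on $|\B|$.

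Summing over all subsets $\B$ of size $k$ gives
\[
|\HS(\A,2)|=1+\sum_{k=1}^{n}\binom{n}{k}3^{k-1}, \qquad n=|\A|.
\]
For the closed form, the binomial theorem gives $\sum_{k=0}^n\binom nk3^k=4^n$. Therefore
\[
1+\frac{4^n-1}{3}=\frac{4^n+2}{3}.
\]

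The step I expect to be the main obstacle is the surjectivity check in the second paragraph: every ternary choice sequence must occur. This needs care in the middle-child case, where the replacement $w_{i+1}\mapsto w_iw_{i+1}w_i$ must be applied to the correct occurrence of $w_{i+1}$ so that $w_{i+1}$ really appears on both sides of the rightmost $w_i$. Everything else is routine counting.
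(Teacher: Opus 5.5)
Your route is the paper's: the paper's proof is just the count of 2-reduced hyposylvester trees supplied by \cref{hypotree-t}. That count is $3^{|\B|-1}$ trees for each nonempty content $\B$, summed over $\B$ and closed up with the binomial theorem, and your closed-form computation is correct. The one real problem is in the surjectivity check you flagged yourself.

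The middle-child witness $w_{i+1}\mapsto w_iw_{i+1}w_i$, as printed in \cref{def:redHSgathered}, does not do what you claim. It leaves the rightmost $w_i$ to the \emph{right} of that copy of $w_{i+1}$. Choosing ``the correct occurrence'' cannot repair this, because at that stage $w_{i+1}$ in general occurs only once (for instance when it is the root, or a left or right child). Already for $\B=\{a<b\}$ your witness for the middle child is $aba$. By \eqref{relhs2} and $a^2=a$ we get $aba\equiv_{\HS}baa\equiv_{\HS(2)}ba$, which is the right-child class. So, as written, your argument exhibits only two of the three classes of content $\{a,b\}$.

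The rule must be $w_{i+1}\mapsto w_{i+1}w_iw_{i+1}$. This is what the paper's own example uses: in the reduced tree of \cref{43124}, $3$ is the middle child of $4$, and $\RW_2=43124$ has a $4$ on each side of $3$. It also matches the paper's remark that these words first decrease and then increase in value.

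With this rule your induction goes through cleanly:
\begin{enumerate}[(1)]
\item The only step that can duplicate $w_{i+1}$ is the placement of $w_i$. So $w_{i+1}$ occurs exactly once when $w_i$ is placed, and no choice of occurrence arises.
\item Every later step inserts letters smaller than $w_i$.
\item The only later duplication of $w_i$ (when $w_{i-1}$ is a middle child) creates two copies of $w_i$ with only smaller letters between them.
\end{enumerate}
Hence the copies of $w_{i+1}$ to the left and to the right of the rightmost $w_i$ never change. For $\{a<b\}$ this gives the three distinct classes $ab$, $ba$ and $bab$, as required.
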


This is Sequence~A047849 of~\cite{oeis}.

\begin{prop}
Given an alphabet $\A$ of cardinality $n$, let
\begin{equation}
\alpha_{n,k}=|\set{W\in\G_{\HS(2)}}{\cont(W)=\A,|W|=k}|,
\end{equation}
be the cardinality of the set of 2-reduced canonical \gathered
words of length $k$ with content $\A$.
One has
\begin{equation}
\alpha_{n+1,k+1}=\alpha_{n,k-1}+2\alpha_{n,k},
\end{equation}
with $\alpha_{1,1}=1$ and $\alpha_{1,0}=0$.
\end{prop}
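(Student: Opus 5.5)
The plan is to turn the count $\alpha_{n,k}$ into a count of reduced hyposylvester trees, and to follow how the length of the reading word grows at each step of \cref{def:redHSgathered}. By \cref{hypotree-t} together with the identities $\H_2(\RW_2(T))=T$ and $\RW_2(\H_2(W))=G_{\HS(2)}(W)$, the map $T\mapsto \RW_2(T)$ is a bijection. It goes from reduced hyposylvester trees with label set $\A$ onto the 2-reduced canonical \gathered words of content $\A$. Hence $\alpha_{n,k}$ is the number of reduced trees on an $n$-letter alphabet whose reading $\RW_2(T)$ has length $k$. The definitions use only the relative order of the letters, so an increasing relabelling of the alphabet preserves this number. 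In particular $\alpha_{n,k}$ depends only on $n=|\A|$ and $k$.

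First I would record the shape of a reduced tree $T$ with labels $w_1<\dots<w_{n}$. Its labelled nodes form a chain $w_n,w_{n-1},\dots,w_1$ from the root, and each $w_i$ is the left, middle or right child of $w_{i+1}$. So $T$ is encoded by a word in $\{L,M,R\}^{n-1}$, which also gives $3^{n-1}$ trees of full content, consistent with the enumeration above. Next I would read off from \cref{def:redHSgathered} how the length changes. The construction starts from $w_n$, of length $1$. A left or right step inserts one copy of $w_i$, so the length grows by $1$. A middle step replaces one $w_{i+1}$ by $w_iw_{i+1}w_i$, so the length grows by $2$. Therefore $|\RW_2(T)| = 1 + (n-1) + \#\{\text{middle steps}\}$.

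For the recurrence, take a tree $T$ on $\{w_1<\dots<w_{n+1}\}$ and delete the node $w_1$ together with its three children, turning that node back into a leaf. The result $T'$ is a reduced tree on $\{w_2<\dots<w_{n+1}\}$. Conversely, each such $T'$ extends in exactly three ways, by labelling the left, middle or right child of the node $w_2$ with $w_1$. The word $\RW_2(T)$ is obtained from $\RW_2(T')$ by the last step of the construction, which adds $1$, $2$ or $1$ letters respectively. This gives a three-to-one correspondence $T\mapsto T'$ in which two of the preimages have length one more and one has length two more. Summing over $T'$ and using the relabelling invariance yields
\[
\alpha_{n+1,k+1} = 2\alpha_{n,k} + \alpha_{n,k-1}.
\]
The initial values $\alpha_{1,1}=1$ and $\alpha_{1,0}=0$ are clear, since the only tree on one letter has reading word $w_1$.

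The main obstacle is justifying the length bookkeeping in the middle step: $w_{i+1}$ may already appear twice in the current word, and one must check that exactly one occurrence is replaced. This holds as long as $\RW_2$ is well defined, and I would verify it by checking that the replaced occurrence is the one adjacent to where $w_i$ must sit according to \cref{def:redHStree}. Once that is settled, the remaining steps are the routine bijection above.
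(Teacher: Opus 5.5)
Your proposal is correct and follows the paper's approach. The paper's proof is just the remark that the recurrence is a direct consequence of \cref{def:redHSgathered}: placing the smallest letter as the left, right or middle child of the next one lengthens the reading word by $1$, $1$ or $2$, which your three-to-one correspondence spells out. The obstacle you flag goes away once one notes that the middle step is meant as $w_{i+1}\mapsto w_{i+1}w_iw_{i+1}$ (compare the reduced word $43124$ in \cref{43124}), so a letter is duplicated only when its own child is placed, $w_{i+1}$ occurs exactly once when $w_i$ is processed, and each step adds exactly $1$ or $2$ letters.
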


\begin{proof}
This is a direct consequence of \cref{def:redHSgathered}.
\end{proof}

This is Sequence A038207
 of~\cite{oeis}
which enumerates the number of $k$-faces of a $n$-cube. Choosing a face of a
$n$-cube amounts to choosing, for each coordinate, either $\{0\}$, $\{1\}$ or
the whole interval $[0,1]$.  Thus, the bijection between the faces of a
$n$-cube and the 2-hyposylvester classes for $|\A|=n+1$ appears clearly thanks
to \cref{def:redHStree} by either labelling the left, the right, or the middle
child of the $i$th internal node.

\paragraph{Idempotents\\}

Given $\B\subseteq\A$, $\B=\{b_1<b_2<\cdots<b_j\}$ and
$\C\subseteq\B\setminus b_1$,
$\C=\{c_1<\cdots< c_i\}$, let $$e_{\B,\C}:=b_jb_{j-1}\cdots b_1c_1\cdots
c_i.$$

\begin{prop}
\label{prop:HS2idem}
The idempotents of $\HS(\A, 2)$ are precisely the $e_{\B,\C}$ for every 
$\B\subseteq \A$ and $\C\subseteq\B\setminus\min(\B)$.
In particular, there are
\begin{equation}
1+\sum_{k=1}^{|\A|}\binom{|\A|}{k}2^{k-1}
\end{equation}
idempotents in $\HS(\A, 2)$.
\end{prop}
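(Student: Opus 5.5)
The plan is to work directly with the complete invariant of $\HS(\A,2)$ given by \cref{def:redHStree} and \cref{hypotree-t}. By these results, two words are $\HS(2)$-equivalent if and only if they have the same content and, for every non-maximal letter $x$ of the content, the same \emph{type}. The type records whether the occurrences of $x^+$ lie only to the right, only to the left, or on both sides of the rightmost $x$. These correspond to $x$ labelling the left, right or middle child of $x^+$ in $\H_2$. So $W$ is idempotent if and only if $WW$ and $W$ have the same content (automatic) and the same type for every such $x$. We compute the types of $WW$ from those of $W$.

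\emph{Step 1.} Fix a non-maximal $x\in\cont(W)$ and note that $x^+$ is the same in $W$ and $WW$.
\begin{itemize}
\item In $WW$ the rightmost $x$ lies in the second copy of $W$. Hence the whole first copy, which contains $x^+$, is to its left, so $WW$ always has an $x^+$ to the left of its rightmost $x$.
\item The occurrences of $x^+$ to the right of the rightmost $x$ in $WW$ are exactly those to the right of the rightmost $x$ in the second copy. So they exist in $WW$ if and only if they exist in $W$.
\end{itemize}
Consequently, the type of $x$ in $WW$ is ``only left'' if $W$ has no $x^+$ right of its rightmost $x$, and ``both sides'' otherwise. It follows that $W$ and $WW$ have the same type at $x$ if and only if the type of $x$ in $W$ is not ``only right''. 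In tree language, $W$ is idempotent if and only if no node of $\H_2(W)$ is a left child. Since each of the $|\B|-1$ non-maximal letters then has two admissible positions, there are $2^{|\B|-1}$ idempotents of content $\B\neq\varnothing$. Together with the empty word this gives the stated count $1+\sum_k\binom{|\A|}{k}2^{k-1}$.

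\emph{Step 2.} Show that the $e_{\B,\C}$ realise exactly these trees, bijectively in $\C$. Take $x=b_m$ with $m<j$ in $e_{\B,\C}=b_j\cdots b_1c_1\cdots c_i$, so that $x^+=b_{m+1}$.
\begin{itemize}
\item There is always an $x^+$ to the left of the rightmost $x$: the $b_{m+1}$ of the decreasing prefix precedes every occurrence of $b_m$.
\item There is an $x^+$ to the right of the rightmost $x$ if and only if $b_{m+1}\in\C$. Indeed, if $b_m\notin\C$ its only occurrence is in the prefix, and a later $b_{m+1}$ exists iff $b_{m+1}\in\C$. If $b_m\in\C$, the increasing suffix places $b_{m+1}$ after $b_m$ iff $b_{m+1}\in\C$.
\end{itemize}
Thus the type of $b_m$ is ``both sides'' if $b_{m+1}\in\C$ and ``only left'' otherwise, so no node of $\H_2(e_{\B,\C})$ is a left child. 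The map $\C\mapsto\{m<j : b_{m+1}\in\C\}$ is a bijection from the subsets of $\B\setminus b_1$ onto the subsets of $\{1,\dots,j-1\}$. Hence the $e_{\B,\C}$ are pairwise distinct and exhaust the idempotents found in Step 1.

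The only delicate point is Step 1. There one must be careful that $x^+$ is computed in the content of the word, which is the same for $W$ and $WW$, and that the ``only right'' type is genuinely destroyed by squaring. Everything else is bookkeeping with the invariant of \cref{def:redHStree}.
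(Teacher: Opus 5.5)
Your proof is correct and follows the paper's argument. The key computation is the same: squaring always puts an $x^+$ to the left of the rightmost $x$, so $\ell_x(WW)>0$, while $r_x(WW)>0$ if and only if $r_x(W)>0$; hence $W$ is idempotent exactly when $\ell_x(W)>0$ for every non-maximal $x$. The only difference is the last step: the paper reads off the form $e_{\B,\C}$ from the decreasing-then-increasing shape of the 2-reduced canonical words, whereas you compute the types of $e_{\B,\C}$ directly and use the bijection $\C\mapsto\{m : b_{m+1}\in\C\}$, which also makes explicit the pairwise distinctness of the $e_{\B,\C}$ and the count $2^{|\B|-1}$ that the paper leaves implicit.
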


\begin{proof}
First, all the $e_{\B,\C}$ are idempotents: let $W$ be such an element.
Then for any letter $x$ not maximal, both $\ell_x(WW)$ and $\ell_x(W)$ are
nonzero and $r_x(WW)>0$ iff $r_x(W)>0$. So both the reduced hyposylvester
trees of $W$ and $WW$ are the same (see~\cref{def:redHStree}).

Now, given a word $W$ and a letter $x\in\cont(W)$ not maximal, we have
$\ell_x(WW)>0$. Thus in order for $W$ to be an idempotent, we need
$\ell_x(W)>0$ for all $x\in\cont(W)$.
Since the $2$-reduced \gathered words are decreasing then increasing (see
\cref{def:redHSgathered}), the previous condition implies that all values of
$W$ belong to the decreasing part, hence are a $e_{\B,\C}$.
\end{proof}

\subsection{The \texorpdfstring{$\sigma$}{s}-Hyposylvester Monoids}
\label{ss-hyposyl-enum-idem-iso}

Because it is not of type 1 as the sylvester monoid, one cannot use
\cref{thm:Type1-Enum} to compute the cardinality of the $\sigma$-hyposylvester
monoids using the 2-hyposylvester monoid.
Fortunately, there is a relatively easy way to compute these cardinalities.

\begin{cor}
For $\B\subseteq\A$, let $V_{\B}=\{(v_1,\ell_2,r_2,\cdots,\ell_{n},r_n)\}$
with the following conditions:
\begin{itemize}
\item $1\leq v_1<\sigma(b_1)$,
\item $0\leq \ell_k,r_k <\sigma(b_k)$ for all $1<k\leq n$,
\item $0<\ell_k + r_k$ for all $1<k\leq n$.
\end{itemize}
There is a bijection between $V_\B$ and the set of $\sigma$-reduced word of
content $\B\subset\A$.
\end{cor}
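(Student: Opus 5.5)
The plan is to build the bijection through the hyposylvester invariant. By Theorem 3.4 of \cite{NT}, a hyposylvester class is determined by $\ev$ together with the numbers $\ell_x$. Equivalently, it is determined by the hyposylvester tree $\H(W)$, whose data for content $\B=\{b_1<\cdots<b_n\}$ is the following:
\begin{itemize}
\item the number $|W|_{b_1}\geq 1$ of occurrences of the minimal letter;
\item for each $k\geq 2$, the pair $(\ell_{b_{k-1}}(W), r_{b_{k-1}}(W))$.
\end{itemize}
Indeed, $|W|_{b_k}=\ell_{b_{k-1}}+r_{b_{k-1}}$, since every occurrence of $b_k=b_{k-1}^+$ lies either to the left or to the right of the rightmost $b_{k-1}$. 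This sum is positive because $b_k\in\B$.

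Conversely, every tuple $(v_1,\ell_2,r_2,\dots,\ell_n,r_n)$ with $v_1\geq 1$, $\ell_k,r_k\geq 0$ and $\ell_k+r_k>0$ is realised by some word. One can take the word obtained by reading the tree in which node $b_k$ has $\ell_k+r_k+1$ children and $b_{k-1}$ is its $(\ell_k+1)$-st child (with $b_1$ having $v_1+1$ children). Its reading $\RW(T)$ has the prescribed invariants. So hyposylvester classes of content $\B$ are in bijection with such unbounded tuples.

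Next, invoke \cref{hypotree-w}: the $\sigma$-reduced canonical words form a section of $\HS(\A,\sigma)$. By the definition of $\sigma$-reduced given just before it, a canonical word $G(W)$ of content $\B$ is $\sigma$-reduced exactly when $|W|_{b_1}<\sigma(b_1)$ and, for each $k\geq2$, both $\ell_{b_{k-1}}(W)<\sigma(b_k)$ and $r_{b_{k-1}}(W)<\sigma(b_k)$. The bound is $\sigma(b_k)$ because these counts are runs of the letter $b_k$.

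Define the map from $\sigma$-reduced words of content $\B$ to $V_\B$ by $W\mapsto(|W|_{b_1},\ell_{b_1}(W),r_{b_1}(W),\dots,\ell_{b_{n-1}}(W),r_{b_{n-1}}(W))$. The conditions above, together with positivity of $|W|_{b_k}$, show it lands in $V_\B$. It is injective because distinct canonical words lie in distinct hyposylvester classes, and \cite[Theorem 3.4]{NT} says the tuple determines the class. It is surjective because any tuple in $V_\B$ is realised by the tree construction above. The canonical word $\RW(T)$ of that tree satisfies the reducedness inequalities by construction, so it is $\sigma$-reduced.

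The main obstacle is the indexing bookkeeping: one must check that the runs of $b_k$ in $\RW(T)$ identified in \cref{HypoSylvWord} have lengths exactly $\ell_{b_{k-1}}$ and $r_{b_{k-1}}$. This is what ensures that $\sigma$-reduction, which shortens runs $b_k^{\sigma(b_k)}\to b_k$, exactly corresponds to the inequalities $\ell_k,r_k<\sigma(b_k)$, while the minimal letter forms a single run of length $v_1$. I would verify this directly from the sector-reading description of $\RW(T)$. In that reading, the sectors of node $b_k$ to the left of its child $b_{k-1}$ are consecutive, as are those to its right, and they are separated by the subtree containing $b_{k-1}$.
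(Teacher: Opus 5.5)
Your proposal is correct and takes essentially the same approach as the paper, whose proof just observes that a $\sigma$-reduced canonical word, being an inflation of a $2$-reduced one, is completely determined by $v_1=|G_\sigma|_{b_1}$ and the left/right counts of each $b_k$ (equivalently, by the run structure $b_n^{\ell_n}\cdots b_2^{\ell_2}b_1^{v_1}b_2^{r_2}\cdots b_n^{r_n}$ that you identify). Your version is more careful than the paper's: your indexing $\ell_k=\ell_{b_{k-1}}(W)$, $r_k=r_{b_{k-1}}(W)$ with bound $\sigma(b_k)$ is the consistent one (the paper writes $\ell_{b_k}$, which is off by one), and you prove surjectivity explicitly via the tree construction, which the paper leaves implicit.
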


\begin{proof}
Let $G_\sigma\in\G_{\HS(\sigma)}$ with
$\cont(G_\sigma)=\B=\{b_1<b_2<\cdots<b_n\}$.
Because the hyposylvester monoid is of type 2, $G_\sigma$ is an inflation of a $G_2\in\G_{\HS(2)}$.
Then $\ell_k=\ell_{b_k}(G_\sigma)$, $r_k=r_{b_k}(G_\sigma)$ for all
$2\leq k\leq n$ and $v_1=|G_\sigma|_{b_1}$ completely determine $G_\sigma$.
\end{proof}

\begin{cor}\label{hyposylv-enum}
Let $\A$ a finite alphabet and $\sigma : \A \to \NN$ a map such that
$\sigma(a) \geq 2$ for every $a\in \A$. We have
\begin{equation}
|\HS(\A,\sigma)|= 1 +
  \sum_{\B\subseteq \A;\B\not=\emptyset}
     (\sigma(\min(\B))-1)
     \prod_{\substack{b\in\B\\b\neq \min{\B}}}(\sigma(b)^2-1).
\end{equation}

If $\sigma$ is constant equal to $p$, one gets
\begin{equation}
|\HS(\A,p)|=1+(p-1)\sum_{k=1}^{|\A|} \binom{|\A|}{i} (p^2-1)^{k-1}=
\frac{p^{2{|\A|}}+p}{p+1}.
\end{equation}

If we let $S_p(n)=|\HS(\A,p)|$ for any $\A$ of size $n$, then the exponential
generating function of $S_p(n)$ is given by
\begin{align}
	\sum_{n=0}^\infty S_p(n)\frac{z^n}{n!}=\frac{(p\exp(z)+\exp(p^2z)}{p+1}.
\end{align}
\end{cor}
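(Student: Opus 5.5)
We count normal forms content by content, using the bijection of the previous corollary. By \cref{hypotree-w}, the $\sigma$-reduced canonical words form a section of $\HS(\A,\sigma)$. So $|\HS(\A,\sigma)|$ is the sum, over all subalphabets $\B\subseteq\A$, of the number of $\sigma$-reduced words of content $\B$. The empty content contributes the single empty word, which gives the leading $1$.

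Fix a nonempty $\B=\{b_1<\dots<b_n\}$. By the previous corollary, the $\sigma$-reduced words of content $\B$ are in bijection with $V_\B$, so we only need $|V_\B|$. Its coordinates are chosen independently.
\begin{itemize}
\item The coordinate $v_1$ ranges over $[1,\sigma(b_1)-1]$, which gives $\sigma(\min\B)-1$ choices.
\item For each $k\geq 2$, the pair $(\ell_k,r_k)$ ranges over $[0,\sigma(b_k)-1]^2$ with $(0,0)$ removed, which gives $\sigma(b_k)^2-1$ choices.
\end{itemize}
Multiplying these counts and summing over $\B$ gives the first formula.

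For constant $\sigma=p$, every $\B$ of size $k$ contributes $(p-1)(p^2-1)^{k-1}$. This yields
\[
1+(p-1)\sum_{k\ge1}\binom{|\A|}{k}(p^2-1)^{k-1}.
\]
Write $(p-1)(p^2-1)^{k-1}=\frac{(p^2-1)^k}{p+1}$. The binomial theorem then gives
\[
1+\frac{(1+(p^2-1))^{|\A|}-1}{p+1}=1+\frac{p^{2|\A|}-1}{p+1}=\frac{p^{2|\A|}+p}{p+1}.
\]

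For the exponential generating function, sum over $n$:
\[
\sum_n \frac{p^{2n}+p}{p+1}\,\frac{z^n}{n!}=\frac{\exp(p^2z)+p\exp(z)}{p+1}.
\]
This is the stated series; the displayed expression in the statement has a stray opening parenthesis. Alternatively one may apply \cref{lem:expgenfunct}.

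The only point needing care is the bijection with $V_\B$, in particular the constraint $\ell_k+r_k>0$. This constraint holds because $b_k$ occurs in the word, and $b_{k-1}$ occurs, so $b_{k-1}^+=b_k$ has at least one occurrence relative to the rightmost $b_{k-1}$. This was settled in the previous corollary, and the remaining steps are routine algebra.
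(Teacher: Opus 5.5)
Your proposal is correct and follows the paper's route: the paper gives no separate proof and reads the corollary off the preceding bijection between $\sigma$-reduced canonical words of content $\B$ and $V_\B$, exactly as you do. Your algebra for the constant case and the exponential generating function is also right, including the silent fixes of the typos $\binom{|\A|}{i}$ (should be $\binom{|\A|}{k}$) and the stray parenthesis.
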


\begin{prop}
\label{hyposylv-idem}
The monoid $\HS(\A,\sigma)$ contains exactly
\begin{align}\label{eq:idemHS}
1+\sum_{\substack{\B\subseteq\A\\\B\neq\emptyset}} \prod_{\substack{b\in\B\\b\neq\min(\B)}}(\sigma(b)-1)
\end{align}
idempotents. If $\sigma(x)=p$ for all $x\in\A$, then \cref{eq:idemHS}
simplifies into
\begin{align}
1+\sum_{k=1}^{|\A|} \binom{|\A|}{k}(p-1)^{k-1}.
\end{align}
\end{prop}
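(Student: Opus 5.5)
The plan is to work entirely inside the section $\G_{\HS(\sigma)}$ of $\sigma$-reduced canonical words given by \cref{hypotree-w}. By the preceding corollary, such a word of content $\B=\{b_1<\dots<b_n\}$ is encoded by a tuple $(v_1,\ell_2,r_2,\dots,\ell_n,r_n)\in V_\B$. Here $v_1=|W|_{b_1}$, and $\ell_k,r_k$ count the occurrences of $b_k$ to the left and right of the rightmost $b_{k-1}$. I will compute the $\sigma$-reduced canonical word of $WW$ in terms of this tuple. Then I will characterise when it equals the tuple of $W$, and count solutions. Note that $\cont(WW)=\cont(W)=\B$, so the content is preserved and only the tuple can change.

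First I compute the invariants of $WW$, using Theorem 3.4 of \cite{NT}. These are the evaluation and the numbers $\ell_x$; since $r_x=|W|_{x^+}-\ell_x$, this is equivalent to knowing $\ell_x$ and $r_x$ for all $x$. The rightmost $b_{k-1}$ of $WW$ lies in the second copy of $W$. Hence in $WW$ we get $\ell_k(WW)=|W|_{b_k}+\ell_k=2\ell_k+r_k$ and $r_k(WW)=r_k$. We also have $|WW|_{b_1}=2v_1$.

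Next I apply $\sigma$-reduction. Since $\HS$ is \gathered, each of these counts reduces independently. A value $m\geq 1$ is replaced by the unique value in $[1,\sigma-1]$ congruent to $m$ modulo $\sigma-1$, and $0$ stays $0$. I have to justify carefully that this independent reduction is legitimate. The tool is \cref{HypoSylvWord}: the left block and the right block of $b_k$'s are separate gathered factors, so each is contracted on its own. Then $W$ is idempotent if and only if three conditions hold.
\begin{itemize}
\item For $b_1$: $2v_1\equiv v_1 \pmod{\sigma(b_1)-1}$ with $v_1\geq1$. This holds for exactly one value, $v_1=\sigma(b_1)-1$.
\item For each $k\geq2$: $r_k$ is automatically preserved.
\item For each $k\geq2$: the reduction of $2\ell_k+r_k$ must equal $\ell_k$.
\end{itemize}

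For the last condition I split into cases. If $\ell_k=0$, then $r_k>0$ and the reduced value of $r_k$ is nonzero, which is a contradiction. So $\ell_k\geq1$, and the condition becomes $\ell_k+r_k\equiv 0\pmod{\sigma(b_k)-1}$. Given any $r_k\in[0,\sigma(b_k)-1]$, this has exactly one solution $\ell_k\in[1,\sigma(b_k)-1]$. That gives $\sigma(b_k)$ choices rather than $\sigma(b_k)-1$, which clashes with the stated formula $\prod(\sigma(b)-1)$. So the bound on $r_k$ needs rechecking. Since $r_k$ is itself a reduced count, the legitimate values are $0$ or a residue in $[1,\sigma(b_k)-1]$ that is identified modulo $\sigma(b_k)-1$. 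With the correct reduction rule this should leave $\sigma(b_k)-1$ admissible values of $r_k$ for each $k$.

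Pinning down this boundary bookkeeping is the main obstacle. The question is whether the range $0\le r_k<\sigma(b_k)$ in $V_\B$ double-counts $r_k=\sigma(b_k)-1$. I will check it against $\sigma=2$ using \cref{prop:HS2idem}, which gives $2^{k-1}$ idempotents of content size $k$, i.e.\ one binary choice ($r_k\in\{0,1\}$, $\ell_k=1$) per non-minimal letter. Once the count per letter is settled as $\sigma(b)-1$, summing over $\B$ together with the empty word gives \eqref{eq:idemHS}. The constant case then follows by grouping subsets by size.
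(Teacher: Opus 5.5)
Your computation is correct up to the point where you find, for each $k\ge 2$, exactly $\sigma(b_k)$ admissible pairs $(\ell_k,r_k)$, namely $\ell_k\ge1$ and $\ell_k+r_k\equiv0\pmod{\sigma(b_k)-1}$, with one $\ell_k$ for each $r_k\in[0,\sigma(b_k)-1]$. You also correctly get the single value $v_1=\sigma(b_1)-1$. The gap is the last step, where you try to bring $\sigma(b_k)$ down to $\sigma(b_k)-1$. This cannot be done. The $\sigma$-reduction only identifies a block length $m\ge1$ with $m+\sigma(b_k)-1$, and it never identifies an empty block with a non-empty one. So the values $r_k=0,1,\dots,\sigma(b_k)-1$ give pairwise distinct $\sigma$-reduced canonical words, and nothing is double-counted.

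In fact \eqref{eq:idemHS} is false as stated, and your own sanity check shows it. For $\sigma\equiv2$, a binary choice $r_k\in\{0,1\}$ per non-minimal letter is $\sigma(b_k)=2$ choices. That gives the $2^{|\B|-1}$ idempotents of content $\B$ of \cref{prop:HS2idem}, whereas \eqref{eq:idemHS} predicts $\prod(\sigma(b)-1)=1$. Two concrete counterexamples:
\begin{itemize}
\item For $\A=\{a<b\}$ and $\sigma\equiv2$, the idempotents are $\varepsilon,a,b,ba,bab$. That is five of them, as \cref{prop:HS2idem} says, against $4$ from \eqref{eq:idemHS}.
\item For $\sigma\equiv3$, the words $b^2a^2$, $ba^2b$, $b^2a^2b^2$ are three distinct idempotents of content $\{a,b\}$, against the predicted $2$. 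The canonical words of their squares are $b^4a^4$, $b^3a^4b$, $b^6a^4b^2$, and these reduce back to the original words.
\end{itemize}

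What your argument actually establishes is the corrected count
\[
1+\sum_{\B\subseteq\A,\ \B\neq\emptyset}\ \prod_{b\in\B,\ b\neq\min(\B)}\sigma(b), \qquad\text{i.e.}\qquad 1+\sum_{k=1}^{|\A|}\binom{|\A|}{k}p^{k-1}\ \text{ for } \sigma\equiv p .
\]
You should state and prove this instead of forcing $\sigma(b)-1$.

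The paper's one-line proof gives the same number once the count is carried out. It takes the idempotents of $\HS(\A,\sigma)$ to be the $\sigma$-reduced inflations of the $\HS(\A,2)$-idempotents $e_{\B,\C}$ whose $\sigma$-evaluation is the idempotent $\prod_{b\in\B}b^{\sigma(b)-1}$ of $\Com(\A,\sigma)$. Then:
\begin{itemize}
\item $b_1$ and each $b\in\B\setminus\C$ get exactly one exponent;
\item each $c\in\C$ gets $\sigma(c)-1$ ways of splitting its exponent between its two blocks.
\end{itemize}
Summing, $\sum_{\C\subseteq\B\setminus\{\min\B\}}\prod_{c\in\C}(\sigma(c)-1)=\prod_{b\neq\min\B}\sigma(b)$, not $\prod(\sigma(b)-1)$.

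Compared with that route, your direct computation of the invariants of $WW$ has one advantage. It also proves that these two conditions are sufficient for idempotency, which is not automatic here since $\HS$ is not of type~1.
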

\begin{proof}
An idempotent of $\HS(\A,\sigma)$ is an inflation $e^\#$ of an idempotent $e$
of $\HS(\A,2)$ such that $\ev_{\sigma}(e^\#)=\prod_{b_i\in\cont(e)}
b_i^{\sigma(b_i)-1}$.
Using \cref{prop:HS2idem}, one directly obtains the result.
\end{proof}

As a corollary of \cref{thm:isom}.(ii), one has

\begin{cor}\label{hyposylv-iso}
Let $\sigma, \tau:\A \to \NN_{\geq2}$, then
$\HPl(\A,\sigma)\cong\HPl(\A,\tau)$ if and only if $\sigma=\tau$.
\end{cor}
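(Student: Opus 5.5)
The statement should read $\HS(\A,\sigma)\cong\HS(\A,\tau)$; the $\HPl$ in it looks like a slip carried over from \cref{Hpl-iso}. I will prove it as a direct application of \cref{thm:isom}.(ii). The converse direction is trivial, so the work is to check the hypotheses of \cref{thm:isom}.(ii) for $\HS(\A)$.

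First, the presentation by \eqref{relhs1} and \eqref{relhs2} is content-preserving, indeed evaluation-preserving. Second, for $a<b$, taking $U$ empty in \eqref{relhs2} gives $ab\cdot a\equiv_{\HS} ba\cdot a$, that is $aba\equiv_{\HS} baa$.

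Third, we must show that $bab$ and $abb$ are not identified. This is the point that needs care, because the proof of \cref{thm:isom} uses this non-relation inside the quotient $\HS(\A,\tau)$, not just in $\HS(\A)$. So I would show that $bab\not\equiv_{\HS(\tau)} abb$ for every $\tau$, using the section of \cref{hypotree-w}.

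Step 1: compute the canonical words. For $bab$, the invariant gives $\ell_a=1$ and $r_a=1$. For $abb$, it gives $\ell_a=0$ and $r_a=2$. By Theorem 3.4 of \cite{NT}, these words lie in different $\HS$-classes. Their canonical words are $bab$ itself and $abb$ itself, since the orientation of the rewritings in \cref{cor-hyposylv-gathered} allows no rewriting of either word.

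Step 2: apply $\tau$-reduction. The word $bab$ is already $\tau$-reduced. The word $abb$ is reduced unless $\tau(b)=2$, in which case it becomes $ab$. In every case the two $\tau$-reduced words differ: one has $\ell_a=1$ and the other has $\ell_a=0$. This is because contracting a factor $b^{\tau(b)}$ never changes whether a $b$ occurs to the left of the rightmost $a$.

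Step 3: conclude. By \cref{hypotree-w}, the $\tau$-reduced words are normal forms for $\HS(\A,\tau)$, so $bab\not\equiv_{\HS(\tau)}abb$. \cref{thm:isom}.(ii) then gives $\sigma=\tau$ whenever the two quotients are isomorphic.
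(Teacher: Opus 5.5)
Your route is the paper's: the corollary is stated there as an immediate consequence of \cref{thm:isom}.(ii), and you are right that $\HPl$ should read $\HS$. Read literally, the hypotheses of \cref{thm:isom}.(ii) hold. With $U$ empty, \eqref{relhs2} gives $aba=baa$, and $(bab,abb)$ is not one of the defining relations. So at the level of the paper's own argument you are done; the problem lies in the extra check you add.

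That check does not verify what the proof of \cref{thm:isom} actually uses. There an isomorphism $\psi$ is only known to send each generator to a \emph{power} of a generator, $\psi(x)=\psi'(x)^{\alpha_x}$. So applying $\psi$ to $xyx\equiv yxx$ (with $x<y$ and $\psi'(x)>\psi'(y)$) gives $b^{\alpha}a^{\beta}b^{\alpha}\equiv_{\HS(\tau)}a^{\beta}b^{2\alpha}$, where $a=\psi'(y)<b=\psi'(x)$ and $\alpha,\beta\geq 1$ are arbitrary. Exponents larger than $1$ cannot be ruled out a priori: already for a one-letter alphabet, $c\mapsto c^k$ with $k$ coprime to $\tau(c)-1$ is an automorphism. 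Showing only $bab\not\equiv_{\HS(\tau)}abb$ therefore does not yet give the contradiction.

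The repair is the statistic you already mention in Step 2. For words of content $\{a,b\}$, the property \emph{some $b$ occurs to the left of the rightmost $a$} is invariant under $\equiv_{\HS(\tau)}$, for three reasons:
\begin{enumerate}[(1)]
\item \eqref{relhs1} never applies, since no letter lies strictly between $a$ and $b$;
\item \eqref{relhs2} swaps $ab\leftrightarrow ba$ in front of a later $a$, so on both sides a $b$ precedes the rightmost $a$;
\item replacing $c^{\tau(c)}$ by $c$, or conversely, only resizes a nonempty block of equal letters, which cannot change whether a $b$ precedes the rightmost $a$.
\end{enumerate}
The property holds for $b^{\alpha}a^{\beta}b^{\alpha}$ and fails for $a^{\beta}b^{2\alpha}$ for all $\alpha,\beta\geq 1$. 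That is exactly the non-identification the proof of \cref{thm:isom}.(ii) needs.
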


\section{The \texorpdfstring{$\sigma$}{s}-Metasylvester Monoids}
\label{section-metasylv}
\label{metasylvester}

\subsection{The Metasylvester Monoid}\label{ss-metasylv}

Let the \emph{metasylvester congruence}, $\equiv_{\MS}$, be the least
congruence containing the relations
\begin{equation}\label{eq:meatRel1}
ab\,U\,a = ba\,U\,a,
\end{equation}
for $a<b$ and
\begin{equation}
b\,U\,ac\,V\,b = b\,U\,ca\,V\,b,
\end{equation}
for $a<b<c$, for any $U,V\in\A^*$.

The \emph{metasylvester} monoid, denoted $\MS(\A)$, is the quotient
of $A^*$ by $\equiv_{\MS}$.

As in \cref{def:quotient-monoid}, for $\sigma: \A \to \NN_{\geq 2}$,
we define the \emph{$\sigma$-metasylvester congruence} to be the least
congruence on $\A ^ *$ containing the metasylvester congruence $\equiv_{\MS}$
and $a ^ {\sigma(a)} = a$ for every $a\in \A$; denoted $\equiv_{\MS(\sigma)}$.
The \emph{$\sigma$-metasylvester monoid over $\A$} is the quotient
$\A^* / {\equiv_{\MS(\sigma)}}$ and is denoted by $\MS(\A, \sigma)$.

\paragraph{An Invariant of the Metasylvester Monoid\\}

In \cite{NT}, the fourth author and Thibon introduced an algorithm that
provides an invariant of metasylvester classes.

\begin{defi}[Metasylvester tree building]\label{MetaTreeBuild}
Let \mbox{$W\in\A^*$} with \mbox{$\cont(W)=\{w_1<w_2<\cdots<w_n\}$}, and let $k_i=|W|_{w_i}$ for all $i$.
The metasylvester tree of $W$, denoted $\Mm(W)$, is built recursively.

\begin{enumerate}[(1)]
\item Label the root with the maximal letter $w_n$ of $W$ and give it $k_n$
children.  Set $i=n-1$.

\item Consider the smallest $j>i$ such that the rightmost occurrence of $w_i$
lies \emph{between} two occurrences of $w_j$.

\begin{enumerate}[(i)]
\item If such $j$ exists, the $(k+1)$-st subtree of $w_j$, where $k$
is the number of occurrences of $w_j$ to the left of the rightmost occurrence of
$w_i$ becomes the tree obtained by inserting the restriction of $W$ to $w_i$
and all letters that are already in the $(k+1)$-st subtree of $w_j$.
End the algorithm if $i=1$. Otherwise, start again Step 2 with $i=i-1$.

\item If no such $j$ exists, if this occurrence is to the left of
all maximal letters of $W$, then the leftmost node of the root becomes the
tree obtained by inserting the restriction of $W$ to $w_i$ and all letters
that are already in the leftmost subtree of the root. The same happens on the
rightmost node if the rightmost occurrence of $w_i$ is to the right of all
maximal letters of $W$.
End the algorithm if $i=1$. Otherwise, start again Step 2 with $i=i-1$.
\end{enumerate}
\end{enumerate}
\end{defi}

As described in Section 3.3.1 of \cite{NT}, from such tree $T$, one can build
the canonical word $\RW(T)$ of its class by labelling each sector between
two edges of a given node by the label of this node and read the sectors from
left to right. Furthermore, this element is the maximal element of its class
which is an interval of the weak order.
See \cref{14324} for an example of both the metasylvester
tree and how to build its associated canonical word.

\begin{figure}[ht]
\centering
\begin{forest}
[4[1[][][]][][3[][2[][][]]][]]
\end{forest}
\begin{forest}
[4,color=gray
	[1,color=gray, l=2mm, anchor=north
				[,l=2mm]
				[1,l=4mm, no edge]
				[,l=6mm]
				[1,l=4mm, no edge]
				[,l=2mm]
			]
	[4,no edge,l=4 mm, anchor=north east]
	[,l=7 mm, anchor=north east]
	[4,no edge, l=6 mm, anchor=north east]
	[3,color=gray, l=7 mm, anchor=north west
		[,l=2 mm, anchor=north east]
		[3,no edge,l=2mm, anchor=north]
		[2,color=gray, l=2 mm, anchor=north west
			[,l=2 mm, anchor=north]			
			[2,no edge,l=4 mm, anchor=north]
			[,l=7 mm, anchor=north]
			[2,no edge,l=4 mm, anchor=north]
			[,l=2 mm, anchor=north]
		]
	]
	[4,no edge,l=4mm, anchor=north west]
	[,l=2mm, anchor=north west]
]
\end{forest}
\begin{forest}
[4,color=gray
	[1,color=gray, l=2mm, anchor=north
				[,l=2mm]
				[1,l=4mm, no edge]
				[,l=2mm]
			]
	[4,no edge,l=4 mm, anchor=north]
	[3,color=gray, l=7 mm, anchor=north west
		[,l=2 mm, anchor=north east]
		[3,no edge,l=2mm, anchor=north]
		[2,color=gray, l=2 mm, anchor=north west
			[,l=2 mm, anchor=north]
			[2,no edge,l=2 mm, anchor=north]
			[,l=2 mm, anchor=north]
		]
	]
	[4,no edge,l=4mm, anchor=north west]
	[,l=2mm, anchor=north west]
]
\end{forest}
\caption{\label{14324}On the left, the metasylvester tree $T$ of $112424324$.
In the middle, the algorithm to get $\RW(T)=11443224$. On the right, the
2-reduction of $T$ associated with the 2-canonical word $14324$.}
\end{figure}

\subsection{Quotient Type}\label{ss-metasylv-type}

For $|\A|\geq 2$, $\MS(\A)$ is not a type 1 monoid. This is for the same
reason as for $\Sylv$; for $a<b$, using \cref{ss:Type1Monoids} notation,
one has $\theta(bbab)=\theta(babb)=bab$ and
$\ev_\sigma(bbab)=\ev_\sigma(babb)=abbb$ but they are not
$\sigma$-metasylvester equivalent if $\sigma(b)\neq 2$.
We shall see that it is of type 2.

\begin{prop}
\label{cor-metasylv-gathered}
The metasylvester monoid is a \gathered monoid.
\end{prop}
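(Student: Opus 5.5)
We follow the same two-step scheme as for the hyposylvester monoid (\cref{cor-hyposylv-gathered}): first show that the canonical word $\RW(T)$ of a metasylvester tree $T$ is \gathered, then show that expansions and contractions commute with passing to the canonical element.

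\textbf{Step 1: existence of a \gathered element.} We take $\RW(T)$, read off the tree $T=\Mm(W)$ by labelling each sector of a node with its label and reading the sectors from left to right. Two consecutive occurrences of a letter $x$ in $\RW(T)$ correspond to two adjacent sectors of the node labelled $x$. They are separated exactly when the subtree hanging between those sectors is nonempty, that is, when some smaller letter has been placed there by \cref{MetaTreeBuild}.

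We must show that if the $k$-th and $(k+1)$-st occurrences of $x$ are adjacent in some word $W'$ of the class, then this subtree is empty. Suppose instead it contains a letter $y<x$. The tree is an invariant of the class, so in every word of the class the rightmost occurrence of some letter of that subtree lies between the $k$-th and $(k+1)$-st occurrences of $x$. Hence these two occurrences can never be consecutive. By \cite{NT}, $\RW(T)$ is the maximal element of its class, so $\RW(T)\in\G_{\MS}$.

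\textbf{Step 2: compatibility with expansions and contractions.} We argue as in the hyposylvester case by orienting the relations. Going from $W$ to $G(W)$ amounts to applying the rewritings $ab\,U\,a\to ba\,U\,a$ for $a<b$, and $b\,U\,ac\,V\,b\to b\,U\,ca\,V\,b$ for $a<b<c$. These strictly increase the word in the weak order, and the class is an interval of the weak order whose maximum is $\RW(T)$.

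Now let $W'$ be the $(x,i)$-expansion of $W$. We mimic each elementary rewriting of a path $W\to G(W)$ on $W'$. If the rewriting does not move the duplicated $x$, we apply it verbatim. If it moves that $x$, we apply it twice, once to each copy, so that the two copies stay adjacent. This is legitimate because the context conditions of both relations only require the presence of certain letters ($a$ at the end, or $b$ on both sides), and doubling an $x$ preserves these conditions. We thus reach the $(x,i)$-expansion of $G(W)$. This word lies in the class of $W'$, is \gathered, and is maximal, because the orientation above admits no further rewriting of it. Therefore it equals $G(W')$.

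For contractions we run the argument backwards. No metasylvester relation needs two equal adjacent letters, so every move involving both copies of $x$ can be simulated with a single copy.

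\textbf{Main obstacle.} The delicate point is the last claim, that the expanded canonical word is again lexicographically maximal in its new class. Here we would rather use the tree description directly. Expanding an occurrence of $x$ in $W$ adds one empty sector to the node $x$ of $\Mm(W)$ at the corresponding position, and leaves unchanged which subtree each smaller letter's rightmost occurrence falls into. Hence $\Mm$ of the expansion is $\Mm(W)$ with that node given one extra empty child. Its reading word is then exactly the $(x,i)$-expansion of $\RW(\Mm(W))$, which gives condition (2) of \cref{def:type2}.
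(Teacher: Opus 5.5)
Your proof is correct, but it follows a different route from the paper's. The paper stays entirely inside the oriented rewriting system, and asserts three things. First, orienting the two relations carries any $W$ to $G(W)=\RW(\Mm(W))$. Second, a step that separates two adjacent equal letters can be mirrored on the other copy so they are re-joined; hence adjacent equal letters of $W$ stay adjacent in $G(W)$. Third, expansions and contractions commute with all rewritings, because no relation requires or forbids two equal adjacent letters. You instead treat the metasylvester tree as a class invariant. You read regroupedness of $\RW(T)$ off the adjacent sectors of the node $x$. You get condition~(2) of the type~2 definition by computing $\Mm$ of the $(x,i)$-expansion directly: it is $\Mm(W)$ with an empty child inserted between children $i$ and $i+1$ of $x$. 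This gives a checkable argument and avoids exactly the point you flagged, namely that a word admitting no further oriented rewriting is the maximum of its class. The paper, like your first version of Step~2, uses that fact without justification. With the tree computation in hand, the rewriting simulation and the discussion of contractions are unnecessary, since the definition only concerns expansions. The price is a dependence on fine properties of the tree-building algorithm, which you should make explicit in Step~1: the witness must be the \emph{root} $z$ of the $(k+1)$-st subtree of $x$. A letter becomes a middle child of $x$ only through Step~2(i) with $w_j=x$; once a letter is sent into an existing subtree, the recursive insertion only ever uses Step~2(ii). Hence in every word of the class the rightmost $z$ lies between the $k$-th and $(k+1)$-st occurrences of $x$. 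Deeper letters of that subtree need not behave this way. For example, in $zyxzx$ with $y<z<x$, the letter $y$ sits below $z$ in the middle subtree of $x$, although its only occurrence is to the left of both $x$'s.
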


The proof is very similar to the proof in the hyposylvester case.

\begin{proof}
First, given any word $W$, one obtains $G(W)=\RW(\Mm(W))$ by orienting the
relations as
\begin{enumerate}[(1)]
\item $ab\,U\,a \to ba\,U\,a$, \quad for $a<b$ and $U\in\A^*$, and,
\item $b\,U\,ac\,V\,b \to b\,U\,ca\,V\,b$, \quad for $a<b<c$, for any
$U,V\in\A^*$.
\end{enumerate}

Then if two values are equal and consecutive and, in the path from $W$ to
$G(W)$, they get separated by another value, one easily sees that one can
apply the same rewriting to the other equal value, putting these back
consecutively. So if two equal letters are consecutive in $W$, they still are
consecutive in $G(W)$. Since $G(W)$ is the maximal element of its class,
$G(W)\in\G_{\MS}$.

Now, the argument is the same as in the proof of the hyposylvester case: there
are no rewritings forbidden by consecutive equal letters or requiring
consecutive equal letters so the canonical element of an expansion (resp. a
contraction) is the expansion (resp. the contraction) of its canonical
element.
\end{proof}

The previous insertion algorithm gave us a canonical word for each
metasylvester class. When quotienting, one can shave these trees by
replacing any $\sigma(x)$ consecutive unlabelled leaves that have $x$ as their
parents by only one unlabelled leaf.
Call such a tree a $\sigma$-\emph{reduced metasylvester tree}. One can obtain
a word from a $\sigma$-reduced metasylvester tree by the same procedure as
in the metasylvester monoid case; denote this word by $G_\sigma(W)$ (or
$G_\sigma(T)$ if $G(W)=T$).

For example, if $\sigma(x)=2,$ for all values $x\in\A$, the corresponding tree
is a tree where for each labelled node, only its rightmost and leftmost edges
can be unlabelled.
See the right tree of \cref{14324} for an example of a 2-reduced metasylvester
tree.

Then $W$ and $G_\sigma(W)$ are in the same $\sigma$-reduced metasylvester
class because each sector separated by an unlabelled edge are labelled the
same and are consecutive in the canonical word of the metasylvester class.

Therefore, if $T$ is a $\sigma$-reduced metasylvester tree,
$\RW(T)\in\G_{\MS(\sigma)}$.

\begin{thm}
\label{metasylv-s}
Let $\A$ be an alphabet. The set of $\sigma$-reduced metasylvester trees is a
section of $\MS(\A,\sigma)$.
\end{thm}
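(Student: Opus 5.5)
The plan is to derive this from \cref{gen-gathered}, using that $\MS(\A)$ is \gathered by \cref{cor-metasylv-gathered}. That theorem says the $\sigma$-reduced words $G_\sigma(W)$ form a set of normal forms for $\MS(\A,\sigma)$. So it suffices to exhibit a bijection between $\G_{\MS(\sigma)}$ and the $\sigma$-reduced metasylvester trees, and to check that it is compatible with the reading map $\RW$.

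First, I would identify $G_\sigma(W)$ concretely. By \cref{cor-metasylv-gathered}, $G(W)=\RW(\Mm(W))$, and this word is the left-to-right reading of the sectors of $\Mm(W)$, each sector labelled by its node. A factor $x^m$ of $G(W)$ made of occurrences of $x$ that are all read from sectors of the same node $x$ corresponds to $m$ consecutive sectors of that node. These sectors are separated by $m-1$ edges, which must be unlabelled leaves, since otherwise a smaller letter would be read in between.

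Replacing a factor $x^{\sigma(x)}$ by $x$ therefore amounts to merging $\sigma(x)$ consecutive sectors, that is, deleting $\sigma(x)-1$ consecutive unlabelled leaves of $x$. This is exactly the shaving described before the statement. One subtlety is that two equal letters adjacent in $G(W)$ always come from the same node $x$: there is only one node labelled $x$ in $\Mm(W)$, so the sectors cannot come from different nodes. Iterating the shaving until no factor $x^{\sigma(x)}$ remains yields precisely the condition that no node $x$ has $\sigma(x)$ consecutive unlabelled children. Hence $G_\sigma(W)=\RW(T_\sigma)$, where $T_\sigma$ is the $\sigma$-reduction of $\Mm(W)$.

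Next, I would check that the map $T\mapsto\RW(T)$ on $\sigma$-reduced trees is injective and lands in $\G_{\MS(\sigma)}$. For landing in $\G_{\MS(\sigma)}$, note that a $\sigma$-reduced tree $T$ is itself a metasylvester tree, namely $\Mm(\RW(T))$: re-running \cref{MetaTreeBuild} on $\RW(T)$ reconstructs $T$, because the positions of rightmost occurrences relative to the occurrences of larger letters are unchanged by shaving leaves. Consequently $\RW(T)=G(\RW(T))$, and this word has no factor $x^{\sigma(x)}$, so it is its own $\sigma$-reduced word. Injectivity follows from the same reconstruction, since $T=\Mm(\RW(T))$.

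Combining these steps, every class of $\MS(\A,\sigma)$ contains exactly one word $G_\sigma(W)=\RW(T)$ with $T$ a $\sigma$-reduced tree, by \cref{gen-gathered}, which proves the section property. The main obstacle is the identification of factors $x^m$ with consecutive sectors separated by unlabelled leaves, together with the fact that $\Mm(\RW(T))=T$ for shaved trees. I would handle both via the sector description of $\RW$ from \cite{NT}, arguing that the rewriting orientation of \cref{cor-metasylv-gathered} never separates those sectors.
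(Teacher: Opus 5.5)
Your route is the paper's. Its proof just invokes \cref{gen-gathered} through \cref{cor-metasylv-gathered}, and your sector analysis is the right way to turn the $\sigma$-reduced words $G_\sigma(W)$ into trees.

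The step that breaks is the sentence converting ``no factor $x^{\sigma(x)}$'' into a condition on trees. By your own analysis, $x^{\sigma(x)}$ comes from $\sigma(x)$ consecutive sectors of the node $x$. These are separated by only $\sigma(x)-1$ unlabelled leaves, none of which is the first or last child of $x$; the two edges bounding the run do not count. So the reducedness condition must be: no node $x$ has $\sigma(x)$ consecutive sectors with no labelled child between them.

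Your condition ``no node $x$ has $\sigma(x)$ consecutive unlabelled children'' is off by one in both directions, and the section property fails with it:
\begin{itemize}
\item For $\sigma(a)=2$, the tree of the word $a$ is a single node with two unlabelled children. So the class of $a$ would contain no reduced tree at all.
\item For $\sigma\equiv 3$, take $\Mm(144424)$: the root $4$ has children the node $1$, two leaves, the node $2$, and a leaf. It satisfies your condition, yet its reading $144424$ contains $4^3$. So it lies in the same $\MS(\A,3)$-class as the tree of $1424$, which also satisfies your condition.
\end{itemize}

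The paper's own wording of the shaving (replace $\sigma(x)$ consecutive unlabelled leaves by one) has the same off-by-one. So ``this is exactly the shaving described before the statement'' is not something to lean on. What the paper actually does is your sector merging. You can see this in \cref{14324}, where the single leaf between the subtrees of $1$ and $3$ is removed, and in its remark that for $\sigma=2$ only the leftmost and rightmost edges may be unlabelled.

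Once reducedness is defined via sectors, the rest of your argument goes through. For injectivity you need not re-run \cref{MetaTreeBuild}. A tree with distinct labels decreasing downward is recovered from its sector reading by cutting at the occurrences of the largest letter and recursing on the pieces.
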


\begin{proof}
The proof is straightforward using the fact that the metasylvester monoid is
\gathered.
\end{proof}

\subsection{The 2-Metasylvester Monoid}

As we did with the other monoids, we first study the 2-metasylvester monoid.

\paragraph{$\L$-triviality\\}

Clearly, $\MS(\A,2)$ is not $\J$-trivial since otherwise, $\Sylv(\A,2)$,
which is a quotient, would be as well, which is not the case. We shall
show that it is $\L$-trivial. In order to do so, we first need a rather
technical lemma and a notation. Denote by $p(i,W)$ the label of the
parent of $i$ in $\Mm(W)$.

\begin{lemma}\label{metaparents}
    For $i,x\in\A$ and a word $W$, $p(i,xW)\leq p(i,W)$.
\end{lemma}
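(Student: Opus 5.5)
We need to show that prepending a letter $x$ can only decrease (weakly) the label of the parent of $i$ in the metasylvester tree. The plan is to compare the construction of \cref{MetaTreeBuild} for $W$ and for $xW$ step by step. Recall that the parent of $w_i$ is determined at step $i$: it is $w_j$ for the smallest $j>i$ such that the rightmost occurrence of $w_i$ lies between two occurrences of $w_j$. If no such $j$ exists, $w_i$ is pushed into the leftmost or rightmost subtree of the root. It is then attached further down inside that subtree. In either case the final parent of $i$ is the smallest letter greater than $i$ whose occurrences ``surround'' the rightmost $i$, once one accounts for the recursive reinsertion into subtrees. I would first make this explicit. I would prove by induction on the construction that $p(i,W)$ is the minimum of the set
\[
S_i(W)=\set{y>i}{y \in\cont(W)\text{ and } y \text{ is the parent-candidate of } i},
\]
where $y$ is a candidate if $i$ ends up in a subtree of $y$. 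Equivalently, $p(i,W)=\min\set{y>i}{\text{$i$ lies in the subtree rooted at $y$ in }\Mm(W)}$. This holds because labels decrease along root-to-leaf paths of $\Mm(W)$.

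The key step is the following claim: if $i$ lies in the subtree of $y$ in $\Mm(W)$, then $i$ also lies in the subtree of $y$ in $\Mm(xW)$, provided $y\neq x$ or $x$ does not change the situation. Given the claim, $S_i(W)\subseteq S_i(xW)$ up to the possible addition of $x$ itself, and taking minima gives $p(i,xW)\leq p(i,W)$. To prove the claim, observe what adding a letter $x$ on the left does. Rightmost occurrences of letters other than $x$ do not move, and if $x\in\cont(W)$ the rightmost $x$ does not move either. The only effect is one new occurrence of $x$ at the far left, which adds a leftmost child to node $x$. If $x\notin\cont(W)$, it creates a new node $x$ placed in the leftmost branch below the nodes whose letters are greater than $x$. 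I would treat these two cases separately.

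In the case $x\in\cont(W)$, the relation ``the rightmost $w_i$ lies between two occurrences of $w_j$'' is unchanged for $w_j\neq x$. For $w_j=x$ it can only become true, never false. So the smallest such $j$ can only decrease. The subtree indices (numbers of $w_j$ to the left) shift by one only for $w_j=x$, and this shift is uniform, so the recursive structure is preserved. In the case $x\notin\cont(W)$, the new node $x$ is inserted into a chain of leftmost subtrees. Any $i<x$ previously in that region now goes below $x$, which can only lower its parent to $x$. Letters $i>x$ are unaffected. I expect the main obstacle to be handling case (ii) of \cref{MetaTreeBuild}, where no surrounding $j$ exists and $i$ is recursively reinserted into the leftmost or rightmost subtree of the root. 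For that case I would set up an induction on $|\cont(W)|$, applying the statement to the restriction of $W$ to the letters that land in that subtree. The point to use is that restriction commutes with prepending $x$.
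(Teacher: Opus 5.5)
\textbf{Genuine gap: the ancestor-preservation claim is false.}

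Your reduction to ancestor sets is fine: labels decrease along root-to-leaf paths, so $p(i,W)$ is the smallest proper ancestor of $i$. The problem is your key claim, that an ancestor $y\neq x$ of $i$ in $\Mm(W)$ remains an ancestor in $\Mm(xW)$. It is false, and the phrase ``the recursive structure is preserved'' hides exactly the step that needs proof.

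Take $W=31352$, $x=2$, $i=1$.
\begin{itemize}
\item In $\Mm(W)$, the node $3$ is the leftmost child of the root $5$ and $2$ is its rightmost child. Since $3$ is the smallest letter with occurrences on both sides of the last $1$, the node $1$ sits in the middle subtree of $3$, so $p(1,W)=3$.
\item In $xW=231352$, the letter $2$ now has occurrences on both sides of the last $1$, and $2<3$. So $1$ is placed in the middle subtree of $2$, which still hangs on the right of $5$; indeed $231352\equiv_{\MS}335212$.
\end{itemize}
Now $3$ is no longer an ancestor of $1$, although $3\neq x$. The inequality survives only because the new parent is $x<3$.

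In general, you correctly observe that the smallest letter surrounding the last $i$ can only decrease. But when it drops to $x$, $i$ is re-inserted into a sector of $x$ and can leave the subtrees of its former ancestors. All you get is $p(i,xW)\le x$, and nothing in your plan handles the case $p(i,W)<x$.

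\textbf{What the case $p(i,W)<x$ requires.} This case is where the paper's proof does its real work.
\begin{itemize}
\item Take $W$ to be the canonical word of its class. This is legitimate because $p(i,\cdot)$ is a class invariant and left multiplication by $x$ respects classes.
\item Let $k=p(i,W)$. In the canonical reading, all occurrences of $i$ and of $k$ lie in the contiguous block that reads the subtree of $k$. So no letter larger than $k$ lies between the last $i$ and the last $k$, and this survives prepending $x$.
\item Hence every letter $l>k$ stands in the same relation to the last $i$ and the last $k$ in $xW$: it surrounds both or neither, with the same number of $l$'s to their left.
\item Consequently, either some letter strictly between $i$ and $k$ is the smallest one surrounding the last $i$, or $i$ follows $k$ wherever $k$ is moved and lands in a subtree of $k$. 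In both cases $p(i,xW)\le k$.
\end{itemize}
This is the paper's statement that for every $l$ with $k<l\le x$, if $k$ lies in a subtree of $l$ then so does $i$.

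Your proposed induction on $|\cont(W)|$ via restrictions cannot replace this argument. The sets of letters landing in a given subtree are themselves different for $W$ and $xW$, so the restricted words you would induct on do not correspond.
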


\begin{proof}
Let $W$ be the 2-reduced canonical word of its class.
For $x\in\A$, let $T$ be the 2-reduced metasylvester tree of $xW$.
   
First, if $x\in\cont(W)$ and $p(i,xW)\neq p(i,W)$, then, by
\cref{MetaTreeBuild}, $i<x$ and the rightmost occurrence of $i$ needs to lie
at the left of the first $x$.

For such an $i$, assume that there exists $i<j$ such that the right occurrence
of $i$ lies between two occurrences of $j$. Take this $j$ to be minimal and
these occurrences consecutive. If $j<x$, by \cref{MetaTreeBuild}, one has
$p(i,xW)=p(i,W)$.

Now, if $j>x$ or if such $j$ does not exist, $i$ is inserted in $T$ on the
second child of $x$, its first child being an empty leaf. If $p(i,W)\geq x$,
then $p(i,xW)\leq x\leq p(i,W)$. And if $p(i,W)<x$, let $k=p(i,W)$. The
only letters between the rightmost $i$ and the rightmost $k$ are smaller than
$k$.

Thus, for any $k<l\leq x$, if $k$ is in a subtree of $l$ in $T$, so does $i$.
So $p(i,xW)\leq k=p(i,W)$, which concludes the case where $x\in\cont(W)$.

Now, if $x\notin\cont(W)$ and $p(i,xW)\neq p(i,W)$, then $i<x$ and the
rightmost occurrence of $i$ needs to lie at the left of all the $y>x$ in $W$.
In the 2-reduced metasylvester tree $T'$ of $W$, they all are in the left
subtree of a $k>x$. Take such $k$ minimal.

Then, by \cref{MetaTreeBuild}, $T$ is obtained from $T'$ by simply cutting out
the left subtree $L$ of $k$, labeling the created leaf $x$ and, placing $L$ as
the right subtree of $x$, its left subtree being a leaf. This concludes the
proof.
\end{proof}

\begin{thm}
    The monoid $\MS(\A,2)$ is $\L$-trivial.
\end{thm}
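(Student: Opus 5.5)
We need $\L$-triviality of $\MS(\A,2)$: if $\MS u=\MS v$ then $u=v$. So suppose $u\equiv_{\MS(2)}XYu$ type situations. The plan is to show that left multiplication by a letter either fixes an element or strictly increases a suitable statistic. Once that holds, $u=Xv$ and $v=Yu$ give $u=XYu$. Monotonicity then forces every intermediate left product to leave the statistic unchanged, hence to fix the element. In particular $v=Yu=u$.

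To set up the statistic, represent each element of $\MS(\A,2)$ by its $2$-reduced metasylvester tree, using \cref{metasylv-s} with $\sigma=2$. We record the content together with, for each $i$ in the content other than the maximum, the parent label $p(i,W)$. This data is close to determining the tree. The remaining information is which child of the parent $i$ sits in. In a $2$-reduced tree, a node labelled $j$ whose leftmost and rightmost edges may be unlabelled has a bounded number of children, and the positions among them are limited.

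Define $\Phi(W)$ as follows. Its first component is $|\cont(W)|$. Its second component is $-\sum_i p(i,W)$, with the labels identified with their ranks in $\A$, say. A third component will capture the position of $i$ among the children of its parent; we order $\Phi$ lexicographically.

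Left multiplication by $x$ never decreases the content. When $x\notin\cont(W)$ the content strictly grows. When $x\in\cont(W)$, \cref{metaparents} gives $p(i,xW)\le p(i,W)$ for all $i$, so the second component is weakly increasing. It remains to handle the case where all parents are unchanged. I then need to check that $xW$ and $W$ either define the same tree or differ by $i$ moving to a child of $p(i,W)$ that is further right. The guiding example is $x$ with $i<x$ whose rightmost occurrence lies left of the first $x$: $i$ gets pushed to the second child of $x$, past an empty first leaf.

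I will follow the case analysis in the proof of \cref{metaparents}, using \cref{MetaTreeBuild}, to show the third component is monotone. The main obstacle is this step: showing that with fixed content and parents, left multiplication can only shift child positions in one direction. If that fails, I would instead choose $\Phi$ as the left-to-right sequence of labelled sectors of the tree, ordered suitably. I would then argue directly from the oriented rewritings $ab\,U\,a\to ba\,U\,a$ and $b\,U\,ac\,V\,b\to b\,U\,ca\,V\,b$ that prefixing $x$ only merges or refines in one direction.
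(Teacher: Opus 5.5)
Your architecture matches the paper's. You show that left multiplication by a letter is monotone, use \cref{metaparents} to control the parent labels, and treat separately the case where only child positions can move. Packaging this as a lexicographic statistic $\Phi$ is fine. So is deducing $\L$-triviality from ``$\Phi(xW)\geq\Phi(W)$, with equality only if $xW\equiv_{\MS(2)}W$''. The gap is that you leave open the one step that \cref{metaparents} does not supply: the case $x\in\cont(W)$ with $p(i,xW)=p(i,W)$ for every $i$. Your fallback (comparing sector sequences via the oriented rewritings) is not an argument. Without that step you have no control over the tie-breaking component, so the proposal does not yet prove the theorem.

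The missing idea, which is what the paper's proof asserts, is a structural statement. Under these hypotheses, $\Mm(xW)$ is obtained from $\Mm(W)$ by inserting a new empty leftmost child at the node $x$, so the former leftmost subtree of $x$ becomes its second subtree. After $2$-reduction this changes nothing when that former leftmost child is already an empty leaf. To see it, note that in \cref{MetaTreeBuild} a letter's placement depends only on its rightmost occurrence relative to larger letters.
\begin{itemize}
\item Letters $\geq x$ are placed as before, since prepending $x$ does not move the rightmost $x$.
\item For $i<x$ whose rightmost occurrence lies to the right of the first $x$ of $W$, the only change is one extra $x$ before it. This is exactly the shift of the children of $x$.
\item For $i<x$ whose rightmost occurrence lies to the left of the first $x$, the new $x$ now straddles it. If the smallest letter straddling it in $W$ is already $<x$, its position below that letter is unchanged. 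If its descent in $W$ passed through $x$, it sat in the leftmost subtree of $x$ and now sits in the second one. Otherwise $i$ is newly pulled under $x$. Then, since inductively the trees on larger letters agree up to the shift, its parent label changes, which the hypothesis excludes.
\end{itemize}
With this in hand your child-position component strictly increases, because the root of the old leftmost subtree of $x$ moves from index $1$ to index $2$. You could also use the simpler tie-breaker $|G_2(W)|$, the number of sectors. In this case it goes up by exactly one whenever the $2$-reduced tree changes.
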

\begin{proof}
    Let $x\in\A$ and $W$ be the 2-reduced canonical word of its class.
    
    By \cref{metaparents}, one has $p(i,xW)\leq p(i,W)$ for all $i\in \cont(W)$.
    
    And if $p(i,xW)=p(i,W)$ for all $i\in\cont(W)$, clearly, there are no letters smaller than $x$ at the left of the leftmost $x$, except the ones that are in the left subtree of $x$ in $\Mm(W)$.
    Thus, in that case, $\Mm(xW)$ is obtained from $\Mm(W)$ by adding a new left subtree to $x$ if its left subtree contains labeled elements.

    Therefore, adding a letter to the left of $W$ cannot create a non-trivial cycle through elements of $\MS(\A,2)$.
\end{proof}

\paragraph{Enumeration}

We can do a finer enumeration by counting the number of 2-reduced canonical
words of length $k$ and content of size $n$.
For an alphabet $\A$ of size $n$,
let $$\Met_{n,k}:=\set{W\in\G_{\MS(2)}}{\cont(W)=\A,|W|=k}.$$

\begin{prop}\label{p:2metaenum}
Let
\begin{equation}
m_{n,k}=|\Met_{n,k}|.
\end{equation}
One has
\begin{equation}
m_{n+1,k+1}=(k+1)m_{n,k}+(k-1)m_{n,k-1},
\end{equation}
with $m_{1,1}=1$ and $m_{1,0}=0$.
\end{prop}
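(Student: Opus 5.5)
The plan is to prove the recurrence by adjoining a new letter smaller than every letter of the alphabet, and to classify the resulting $2$-reduced canonical words by how many times that letter occurs. Let $\A=\{a_1<\dots<a_n\}$ and $\A'=\{a_0\}\cup\A$ with $a_0<a_1$. I would define a \emph{restriction map} $\Met_{n+1,k+1}\to\bigcup_j\Met_{n,j}$ that erases every occurrence of $a_0$, and count the fibres.

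The first step is to check that the restriction map is well defined, i.e.\ that erasing $a_0$ sends a $2$-reduced canonical word to a $2$-reduced canonical word. In \cref{MetaTreeBuild} the minimal letter is processed last. It therefore only creates a new subtree hanging from some existing node, and never modifies the part of $\Mm(W)$ built from the larger letters. The same holds for the shaved ($2$-reduced) trees of \cref{metasylv-s}. Hence the restriction of $G_2(W)$ to $\A$ is $G_2$ of the restriction of $W$.

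The second step bounds the number of occurrences of $a_0$. In a $2$-reduced tree only the leftmost and rightmost edges of a labelled node may be unlabelled. The node $a_0$ has no labelled children, so it has at most two unlabelled edges, and $a_0$ occurs once or twice. This splits $\Met_{n+1,k+1}$ into two disjoint parts.
\begin{itemize}
\item If $a_0$ occurs once, the restriction $W'$ has length $k$. The class is determined by $W'$ together with the gap of $W'$ in which $a_0$ sits. By the rule for choosing the $(k+1)$-st subtree in \cref{MetaTreeBuild}, the $k+1$ possible gaps give $k+1$ distinct $2$-reduced trees. Each choice yields a canonical word, since the oriented rules $ab\,U\,a\to ba\,U\,a$ and $b\,U\,ac\,V\,b\to b\,U\,ca\,V\,b$ only fix where $a_0$ is written inside its gap. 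This part contributes $(k+1)m_{n,k}$.
\item If $a_0$ occurs twice, the restriction has length $k-1$ and hence $k$ gaps. The rightmost $a_0$ determines the subtree, and the position of the first $a_0$ is then forced by canonicity. The relation $a_0b\,U\,a_0=ba_0\,U\,a_0$ pushes the first $a_0$ right until it is immediately before the letter preceding the second $a_0$'s block. Two occurrences in the same gap would collapse under $a_0^2=a_0$. Consequently the leftmost gap is forbidden for the rightmost $a_0$, because some letter must separate the two occurrences. Every other gap works, which gives $(k-1)m_{n,k-1}$.
\end{itemize}
Adding the two cases gives the recurrence. The initial values hold because the only word on one letter is $a_1$, so $m_{1,1}=1$ and $m_{1,0}=0$.

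The main obstacle is the two-occurrence case. There I must show that placing the rightmost $a_0$ in any gap other than the first yields a genuine $2$-reduced canonical word, and that the leftmost gap is the \emph{only} forbidden one. I would settle this by reading the children of the node $a_0$ in the shaved tree. Two sectors with exactly the outer edges unlabelled correspond to $a_0$ occurring twice, and I would verify against the recursive construction of \cref{MetaTreeBuild} which parent subtrees admit such a node. If this local analysis turns out to be delicate, I would first check the recurrence numerically on small cases, for instance $n=2$, where the count should be $m_{2,2}=2$ and $m_{2,3}=0\cdot$ from the first term plus $1\cdot m_{1,1}$. I would then adjust the gap-exclusion argument to match.
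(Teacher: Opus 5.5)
There is a genuine gap. Your case split by the number of occurrences of $a_0$ is the wrong one, and both of your structural claims about $a_0$ are false.

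First, $a_0$ occurs \emph{exactly once} in every element of $\Met_{n+1,k+1}$. Since $a_0$ is the smallest letter, the relation $ab\,U\,a=ba\,U\,a$ lets any non-final occurrence of $a_0$ slide right past every letter until it meets the next $a_0$. Then $a_0^2=a_0$ merges them; for instance $a_0a_1a_0\equiv_{\MS}a_1a_0a_0\equiv_{\MS(2)}a_1a_0$. In the tree you conflated edges with sectors: a node with only its two outer edges has a single sector, hence a single occurrence. So your two-occurrence case is empty. Your ``push the first $a_0$ until it is just before the letter preceding the second $a_0$'' argument describes words that are not canonical.

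Second, erasing $a_0$ does \emph{not} preserve $2$-reducedness, and adding $a_0$ \emph{does} modify the shaved tree. The letter $a_0$ can sit between two equal letters, $W=U\,j\,a_0\,j\,V$, and then the erasure $U\,jj\,V$ is not reduced. The unique element of $\Met_{2,3}$ is $a_1a_0a_1$, which has exactly this form. It falls in neither of your cases, since its erasure $a_1a_1$ is not in $\Met_{1,2}=\varnothing$. Your check $m_{2,3}=0+1\cdot m_{1,1}$ succeeds only because your two misattributions happen to produce the same numbers.

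The missing idea, which is exactly the paper's argument, is to classify by the \emph{neighbours} of the unique $a_0$.
\begin{enumerate}[(1)]
\item Either $a_0$ lies between two distinct letters or at an end. Erasing it gives an element of $\Met_{n,k}$. Conversely, $a_0$ may be inserted in any of the $k+1$ gaps of such a word; in the tree, $a_0$ becomes a leftmost or rightmost child. This gives $(k+1)m_{n,k}$.
\item Or $a_0$ lies between two equal letters $j\,a_0\,j$. Erasing it and reducing $jj$ to $j$ gives an element of $\Met_{n,k-1}$. Conversely, any of its $k-1$ letters $j$ can be replaced by $j\,a_0\,j$; in the tree, $a_0$ becomes an extra middle child of $j$, which is why $j$ gains a sector. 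This gives $(k-1)m_{n,k-1}$.
\end{enumerate}
You would still need to check two things. The insertions must always produce canonical $2$-reduced words. For instance, in a canonical word every letter strictly between two consecutive occurrences of $b$ is smaller than $b$, so no relation can act on a new factor $a_0y$. You must also check that the two constructions are inverse to erasure followed by reduction. But the bijective structure has to be this one, not the one you proposed.
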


\begin{proof}
Let $W\in\G_{\MS(2)}$ and $T=\Mm(W)$ its corresponding tree. In order to add a
new letter 0 smaller than all the letters in $W$, one has two choices.
Either add $0$ in between two distinct letters $j<k$ (or at the beginning or
the end of $W$) or duplicate a letter $j$ and insert $0$ in between these two
occurrences of $j$. This can be seen on the tree $T$ by adding $0$
respectively on the rightmost or leftmost child of $j$ or by adding $0$ as an
extra middle child of $j$.

One can retrieve the former $T$ by simply removing $0$ and maybe shave one of
its edge off if needed (latter case).

The equation comes naturally.
\end{proof}

Using the finer enumeration, one can easily build \cref{table-metasylv}:
the number of reduced 2-metasylvester trees with $k$ sectors using all
values from $1$ to $|\A|$.

\begin{table}[ht]
\begin{equation*}
\begin{array}{lrrrrrrr}
\toprule
\multirow{2}{*}{k} & \multicolumn{7}{c@{}}{|\A|}\\
    \cmidrule(lr){2-8}
      &  1 & 2 & 3 & 4 & 5 & 6 & 7 \\
\midrule
     1 & 1 &   &   \\
     2 &   & 2 &   \\
     3 &   & 1 & 6 \\
     4 &   &   & 8 & 24 \\
     5 &   &   & 3 & 58 & 120 \\
     6 &   &   &   & 50 & 444 & 720  \\
     7 &   &   &   & 15 & 640 & 3708 & 5040 \\
\bottomrule
\end{array}
\end{equation*}
\caption{The number of reduced 2-metasylvester trees with $k$ sectors
using all values from $1$ to $|\A|$.}
\label{table-metasylv}
\end{table}

Note that the main diagonal is the factorial since it counts the number of
(usual) decreasing trees. More surprisingly, the numbers where
$k=2|\A|-1$ are the products of the odd numbers, $|\A|-1!!$.
In that case, the node containing the sectors of all letters but $n$ is
between two sectors of the same value. So by induction, to add a new smallest
value, one has to split any of the already existing nodes, that are in total
number $2|\A|-1$, hence the product formula.

\begin{cor}
\label{cor:enum-2MS}
If $\A$ is a finite alphabet of size $n$, we have,
\begin{equation}
|\MS(\A,2)| = \sum_{i=0}^{n}\binom{n}{i}\sum_{k=i}^{2i-1}m_{i,k}.
\end{equation}
\end{cor}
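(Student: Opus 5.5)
The plan is to partition $\MS(\A,2)$ by content and then count by length within each content class. By \cref{metasylv-s} with $\sigma=2$, the 2-reduced metasylvester trees, equivalently the words of $\G_{\MS(2)}$, form a section of $\MS(\A,2)$. So $|\MS(\A,2)|$ is the number of words in $\G_{\MS(2)}$ over $\A$. The presentation is content-preserving, so these words split according to their content $\B\subseteq\A$.

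First I would show that the number of such words of content $\B$ and length $k$ depends only on $|\B|$ and $k$. An order-preserving relabelling $\B\to\{1,\dots,|\B|\}$ commutes with the metasylvester relations and with the tree construction of \cref{MetaTreeBuild}, since both depend only on relative order. Hence that number is $m_{|\B|,k}=|\Met_{|\B|,k}|$. Grouping subsets by cardinality $i$ contributes a factor $\binom{n}{i}$.

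It remains to pin down the range of $k$ for content of size $i\geq1$, namely $i\le k\le 2i-1$. The lower bound holds because every letter of $\B$ appears. For the upper bound I would induct using the construction in the proof of \cref{p:2metaenum}. That proof shows that each element of $\Met_{i+1,\cdot}$ arises from an element of $\Met_{i,\cdot}$ by adding a new minimal letter. This either adds one sector ($k\mapsto k+1$) or duplicates a letter and inserts the new one between the copies ($k\mapsto k+2$). Starting from $m_{1,1}=1$ with $m_{1,k}=0$ for $k\neq1$, induction gives $m_{i,k}=0$ outside $[i,2i-1]$. Equivalently, one can read this off the recurrence of \cref{p:2metaenum}.

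For $i=0$ the inner sum is empty under the stated bounds, so it must be read as counting the empty word, which contributes $1$. I would note this convention, setting $m_{0,0}=1$ as the empty tree, so that the formula also accounts for the identity element. Summing over $i$ gives the formula. The only delicate step is the bound $k\le 2i-1$, and I expect it to follow directly from the recurrence.
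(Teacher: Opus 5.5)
Your argument is correct and matches the reasoning behind the paper's corollary, which is stated without proof as a direct consequence of \cref{metasylv-s} and the recurrence of \cref{p:2metaenum}: the normal forms split by content, each content of size $i$ contributes $\sum_k m_{i,k}$, and the recurrence confines $k$ to $[i,2i-1]$. Your remark on $i=0$ is also right, since as printed the formula gives $1$ instead of $2$ for $n=1$; however, setting $m_{0,0}=1$ alone does not repair it, because the range $0\le k\le -1$ is still empty, so the clean fix is an explicit leading term, $1+\sum_{i=1}^{n}\cdots$, as in \cref{thm:2redTree}.
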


\paragraph{Idempotents}
\label{ss-meta-idem}

In order to compute the idempotents of $\MS(\A,2)$ we need this technical
lemma, which is a corollary of \cref{eq:meatRel1}.

\begin{lemma}\label{lem:MetaDeletion}
Let $x\in\A$ and $W\in\A^*$. If one writes $W=UxV$ such that
$x=\min(\cont(U))$ then $W\equiv_{\MS(2)}U'xV$ where $U'$ is obtained from $U$
by removing all its $x$'s.
\end{lemma}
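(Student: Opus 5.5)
The plan is to push every occurrence of $x$ in $U$ to the right, one at a time, using relation~\eqref{eq:meatRel1}, until it is adjacent to the distinguished occurrence of $x$ between $U$ and $V$. It is then absorbed there by $x^2\equiv_{\MS(2)}x$. I read the hypothesis $x=\min(\cont(U))$ as saying that every letter of $U$ other than $x$ is strictly greater than $x$. The proof is an induction on $|U|_x$; if $|U|_x=0$ there is nothing to prove, since then $U'=U$.

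For the inductive step, write $U=U_1xU_2$, where this $x$ is the rightmost occurrence of $x$ in $U$, so that $x\notin\cont(U_2)$. Every letter $b$ of $U_2$ then satisfies $b>x$. Write $U_2=b_1b_2\cdots b_m$.
\begin{itemize}
\item Apply~\eqref{eq:meatRel1} with $a=x$ and $b=b_1$, taking as the ``$U$'' of the relation the factor $b_2\cdots b_m$, and as the closing $a$ the occurrence of $x$ between $U$ and $V$. This gives $U_1\,xb_1\,b_2\cdots b_m\,xV\equiv_{\MS}U_1\,b_1x\,b_2\cdots b_m\,xV$. Note that the relation is applied inside the context $U_1\cdot(\ )\cdot V$, which is allowed since $\equiv_{\MS}$ is a congruence.
\item Repeat with $b_2,\dots,b_m$, each time using the same closing $x$ as witness. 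This yields $W\equiv_{\MS}U_1U_2\,xx\,V$.
\item Contract $xx$ to $x$ in $\MS(\A,2)$, obtaining $W\equiv_{\MS(2)}U_1U_2\,x\,V$.
\end{itemize}

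The new prefix $U_1U_2$ has one fewer occurrence of $x$. If it still contains $x$, then $x$ is still its minimal letter; if not, it already equals $U'$. Removing the $x$'s from $U_1U_2$ gives exactly $U'$, so the induction hypothesis applied to $U_1U_2\,x\,V$ gives $W\equiv_{\MS(2)}U'xV$.

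The only point needing care is that relation~\eqref{eq:meatRel1} needs an occurrence of $a$ to the right of the swapped pair. Here that occurrence is always provided by the fixed $x$ separating $U$ from $V$, and the strict inequality $x<b_i$ holds because $x$ is the minimum of $\cont(U)$ and $b_i\neq x$. Processing the occurrences from the rightmost one leftwards ensures that no other $x$ ever lies between the moving $x$ and the letters it crosses. The argument never uses the second metasylvester relation.
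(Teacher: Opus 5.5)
Your proof is correct and follows the paper's approach. The paper gives no separate argument, only remarking that the lemma is a corollary of relation~\eqref{eq:meatRel1}. Your induction on $|U|_x$ makes that remark explicit: you slide the rightmost $x$ of $U$ past the strictly larger letters after it, using the $x$ just before $V$ as the closing occurrence required by the relation, and then contract $xx$ to $x$.
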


\begin{prop}\label{prop:MS2idem}
The monoid $\MS(\A, 2)$ contains
\begin{equation}
1+\sum_{k=1}^{|\A|}\binom{|\A|}{k}L_k
\end{equation}
idempotents, where $L_{k}$ is the $k$-th large Schröder number.
\end{prop}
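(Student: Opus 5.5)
The plan has three steps: reduce idempotency to a condition on left multiplication by single letters, translate that condition into a local property of 2-reduced metasylvester trees, and count such trees with a recursion on the root label. The recursion should reproduce the Schröder recursion of \cref{2sylv-idem}.

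Throughout, I read $L_k$ with the same indexing as $S_k$ in \cref{2sylv-idem} ($L_0=L_1=1$, $L_2=2$, $L_3=6,\dots$). This is forced for $k=1$, where the only element of content $\{a\}$ is $a$, which is idempotent.

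\textbf{Step 1: idempotency via single letters.} I would first show that $W\in\MS(\A,2)$ is idempotent if and only if $xW\equiv_{\MS(2)}W$ for every $x\in\cont(W)$.

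Sufficiency is direct. Write $W=w_1\cdots w_k$, so that $WW=w_1\cdots w_kW$. Prepending the letters $w_k,w_{k-1},\dots,w_1$ one at a time leaves the class of $W$ unchanged at each stage, so $WW\equiv W$.

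For necessity I would use the monotonicity of \cref{metaparents} together with the $\L$-triviality argument. Left multiplication never increases any parent label $p(i,\cdot)$. Equality of all parents forces the tree to be unchanged, up to the possible creation of a left subtree at the node labelled by the prepended letter.

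Now suppose $WW\equiv W$ and write $W=UxV$ with $x$ not occurring in $U$. Then $W\equiv WW=UxVW$. Prepending $U x V$ letter by letter to $W$ produces a chain of trees whose parent labels weakly decrease and end at those of $W$, so every step of the chain preserves all parents. I would use \cref{lem:MetaDeletion} to argue that the step where $x$ is prepended can be isolated. From this, $xW$ must already have the same tree as $W$.

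This necessity direction is the step I expect to be the main obstacle. The difficulty is that the intermediate words $VW$ need not coincide with $W$, so the argument has to go through the parent invariants rather than through $W$ itself.

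\textbf{Step 2: translation to trees.} Using the description in the proof of $\L$-triviality, the condition $xW\equiv W$ holds exactly when prepending $x$ changes no parent and does not create a new left subtree at $x$. In terms of the 2-reduced canonical word or tree, I expect this to say the following: every letter $i<x$ occurring to the left of the leftmost $x$ already lies in the leftmost subtree of $x$, and the leftmost edge of every labelled node is unlabelled.

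\textbf{Step 3: counting.} I would count idempotent 2-reduced trees of content $\{1,\dots,n\}$ by splitting according to the label $k$ of the root, mirroring the computation $I_{n,k}=s_kS_{n-k}$ in \cref{2sylv-idem}. The conditions from Step 2 should decouple the leftmost subtree from the rest of the tree. This gives recursions of the form $I_{n,k}=s_k L_{n-k}$ and $L_n=\sum_k s_kL_{n-k}$, where the $s_k$ are the little Schröder numbers.

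I would check this recursion against brute force for $n\le 3$, where it should give $1,2,6$. Summing over contents with binomial coefficients then yields the formula in the statement.
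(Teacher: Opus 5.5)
Steps 1 and 2 are sound in substance, but Step 3 does not work, and Step 3 is where the large Schr\"oder numbers have to come from. You transplant the root-label decomposition $I_{n,k}=s_kS_{n-k}$ of \cref{2sylv-idem}, but that decomposition belongs to binary search trees. Metasylvester trees are decreasing trees. By \cref{MetaTreeBuild} the root is always labelled by the maximal letter of the content, and every labelled node carries a larger label than all its descendants. So there is no ``label $k$ of the root'' to split on, and $I_{n,k}=s_kL_{n-k}$ has no meaning here. Your own Step 2 also says that the leftmost edge of every labelled node is unlabelled, so there is no leftmost subtree to ``decouple''. The recursion is not derived from the structure of the objects being counted. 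Checking it numerically for $n\le 3$ therefore proves nothing for general $n$, and as written the count is not established.

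The missing idea is what your Step 2 condition does to the labelling. Requiring, for every $x$, that no letter smaller than $x$ occurs before the leftmost $x$ means that first occurrences appear in the order $w_n,w_{n-1},\dots,w_1$. On the tree, together with the unlabelled leftmost edges, this says the left-to-right prefix reading is $w_nw_{n-1}\cdots w_1$. The labels are therefore forced by the underlying plane tree formed by the labelled nodes. Now use 2-reducedness: only the two extreme edges of a node may be unlabelled. A labelled node with no labelled child must then have exactly two leaf children. A node with labelled children has a forced unlabelled leftmost child and a free choice of an unlabelled rightmost child. Hence each plane tree with $k$ nodes and $j$ leaves gives exactly $2^{k-j}$ idempotents with content of size $k$. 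Summing over plane trees via Narayana numbers yields $L_k$, and this is the paper's argument. If you want a recursion instead, detaching the first labelled subtree of the root gives $L_n=2L_{n-1}+\sum_{k=1}^{n-2}L_kL_{n-k}$ for $n\ge 2$.

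Finally, the step you flagged as the main obstacle is immediate from $\L$-triviality. If $e=e^2$ is represented by $UxV$, then $e=(Ux)(Ve)$ and $Ve\in\MS(\A,2)\,e$ give $\MS(\A,2)\,Ve=\MS(\A,2)\,e$, hence $Ve=e$. Then $e=U(xe)$ gives $xe=e$ by the same argument.
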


\begin{proof}
Let $T$ be a 2-reduced metasylvester tree and $W=\RW(T)$.
Assume $W$ is idempotent, that is $WW\equiv_{\MS(2)} W$.
Let $\cont(W)=\{w_1<w_2<\cdots<w_n\}$.

First, every labelled node of $T$ has a left unlabelled child.
Indeed, if a node $y$ has a left child labeled $x<y$,
that means that the rightmost $x$ is at the left of any $y$ in $W$.
But this would not be the case in $WW$, which contradicts $WW\equiv_{\MS(2)}
W$.
Because of that, the left to right prefix reading of $T$ has to be
$w_nw_{n-1}\cdots w_1$.
Indeed, assume there exists $i<j$ such that $w_i$ appears before $w_j$
in the left to right prefix reading of $T$.
Because every left subtrees are trivial, there exists $k>j$ such that $w_i$ is
in one of the middle subtrees of $w_k$ and $w_j$ is in the right subtree of
$w_k$.
Thus the rightmost occurrence of $w_i$ is at the left of any occurrences of
$w_j$ in $W$.
But this would not be the case in $WW$, which again contradicts
$WW\equiv_{\MS(2)} W$.

By recursively applying \cref{lem:MetaDeletion} from $w_1$ to $w_n$, one
proves that these two properties are enough to conclude that
$WW\equiv_{\MS(2)}W$.

Therefore, from an unlabelled tree $t$ with $k$ nodes and $j$ leaves, one can
build exactly $2^{k-j}$ distinct idempotents.
First, label its nodes such that its left to right prefix word is strictly
decreasing.
Then, add two children to all leaves and a left child to every internal node.
Finally, for each internal node, either add a right child or do not.

Clearly, for an idempotent $e$, shaving its leaves creates the tree $t$ from
which $e$ is built.

By recalling that the Narayana number $T_{k,j}$ counts the number of ordered
trees with $k$ nodes and $j$ leaves and that the large Schröder numbers can be
defined as $L_k=\sum_{j=1}^{k-1} 2^{k-j}T_{k,j}$,
one gets the statement.
\end{proof}

\subsection{The \texorpdfstring{$\sigma$}{s}-Metasylvester Monoids}
\label{ss-meta-enum-iso}

As for the sylvester monoid, because the metasylvester monoid is not of type
$1$, the formula for general $\sigma$ becomes significantly more complicated.
But one can compute the number of elements in $\MS(\A,p)$ using
\cref{cor:enum-2MS}.

\begin{cor}\label{meta-enum}
If $\A$ is a finite alphabet, we have,
\begin{equation}
|\MS(\A,p)| = \sum_{i=0}^{|\A|}\binom{|\A|}{i}\sum_{k=i}^{2i-1}(p-1)^km_{i,k}.
\end{equation}
\end{cor}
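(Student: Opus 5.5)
The plan is to count $\sigma$-reduced metasylvester trees with constant $\sigma=p$, using \cref{metasylv-s}, and to reduce the count to the $2$-reduced trees already enumerated by the $m_{i,k}$ in \cref{p:2metaenum} and \cref{cor:enum-2MS}. First I partition $\MS(\A,p)$ according to content. Metasylvester trees are determined by the order structure only, so the number of classes with content $\B$ depends only on $|\B|=i$; this produces the factor $\binom{|\A|}{i}$ and the term $i=0$ (the empty tree). It then suffices to show that the number of $p$-reduced trees with content a fixed $i$-letter alphabet equals $\sum_{k}(p-1)^k m_{i,k}$.

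The key step is a projection from $p$-reduced trees to $2$-reduced trees. Take a $p$-reduced tree $T$ and look at each labelled node $x$. Its children split into maximal runs of consecutive unlabelled leaves, separated by labelled children. Each such run corresponds to a maximal factor $x^r$ of the canonical word $\RW(T)$, where $r$ is the length of the run (sectors between consecutive edges read as $x$).

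I define the shaving $\pi(T)$ by replacing every run of unlabelled leaves with a single unlabelled leaf. This is exactly the $2$-reduction, so $\pi(T)$ is a $2$-reduced tree of the same content. On words, $\pi$ replaces every maximal power $x^r$ in $\RW(T)$ by $x$. The $2$-reduced word $G_2$ therefore has $k$ letters, and $\RW(T)$ is obtained from $G_2$ by inflating each letter occurrence into a block $x^{r}$.

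The converse direction is where the care is needed. Given a $2$-reduced word $G_2$ of length $k$, I claim that the $p$-reduced canonical words mapping to it are exactly the inflations of $G_2$ in which each letter occurrence is raised to some power $r\in[1,p-1]$, with the choices made independently. Independence should follow from the type~2 property, namely \cref{def:type2}(2) and \cref{gen-gathered}: expansions of canonical gathered words remain canonical. The constraint $r\le p-1$ is exactly $p$-reducedness. The requirement that every inflation of a $2$-reduced word is again canonical, with no two inflated blocks merging, holds because consecutive letters in $G_2$ are distinct. This gives $(p-1)^k$ preimages, and hence the count $\sum_k (p-1)^k m_{i,k}$ with $k$ ranging over $[i,2i-1]$ as in \cref{cor:enum-2MS}.

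The main obstacle is the bijectivity of this fibre description. Two points need checking: that every choice of exponents gives a distinct $\MS(\A,p)$-class, which follows from \cref{metasylv-s} since distinct reduced canonical words are distinct normal forms; and that the shaving is well defined, with the runs of leaves in $T$ corresponding exactly to the single leaves of $\pi(T)$. For the second point I will argue directly on the tree: labelled children are untouched by shaving, so the positions of the runs are preserved.
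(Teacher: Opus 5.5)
Your argument is correct and is essentially the one the paper leaves implicit when it derives this corollary from \cref{cor:enum-2MS}. By condition (2) of \cref{def:type2}, every inflation of a $2$-reduced canonical word of length $k$ with exponents in $[1,p-1]$ is a $p$-reduced canonical word. Conversely, every $p$-reduced canonical word comes from exactly one $2$-reduced one, and \cref{gen-gathered} makes these $(p-1)^k$ words distinct elements of $\MS(\A,p)$.

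Two small corrections. First, run the bijection entirely on words, because your tree-level description is inaccurate: a run of $r$ unlabelled leaves between two labelled children reads $x^{r+1}$; two adjacent labelled children still have an $x$ between them; and collapsing the leaves of a node with no labelled child to a single leaf would delete $x$. Second, for $i=0$ the inner sum $\sum_{k=0}^{-1}$ is empty, so the formula as printed omits the identity (it gives $p-1$ rather than $p$ when $|\A|=1$). What your count actually proves is the same expression plus $1$.
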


As a corollary of \cref{thm:isom}.(ii), one has

\begin{cor}\label{meta-iso}
Let $\sigma, \tau:\A \to \NN_{\geq2}$, then $\MS(\A,\sigma)\cong\MS(\A,\tau)$
if and only if $\sigma=\tau$.
\end{cor}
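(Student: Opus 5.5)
The statement is a direct application of \cref{thm:isom}.(ii). The plan is to check its two hypotheses on the metasylvester presentation and then invoke it. First, the presentation is content-preserving, indeed evaluation-preserving: both families of relations, $ab\,U\,a = ba\,U\,a$ and $b\,U\,ac\,V\,b = b\,U\,ca\,V\,b$, only permute letters.

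The first hypothesis is that $(aba,baa)$ is a relation for all $a<b$. Taking $U$ empty in \eqref{eq:meatRel1} gives exactly $ab\cdot a = ba\cdot a$, so this holds.

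The second hypothesis is that $bab \not\equiv_{\MS} abb$ for $a<b$. This is the step needing an argument, since the congruence is generated by infinitely many relations. I would argue it with the canonical forms: by \cite{NT} (recalled before \cref{MetaTreeBuild}), each class has a unique metasylvester tree and the canonical word $\RW(\Mm(W))$. I would compute $\Mm(bab)$ and $\Mm(abb)$ directly. In $bab$ the unique $a$ lies between two occurrences of $b$, so it sits in the middle subtree of the root $b$. In $abb$ the $a$ lies to the left of all $b$'s, so it goes to the leftmost subtree. The two trees differ, so the words are not equivalent.

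A more elementary alternative is to note that no relation applies to $abb$ or to $bab$ nontrivially. Relation (1) needs a factor $ab$ followed later by $a$, or $ba$ followed later by $a$, and relation (2) needs three distinct letters. Under this reading the class of $bab$ is $\{bab\}$ and the class of $abb$ is $\{abb\}$. I would check that the left-hand and right-hand sides of the relations are both accounted for: in $bab$ the factor $ba$ is followed by $b$, not by $a$. Since the two classes are singletons, the pair $(bab,abb)$ is not in $\equiv_{\MS}$.

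With both hypotheses verified, \cref{thm:isom}.(ii) yields $\MS(\A,\sigma)\cong\MS(\A,\tau)$ if and only if $\sigma=\tau$. The only point requiring care is reading the theorem's condition ``$\notin\R$'' as non-equivalence in the congruence, which is how its proof uses it. The singleton-class check handles this cleanly.
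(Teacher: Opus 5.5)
Your proposal follows the paper's route: the paper gives this corollary as an immediate consequence of \cref{thm:isom}(ii). Your checks of content preservation and of $aba=baa$ (from \eqref{eq:meatRel1} with $U$ empty) are correct.

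Your final paragraph needs one correction. In the proof of \cref{thm:isom}, the contradiction comes from $\psi(x)\psi(y)\psi(x)\equiv\psi(y)\psi(x)\psi(x)$ in the quotient $\M(\A,\tau)$. So what is really used is $bab\not\equiv abb$ in $\MS(\A,\tau)$. Your singleton-class computation in $\MS(\A)$ does not give this by itself; for instance, in $\MS(\A,\tau)$ the class of $abb$ also contains $ab^{\tau(b)+1}$.

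The missing step is easy to supply from what you already did. Your computation shows $G(bab)=bab$ and $G(abb)=abb$. Their $\tau$-reductions are $bab$, and $abb$ or $ab$ respectively. These differ, so the two words are distinct in $\MS(\A,\tau)$ by \cref{cor-metasylv-gathered} and \cref{gen-gathered}.

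Alternatively, use a direct invariant on words of content $\{a,b\}$: ``some $b$ occurs to the left of the rightmost $a$''. It is preserved by \eqref{eq:meatRel1}, since the swapped $a$ is never the rightmost one. It is also preserved by $x^{\tau(x)}=x$, and the other metasylvester relation needs three distinct letters. It holds for $bab$ and fails for $abb$. It equally separates $b^{\alpha}a^{\beta}b^{\alpha}$ from $a^{\beta}b^{2\alpha}$. That is what is actually needed when $\psi(a)=\psi'(a)^{\alpha_a}$ with $\alpha_a>1$, a case the proof of \cref{thm:isom} passes over.
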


\section{References}
\nocite{oeis}
\printbibliography[heading=none]
\end{document}